\begin{document}

\newtheorem{theorem}{Theorem}
\newtheorem{proposition}[theorem]{Proposition}
\newtheorem{conjecture}[theorem]{Conjecture}
\def\theconjecture{\unskip}
\newtheorem{corollary}[theorem]{Corollary}
\newtheorem{lemma}[theorem]{Lemma}
\newtheorem{sublemma}[theorem]{Sublemma}
\newtheorem{fact}[theorem]{Fact}
\newtheorem{observation}[theorem]{Observation}
\theoremstyle{definition}
\newtheorem{definition}{Definition}
\newtheorem{notation}[definition]{Notation}
\newtheorem{remark}[definition]{Remark}
\newtheorem{question}[definition]{Question}
\newtheorem{questions}[definition]{Questions}
\newtheorem{claim}[definition]{Claim}
\newtheorem{example}[definition]{Example}
\newtheorem{problem}[definition]{Problem}
\newtheorem{exercise}[definition]{Exercise}

 \numberwithin{theorem}{section}
 \numberwithin{definition}{section}
 \numberwithin{equation}{section}

\def\reals{{\mathbb R}}
\def\torus{{\mathbb T}}
\def\heis{{\mathbb H}}
\def\integers{{\mathbb Z}}
\def\rationals{{\mathbb Q}}
\def\naturals{{\mathbb N}}
\def\complex{{\mathbb C}\/}
\def\distance{\operatorname{distance}\,}
\def\support{\operatorname{support}\,}
\def\dist{\operatorname{dist}}
\def\Dist{\operatorname{Distance}}
\def\Span{\operatorname{span}\,}
\def\degree{\operatorname{degree}\,}
\def\kernel{\operatorname{kernel}\,}
\def\dim{\operatorname{dim}\,}
\def\codim{\operatorname{codim}}
\def\trace{\operatorname{trace\,}}
\def\Span{\operatorname{span}\,}
\def\dimension{\operatorname{dimension}\,}
\def\codimension{\operatorname{codimension}\,}
\def\nullspace{\scriptk}
\def\kernel{\operatorname{Kernel}}
\def\ZZ{ {\mathbb Z} }
\def\p{\partial}
\def\rp{{ ^{-1} }}
\def\Re{\operatorname{Re\,} }
\def\Im{\operatorname{Im\,} }
\def\ov{\overline}
\def\eps{\varepsilon}
\def\lt{L^2}
\def\diver{\operatorname{div}}
\def\curl{\operatorname{curl}}
\def\etta{\eta}
\newcommand{\norm}[1]{ \|  #1 \|}
\def\expect{\mathbb E}
\def\bull{$\bullet$\ }
\def\det{\operatorname{det}}
\def\Det{\operatorname{Det}}
\def\multiR{\mathbf R}
\def\bestA{\mathbf A}
\def\Apq{\mathbf A_{p,q}}
\def\Apqr{\mathbf A_{p,q,r}}
\def\bestC{\mathbf C}
\def\bestAqd{\mathbf A_{q,d}}
\def\bestBqd{\mathbf B_{q,d}}
\def\bestB{\mathbf B}
\def\Apq{\mathbf A_{p,q}}
\def\Apqr{\mathbf A_{p,q,r}}
\def\rank{\mathbf r}
\def\diameter{\operatorname{diameter}}
\def\bp{\mathbf p}
\def\bff{\mathbf f}
\def\bx{\mathbf x}
\def\by{\mathbf y}
\def\bz{\mathbf z}
\def\bu{\mathbf u}
\def\bg{\mathbf g}
\def\essd{\operatorname{essential\ diameter}}

\def\symdif{\,\Delta\,}
\def\frakE{{\mathfrak E}}
\def\frakI{{\mathfrak I}}
\def\defe{\dist(E,\frakE)}
\def\defi{\dist(E,\frakI)}

\def\mab{\max(|A|,|B|)}
\def\t2{\tfrac12}
\def\tatb{tA+(1-t)B}
\def\divergence{\operatorname{div}}
\def\steiner{\starstar}
\def\tarstar{{\star\star}}

\newcommand{\abr}[1]{ \langle  #1 \rangle}

\newcommand{\Norm}[1]{ \Big\|  #1 \Big\| }
\newcommand{\set}[1]{ \left\{ #1 \right\} }
\def\one{{\mathbf 1}}
\newcommand{\modulo}[2]{[#1]_{#2}}

\def\scriptf{{\mathcal F}}
\def\scriptq{{\mathcal Q}}
\def\scriptg{{\mathcal G}}
\def\scriptm{{\mathcal M}}
\def\scriptb{{\mathcal B}}
\def\sB{{\mathcal B}}
\def\scriptc{{\mathcal C}}
\def\scriptt{{\mathcal T}}
\def\scripti{{\mathcal I}}
\def\scripte{{\mathcal E}}
\def\scriptv{{\mathcal V}}
\def\scriptw{{\mathcal W}}
\def\scriptu{{\mathcal U}}
\def\scriptS{{\mathcal S}}
\def\scripta{{\mathcal A}}
\def\scriptr{{\mathcal R}}
\def\scripto{{\mathcal O}}
\def\scripth{{\mathcal H}}
\def\scriptd{{\mathcal D}}
\def\ovscriptd{\overline{\mathcal D}}
\def\scriptl{{\mathcal L}}
\def\scriptn{{\mathcal N}}
\def\scriptp{{\mathcal P}}
\def\scriptk{{\mathcal K}}
\def\scriptP{{\mathcal P}}
\def\scriptj{{\mathcal J}}
\def\scriptz{{\mathcal Z}}
\def\scripts{{\mathcal S}}
\def\frakv{{\mathfrak V}}
\def\frakG{{\mathfrak G}}
\def\aff{\operatorname{Aff}}
\def\frakB{{\mathfrak B}}
\def\frakC{{\mathfrak C}}
\def\aA{{\mathfrak A}}

\def\aff{\operatorname{Aff}}
\def\bestC{{\mathbf C}}

\def\bE{{\mathbf E}}
\def\boldC{{\mathbf C}}
\def\Ceta{{\mathbf C}_\eta}
\def\bB{{\mathbf B}}
\def\bF{{\mathbf F}}
\def\bG{{\mathbf G}}
\def\Psharp{P^\sharp}
\def\bA{{\mathbf A}}
\def\bI{{\mathbf I}}
\def\bEstar{{\mathbf E}^\star}
\def\bAstar{{\mathbf A}^\star}
\def\be{{\mathbf e}}
\def\bv{{\mathbf v}}
\def\bw{{\mathbf w}}
\def\br{{\mathbf r}}
\def\Star{\star}
\def\doublestar{\dagger\ddagger}
\def\unitQ{{\mathbf Q}}
\def\Gl{\operatorname{Gl}}

\def\Gjo{G_{\text{$j$,o}}}
\def\Gje{G_{\text{$j$,e}}}

\def\ball{\mathbb B}

\def\barrier{\medskip\hrule\hrule\medskip}

\def\bb{\mathbb B}
\def\br{\mathbf{r}}
\def\bt{\mathbf{t}}
\def\sstar{{\dagger\star}}
\def\Star{{\bullet}}
\def\aff{\operatorname{Aff}}
\def\gl{\operatorname{Gl}}
\def\baff{\operatorname{\bf Aff}}
\newcommand{\mbf}[1]{\mathbf #1}
\def\T{{\mathcal T}}

\def\defb{|E\symdif \bb|}

\def\orbit{\scripto}
\def\repair{\medskip \hrule \hrule \medskip}

\def\astar{{A^\star}}
\def\bstar{{B^\star}}
\def\cstar{{C^\star}}
\def\bC{{\mathbf C}}
\def\freiman{Fre{\u\i}man}

\def\defb{|E\symdif \bb|}
\def\sym{\operatorname{Sym}}
\def\freiman{Fre{\u\i}man}

\author{Michael Christ}

\address{
        Michael Christ\\
        Department of Mathematics\\
        University of California \\
        Berkeley, CA 94720-3840, USA}
\email{mchrist@berkeley.edu}

\author{Marina Iliopoulou}
\address{
        Marina Iliopoulou\\
        School of Mathematics, Statistics and Actuarial Science\\
        University of Kent\\
        Canterbury, CT2 7PE, UK}
\email{m.iliopoulou@kent.ac.uk}
\thanks{The first author was supported in part by NSF grants DMS-1363324 
and DMS-1901413.
Both authors were supported in part by the Mathematical Sciences Research Institute,
which in turn was also supported in part by the National Science Foundation under Grant No.~1440140. Much of this work was carried out while the second author held a postdoctoral position at UC Berkeley.}

\date{August 12, 2019}

\title[Inequalities of Riesz-Sobolev type]
{Inequalities of Riesz-Sobolev type \\ for compact connected Abelian groups}

\begin{abstract}
A version of the Riesz-Sobolev convolution inequality
is formulated and proved for arbitrary compact connected Abelian groups.
Maximizers are characterized and a quantitative stability theorem
is proved, under natural hypotheses. A corresponding stability theorem
for sets whose sumset has nearly minimal measure is also proved,
sharpening recent results of other authors. For the special case
of the group $\reals/\integers$,
a continuous deformation of sets is developed, under which
an appropriately scaled Riesz-Sobolev functional is shown to
be nondecreasing.
\end{abstract}

\maketitle

\tableofcontents

\section{Introduction} \label{section:intro}

Let $G$ be a compact connected Abelian topological group,
equipped with Haar measure $\mu$. 
Throughout this paper, 
the measure $\mu$ is assumed to be complete. 
We say that $\mu$ is normalized to mean that $\mu(G)=1$.
By a measurable subset of $G$ we will always mean a $\mu$--measurable subset.
$\mu_*$ denotes the associated inner measure.
Let $\torus=\reals/\integers$, equipped with Lebesgue
measure $m$, with $m(\torus)=1$. 

Our first result is 
a Riesz-Sobolev--type inequality for $G$,  of which the 
following is one of several formulations.
Assuming $\mu$ to be normalized,
to any measurable set $A\subset G$ is associated the set $\astar\subset\torus$,
which is defined to be the closed interval centered at $0$ satisfying $m(\astar) = \mu(A)$. 
Convolution on $G$ is defined by $f*g(x) = \int_G f(x-y)g(y)\,d\mu(y)$.
$\one_A$ denotes the indicator function of $A$.

\begin{theorem} \label{thm:RS}
Let $G$ be a compact connected Abelian topological group,
equipped with normalized Haar measure. 
For any measurable subsets $A,B,C\subset G$,
\begin{equation} \label{ineq:RS}
\int_C \one_A*\one_B\,d\mu 
\le \int_{\cstar} \one_{\astar}*\one_{\bstar}\,dm. 
\end{equation}
\end{theorem}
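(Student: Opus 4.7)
The plan is to reduce Theorem~\ref{thm:RS} to the classical one-dimensional Riesz-Sobolev inequality on $\torus$, via Pontryagin duality and induction on dimension.

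First, by inner regularity of $\mu$, one may assume that $A, B, C$ are compact. The Pontryagin dual $\widehat{G}$ is a torsion-free discrete Abelian group, hence a directed union of finitely generated subgroups, each isomorphic to $\integers^{d_n}$ for some $d_n\ge 1$. Dualizing exhibits $G$ as an inverse limit of finite-dimensional tori $\torus^{d_n}$ with continuous surjections $\pi_n\colon G\to\torus^{d_n}$. Since $\one_A$ is approximable in $L^1(\mu_G)$ by pull-backs of indicator functions on the tori $\torus^{d_n}$, and both sides of \eqref{ineq:RS} vary continuously under such approximations, it suffices to treat $G=\torus^d$ for each $d\ge 1$.

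I then proceed by induction on $d$. The base case $d=1$ is the classical Riesz-Sobolev inequality on the circle. For the inductive step, the coordinate projection $\pi\colon \torus^d=\torus\times\torus^{d-1}\to\torus$ has fiber $K\cong\torus^{d-1}$, and Fubini yields
\begin{equation*}
\int_C \one_A * \one_B \, d\mu_{\torus^d}
= \int_\torus \int_\torus T_K(A_s, B_t, C_{s+t}) \, ds \, dt,
\end{equation*}
where $A_s:=\{y\in K:(s,y)\in A\}$ and $T_K(X,Y,Z):=\int_Z \one_X *_K \one_Y \, d\mu_K$. The induction hypothesis applied to $K$ gives $T_K(A_s,B_t,C_{s+t})\le F(\rho_A(s),\rho_B(t),\rho_C(s+t))$, where $\rho_E(s):=\mu_K(E_s)$ and $F(a,b,c):=T_\torus(I_a,I_b,I_c)$ with $I_x:=[-x/2,x/2]\subset\torus$.

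The proof is thereby reduced to the density-level Riesz-Sobolev inequality on the circle: for measurable $f,g,h\colon\torus\to[0,1]$,
\begin{equation*}
\int_\torus \int_\torus F\bigl(f(s),g(t),h(s+t)\bigr) \, ds \, dt \le F(\bar f,\bar g,\bar h),
\end{equation*}
where $\bar f:=\int_\torus f \, dm$, etc.\ (applied to $(f,g,h)=(\rho_A,\rho_B,\rho_C)$). This reduction is the main obstacle of the proof. Because $F(a,b,c)=ab$ whenever $c\ge a+b$, the function $F$ fails to be jointly concave on $[0,1]^3$, so Jensen's inequality cannot be invoked directly. My intended approach is to unfold $F$ via its integral representation, recognizing the left-hand side as $T_{\torus^2}(\widetilde A,\widetilde B,\widetilde C)$ for the centered subgraphs $\widetilde E:=\{(s,u)\in\torus^2:|u|\le f_E(s)/2\}$; then to apply the base-case Riesz-Sobolev on $\torus$ to the inner integration, so as to replace $f,g,h$ by their symmetric decreasing rearrangements $f^*,g^*,h^*$; and finally to deform $(f^*,g^*,h^*)$ continuously towards the triple of constants $(\bar f,\bar g,\bar h)$, establishing monotonicity of the functional along the deformation. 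Carrying out this final monotonicity step --- showing that no mass is transported in a direction which decreases the functional --- is the technical crux of the argument.
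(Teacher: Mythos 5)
Your proposal takes a completely different route from the paper's. The paper never reduces to finite-dimensional tori or uses induction on dimension; instead it derives \eqref{ineq:RS} for a general compact connected $G$ directly from the sharpened Kneser-type inequality of Theorem~\ref{thm:sharpKPRGT} (itself a refinement of Tao's inequality \eqref{ineq:taokneser1}), by applying it to the superlevel sets $S_{A,B}(\tau)$ via Lemma~\ref{lemma:RSforsupers}, and handling the remaining cases (bad ranges of measures, level sets of positive measure) by elementary manipulations and a product-with-$\torus$ perturbation. Your Pontryagin-duality reduction to $\torus^d$, the Fubini slicing identity, and the subsequent slice-by-slice application of the $d=1$ Riesz--Sobolev inequality (in the second variable, which is the BLL-style ``horizontal'' symmetrization replacing $f,g,h$ by $f^\star,g^\star,h^\star$) are all correct.

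The problem is the final step. You have reduced everything to the ``density-level Riesz--Sobolev inequality''
\begin{equation*}
\int_\torus\int_\torus F\big(f^\star(s),g^\star(t),h^\star(s+t)\big)\,ds\,dt \;\le\; F(\bar f,\bar g,\bar h),
\end{equation*}
and you propose to prove it by a continuous deformation of $(f^\star,g^\star,h^\star)$ towards the constant triple, asserting monotonicity of the functional along the flow. But this is not a minor technical point to be filled in: after the unfolding, the left-hand side is $T_{\torus^2}$ evaluated on the centered subgraphs $\widetilde A^{(1)}, \widetilde B^{(1)}, \widetilde C^{(1)}$, which are exactly the doubly Steiner-symmetrized sets of the given measures, while the right-hand side is $T_{\torus^2}$ evaluated on the cylindrical rank-one Bohr sets $\torus\times I_{\bar f}$, etc. So the assertion is precisely the Riesz--Sobolev inequality for $\torus^2$ restricted to doubly Steiner-symmetrized competitors --- which is what the induction step is trying to establish. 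There is no visible independent mechanism: $F$ is neither concave (its Hessian in the admissible regime has eigenvalues $1,-2,-2$ up to a positive factor, so Jensen fails in both directions), and linear interpolation $(1-\theta)f^\star + \theta\bar f$ gives a derivative with no determinate sign. The BLL iteration scheme that would normally close a gap like this in $\reals^d$ relies on repeated Steiner symmetrization converging to balls; on $\torus^d$ the maximizers are rank-one Bohr sets (anisotropic, not doubly symmetrized in general), and there is no analogous convergence argument. Without a genuinely new monotonicity theorem for the flow you describe --- one which you acknowledge you have not supplied --- the inductive step does not close. The paper avoids this obstruction entirely by not working dimension by dimension.
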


Kneser's inequality \cite{kneser} 
\begin{equation} \label{ineq:kneser}
\mu_*(A+B) \ge \min(\mu(A)+\mu(B),\mu(G))
\end{equation}
may be viewed as a limiting case of \eqref{ineq:RS}. 
A mildly stronger formulation 
is\footnote{\eqref{ineq:kneserzero} follows from \eqref{ineq:kneser}
for $G=\torus^d$  by a simple argument involving points of density,
since $A+B=A+_0B$ if every point of each of $A,B$ is a point of density.
For general groups $G$, \eqref{ineq:kneserzero} follows from the
special case of $\torus^d$ by approximating by elements of the
algebra generated by Bohr sets. Alternatively,
a stronger form of \eqref{ineq:kneserzero} is proved in \cite{taokneser}.}
\begin{equation} \label{ineq:kneserzero}
\mu(A+_0 B) \ge \min(\mu(A)+\mu(B),\mu(G))
\end{equation}
where  $A+_0 B$ is the open set
\[ A+_0 B:=\{x: \one_A*\one_B(x)>0\}.\]
Indeed, $\mu_*(A+B)\ge \mu_*(A+_0 B) = \mu(A+_0 B)$.

Our main theme is the quantitative characterization
of triples $(A,B,C)$ that maximize, or nearly maximize,
the functional $\int_C \one_A*\one_B\,d\mu$
among all sets of specified Haar measures.
However, the inequality \eqref{ineq:RS} seems to have attracted little attention
in the setting of compact groups, so some of its aspects relevant to this characterization
are also developed here. 

In the parameter range of primary interest,
\eqref{ineq:RS} can be restated with an alternative expression for the right-hand side.
\begin{theorem} \label{thm:RSprime}
For any compact connected Abelian topological group $G$
and any measurable subsets $A,B,C\subset G$
satisfying 
\begin{equation}
\label{RSmeasurerestriction}
\left\{
\begin{gathered}
|\mu(A)-\mu(B)|\le \mu(C)\le \mu(A)+\mu(B),
\\
\mu(A)+\mu(B)+\mu(C)\le 2,
\end{gathered} \right. \end{equation}
one has
\begin{equation} \label{ineq:RSprime}
\int_C \one_A*\one_B\,d\mu 
\le 
\tfrac12 (ab + bc + ca) - \tfrac14 (a^2+b^2+c^2)
= ab-\tfrac14(a+b-c)^2
\end{equation}
where $(a,b,c) = (\mu(A),\mu(B),\mu(C))$.
\end{theorem}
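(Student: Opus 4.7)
The plan is to combine Theorem~\ref{thm:RS} with an explicit evaluation of the right-hand side $\int_{C^\star}\one_{A^\star}*\one_{B^\star}\,dm$ on $\torus$. Since $A^\star,B^\star,C^\star$ are intervals centered at $0\in\torus$ of lengths $a,b,c$, the problem reduces to a concrete one-dimensional computation; by symmetry assume $a\le b$.

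The first step is to treat the \emph{sub-critical} case $a+b\le 1$. Here the interval $A^\star=[-a/2,a/2]$ does not wrap around when translated by any $y\in B^\star$, so $\one_{A^\star}*\one_{B^\star}$ on $\torus$ agrees with the tent function $\Delta(x)=\max(0,\min(a,(a+b)/2-|x|))$ computed on $\reals$. The hypothesis $|b-a|\le c\le a+b$ guarantees that $[-c/2,c/2]$ contains the flat top $[-(b-a)/2,(b-a)/2]$ of height $a$ and is contained in the support $[-(a+b)/2,(a+b)/2]$. Thus
\[
\int_{-c/2}^{c/2}\Delta(x)\,dx
\;=\;a(b-a)\;+\;2\int_{(b-a)/2}^{c/2}\Big(\tfrac{a+b}{2}-x\Big)dx.
\]
A direct expansion collects into exactly $\tfrac12(ab+bc+ca)-\tfrac14(a^2+b^2+c^2)$, and the algebraic identity $\tfrac12(ab+bc+ca)-\tfrac14(a^2+b^2+c^2)=ab-\tfrac14(a+b-c)^2$ is immediate.

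The second step handles the \emph{super-critical} case $a+b>1$ via complementation. A one-line computation gives the pointwise identity
\[
\one_{A^\star}*\one_{B^\star}(x)\;=\;(a+b-1)\;+\;\one_{(A^\star)^c}*\one_{(B^\star)^c}(x),
\]
because $\int(1-\one_{A^\star}(x-y))(1-\one_{B^\star}(y))\,dy=1-a-b+\one_{A^\star}*\one_{B^\star}(x)$. Now $(A^\star)^c$ and $(B^\star)^c$ are intervals of lengths $1-a,1-b$ centered at $1/2$; since their centers add to $1\equiv0$, the convolution is translation-equivalent to that of the \emph{centered} intervals of the same lengths. Setting $a'=1-b,\ b'=1-a$, the new triple $(a',b',c)$ satisfies $a'+b'=2-a-b\le 1$ (the sub-critical regime), and the triangle conditions become $b'-a'=b-a\le c$ and $c\le 2-a-b=a'+b'$, the latter being exactly the assumption $a+b+c\le 2$. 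Hence Case~1 applies and gives
\[
\int_{C^\star}\one_{(A^\star)^c}*\one_{(B^\star)^c}\,dm
\;=\;F(1-b,1-a,c),
\qquad F(x,y,z):=\tfrac12(xy+yz+zx)-\tfrac14(x^2+y^2+z^2).
\]

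Integrating the complementation identity over $C^\star$ yields $\int_{C^\star}\one_{A^\star}*\one_{B^\star}\,dm=c(a+b-1)+F(1-b,1-a,c)$, and an elementary expansion (the linear-in-$(a,b)$ terms cancel cleanly against $c(a+b-1)$) verifies that this equals $F(a,b,c)$, matching the sub-critical formula. Combining both cases with Theorem~\ref{thm:RS} proves~\eqref{ineq:RSprime}. The only non-routine step is spotting the complementation trick for the super-critical case; the rest is bookkeeping, and the fact that the two hypotheses in~\eqref{RSmeasurerestriction} are precisely what is needed in each case is a sanity check that the chosen regime is the natural one.
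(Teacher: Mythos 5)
Your proposal is correct and takes essentially the same route as the paper: Theorem~\ref{thm:RSprime} is deduced from Theorem~\ref{thm:RS} by verifying that for centered intervals $\astar,\bstar,\cstar$ on $\torus$, the quantity $\int_{\cstar}\one_{\astar}*\one_{\bstar}\,dm$ equals $\tfrac12(ab+bc+ca)-\tfrac14(a^2+b^2+c^2)$ under hypotheses~\eqref{RSmeasurerestriction}. The paper merely asserts this equality (``Thus \eqref{ineq:RSprime} is a direct restatement of \eqref{ineq:RS} in this parameter regime''), whereas you carry out the arithmetic in full, including the complementation device for the wrap-around case $a+b>1$; both the tent-function integral and the complementation identity check out.
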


The conclusion \eqref{ineq:RSprime} can also be stated
\begin{equation} \label{ineq:RSdoublep}
\int_C \one_A*\one_B\,d\mu \le 
\mu(A)\mu(B)-\tfrac14 (\mu(A)+\mu(B)-\mu(C))^2
\end{equation}
where $\tau$ is defined by $\mu(C) = \mu(A)+\mu(B)-2\tau$.

Both hypotheses 
\eqref{RSmeasurerestriction} 
are invariant under permutations of $(A,B,C)$.
Likewise, the modified form $\int_{-C} \one_A*\one_B\,d\mu$,
where
$-C=\{-x: x\in C\}$, is invariant under permutations of $(A,B,C)$.

Equality holds in \eqref{ineq:RSprime},
under the indicated hypotheses
on $(\mu(A),\mu(B),\mu(C))$,
when $G=\torus$ and $(A,B,C) = (\astar,\bstar,\cstar)$.
Thus \eqref{ineq:RSprime} is a direct
restatement of \eqref{ineq:RS} 
in this parameter regime.

If the hypothesis \eqref{RSmeasurerestriction} 
is violated, then \eqref{ineq:RS} is easily verified directly, using the trivial upper bound
\begin{equation}
\int_C \one_A*\one_B\,d\mu
\le \min(\mu(A)\mu(B),\mu(B)\mu(C),\mu(C)\mu(A))
\end{equation}
which follows from
$\int_C \one_A*\one_B\,d\mu\le\int_G \one_A*\one_B\,d\mu = \mu(A)\mu(B)$
and permutation invariance.
In this paper we will focus primarily on the regime in which
the hypotheses \eqref{RSmeasurerestriction} hold.

The formulation \eqref{ineq:RS} is analogous to the Riesz-Sobolev inequality for $\reals^d$,
but now the symmetrization $\astar$ is a subset of $\torus$, rather than of $G$.
As is the case for $\reals^d$, the inequality for indicator functions implies the
generalization
\begin{equation}\label{rearrangegeneralfns} \langle f*g,h\rangle_G
\le \langle f^\star*g^\star,h^\star\rangle_\torus \end{equation}
for arbitrary nonnegative measurable functions defined on $G$,
with the pairing $\langle \varphi,\psi\rangle_G = \int_G \varphi\psi\,d\mu$
of real-valued functions, and with the natural extension of the definition of symmetrization $f^\star$
from indicator functions to general nonnegative functions.
Thus if $\torus$ is identified with $(-\tfrac12,\tfrac12]$
by identifying each equivalence class in $\reals/\integers$ by its
unique representative in this domain,
then $f^\star$ is even, is nonincreasing on
$[0,\tfrac12]$, and is equimeasurable with $f$.
Theorem~\ref{thm:relaxed} has a stronger conclusion than  \eqref{rearrangegeneralfns}.

For $G=\torus$, Theorem~\ref{thm:RS} was proved by Baernstein \cite{baernstein},
and was stated by Luttinger \cite{luttingerIII}.
For general compact connected Abelian groups,
inequality \eqref{ineq:RS} should be regarded as an equivalent formulation
of an inequality of Tao \cite{taokneser}. The deduction of \eqref{ineq:RS}
as a consequence of the formulation in  \cite{taokneser}
is carried out in \S\S\ref{section:Inter} and \ref{section:RSproved} below.

The formulation of our inverse theorems requires several definitions.

\begin{definition}
Two measurable sets $A,A'\subset G$ are equivalent if $\mu(A\symdif A')=0$.
Likewise, two ordered triples $\bE =(E_1,E_2,E_3)$ and $\bE'=(E'_1,E'_2,E'_3)$
are equivalent if $E_j$ is equivalent to $E'_j$ for each $j\in\{1,2,3\}$. 
\end{definition}

\begin{definition}
For $x\in\torus = \reals/\integers$,
$\norm{x}_\torus = |y|$ where $y\in[-\tfrac12,\tfrac12]$
is congruent to $x$ modulo $1$.
\end{definition}

\begin{definition}
A rank one Bohr set $\scriptb\subset G$ is a set of the form
\begin{equation}
\scriptb = \scriptb(\phi,\rho,c) = \{x\in G: \norm{\phi(x)-c}_\torus \le\rho\},
\end{equation}
where $\phi: G\to\torus$ is a continuous homomorphism,
$c\in\torus$, and $\rho\in[0,\tfrac12]$.
\end{definition}
By a homomorphism $\phi:G\to\torus$, we will always mean
a continuous homomorphism.


\begin{definition}
Two rank one Bohr subsets $\scriptb_1,\scriptb_2$ of $G$ are parallel if
they can be represented as
$\scriptb_j = \scriptb(\phi_j,c_j,\rho_j)$ with $\phi_1=\phi_2$.

An ordered triple $(\scriptb_1,\scriptb_2,\scriptb_3)$
of rank one Bohr subsets of $G$ is parallel if
these three sets are pairwise parallel.

An ordered triple $(\scriptb_1,\scriptb_2,\scriptb_3)$ 
of Bohr sets $\scriptb_j=\scriptb(\phi_j,c_j,\rho_j)$
is compatibly centered if $c_3=c_1+c_2$.
\end{definition}

\begin{definition} Let $(E_1,E_2,E_3)$ be an ordered triple of measurable subsets of $G$.

$(E_1,E_2,E_3)$ is admissible if
$0<\mu(E_i)<1$ for each $i\in\{1,2,3\}$,
$\mu(E_1)+\mu(E_2)+\mu(E_3)<2$,
and
\[\mu(E_k)\le \mu(E_i)+\mu(E_j)\] for each permutation
$(i,j,k)$ of $(1,2,3)$.

$(E_1,E_2,E_3)$ is strictly admissible if
it is admissible and
\begin{equation}
\mu(E_k)< \mu(E_i)+\mu(E_j) 
\end{equation}
for every permutation $(i,j,k)$ of $(1,2,3)$.

For any $\eta>0$, $(E_1,E_2,E_3)$ is $\eta$--strictly admissible if
it is admissible and
\begin{equation}
\mu(E_k)\le \mu(E_i)+\mu(E_j) - \eta \max(\mu(E_1),\mu(E_2),\mu(E_3))
\end{equation}
for every permutation $(i,j,k)$ of $(1,2,3)$. 
\end{definition}

\medskip

Admissibility is a property only of the measures $\mu(E_j)$,
and we will sometimes say instead that $(\mu(E_1),\mu(E_2),\mu(E_3))$ is admissible.
The condition that $\mu(E_k)\le \mu(E_i)+\mu(E_j)$
for every permutation $(i,j,k)$ of $(1,2,3)$ can be equivalently formulated as the condition
\begin{equation}
|\mu(E_i)-\mu(E_j)| \le \mu(E_k)\le \mu(E_i)+\mu(E_j)
\end{equation}
for any single permutation. 
Corresponding equivalences hold for strict admissibility and $\eta$--strict admissibility.
Simple consequences of $\eta$--strict admissibility are 
\begin{align} \mu(E_i) &\ge |\mu(E_j)-\mu(E_k)| + \eta \max(\mu(E_1),\mu(E_2),\mu(E_3)),
\\
\label{etacomparable} \mu(E_i)&\ge\eta\mu(E_j)  \end{align}
for every permutation $(i,j,k)$ of $(1,2,3)$.

\begin{definition}
The ordered triple $(A,B,C)$ of measurable subsets of $G$ is $\eta$--bounded if it satisfies
\begin{gather}
\label{etasmall}
\mu(A) + \mu(B) + \mu(C)\le (2-\eta)\mu(G),
\\
\label{etabig}
\min(\mu(A),\mu(B),\mu(C))\ge\eta \mu(G).
\end{gather}
\end{definition}

If $(A,B,C)$ is $\eta$--strictly admissible
and satisfies \eqref{etasmall}
then \[\max(\mu(A),\mu(B),\mu(C))
\le \tfrac{2-\eta}{2+\eta}\le 1-\tfrac{\eta}{2}.\]
Indeed, suppose that $\mu(C)$ is largest. 
Since $\mu(A) + \mu(B) \ge (1+\eta)\mu(C)$,
 $(2+\eta) \mu(C) \le \mu(A)+\mu(B)+\mu(C)\le 2-\eta$. 
\qed

\begin{theorem}[Uniqueness of maximizers up to symmetries] \label{thm:RSuniqueness}
Let $G$ be a compact connected Abelian topological group equipped with Haar measure $\mu$ satisfying $\mu(G)=1$. Let $(A,B,C)$ be an admissible triple of measurable subsets of $G$.
Then $\int_C \one_A*\one_B\,d\mu = \int_{C^*} \one_\astar*\one_\bstar\,dm$
if and only if $(A,B,C)$ is equivalent
to a compatibly centered parallel ordered triple of rank one Bohr sets.
\end{theorem}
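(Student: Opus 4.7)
The forward direction is a direct computation. If $(A,B,C)$ is equivalent to a compatibly centered parallel rank-one Bohr triple via a continuous homomorphism $\phi:G\to\torus$, then $\phi$ is surjective by connectedness of $G$, so $\phi_*\mu = m$ and $\one_A*\one_B$ factors through $\phi$ as $h\circ\phi$ with $h=\one_{I_1+c_1}*\one_{I_2+c_2}$ on $\torus$. The integral then reduces to the corresponding integral on $\torus$ for centered intervals, which yields equality via the Baernstein--Luttinger computation in the case $G=\torus$.

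For the converse, the plan is to decompose the hypothesized equality into two rearrangement equalities. Write $F=\one_A*\one_B\in C(G)$, $\tilde F=\one_\astar*\one_\bstar\in C(\torus)$, and let $F^\star$ denote the symmetric decreasing rearrangement of $F$ from $G$ to $\torus$. Then
\[ \int_C F\,d\mu \le \int_{C^*} F^\star\,dm \le \int_{C^*}\tilde F\,dm, \]
where the first inequality is Hardy--Littlewood--P\'olya and the second is the functional form of Theorem~\ref{thm:RS}, equivalent to $F^\star\le \tilde F$ pointwise on $\torus$. The hypothesis forces equality at both steps. Equality in the first step forces $C$ to agree, up to a $\mu$-null set, with a superlevel set $\{F>t_0\}$ for $t_0:=\tilde F(\mu(C)/2)$; equality in the second, combined with the monotonicity of both functions on $[0,\tfrac12]$, yields $\mu(\{F>t\}) = m(\{\tilde F>t\})$ for every $t\in[t_0,\max\tilde F]$.

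The crux is then to propagate this matching of measures down to $t=0$ and invoke the equality case of Kneser's inequality. In the degenerate case $\mu(C)=\mu(A)+\mu(B)$ one has $t_0=0$ already, so $\mu(A+_0 B)=\mu(A)+\mu(B)$ follows directly. In the strictly admissible case, permutation symmetry of $\int\one_A*\one_B\cdot\one_{-C}\,d\mu$ allows one to play the same game with the pairs $(B,-C)$ and $(-C,A)$, and combining the three matchings reaches the critical level. Tao's characterization of Kneser equality on compact connected Abelian groups then supplies a continuous homomorphism $\phi:G\to\torus$ and intervals $I_1,I_2\subset\torus$ with $m(I_1)=\mu(A)$ and $m(I_2)=\mu(B)$, for which $A$ is equivalent to $\phi^{-1}(I_1+c_1)$ and $B$ to $\phi^{-1}(I_2+c_2)$. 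With $A,B$ parallel rank-one Bohr sets, $F$ factors through $\phi$, so the superlevel set $\{F>t_0\}$ equals $\phi^{-1}(J)$ for an interval $J$ centered at $c_1+c_2$; this identifies $C$ as the rank-one Bohr set completing the compatibly centered parallel triple. The principal obstacle is the propagation of the superlevel equality down to $t=0$ in the strictly admissible regime: Hardy--Littlewood--P\'olya supplies information only above the threshold $t_0>0$, and reaching $t=0$ requires the permutation-symmetry maneuver above together with a delicate application of Tao's equality case of Kneser in full generality.
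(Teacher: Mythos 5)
Your forward direction, your treatment of the degenerate case $\mu(C)=\mu(A)+\mu(B)$, and the final bootstrap (once $A,B$ are known to be parallel rank-one Bohr sets, $\one_A*\one_B$ factors through $\phi$, so the superlevel set identifying $C$ is automatically the compatibly-centered third Bohr set) are all sound, and the degenerate case matches the paper's argument in \S11. However, the strictly admissible case contains both a false step and an acknowledged but unfilled gap. The claim that the functional form of Theorem~\ref{thm:RS} is ``equivalent to $F^\star\le\tilde F$ pointwise on $\torus$'' is wrong: pointwise domination is equivalent to $\mu(S_{A,B}(t))\le m(S_{\astar,\bstar}(t))$ for every $t$, and the paper explicitly remarks after Corollary~\ref{cor:KTsharpened}'s companion in \S\ref{section:Inter} that $\mu(S_{A,B}(\tau))$ can in general be either larger or smaller than $\mu(A)+\mu(B)-2\tau=m(S_{\astar,\bstar}(\tau))$. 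For instance, with $G=\torus$ and $A=B$ a union of two short, widely separated intervals of total measure $2\epsilon$, one has $m(\{\one_A*\one_B>0\})\approx 6\epsilon>4\epsilon=m(\{\one_{\astar}*\one_{\bstar}>0\})$, so $F^\star$ has strictly larger support than $\tilde F$ and the pointwise inequality fails outright. Theorem~\ref{thm:RS} only yields the integrated bound $\int_\tau^\infty\mu(S_{A,B})\,dt\le\int_\tau^\infty m(S_{\astar,\bstar})\,dt$, i.e.\ $\scriptd'(A,B,\tau)\ge 0$. Consequently the single equality $\int_{\cstar}F^\star\,dm=\int_{\cstar}\tilde F\,dm$ does not force $F^\star=\tilde F$ on $\cstar$, and your intermediate conclusion $\mu(\{F>t\})=m(\{\tilde F>t\})$ for all $t\in[t_0,\max\tilde F]$ is unjustified; what one actually extracts (Lemma~\ref{lemma:Staulowerbound}, Lemma~\ref{lemma:726}, and the corollary to Theorem~\ref{thm:sharpKPRGT} with $\scriptd=0$) is the one-level identity $\mu(S_{A,B}(\tau_C))=\mu(C)$ together with $C\doteq S_{A,B}(\tau_C)$. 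Permuting gives two further single-level identities, but you supply no mechanism by which the three combine to reach $\mu(A+_0B)=\mu(A)+\mu(B)$ at $t=0^+$, which is what a direct appeal to the equality case of Kneser for the pair $(A,B)$ requires.

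For contrast, the paper does not attempt to establish $\mu(A+_0B)=\mu(A)+\mu(B)$ directly in the strictly admissible regime. In \S11 it deduces the equality case as a literal corollary of the quantitative stability Theorem~\ref{thm:RSstability} with $\delta=0$, and the proof of that theorem occupies most of the paper. The step that replaces your missing ``propagation to $t=0$'' is Lemma~\ref{lemma:equalitycase}, which shows that a certain \emph{superlevel set} $S=S_{A,B}(\beta)$ has small difference set $S-S$; Kneser's inverse theorem (via Tao and Griesmer) is then applied to $S-S$, not to $A+B$. That lemma, in turn, requires the normalization $\mu(A)=\mu(B)$, which is arranged via the nontrivial iterated reduction in \S\ref{section:towards} using the submodularity principle (Proposition~\ref{submodularity}). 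A correct proof along your proposed lines would therefore need a genuinely new idea to bridge the gap from the single level $\tau_C>0$ down to the sumset level $t=0^+$.
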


As recognized by Burchard \cite{burchard}, if $\mu(C) > \mu(A)+\mu(B)$
then no characterization
of cases of equality is possible without the admissibility hypothesis,
beyond the trivial necessary and sufficient condition that 
$\one_A*\one_B$ should vanish $\mu$--almost everywhere on the complement of $C$.

Our main stability theorem is the following.

\begin{theorem}[Stability]\label{thm:RSstability}
For each $\eta>0$ there exist $\delta_0>0$ and $\boldC<\infty$ with the following property.
Let $G$ be a compact connected Abelian topological group
equipped with Haar measure $\mu$ satisfying $\mu(G)=1$.
Let $(A,B,C)$ be an $\eta$--strictly admissible and $\eta$-bounded ordered 
triple of measurable subsets of $G$.
Let $0\le \delta \le \delta_0$.
If 
$\int_C \one_A*\one_B\,d\mu \ge
\int_\cstar \one_\astar*\one_\bstar\,dm-\delta$
then there exists a compatibly centered parallel ordered triple
$(\scriptb_A,\scriptb_B,\scriptb_C)$ of rank one Bohr sets
satisfying
\begin{equation}
\mu(A\symdif \scriptb_A) \le \boldC \delta^{1/2}
\end{equation}
and likewise for $(B,\scriptb_B)$ and $(C,\scriptb_C)$.
\end{theorem}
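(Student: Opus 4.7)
The plan is to first establish qualitative stability by a compactness-and-contradiction argument rooted in Theorem~\ref{thm:RSuniqueness}, and then upgrade to the quantitative $\delta^{1/2}$ rate via a local perturbative analysis near a Bohr-set maximizer.

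For the qualitative step, suppose the theorem fails for some fixed $\eta>0$. By a diagonal choice there exist compact connected Abelian groups $G_n$ with normalized Haar measures $\mu_n$ and $\eta$-strictly admissible, $\eta$-bounded triples $(A_n,B_n,C_n)$ whose deficits $\delta_n$ tend to $0$ but whose symmetric-difference distance to every parallel compatibly centered Bohr triple stays bounded below by some fixed $\eps_0>0$. Pass to a subsequence with $(\mu_n(A_n),\mu_n(B_n),\mu_n(C_n))\to (a,b,c)$, still $\eta$-strictly admissible and $\eta$-bounded. For each $n$, extract a continuous homomorphism $\phi_n\colon G_n\to\torus$ whose pushforward captures most of the $L^1$-mass of $\one_{A_n},\one_{B_n},\one_{C_n}$; such $\phi_n$ should be produced by an inverse Kneser-type statement in the spirit of Tao~\cite{taokneser}, applied to the near-extremizing triples of \eqref{ineq:RS}. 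Taking a weak-$\star$ limit of the pushforward measures on $\torus$, upper semicontinuity of the Riesz-Sobolev functional forces the limit triple to achieve equality. Theorem~\ref{thm:RSuniqueness} on $\torus$ identifies this limit with a compatibly centered parallel triple of intervals, whose pullbacks under $\phi_n$ approximate $(A_n,B_n,C_n)$ to $o(1)$, contradicting the lower bound $\eps_0$.

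This yields qualitative stability: for every $\eps>0$ there is $\delta_1(\eta,\eps)>0$ such that deficit $\le\delta_1$ forces the symmetric-difference distance to some compatibly centered parallel Bohr triple to be $\le\eps$. To upgrade to $\le\boldC\delta^{1/2}$, work in the regime where $(A,B,C)$ is already $\eps_0(\eta)$-close to a compatibly centered parallel Bohr triple $(\scriptb_A,\scriptb_B,\scriptb_C)$ with $\scriptb_j=\scriptb(\phi,c_j,\rho_j)$, chosen to minimize the total discrepancy subject to $\mu(\scriptb_j)=\mu(E_j)$, $c_C=c_A+c_B$, and fixed character $\phi$ (optimized over the discrete dual of $G$). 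Decompose $A=(\scriptb_A\setminus H_A)\cup F_A$ with $F_A\cap\scriptb_A=\emptyset$, and analogously for $B,C$, and set $\eps:=\sum_j(\mu(F_j)+\mu(H_j))$. Since Theorem~\ref{thm:RSuniqueness} makes $(\scriptb_A,\scriptb_B,\scriptb_C)$ itself a maximizer with matching measures, one has
\[
\delta=\int_{\scriptb_C}\one_{\scriptb_A}*\one_{\scriptb_B}\,d\mu-\int_C\one_A*\one_B\,d\mu.
\]
Expand this difference to second order in the perturbations $(F_j,H_j)$: the first-order terms vanish by the optimality of the Bohr approximation, while the quadratic (cross) terms are bounded below by $c(\eta)\eps^2$ using $\eta$-strict admissibility and $\eta$-boundedness to ensure coercivity of the second variation on admissible perturbations. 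This furnishes $\eps\le\boldC\delta^{1/2}$.

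The main obstacle is the qualitative step: producing the homomorphisms $\phi_n$ with enough uniform control, and showing that the weak-$\star$ limits on $\torus$ actually extremize the Riesz-Sobolev functional rather than only nearly doing so. This requires a suitably quantitative inverse form of Kneser's theorem together with careful handling of weak convergence as the underlying groups $G_n$ vary. The local coercivity step is more routine, reducing after pullback to an explicit second-order computation on intervals of $\torus$; the $\eta$-admissibility and $\eta$-boundedness hypotheses are precisely what rule out the degenerate boundary configurations where the Hessian would fail to be bounded below.
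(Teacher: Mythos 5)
Your high-level architecture — establish qualitative stability first, then upgrade to a quantitative $\delta^{1/2}$ rate by a local perturbative expansion around a Bohr maximizer — mirrors the paper's organization (the quantitative step is carried out in Lemma~\ref{lemma:perturbative}). But your qualitative step has a genuine gap that you partly acknowledge yet do not close, and the paper's resolution of it is in fact the heart of the argument.

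You write that the homomorphisms $\phi_n$ ``should be produced by an inverse Kneser-type statement in the spirit of Tao, applied to the near-extremizing triples of \eqref{ineq:RS}.'' This is exactly the step that needs justification: a near-extremizer of the Riesz-Sobolev inequality does not, on its face, exhibit any set with small doubling, so the inverse Kneser theorems of Tao and Griesmer are not directly applicable. The paper bridges this gap via Lemma~\ref{lemma:equalitycase}: under the crucial hypothesis $\mu(A)=\mu(B)$, a near-extremizing triple forces the difference set $S-S$ of a suitable superlevel set $S=S_{A,B}(\beta)$ to be small, and \emph{then} the Kneser inverse theorem applies. The hypothesis $\mu(A)=\mu(B)$ is genuinely restrictive, and the entire content of \S\ref{section:towards} (Lemmas~\ref{reducing to equal} and \ref{for not strongly admissible}, the iterative construction using Lemma~\ref{continuous overlap} and the submodularity Proposition~\ref{submodularity}) is devoted to reducing the general case to this special one without losing control of the defect. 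Your proposal contains none of this reduction machinery; without it, the appeal to inverse Kneser is unjustified.

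A second, related problem is in your weak-$\star$ limiting argument on $\torus$. Pushforward by $\phi_n$ satisfies $\phi_{n*}(\one_{A_n}*\one_{B_n})=\phi_{n*}(\one_{A_n})*\phi_{n*}(\one_{B_n})$, but the identity
\[
\int_{C_n}\one_{A_n}*\one_{B_n}\,d\mu_n
= \scriptt_\torus\bigl(\phi_{n*}(\one_{A_n}),\phi_{n*}(\one_{B_n}),\one_{C_n^\star}\bigr)
\]
holds only when $C_n$ is itself the preimage $\phi_n^{-1}(C_n^\star)$, i.e.\ a rank one Bohr set (this is the starting point of Proposition~\ref{prop:oneofthree}). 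For generic $C_n$ the pushforward does not encode the value of the functional, so semicontinuity of the functional on $\torus$ under your weak limits gives no information about the original triples. Thus your compactness argument implicitly presupposes that one of the sets is already nearly a Bohr set — which is again the conclusion you are trying to reach. The paper avoids this circularity by first producing a near-Bohr structure on a superlevel set via the Kneser link, and only then invoking (in \S\ref{section:When}) the pushforward identity and the relaxed Theorem~\ref{thm:relaxed}/Proposition~\ref{prop:longname} to propagate Bohr structure to the other two sets.

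Your local coercivity step is in the right spirit, but note the bookkeeping in the paper's Lemma~\ref{lemma:perturbative_relaxed} is the reverse of what you describe: the terms that are linear in the perturbations $f_k$, namely $\langle K_k,f_k\rangle=\langle K_k-\gamma_k,f_k\rangle$, do \emph{not} vanish by optimality — they supply the coercive lower bound $-c\,\delta^2$, because $|K_k-\gamma_k|$ vanishes linearly at $\partial\scriptb_k$ and the perturbations concentrate there. It is the quadratic cross terms $\langle\one_{\scriptb_l},f_i*f_j\rangle$ that vanish, after a suitable choice of translations. The distinction matters: making the quadratic form coercive requires both the decomposition $f_j=f_j^\dagger+\tilde f_j$ and the translation choice of Lemma~\ref{lemma:paininneck}, neither of which appears in your sketch.
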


The next two theorems generalize Theorems~\ref{thm:RS} and \ref{thm:RSstability}
from indicator functions of sets to more general functions.
Theorem~\ref{thm:relaxed} will be used in our proof of Theorem~\ref{thm:RSstability}.

\begin{theorem} \label{thm:relaxed}
Let $G$ be a compact connected Abelian topological group
equipped with Haar measure $\mu$ satisfying $\mu(G)=1$.
For any measurable functions $f,g,h:G\to[0,1]$, 
\begin{equation} \label{prec}
\langle f*g,h\rangle_G 
\le \langle \one_\astar*\one_\bstar,\one_\cstar\rangle_\torus
\end{equation}
where $\astar,\bstar,\cstar\subset\torus$ are 
intervals centered at $0$ 
satisfying
\[\big(m(\astar),m(\bstar),m(\cstar)\big)
= 
\big({\textstyle\int_G f\,d\mu,\,\int_G g\,d\mu,\, \int_G h\,d\mu}\big).\]
\end{theorem}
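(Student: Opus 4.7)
The plan is to combine a layer-cake reduction (using Theorem~\ref{thm:RS}) with three successive applications of the bathtub principle on $\torus$.

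First, for measurable $f,g,h:G\to[0,1]$, write $f=\int_0^1\one_{A_t}\,dt$ with $A_t=\{f>t\}$, and similarly for $g$ and $h$. By trilinearity and Fubini,
\[
\langle f*g,h\rangle_G
= \int_0^1\!\!\int_0^1\!\!\int_0^1 \langle \one_{A_t}*\one_{B_s},\one_{C_u}\rangle_G \,dt\,ds\,du.
\]
Applying Theorem~\ref{thm:RS} pointwise and reassembling, the right-hand side is dominated by $\langle f^\star*g^\star,h^\star\rangle_\torus$, where $f^\star:=\int_0^1 \one_{A_t^\star}\,dt$ is the symmetric decreasing rearrangement of $f$ viewed on $\torus$, and similarly for $g^\star$ and $h^\star$. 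These are symmetric nonincreasing functions on $\torus$ taking values in $[0,1]$, with integrals $a=\int_G f\,d\mu$, $b$, and $c$ respectively. It therefore suffices to prove
\[
\langle F*G,H\rangle_\torus \le \langle \one_\astar*\one_\bstar,\one_\cstar\rangle_\torus
\]
whenever $F,G,H:\torus\to[0,1]$ are symmetric nonincreasing with integrals $a$, $b$, $c$.

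The key auxiliary fact is that convolution on $\torus$ preserves symmetric monotonicity: if $F,G:\torus\to[0,\infty)$ are symmetric and nonincreasing in $\norm{\cdot}_\torus$, then so is $F*G$. By layer cake this reduces to showing $\one_I*\one_J$ is symmetric nonincreasing whenever $I,J$ are centered symmetric intervals in $\torus$. A direct computation of $(\one_I*\one_J)(x)=m(I\cap(x+J))$ shows this function equals $\min(m(I),m(J))$ on a central interval, decays linearly, then plateaus at $\max(m(I)+m(J)-1,0)$. This step requires the most care, because the wraparound when $m(I)+m(J)>1$ complicates the explicit formula; the case analysis is routine but must be verified.

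With the auxiliary lemma in hand, the proof is completed by iterating the bathtub principle. Since $F,G,H$ are even, $\langle F*G,H\rangle_\torus=\langle F,G*H\rangle_\torus$; since $G*H$ is symmetric nonincreasing, the bathtub principle (maximizing $\int F\,P\,dm$ over $0\le F\le 1$ with $\int F=a$ for symmetric nonincreasing $P$ gives the maximizer $F=\one_\astar$) yields
\[
\langle F,G*H\rangle_\torus \le \langle \one_\astar,G*H\rangle_\torus = \langle G,\one_\astar*H\rangle_\torus.
\]
Repeating with $\one_\astar*H$ in place of $G*H$ and $G$ in place of $F$, and then a third time with $\one_\astar*\one_\bstar$ and $H$, produces
\[
\langle G,\one_\astar*H\rangle_\torus \le \langle H,\one_\astar*\one_\bstar\rangle_\torus \le \langle \one_\cstar,\one_\astar*\one_\bstar\rangle_\torus,
\]
which chains with the previous bound to complete the proof.
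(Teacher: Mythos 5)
Your proposal is correct and follows essentially the same route as the paper: a layer-cake reduction to $\langle f^\star*g^\star,h^\star\rangle_\torus$ via Theorem~\ref{thm:RS}, followed by three successive replacements of a symmetric nonincreasing factor by the centered interval indicator of equal mass, each justified by the fact that convolutions of centered intervals on $\torus$ are symmetric nonincreasing. The paper packages your ``auxiliary fact'' together with the bathtub step into a single statement (Lemma~\ref{hardy-littlewood type}) which it then applies three times, which is exactly the chain you write out.
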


Inequality \eqref{prec} is known for $\reals^d$.
A corresponding stability theorem extends Theorem~\ref{thm:RSstability}
from indicator functions of sets to more general functions.

\begin{theorem} \label{thm:relaxedstability}
For each $\eta>0$ there exists $\bC<\infty$ with the following property.
Let $G$ be a  compact connected Abelian topological group
equipped with Haar measure $\mu$ satisfying $\mu(G)=1$.
Let $f,g,h:G\to[0,1]$ be measurable. 
Let $(\astar,\bstar,\cstar)\subset\torus$ be intervals centered at $0$
with Lebesgue measures $(\int f\,d\mu,\int g\,d\mu,\int h\,d\mu)$.
Let
\begin{equation}
\scriptd = 
 \langle \one_\astar*\one_\bstar,\one_\cstar\rangle_\torus
-\langle f*g,h\rangle_G. 
\end{equation}
Suppose that $(\astar,\bstar,\cstar)$ is $\eta$--strictly admissible
and $\eta$--bounded. 
If $\scriptd$ is sufficiently small as a function of $\eta$ alone
then
there exists a compatibly centered parallel triple $(\scriptb_f,\scriptb_g,\scriptb_h)$
of rank one Bohr subsets of $G$ 
satisfying
\begin{equation}
\norm{f-\one_{\scriptb_f}}_{L^1(G,\mu)}
\le \boldC \scriptd^{1/2}
\end{equation}
and likewise for $(g,\one_{\scriptb_g})$ and $(h, \one_{\scriptb_h})$.
\end{theorem}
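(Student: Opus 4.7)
The strategy is to reduce to the set-version Theorem~\ref{thm:RSstability} by a layer-cake decomposition. Write $f=\int_0^1\one_{F_s}\,ds$ with $F_s=\{f>s\}$, and analogously $g,h$ with level sets $G_t, H_u$; set $\alpha_s=\mu(F_s)$, $\beta_t=\mu(G_t)$, $\gamma_u=\mu(H_u)$, so $\int_0^1\alpha_s\,ds=a$ and analogously for $b,c$. For $(\alpha,\beta,\gamma)\in[0,1]^3$ let $K(\alpha,\beta,\gamma)=\langle\one_{I_\alpha}*\one_{I_\beta},\one_{I_\gamma}\rangle_\torus$, with $I_\xi\subset\torus$ the centered interval of length $\xi$; on the regime where $(\alpha,\beta,\gamma)$ is admissible and $\alpha+\beta+\gamma\le 2$, Theorem~\ref{thm:RSprime} gives $K=L$, with $L(\alpha,\beta,\gamma)=\tfrac12(\alpha\beta+\beta\gamma+\gamma\alpha)-\tfrac14(\alpha^2+\beta^2+\gamma^2)$. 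Layer-cake and Theorem~\ref{thm:RS} split the deficit as
\[
\scriptd=\underbrace{\bigl[\langle\one_\astar*\one_\bstar,\one_\cstar\rangle_\torus-\langle f^\star*g^\star,h^\star\rangle_\torus\bigr]}_{\scriptd_1}+\underbrace{\iiint_{[0,1]^3}\sigma(s,t,u)\,ds\,dt\,du}_{\scriptd_2},
\]
where $\sigma(s,t,u)=K(\alpha_s,\beta_t,\gamma_u)-\langle\one_{F_s}*\one_{G_t},\one_{H_u}\rangle_G\ge 0$ by Theorem~\ref{thm:RS} applied pointwise, and $\scriptd_1\ge 0$ by Theorem~\ref{thm:relaxed} applied to $(f^\star,g^\star,h^\star)$ on $\torus$. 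Both summands are therefore bounded by $\scriptd$.

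I would first show that the $\torus$-side deficit controls the level-set variances: $\int(\alpha_s-a)^2\,ds+\int(\beta_t-b)^2\,dt+\int(\gamma_u-c)^2\,du\le\boldC\,\scriptd$, where $\boldC$ depends only on $\eta$. The quadratic identity
\[
\iiint L(\alpha_s,\beta_t,\gamma_u)\,ds\,dt\,du=L(a,b,c)-\tfrac14\Bigl[\int(\alpha_s-a)^2\,ds+\int(\beta_t-b)^2\,dt+\int(\gamma_u-c)^2\,du\Bigr]
\]
follows because $\alpha_s,\beta_t,\gamma_u$ depend on distinct variables and all cross-terms vanish. Combined with $K\ge L$ pointwise this only gives $\scriptd_1\le\tfrac14\sum\int(\alpha-a)^2$, the inequality in the \emph{wrong} direction for stability. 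Reversing it requires showing that the positive excess $\iiint(K-L)$, supported where $(\alpha,\beta,\gamma)$ is non-admissible or $\alpha+\beta+\gamma>2$, is itself controlled by $\scriptd_1$; the $\eta$-hypotheses $a+b+c\le 2-\eta$ and $\min(a,b,c)\ge\eta$ are precisely what prevent the distributions $\alpha_s,\beta_t,\gamma_u$ from dumping mass into those "bad" regions without paying a proportional amount in $\scriptd_1$.

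Given the variance bound, I pick $(s_0,t_0,u_0)$ such that $\sigma(s_0,t_0,u_0)\le 2\scriptd$ (possible since $\iiint\sigma\le\scriptd$) and $(\alpha_{s_0},\beta_{t_0},\gamma_{u_0})$ is $(\eta/4)$-strictly admissible and $(\eta/4)$-bounded (possible because the previous step produces a positive-$\eta$-measure subset of $[0,1]^3$ on which $(\alpha_s,\beta_t,\gamma_u)$ is within $\boldC\scriptd^{1/2}$ of $(a,b,c)$). Theorem~\ref{thm:RSstability} applied to $(F_{s_0},G_{t_0},H_{u_0})$ yields a compatibly centered parallel triple $(\scriptb_f,\scriptb_g,\scriptb_h)$ of rank-one Bohr sets with $\mu(F_{s_0}\symdif\scriptb_f)\le\boldC\,\scriptd^{1/2}$, and analogously for $g,h$. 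The conclusion follows from the triangle inequality
\[
\|f-\one_{\scriptb_f}\|_{L^1(G,\mu)}\le\|f-\one_{F_{s_0}}\|_{L^1}+\mu(F_{s_0}\symdif\scriptb_f),
\]
combined with the layer-cake bound $\|f-\one_{F_{s_0}}\|_{L^1}\le\int_0^1|\alpha_s-\alpha_{s_0}|\,ds\le\|\alpha-a\|_{L^1(0,1)}+|\alpha_{s_0}-a|\le\boldC\,\scriptd^{1/2}$, where the $L^1$ bound on $\alpha-a$ comes from Cauchy-Schwarz applied to the variance estimate of the previous step, and $s_0$ is chosen so that $|\alpha_{s_0}-a|$ is controlled by the same quantity.

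The main obstacle is the stability step on $\torus$: the natural Jensen-type identity gives the inequality in the wrong direction, and obtaining the reverse with an $\eta$-quantitative constant requires careful control of the positive excess $K-L$ on the non-admissible and wraparound parts of $[0,1]^3$, exploiting $\eta$-boundedness to prevent cancellation between variance-producing and wraparound-producing perturbations of $f^\star, g^\star, h^\star$.
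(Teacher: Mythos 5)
Your proposal takes a genuinely different route from the paper's. The paper approximates $f,g,h$ by indicator functions $\one_A,\one_B,\one_C$ of sets with the right Haar measures (using $L^1$ control of $h^\star-\one_\cstar$ coming from the chain $\langle f*g,h\rangle_G\le\langle\one_\astar*\one_\bstar,h^\star\rangle_\torus$ and the kernel estimate of Claim~\ref{eq:control on norms}), applies Theorem~\ref{thm:RSstability} to $(A,B,C)$ — whose deficit is only controlled by $\bC M\scriptd^{1/2}$, yielding a weaker $\bC\scriptd^{1/4}$ bound — and then upgrades to $\bC\scriptd^{1/2}$ with the perturbative Lemma~\ref{lemma:perturbative_relaxed}. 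You instead work directly with level set triples $(F_{s_0},G_{t_0},H_{u_0})$, whose pointwise Riesz--Sobolev deficit $\sigma(s_0,t_0,u_0)$ you force to be $\le\bC\scriptd$ by a Markov selection, so Theorem~\ref{thm:RSstability} delivers the $\bC\scriptd^{1/2}$ bound in one step. This avoids the $\scriptd^{1/4}$ loss and the final perturbative pass, and the decomposition $\scriptd=\scriptd_1+\scriptd_2$ with both summands nonnegative is correct and is what makes the Markov argument possible. That is a real simplification over the paper.

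The ``main obstacle'' you identify is real in the sense that you have not closed it, but it is addressable — and not by the route you suggest. The identity $\scriptd_1=\tfrac14\sum\text{var}-\iiint(K-L)$ means that bounding $\sum\text{var}$ by $\scriptd_1$ is \emph{exactly equivalent} to bounding $\iiint(K-L)$ by a multiple of $\scriptd_1$, so attacking the latter is circular. What does work is the direct kernel argument underlying Claim~\ref{eq:control on norms}. From $\langle f^\star*g^\star,h^\star\rangle\le\langle f^\star,\one_\bstar*\one_\cstar\rangle$ (Lemma~\ref{hardy-littlewood type} applied twice) one gets
\[
\scriptd_1\ge\langle\one_\astar-f^\star,\ \one_\bstar*\one_\cstar-\gamma\rangle
=\int_\torus|\one_\astar-f^\star|\cdot|K|\,dm,
\]
with $\gamma=\one_\bstar*\one_\cstar(a/2)$, $K=\one_\bstar*\one_\cstar-\gamma$, and $|K(y)|$ bounded below by $\min\bigl(|y-a/2|,\tfrac\eta2\max(a,b,c)\bigr)$. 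On the other hand a change of variables (identifying $\alpha_s$ with the inverse distribution function of $f^\star$ on $[0,\tfrac12]$ and applying Fubini) gives the exact identity
\[
\int_0^1(\alpha_s-a)^2\,ds=8\int_0^{1/2}|y-\tfrac a2|\cdot|\one_\astar-f^\star|(y)\,dy,
\]
so the variance is precisely the object the kernel estimate controls. Using $\eta$-boundedness ($\min(a,b,c)\ge\eta$) to handle the region where $|K|$ is merely bounded below by $\tfrac\eta2\max(a,b,c)$, one obtains $\int(\alpha_s-a)^2\,ds\le\bC_\eta\scriptd_1$, confirming your $L^2$ claim. More simply still, your argument only needs the $L^1$ bound $\int|\alpha_s-a|\,ds=\|\one_\astar-f^\star\|_{L^1}\le\bC\scriptd^{1/2}$: Markov with threshold $\lambda\sim\scriptd^{1/2}$ still shrinks the bad $s$-set to measure $\le\tfrac1{12}$, and $\|f-\one_{F_{s_0}}\|_{L^1}\le\int|\alpha_s-a|\,ds+|\alpha_{s_0}-a|\le\bC\scriptd^{1/2}$ follows. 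This $L^1$ estimate is what the paper itself proves (via $\int|x|\psi\,dx\ge\tfrac14\|\psi\|_1^2$ for $0\le\psi\le1$) and is strictly easier than the variance bound. Once either bound is in place, the remainder of your argument goes through.
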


Underlying our analysis are analogous results concerning 
Kneser's inequality \eqref{ineq:kneser}.
Continue to consider 
any compact connected Abelian topological group $G$,
equipped with Haar measure $\mu$. 
If $A,B\subset G$ are measurable sets that satisfy $\mu(A)+\mu(B)<\mu(G)$ and
$\min(\mu(A),\mu(B))>0$, 
then according to a theorem of Kneser \cite{kneser}, 
$\mu_*(A+B)=\mu(A)+\mu(B)$ if and only if 
there exists 
a pair of parallel rank one Bohr sets 
satisfying $A\subset\scriptb_A$, $B\subset\scriptb_B$,
and $\mu(\scriptb_A\setminus A) = \mu(\scriptb_B\setminus B)=0$.
For compact Abelian groups that are not necessarily connected, matters
are more complicated; see for instance \cite{griesmer2014}.
Moreover, Tao \cite{taokneser} and Griesmer \cite{griesmer} have proved associated stability,
or quantitative uniqueness, theorems.
Most relevant to our considerations is this result from \cite{griesmer}:
For every $\eps,\eta>0$ there exists $\delta>0$ such that
if $A,B\subset G$ are measurable sets satisfying
the auxiliary hypotheses
$\mu(A)\ge \eta\mu(G)$, $\mu(B)\ge \eta\mu(G)$, $\mu(A)+\mu(B)\le(1-\eta)\mu(G)$ and
the main hypothesis $\mu_*(A+B)\le \mu(A)+\mu(B)+\delta\mu(G)$,
then there exists a pair of parallel rank one Bohr sets $(\scriptb_A,\scriptb_B)$ satisfying
$A\subset\scriptb_A$,
$B\subset\scriptb_B$,
and 
\[\mu(\scriptb_A\setminus A) + \mu(\scriptb_B\setminus B)<\eps\mu(G).\] 
This is the result required for our analysis on general compact connected groups. 
We also prove a more quantitative version, Theorem~\ref{thm:kneserstability} below. 

Both \cite{taokneser} and \cite{griesmer} 
extend this result by weakening the hypothesis to one 
which involves an upper bound only on the Haar measure of 
$\{x\in A+B: \one_A*\one_B(x)\ge\rho\}$ for sufficiently small $\rho>0$.
Our development does not rely on this extension.

A more quantitative stability theorem for sumsets
is the following.

\begin{theorem} \label{thm:kneserstability}
For each $\eta,\eta'>0$ there exist $\delta_0>0$
and $\boldC<\infty$ with the following property.
Let $G$ be any compact connected Abelian topological group 
equipped with normalized Haar measure $\mu$.
Let $A,B\subset G$ be a pair of measurable sets satisfying
$\min(\mu(A),\mu(B)) \ge \eta'$
and $\mu(A)+\mu(B)\le 1-\eta$.
If  $\mu(A+_0 B)\le\mu(A)+\mu(B)+\delta\min(\mu(A),\mu(B))$
and $\delta\le\delta_0$, 
then there exists a pair of parallel rank one Bohr sets
$(\scriptb_A,\scriptb_B)$ such that
$A\subset\scriptb_A$, $B\subset\scriptb_B$, and
\begin{equation}
\mu(\scriptb_A\setminus A) + \mu(\scriptb_B\setminus B) \le \boldC \delta \min(\mu(A),\mu(B)).
\end{equation}
\end{theorem}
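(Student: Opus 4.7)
The plan is to bootstrap from the qualitative stability result of Griesmer cited immediately before the statement, which produces parallel Bohr sets arbitrarily close to $A,B$ but without an effective rate. The key idea is then to use the homomorphism $\phi:G\to\torus$ producing those Bohr sets to descend to a one-dimensional problem on $\torus$, where a sharp linear-in-$\delta$ bound is available because the extremizers of Kneser's inequality on the circle—closed intervals—form a rigid one-parameter family.

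\textbf{Descent to $\torus$.} Fix an auxiliary threshold $\eps=\eps(\eta,\eta')>0$, to be chosen small later. Applying Griesmer's qualitative result, obtain a continuous homomorphism $\phi:G\to\torus$ and closed intervals $I_A^{(0)},I_B^{(0)}\subset\torus$ with $A\subset\phi^{-1}(I_A^{(0)})$, $B\subset\phi^{-1}(I_B^{(0)})$, and excess measures below $\eps$. Let $\mu_K$ be normalized Haar measure on $K=\ker\phi$, and define the fiberwise densities $f(t)=\mu_K\bigl(A\cap\phi^{-1}(t)\bigr)$ and $g(t)=\mu_K\bigl(B\cap\phi^{-1}(t)\bigr)$ on $\torus$. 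These satisfy $0\le f,g\le 1$ with $\int_\torus f\,dm=\mu(A)$ and $\int_\torus g\,dm=\mu(B)$. A Fubini computation gives $(f*g)(t)=\int_{\phi^{-1}(t)}\one_A*\one_B\,d\mu_K$, so $(f*g)(t)>0$ forces $\phi^{-1}(t)$ to meet the open set $A+_0B$, yielding
\[
m\bigl(\{t\in\torus:(f*g)(t)>0\}\bigr)\le \mu(A+_0B)\le \mu(A)+\mu(B)+\delta\min(\mu(A),\mu(B)).
\]

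\textbf{One-dimensional stability with linear rate.} The problem is now to show that for $f,g:\torus\to[0,1]$ with the admissibility and boundedness inherited from $(\mu(A),\mu(B))$ and with $m(\{f*g>0\})\le \int f+\int g+\delta'$, there exist intervals $I_f,I_g\subset\torus$ of the correct lengths satisfying
\[
\|f-\one_{I_f}\|_{L^1(\torus)}+\|g-\one_{I_g}\|_{L^1(\torus)}=O(\delta').
\]
I would apply a sharp set-level stability version of Kneser on $\torus$—the continuous analogue of Freiman's $3k-4$ theorem, which yields linear bounds in dimension one because only intervals are extremizers—to each pair of super-level sets $\{f>s\},\{g>s\}$, exploiting the elementary inclusion $\{f*g>0\}\supseteq\{f>s\}+_0\{g>s\}$. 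A layer-cake integration in $s$ converts this uniform linear set bound into a function-level $L^1$ bound of the same order. Pulling back, $\scriptb_A=\phi^{-1}(I_f)$ and $\scriptb_B=\phi^{-1}(I_g)$ form a compatibly parallel pair with $\mu(A\symdif\scriptb_A)+\mu(B\symdif\scriptb_B)=O(\delta)$; enlarging each interval by a further $O(\delta)$ secures the containments $A\subset\scriptb_A$, $B\subset\scriptb_B$ required by the conclusion.

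\textbf{Main obstacle.} The critical difficulty is obtaining the linear rate on $\torus$ rather than only $\sqrt\delta$. Energy-increment or $L^2$-based stability arguments characteristically lose a square root, so I would instead rely on the geometric rigidity of intervals as the unique Kneser extremizers on $\torus$: any measurable $E\subset\torus$ with $|E+_0E|\le 2|E|+\varrho$ lies within $O(\varrho)$ of a translate of an interval of length $|E|$, and the analogous two-set statement for $\{f>s\},\{g>s\}$ likewise enjoys a linear bound independent of $s$ (away from the boundary values $s=0,1$). Pushing this through the layer-cake requires careful uniformity in $s$, particularly near $s=0$ where the super-level sets grow to full size. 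A secondary technical point, handled by a density-point argument in the fibers, is the verification that $\{f*g>0\}\subseteq\phi(A+_0B)$ modulo $m$-null sets, which prevents loss of information to $\mu_K$-null sets within individual fibers of $\phi$.
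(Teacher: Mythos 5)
Your high-level plan --- bootstrap from Griesmer's qualitative stability to get a homomorphism $\phi\colon G\to\torus$, then descend to $\torus$ via the fiberwise densities $f(t)=\nu(A\cap\phi^{-1}(t))$ and $g$ --- is in the same spirit as the paper's Proposition~\ref{prop:refineTao}. But the pivotal inequality you assert in the descent step, $m(\{t:(f*g)(t)>0\})\le\mu(A+_0B)$, is false in general: modulo null sets $\{f*g>0\}=\{f>0\}+_0\{g>0\}\supset\phi(A+_0B)$, while Fubini gives $\mu(A+_0B)=\int_\torus\nu\bigl((A+_0B)\cap\phi^{-1}(t)\bigr)\,dm(t)\le m(\phi(A+_0B))$, so the inequalities actually run the other way: $m(\{f*g>0\})\ge\mu(A+_0B)$, not $\le$. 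To reverse them one would need the fibers of $A+_0B$ to be $\nu$-full over essentially all of $\{f*g>0\}$, which requires already knowing that $f,g$ are close to $\{0,1\}$-valued --- the very conclusion you are trying to establish. As written, this step is circular.

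The paper's Proposition~\ref{prop:refineTao} sidesteps the circularity by working only where fullness is provable a priori: over $\scripta'+\scriptb'$ with $\scripta'=\{t:\nu(A_t)>\tfrac12\}$ and $\scriptb'=\{t:\nu(B_t)>\tfrac12\}$, the pigeonhole $\nu(S)+\nu(T)>\nu(K)\Rightarrow S+T=K$ gives genuinely full fibers of $A+B$, so $|\scripta'+\scriptb'|\le\mu(A+B)$ holds legitimately. It then controls $\rho_A=1-\sup_t\nu(A_t)$ and $|\scripta\setminus\scripta'|$ via the three-way disjointness decomposition $A+B\supset(A_\tau+B)\sqcup(A_-+B_{b_-})\sqcup(A_++B_{b_+})$ around a maximal fiber together with Lemma~\ref{lemma:>onehalf}, applies a continuous Freiman $(3k-4)$-type theorem on $\reals$ to $\scripta',\scriptb'$, and finally upgrades from $\scripta'$ to $\scripta$ in Claim~\ref{claim:fullsets}. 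Your layer-cake proposal runs into the same wall at small thresholds: for $s$ near $0$ the sumsets $\{f>s\}+_0\{g>s\}$ inherit no useful bound from $\mu(A+_0B)$ because their fibers need not be $\nu$-full, so the uniformity in $s$ that the layer-cake integration requires is not available. Relatedly, a ``relaxed'' Kneser stability for $[0,1]$-valued functions on $\torus$ with a linear rate, which your outline treats as known, is itself not a standard result and would need a separate proof.
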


Candela and de~Roton \cite{candela+deroton}
have proved a theorem of this type for the special case $G=\torus$
in which the relationship between $m(\scriptb_A\setminus  A)$
and $m_*(A+B)-m(A)-m(B)$ is made quite precise, for an interesting range
of parameters.
We believe that their theorem extends to arbitrary compact connected Abelian groups, with the same relationship between parameters.

\medskip \noindent
{\bf Organization of the paper.}\ 
We begin by reviewing in \S\ref{section:Inter} 
an inequality of Tao \cite{taokneser},
stating several equivalent reformulations and establishing a refinement.
This refinement is used in \S\ref{section:RSproved}
to prove the Riesz-Sobolev--type inequality of Theorem~\ref{thm:RS}. The defect 
$\scriptd(A,B,C) = \langle \one_\astar*\one_\bstar,\one_\cstar\rangle 
- \langle \one_A*\one_B,\one_C\rangle$
and a related defect $\scriptd'(A,B,\tau)$, 
in terms of which much of our analysis is naturally phrased,
are introduced in \S\ref{section:RSproved}.

In \S\ref{section:twokey} we
discuss two key principles, submodularity and complementation. 
At the heart of our analysis of stability for the Riesz-Sobolev-type inequality \eqref{ineq:RS}
is a connection, developed in \cite{christRS} for $G=\reals$,
between (near) equality in the Riesz-Sobolev equality and
(near) equality in the sumset inequality
for certain associated sets.
This connection only applies directly in the case in which
two of the three sets $A,B,C$ have equal measures.
\S\ref{section:alink} reviews this connection 
and adapts it to general connected compact Abelian groups. 
\S\ref{section:towards} begins a reduction of the general case to the special case
of two sets of equal measures. This reduction does not
proceed in the same way as the corresponding reduction in \cite{christRS} for the Euclidean
case.

\S\ref{section:RSperturbative} establishes the conclusion of 
Theorem~\ref{thm:RSstability} in its quantitative form,
for the perturbative regime in which
$(A,B,C)$ is assumed to be within a certain threshold distance of 
a compatibly centered parallel triple of rank one Bohr sets. 
\S\ref{section:sumsetperturbative}
digresses to establish quantitative stability for
Kneser's inequality  in the perturbative regime in which $A,B$ are assumed
to be moderately close to a pair of parallel rank one Bohr sets.
These perturbative results are elements of our more general analysis of stability
in the absence of any perturbative hypotheses.

Theorem~\ref{thm:relaxed} and Theorem~\ref{thm:relaxedstability}
concern relaxed variants of the Riesz-Sobolev-type inequality
and its companion inverse stability theorem.
They are proved in \S\ref{section:relax} and \S\ref{section:relax2},
respectively. 

\S\ref{section:specialcase} and \S\ref{section:When} 
analyze the special case in which the defect $\scriptd(A,B,C)$
is small and one of the three sets is well
approximated by a rank one Bohr set. 
\S\ref{section:specialcase} treats the sub-subcase in which $G=\torus$ and $C$ is an interval. 
In \S\ref{section:When}, we reduce matters
from general groups $G$ to $\torus$.
The situation that arises on $\torus$ in this way
belongs to the more general framework of Theorems~\ref{thm:relaxed} and \ref{thm:relaxedstability}.
That framework comes into play at this juncture.
The proof of Theorem~\ref{thm:RSstability} is completed in \S\ref{section:finishRSstability}.

Another thread is taken up in \S\ref{section:flow} and \S\ref{section:finishingtorus},
which are concerned with the important group $G=\torus$.
This thread is founded on the monotonicity
of a normalized version of the functional
$\int_C \one_A*\one_B\,dm$
under a certain continuous deformation of $A,B,C$.
This deformation is developed in \S\ref{section:flow}.
As an application, 
in \S\ref{section:finishingtorus} we 
establish Theorem~\ref{thm:RS_Tstability}, a refinement for $G=\torus$ of
Theorem~\ref{thm:RSstability} which in an appropriate sense
eliminates the dependence of the conclusion 
on a lower bound for $\min(m(A),m(B),m(C))$.

One could alternatively bypass the 
analysis in \S\ref{section:specialcase} of the situation 
in which $G=\torus$ and $C$ is an interval, 
by invoking the theory for $\torus$ established in \S\ref{section:finishingtorus}.

\begin{notation}
For the sake of economy, we will often refer to \eqref{ineq:RS}
as the Riesz-Sobolev inequality, or the Riesz-Sobolev inequality for $G$.
Throughout the remainder of the paper,
$G$ denotes a compact connected Abelian topological group 
equipped with a complete Haar measure $\mu$
that is normalized in the sense that $\mu(G)=1$.
This is a hypothesis of all lemmas and propositions, though
it is not included in their statements.
It is implicitly asserted that all constants
in upper and lower bounds in theorems, propositions, lemmas, 
and inequalities are independent of $G$,
except when the special case $G=\torus$ is explicitly indicated.

$m$ denotes Lebesgue measure for $\torus = \reals/\integers$. 
$m(E)$ is alternatively denoted by $|E|$ in some parts of the discussion.
$C$ with no subscript is used to denote a subset of $G$, rather than a constant.
$c$, $c'$, and $\bC$ denote unspecified positive finite constants, 
whose values may change freely from one occurrence to the next. 
$C_n$ denotes a constant that is fixed for a relatively short portion
of the discussion.

It will be convenient in the analysis of the functional
$\langle \one_{E_1}*\one_{E_2},\one_{E_3}\rangle$ 
to be able to freely interchange the sets $E_j$.
For that purpose, we work with a more symmetric variant.
For measurable functions $f_j:G\to[0,\infty)$,
\begin{equation}
\scriptt_G(f_1,f_2,f_3) = \iint_{x+y+z=0} f_1(x)f_2(y)f_3(z)\,d\lambda(x,y,z)
\end{equation}
where $\lambda$ is the measure on $\{(x,y,z)\in G^3: x+y+z=0\}$
defined by pulling back the measure $\mu\times\mu$ on $G\times G$
via the mapping $(x,y,z)\mapsto (x,y)$. This definition of $\lambda$ is invariant
with respect to permutation of the three coordinates.
Equivalently,
\[ \scriptt_G(\bff)
= \scriptt_G(f_1,f_2,f_3) = \iint_{G^2} f_1(x)f_2(y)f_3(-x-y)\,d\mu(x)\,d\mu(y).  \]
For a three-tuple $\bE = (E_j: j\in\{1,2,3\})$ of sets,
we write $\scriptt_G(\bE) = \scriptt_G(\bff)$
with $f_j=\one_{E_j}$.

We sometimes work simultaneously on a general group $G$ and on $\torus$,
and write $\scriptt_G$ and/or $\scriptt_\torus$ to distinguish between
the functionals associated to the two groups. Defining 
$$\overline{\scriptd}(A,B,C):=\scriptt_\torus(A^\star,B^\star,C^\star)-\scriptt_G(A,B,C),
$$
one has $\overline{\scriptd}(A,B,C)=\scriptd(A,B,-C)$.
\end{notation}

\medskip

The authors are grateful to Rupert Frank, who kindly called their attention to a slip 
in the proof of Theorem~\ref{thm:relaxedstability} in an earlier draft. A version of Theorem \ref{thm:relaxedstability} is proved by Frank and Lieb in the Euclidean setting in arbitrary dimension \cite{franklieb}, in the special case where two of the functions are identical and the third is the indicator function of a ball.

\section{Refinement of a related inequality} \label{section:Inter}

In this section we review an inequality of Tao \cite{taokneser},
discuss multiple equivalent reformulations,
and formulate and prove a sharper inequality,
from which the Riesz-Sobolev inequality \eqref{ineq:RS} for $G$ 
will subsequently be derived.

The inequality of \cite{taokneser} states that
for any compact connected Abelian group $G$ with normalized Haar measure $\mu$,
for any measurable $A,B\subset G$,
\begin{equation}\label{ineq:taokneser1} 
\int_G \min(\one_A*\one_B,\tau)\,d\mu \ge \tau \min(\mu(A)+\mu(B)-\tau,1)
\ \forall\, 0\le \tau \le \max(\mu(A),\mu(B)).
\end{equation}
The inequality is trivial in the range
$\min(\mu(A),\mu(B))\le\tau\le \max(\mu(A),\mu(B))$,
in the sense that for arbitrary $A,B$ equality holds 
when $\tau$ is equal to the minimum or maximum, 
while for $\tau$ in the open interval $\big(\min(\mu(A),\mu(B)),\,\max(\mu(A),\mu(B))\big)$,
the left-hand side is equal to $\mu(A)\cdot\mu(B)$ 
and \eqref{ineq:taokneser1} holds with strict inequality.
\eqref{ineq:taokneser1} also holds with equality whenever $\mu(A)+\mu(B)\ge 1+\tau$,
for in that case,
\[\one_A*\one_B(x) = \mu(A \cap (x-B)) \ge \mu(A)+\mu(B)-1\ge \tau\] for every $x\in G$,
so both the left-- and right--hand sides are equal to $\tau$.
\eqref{ineq:taokneser1} never holds when $\tau > \max(\mu(A),\mu(B))$.

If $G=\torus$ and $A,B\subset\torus$ are intervals centered at $0$ then equality holds
in \eqref{ineq:taokneser1} whenever $\tau\le\min(\mu(A),\mu(B))$. Therefore this inequality
can be equivalently restated as
\begin{multline}\label{ineq:taokneser2} 
\int_G \min(\one_A*\one_B,\tau)\,d\mu \ge 
\int_\torus \min(\one_{\astar}*\one_{\bstar},\tau)\,dm
\ \forall\, 0\le \tau\le \min(\mu(A),\mu(B)).
\end{multline}

By virtue of the identities 
\begin{equation} \label{identity} \int_G \one_A*\one_B\,d\mu = \mu(A)\cdot\mu(B) \end{equation}
and $\max(f,g)+\min(f,g) = f+g$,
\eqref{ineq:taokneser2} can be equivalently reformulated as
\begin{multline}
\int_G \max(\one_A*\one_B-\tau,0)\,d\mu \le (\mu(A)-\tau)(\mu(B)-\tau)
\\
\ \forall\, \tau\in[\mu(A)+\mu(B)-1,\, \min(\mu(A),\mu(B))]
\label{ineq:taokneser3}
\end{multline}
with $\int_G \max(\one_A*\one_B-\tau,0)\,d\mu = \mu(A)\mu(B)-\tau$
for all $\tau\in[0,\mu(A)+\mu(B)-1]$. 
Likewise, \eqref{ineq:taokneser2} can be reformulated as
\begin{multline}
\int_G \max(\one_A*\one_B-\tau,0)\,d\mu 
\le 
\int_\torus \max(\one_{\astar}*\one_{\bstar}-\tau,0)\,dm
\\
\forall\,0\le \tau\le \min(\mu(A),\mu(B)).
\label{ineq:taokneser4}
\end{multline}
These four inequalities are equivalent in the sense that any one of them follows from 
any other one by simple manipulations augmented by 
the above discussion of the cases in which $\min(\mu(A),\mu(B))\le \tau\le \max(\mu(A),\mu(B))$
or $\tau\le \mu(A)+\mu(B)-1$.

The inequalities \eqref{ineq:taokneser1} through \eqref{ineq:taokneser4} can be reformulated
in terms of superlevel sets and associated distribution functions.
The following notation \eqref{Superlevel} will be used throughout the paper.

\begin{definition}
For measurable sets $A,B\subset G$ and for $t\ge 0$,
the associated superlevel set is
\begin{equation} \label{Superlevel}
S_{A,B}(t) = \{x\in G: \one_A*\one_B(x)>t\}.
\end{equation}
\end{definition}
Superlevel sets appear in fundamental formulae for the functionals of interest here:
\begin{align}
&\int_G \one_A*\one_B\,d\mu = \mu(A)\cdot\mu(B) = \int_0^\infty \mu(S_{A,B}(t))\,dt,
\\
&\int_{S_{A,B}(\tau)}\one_A*\one_B\,d\mu
= \tau \mu(S_{A,B}(\tau)) + 
\int_\tau^\infty \mu(S_{A,B}(t))\,dt,
\\
&\int_G \max(\one_A*\one_B- \tau,0)\,d\mu
= \int_\tau^\infty \mu(S_{A,B}(t))\,dt.
\end{align}

Thus \eqref{ineq:taokneser3} can be written
\begin{multline} \label{ineq:taokneser5}
\int_\tau^\infty \mu(S_{A,B}(t))\,dt
\le 
(\mu(A)-\tau)(\mu(B)-\tau)
\\
\ \forall\, \tau\in[\mu(A)+\mu(B)-1,\, \min(\mu(A),\mu(B))].
\end{multline}

The next result sharpens \eqref{ineq:taokneser5} and will be the basis of
our proof of Theorem~\ref{thm:RS}.

\begin{theorem} \label{thm:sharpKPRGT}
Let $G$ be a compact connected Abelian topological group,
equipped with normalized Haar measure $\mu$.
Suppose that
\begin{equation} \label{tauhypothesis}
0\le\tau\le\min (\mu(A),\mu(B))
\end{equation} 
and that 
\begin{equation} \label{SABhypothesis}
\mu(A)+\mu(B) + \mu(S_{A,B}(\tau))\le 2.
\end{equation}
Let $\sigma = \tfrac12 (\mu(A)+\mu(B)-\mu(S_{A,B}(\tau)))$.
Then
\begin{equation} \label{eq:sharpKPRGTconclusion}
\int_\tau^\infty \mu(S_{A,B}(t))\,dt 
\le (\mu(A)-\tau)(\mu(B)-\tau)-h,
\end{equation}
where
\begin{equation}
h=
\left\{
\begin{alignedat}{2}
&(\sigma-\tau)^2 
&&\text{if $\sigma\le\min(\mu(A),\mu(B))$}
\\ 
&(\min(\mu(A),\mu(B))-\tau)^2
\qquad &&\text{if $\sigma > \min(\mu(A),\mu(B))$.}
\end{alignedat} \right.
\end{equation}

In particular, if 
$\mu(A)+\mu(B)\le 1+\tau$
then 
\begin{equation}
\int_\tau^\infty \mu(S_{A,B}(t))\,dt 
\le
\int_\tau^\infty m(S_{\astar,\bstar}(t))\,dt -h.
\end{equation}
\end{theorem}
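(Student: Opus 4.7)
\medskip
\noindent\emph{Proof plan.} The strategy is to apply Tao's inequality \eqref{ineq:taokneser5} at an auxiliary level $s$, combine it with the monotonicity of the distribution function $g(t) := \mu(S_{A,B}(t))$ through a convexity / tangent-line argument, and then optimize over $s$. Set $T(s) := \int_s^\infty g(t)\,dt = \int_G (\one_A*\one_B - s)_+\,d\mu$. Since $T'(s) = -g(s)$ and $g$ is non-increasing, $T$ is convex on $[0,\infty)$, and $-g(\tau)$ belongs to the subgradient of $T$ at $\tau$. This yields the tangent-line bound $T(\tau) \le T(s) + g(\tau)(s-\tau)$ for every $s$. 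Coupling this with Tao's inequality \eqref{ineq:taokneser5}, which gives $T(s) \le (\mu(A)-s)(\mu(B)-s)$ on the interval $I := [\max(0,\,\mu(A)+\mu(B)-1),\,\min(\mu(A),\mu(B))]$, one obtains
\[
T(\tau) \le (\mu(A)-s)(\mu(B)-s) + g(\tau)(s-\tau) \qquad \text{for every } s \in I.
\]

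The right-hand side is a convex quadratic in $s$ whose unique critical point is $s = \sigma = \tfrac12(\mu(A)+\mu(B)-g(\tau))$. Hypothesis \eqref{SABhypothesis} is exactly $\sigma \ge \mu(A)+\mu(B)-1$, so the lower endpoint of $I$ is automatically respected. In the first case $\sigma \le \min(\mu(A),\mu(B))$, the critical point lies in $I$; substituting $s = \sigma$ and expanding produces the algebraic identity
\[
(\mu(A)-\sigma)(\mu(B)-\sigma) + g(\tau)(\sigma-\tau) = (\mu(A)-\tau)(\mu(B)-\tau) - (\sigma-\tau)^2,
\]
(using $g(\tau) = \mu(A)+\mu(B)-2\sigma$), which is the refined bound with $h = (\sigma-\tau)^2$. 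In the second case $\sigma > \min(\mu(A),\mu(B))$, substitute the boundary $s = \min(\mu(A),\mu(B))$; the factor $(\mu(A)-s)(\mu(B)-s)$ vanishes, and the inequality $\sigma \ge \min(\mu(A),\mu(B))$ converts $g(\tau)(\min(\mu(A),\mu(B))-\tau) = (\mu(A)+\mu(B)-2\sigma)(\min(\mu(A),\mu(B))-\tau)$ into the desired upper bound $(\mu(A)-\tau)(\mu(B)-\tau) - (\min(\mu(A),\mu(B))-\tau)^2$.

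The ``in particular'' conclusion is a direct corollary: when $\mu(A)+\mu(B) \le 1+\tau$, the convolution $\one_\astar*\one_\bstar$ on $\torus$ is a centered trapezoid whose superlevel sets satisfy $m(S_{\astar,\bstar}(t)) = \mu(A)+\mu(B)-2t$ for $t$ in the relevant range, so $\int_\tau^\infty m(S_{\astar,\bstar}(t))\,dt = (\mu(A)-\tau)(\mu(B)-\tau)$ by direct integration, giving the stated form. The main obstacle in the argument is the constrained optimization: one must verify that the critical point $s=\sigma$ actually lies in the admissible interval $I$ in the first case, and carry out the boundary calculation cleanly in the second. Everything else reduces to the algebraic identity for $F(\sigma)$ once the convexity-plus-Tao inequality has been set up.
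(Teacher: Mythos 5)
Your argument is essentially the paper's own, repackaged more cleanly. The paper splits into three cases ($\sigma\le\tau$, $\tau\le\sigma\le\min(\mu(A),\mu(B))$, and $\sigma>\min(\mu(A),\mu(B))$) and in each one rewrites $\int_\tau^\infty\mu(S(t))\,dt$ as $\int_\sigma^\infty \mp \int$, bounds the tail by \eqref{ineq:taokneser5}, and bounds the short integral by monotonicity of $t\mapsto\mu(S(t))$. Your convexity formulation — $T$ convex, $-\mu(S(\tau))$ a subgradient, hence $T(\tau)\le T(s)+\mu(S(\tau))(s-\tau)$ for every $s$ — is precisely that monotonicity bound, but it merges the paper's first two cases into one inequality, and minimizing the resulting quadratic exposes $\sigma$ as the canonical choice of $s$ rather than having it emerge from a change of variables. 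Your boundary case $s=\min(\mu(A),\mu(B))$ and the final ``in particular'' computation both match the paper. This is a genuine (if modest) simplification.

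One point deserves flagging, though it is inherited from the paper's own proof rather than introduced by you. You correctly set $I=[\max(0,\mu(A)+\mu(B)-1),\min(\mu(A),\mu(B))]$, since \eqref{ineq:taokneser1}–\eqref{ineq:taokneser5} are only valid for nonnegative levels, but you then claim that \eqref{SABhypothesis} ``respects the lower endpoint of $I$'' — which only guarantees $\sigma\ge\mu(A)+\mu(B)-1$, not $\sigma\ge 0$. These are not the same when $\mu(A)+\mu(B)<1$, and $\sigma<0$ can actually occur (take $A=B$ a union of a few short, generically placed intervals in $\torus$ and $\tau=0$: then $\mu(S_{A,B}(0))=\mu(A+_0B)$ can far exceed $\mu(A)+\mu(B)$). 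In that regime Tao's bound is unavailable at $s=\sigma$, and indeed the stated conclusion \eqref{eq:sharpKPRGTconclusion} with $h=(\sigma-\tau)^2$ can fail. The paper's proof applies \eqref{ineq:taokneser5} at $\sigma$ with the identical unverified assumption, and in every place the theorem is subsequently invoked (Lemma~\ref{lemma:RSforsupers}, Corollary~\ref{cor:KTsharpened}) one has $\mu(S_{A,B}(\tau))\le\mu(A)+\mu(B)$ and hence $\sigma\ge 0$, so the usable content of the theorem is unaffected. Still, since you took the care to write $I$ with the $\max(0,\cdot)$, it would be worth adding the explicit check that $\sigma\ge 0$ (or, where it fails, noting that the conclusion reduces to the already-known $h=0$ bound after clipping $\sigma$ at $0$).
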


The form of the right-hand side of \eqref{eq:sharpKPRGTconclusion}
is unnatural when $\mu(A)+\mu(B)>1+\tau$,
in the sense that 
$\int_\tau^\infty m(S_{\astar,\bstar}(t))\,dt=\mu(A)\mu(B)-\tau$ 
is strictly smaller than $(\mu(A)-\tau)(\mu(B)-\tau)$ 
for such values of $\tau$.

\begin{proof}[Proof of Theorem~~\ref{thm:sharpKPRGT}]
Write $S(t)=S_{A,B}(t)$ to simplify notation.
With $\sigma$ as defined above, the hypothesis 
$\mu(S(\tau)) +\mu(A) + \mu(B)\le 2$
can be equivalently written as
$\mu(A)+\mu(B)-1\le\sigma$.
This is one of two conditions
needed to apply \eqref{ineq:taokneser5}
to $\int_\sigma^\infty \mu(S(t))\,dt$.
The second condition is that $\sigma\le\min(\mu(A),\mu(B))$, 
which need not hold under the hypotheses of Theorem~\ref{thm:sharpKPRGT}, in general.
The proof is consequently organized into cases.

If $\sigma\le\tau$ 
then indeed $\sigma\le\min(\mu(A),\mu(B))$, so \eqref{ineq:taokneser5} 
may be applied to obtain
\begin{align*}
\int_\tau^\infty \mu(S(t))\,dt
&= 
\int_\sigma^\infty \mu(S(t))\,dt
- \int_\sigma^\tau \mu(S(t))\,dt
\\& 
\le (\mu(A)-\sigma)(\mu(B)-\sigma) - (\tau-\sigma) \mu(S(\tau))
\\& 
= (\mu(A)-\tau)(\mu(B)-\tau) -(\sigma-\tau)^2 
\end{align*}
as follows by expanding $\tau = \sigma - (\tau-\sigma)$
in the product $(\mu(A)-\tau)(\mu(B)-\tau)$
and invoking the relation $\mu(A)+\mu(B) = 2\sigma + \mu(S(\tau))$.

If $\tau\le\sigma$ and if $\sigma$ does satisfy 
$\sigma \le\min(\mu(A),\mu(B))$, then again by \eqref{ineq:taokneser5},
\begin{align*}
\int_\tau^\infty \mu(S(t))\,dt
&= 
\int_\sigma^\infty \mu(S(t))\,dt
+ \int_\tau^\sigma \mu(S(t))\,dt
\\& 
\le (\mu(A)-\sigma)(\mu(B)-\sigma) 
+(\sigma-\tau) \mu(S(\tau))
\end{align*}
which we have already stated to be equal to $(\mu(A)-\tau)(\mu(B)-\tau) -(\sigma-\tau)^2$.

If on the other hand 
$\sigma \ge \min(\mu(A),\mu(B))$
then by permutation invariance, we may assume without loss of generality that $\mu(A)\le\mu(B)$.
Thus $\tfrac12(\mu(A)+\mu(B)-\mu(S(\tau)))=\sigma \ge \mu(A)$,
so $\mu(S(\tau)) \le  \mu(B)-\mu(A)$.
Since $\one_A*\one_B\le \mu(A)$,
\begin{equation*}
\int_\tau^\infty \mu(S(t))\,dt
= \int_\tau^{\mu(A)} \mu(S(t))\,dt
\le 
(\mu(A)-\tau) \mu(S(\tau))
\end{equation*}
since the integrand is a nonincreasing function of $t$.
The right-hand side is
\begin{equation*}
\le (\mu(A)-\tau)(\mu(B)-\mu(A))
= (\mu(A)-\tau)(\mu(B)-\tau) - (\mu(A)-\tau)^2.
\end{equation*}
\end{proof}

\begin{corollary}
Let $G$ be a compact connected Abelian topological group,
equipped with Haar measure $\mu$ satisfying $\mu(G)=1$.
Let $A,B\subset G$ be measurable sets. 
Suppose that
\begin{equation}
\mu(A)+\mu(B)-1 < t < \min(\mu(A),\mu(B)).
\end{equation}
If $(A,B,t)$ achieves equality in \eqref{ineq:taokneser1}
(equivalently in any or all of \eqref{ineq:taokneser2},
\eqref{ineq:taokneser3},
\eqref{ineq:taokneser4}),
then \begin{equation}\label{equalityintaokneser} \mu(S_{A,B}(t)) = \mu(A)+\mu(B) -2t.\end{equation}
\end{corollary}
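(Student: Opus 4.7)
The plan is to rewrite the equality hypothesis in the distribution-function form \eqref{ineq:taokneser5} and then apply Theorem~\ref{thm:sharpKPRGT} at $\tau = t$. Since $t$ lies strictly in $(\mu(A)+\mu(B)-1,\min(\mu(A),\mu(B)))$, the chain of reformulations \eqref{ineq:taokneser1}--\eqref{ineq:taokneser5} shows that equality in \eqref{ineq:taokneser1} at $\tau=t$ is equivalent to
\[
\int_t^\infty \mu(S_{A,B}(s))\,ds \;=\; (\mu(A)-t)(\mu(B)-t).
\]

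To apply Theorem~\ref{thm:sharpKPRGT} I need the auxiliary hypothesis $\mu(A)+\mu(B)+\mu(S_{A,B}(t))\le 2$. If $\mu(A)+\mu(B)\le 1$, this is immediate from $\mu(S_{A,B}(t))\le 1$. Otherwise, writing $A^c:=G\setminus A$ and $B^c:=G\setminus B$, I appeal to the complementation identity
\[
\one_{A^c}*\one_{B^c}(x) \;=\; \one_A*\one_B(x) + 1 - \mu(A) - \mu(B),
\]
which gives $\mu(S_{A,B}(t))=\mu(S_{A^c,B^c}(t'))$ with $t':=t+1-\mu(A)-\mu(B)$. Using $\min(y+c,z)=\min(y,z-c)+c$, one checks that the equality hypothesis of the corollary transfers from $(A,B,t)$ to $(A^c,B^c,t')$; that the range condition $\mu(A^c)+\mu(B^c)-1<t'<\min(\mu(A^c),\mu(B^c))$ holds; and that the target conclusion $\mu(S_{A,B}(t))=\mu(A)+\mu(B)-2t$ is equivalent to the analogous statement for $(A^c,B^c,t')$. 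Since $\mu(A^c)+\mu(B^c)<1$, this reduces the case $\mu(A)+\mu(B)>1$ to the previous case, so we may assume $\mu(A)+\mu(B)\le 1$.

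Theorem~\ref{thm:sharpKPRGT} combined with the displayed equality above then yields
\[
(\mu(A)-t)(\mu(B)-t) \;=\; \int_t^\infty \mu(S_{A,B}(s))\,ds \;\le\; (\mu(A)-t)(\mu(B)-t) - h,
\]
forcing $h\le 0$. Let $\sigma=\tfrac12(\mu(A)+\mu(B)-\mu(S_{A,B}(t)))$. In the first case of Theorem~\ref{thm:sharpKPRGT}, $\sigma\le\min(\mu(A),\mu(B))$ and $h=(\sigma-t)^2\ge 0$; combined with $h\le 0$, this forces $\sigma=t$, which unwinds exactly to $\mu(S_{A,B}(t))=\mu(A)+\mu(B)-2t$, as required. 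In the second case, $\sigma>\min(\mu(A),\mu(B))$ gives $h=(\min(\mu(A),\mu(B))-t)^2$, which is strictly positive since $t<\min(\mu(A),\mu(B))$ by the corollary's hypothesis; this contradicts $h\le 0$, so the second case cannot occur.

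The main delicate point is the complementation reduction to $\mu(A)+\mu(B)\le 1$: one must verify that the range condition, the equality hypothesis, and the target conclusion all transform equivariantly under $(A,B,t)\mapsto(A^c,B^c,t')$. Once that symmetry is in place, the remainder is a direct invocation of Theorem~\ref{thm:sharpKPRGT} together with a short two-case analysis.
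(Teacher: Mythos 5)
Your proof is correct, and it takes a genuinely different route from the paper's at the crux of the argument, namely establishing the auxiliary hypothesis $\mu(A)+\mu(B)+\mu(S_{A,B}(t))\le 2$ needed to invoke Theorem~\ref{thm:sharpKPRGT}. The paper handles this directly: if $\mu(A)+\mu(B)=1+\tau>1$, then $\one_A*\one_B\ge\tau$ identically, so $\mu(S_{A,B}(r))=1$ for $r<\tau$ and $\mu(S_{A,B}(r))\ge\mu(S_{A,B}(t))$ for $r\in[\tau,t]$; if in addition $\mu(S_{A,B}(t))>1-t$, then integrating the distribution function over $[0,t]$ yields $\int_0^t\mu(S_{A,B}(r))\,dr>t(1+\tau-t)$, which contradicts the equality hypothesis written in the form $\int_0^t\mu(S_{A,B}(r))\,dr=t(\mu(A)+\mu(B)-t)$. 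You instead reduce to the case $\mu(A)+\mu(B)\le 1$ (where the auxiliary hypothesis is trivial) via the complementation identity $\one_{A^c}*\one_{B^c}=\one_A*\one_B+(1-\mu(A)-\mu(B))$, which translates superlevel sets, the range condition, the equality hypothesis, and the target conclusion all equivariantly under $(A,B,t)\mapsto(A^c,B^c,t+1-\mu(A)-\mu(B))$. Your reduction is cleaner in that it exploits a structural symmetry (one that the paper itself later codifies, for the Riesz-Sobolev defect $\scriptd$, as the complementation Lemma~\ref{lemma:complementD}); the paper's argument is more self-contained in that it stays entirely within the distribution-function framework and shows that the bad case is outright incompatible with equality, rather than routing around it. Once the auxiliary hypothesis is in place, both proofs conclude identically by reading off $h=0$ in Theorem~\ref{thm:sharpKPRGT}: the strict inequality $t<\min(\mu(A),\mu(B))$ rules out the second branch $h=(\min(\mu(A),\mu(B))-t)^2$, so the first branch forces $\sigma=t$, i.e.\ $\mu(S_{A,B}(t))=\mu(A)+\mu(B)-2t$. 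One small clarification you could add: the reduction step needs $t'>0$, which holds because the strict inequality $t>\mu(A)+\mu(B)-1$ gives $t'=t-(\mu(A)+\mu(B)-1)>0$; you implicitly rely on this when applying the theorem to the complemented triple.
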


We remark that $\mu(A)+\mu(B) -2\tau$ is not an extremal value for $\mu(S_{A,B}(\tau))$
for any single value of $\tau$;
$\mu(S_{A,B}(\tau))$ can in general be either larger, or smaller.

\begin{proof}
If $\mu(A)+\mu(B)+\mu(S_{A,B}(t))\le 2$ then all hypotheses of Theorem~\ref{thm:sharpKPRGT}
are satisfied, and \eqref{equalityintaokneser} follows from its conclusion
since $t$ is strictly less than $\min(\mu(A),\mu(B))$.

We claim that $\mu(S_{A,B}(t))\leq 1-t$, whence
$\mu(A)+\mu(B)+\mu(S_{A,B}(t))\leq 1+t+1-t =2$,
completing the proof of the corollary.
Suppose to the contrary that $\mu(S_{A,B}(t)) > 1-t$. 
Define $\tau\in(0,t)$ by $\mu(A)+\mu(B)=1+\tau$. 

For every $x\in G$, $\one_A*\one_B(x)=\mu\big(A\cap(x-B)\big)\geq \mu(A)+\mu(B)-1=\tau$. Thus, for every $r\in [0,\tau)$, $S_{A,B}(r)=G$, so
\begin{equation}\label{eq:strong}\mu(S_{A,B}(r))=1\text{ for every }r\in [0,\tau).  \end{equation}
For any $r\in [\tau,t]$, $S_{A,B}(r)\supset S_{A,B}(t)$, so
\begin{equation}\label{eq:small level sets}\mu(S_{A,B}(r))
\ge \mu(S_{A,B}(t))
> 1-t\text{ for every }r\in [\tau,t].  \end{equation}
The assumption that $(A,B,t)$ satisfies equality in \eqref{ineq:taokneser3} means that
$$\int_G \min(1_A*1_B,t)\,d\mu =t(\mu(A)+\mu(B)-t)=t(1+\tau-t). $$
Substituting
\[\int_G \min(1_A*1_B,t)\,d\mu
= \int_{0}^{t}\mu(S_{A,B}(r))\,dr\]
in the left-hand side and invoking \eqref{eq:strong} and \eqref{eq:small level sets} gives
\begin{multline*}
t(1+\tau-t)
=\int_{0}^{t}\mu(S_{A,B}(r))\,dr 
=\int_{0}^{\tau}\mu(S_{A,B}(r))\,dr+\int_{\tau}^t\mu(S_{A,B}(r))\,dr\\
> \int_{0}^{\tau}1+\int_{\tau}^t(1-t)\,dr
=\tau+(t-\tau)(1-t)
=t(1+\tau-t),
\end{multline*}
which is a contradiction. 
Therefore $\mu(S_{A,B}(t))\le 1-t$, and the proof of the corollary is complete.
\end{proof}

\section{On the Riesz-Sobolev for $G$} \label{section:RSproved}

In this section we prove the Riesz-Sobolev inequality \eqref{ineq:RS}
for $G$ using Theorem~\ref{thm:sharpKPRGT}. The sharpened form \eqref{eq:sharpKPRGTconclusion}
of \eqref{ineq:taokneser3} for $\sigma\le\min(\mu(A),\mu(B))$
is exactly what is needed in this derivation. Also, for the defects $\mathcal{D}$ and $\mathcal{D}'$ corresponding to these inequalities, defined below, we discuss approximation of the set $C$ by
superlevel sets $S_{A,B}(t)$, under the assumption that $\scriptd(A,B,C)$ is small.
We also discuss majorization of $\scriptd(A,B,C)$ by $\scriptd'(A,B,\tau)$ and vice versa,
under appropriate hypotheses linking $C$ to $\tau$.

The defects $\scriptd(A,B,C)$ and $\scriptd'(A,B,\tau)$ are defined as follows.

\begin{definition}
\begin{align}
&\scriptd(A,B,C) = \int_\cstar \one_\astar*\one_\bstar\,dm 
- \int_C \one_A*\one_B\,d\mu.
\\
&\scriptd'(A,B,\tau) = \int_\torus \max(\one_\astar*\one_\bstar-\tau,0)\,dm
- \int_G \max(\one_A*\one_B-\tau,0)\,d\mu.
\end{align}
\end{definition}
Theorem~\ref{thm:RS} states that $\scriptd(A,B,C)\ge 0$ for any ordered triple,
while inequality \eqref{ineq:taokneser4} states that
$\scriptd'(A,B,\tau)\ge 0$ for all $\tau\in[0,\min(\mu(A),\mu(B))]$.
These defects can usefully be expressed in terms of distribution functions
$\mu(S_{A,B}(t))$, as discussed in \S\ref{section:Inter}.

The following quantity arises throughout our analysis.
\begin{definition}
To sets $A,B,C\subset G$ satisfying $\mu(C)\le\mu(A)+\mu(B)$
is associated
\begin{equation}
\tau_C = \tfrac12 (\mu(A)+\mu(B)-\mu(C)).
\end{equation}
\end{definition}
This quantity satisfies $m(S_{\astar,\bstar}(\tau_C))=m(\cstar)= \mu(C)$;
it represents the parameter $\tau$ for which $\cstar$
equals the superlevel set $S_{\astar,\bstar}(\tau)$,
provided that $(\mu(A),\mu(B),\mu(C))$ 
is admissible.

\begin{lemma} \label{lemma:RSforsupers}
Suppose that $A,B\subset G$ and $\tau\in[0,1]$ satisfy 
$$
0\le\tau\le\min (\mu(A),\mu(B)),
$$
$$
\mu(A)+\mu(B) + \mu(S_{A,B}(\tau))\le 2.
$$
Then
\begin{equation} \label{RSsupersconclusion}
\tau\mu(S_{A,B}(\tau)) + \int_\tau^\infty \mu(S_{A,B}(\alpha))\,d\alpha
\le \mu(A)\mu(B)-\tfrac14 (\mu(A)-\mu(B)-\mu(S_{A,B}(\tau)))^2.
\end{equation}
\end{lemma}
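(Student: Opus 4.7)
The plan is to recognise the left-hand side of \eqref{RSsupersconclusion} as $\int_{S_{A,B}(\tau)}\one_A\ast\one_B\,d\mu$, and then apply Theorem~\ref{thm:sharpKPRGT} to the tail integral, collapsing everything by a short algebraic identity. Write $f=\one_A\ast\one_B$ and $s=\mu(S_{A,B}(\tau))$. On the superlevel set $S_{A,B}(\tau)=\{f>\tau\}$ one has $f=\tau+(f-\tau)$ with $f-\tau\ge 0$, so the layer-cake formula applied to $(f-\tau)_+$ gives
\[
\int_{S_{A,B}(\tau)} f\,d\mu \;=\; \tau s \;+\; \int_\tau^\infty \mu(S_{A,B}(\alpha))\,d\alpha.
\]
Hence \eqref{RSsupersconclusion} reduces to an upper bound on the single quantity $\int_{S_{A,B}(\tau)} f\,d\mu$.

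The hypotheses of the present lemma coincide verbatim with those of Theorem~\ref{thm:sharpKPRGT}, which therefore supplies
\[
\int_\tau^\infty \mu(S_{A,B}(\alpha))\,d\alpha \;\le\; (\mu(A)-\tau)(\mu(B)-\tau) - h,
\]
with $h=(\sigma-\tau)^2$ in the principal subcase $\sigma:=\tfrac12(\mu(A)+\mu(B)-s)\le\min(\mu(A),\mu(B))$, and $h=(\min(\mu(A),\mu(B))-\tau)^2$ otherwise. The principal subcase is precisely the admissibility of the triple $(\mu(A),\mu(B),s)$, and in it I add $\tau s$ to both sides and use the defining relation $\mu(A)+\mu(B)-2\sigma=s$ to cancel the cross terms. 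A short expansion yields
\[
\tau s + (\mu(A)-\tau)(\mu(B)-\tau) - (\sigma-\tau)^2
\;=\; \mu(A)\mu(B) - \sigma^2
\;=\; \mu(A)\mu(B) - \tfrac14\bigl(\mu(A)+\mu(B)-s\bigr)^2,
\]
which is the asserted bound in \eqref{RSsupersconclusion} (the minus sign on $\mu(B)$ in the stated right-hand side appears to be a typographical slip for a plus, since $\sigma$ is the only quantity that emerges naturally from the argument).

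The main obstacle is the remaining subcase $\sigma>\min(\mu(A),\mu(B))$, equivalently $s<|\mu(A)-\mu(B)|$, where the clean algebraic collapse above no longer reproduces the stated form. In this regime I would replace the bound from Theorem~\ref{thm:sharpKPRGT} by the pointwise estimate $f\le\min(\mu(A),\mu(B))$ together with the monotonicity of $\alpha\mapsto\mu(S_{A,B}(\alpha))$ and the vanishing $S_{A,B}(\alpha)=\emptyset$ for $\alpha\ge\min(\mu(A),\mu(B))$, which immediately give $\int_{S_{A,B}(\tau)} f\,d\mu\le s\cdot\min(\mu(A),\mu(B))$. The remaining technical point is to reconcile this expression with the right-hand side of \eqref{RSsupersconclusion} as written; I expect this to reduce to a short nonnegativity check of the form $(|\mu(A)-\mu(B)|-s)^2\ge 0$ once the conclusion is read in the case-appropriate piecewise form mirroring Theorem~\ref{thm:sharpKPRGT}.
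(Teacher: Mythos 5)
Your reduction of the left side to $\int_{S_{A,B}(\tau)}\one_A*\one_B\,d\mu$, the invocation of Theorem~\ref{thm:sharpKPRGT}, and the algebraic collapse in the subcase $\sigma\le\min(\mu(A),\mu(B))$ reproduce the paper's argument exactly, and your observation that the $-\mu(B)$ in the stated right-hand side should be $+\mu(B)$ is correct (the remark immediately following the lemma equates its conclusion with \eqref{ineq:RSdoublep}, which carries $+\mu(B)$).

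Your treatment of the remaining subcase $\sigma>\min(\mu(A),\mu(B))$ is, however, a genuine gap---and one the paper's own proof quietly shares. Assume without loss of generality $\mu(A)\le\mu(B)$. You propose to bound the left side by $\mu(A)\,s$ and expect a nonnegativity check to close the argument, but the needed inequality $\mu(A)\,s\le\mu(A)\mu(B)-\sigma^2$, i.e.\ $\sigma^2\le\mu(A)(\mu(B)-s)$, reads after substituting $\sigma=\tfrac12\bigl(\mu(A)+(\mu(B)-s)\bigr)$ as $\tfrac14\bigl(\mu(A)+(\mu(B)-s)\bigr)^2\le\mu(A)(\mu(B)-s)$. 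This is the \emph{reverse} of the AM--GM inequality; it holds only on the boundary $s=\mu(B)-\mu(A)$, i.e.\ $\sigma=\min(\mu(A),\mu(B))$. Moreover, the intended conclusion of the lemma is actually false in this subcase: take $A$ an approximately equidistributed union of many short intervals with $\mu(A)=0.1$, $B$ an interval with $\mu(B)=0.8$, and $\tau=0.09$. Then $\one_A*\one_B\approx 0.08$ everywhere, so $S_{A,B}(\tau)=\emptyset$ and $s=0$, both hypotheses of the lemma hold, yet $\mu(A)\mu(B)-\tfrac14(\mu(A)+\mu(B))^2<0$ while the left-hand side vanishes. The paper's proof applies Theorem~\ref{thm:sharpKPRGT} with $h=(\sigma-\tau)^2$ without verifying $\sigma\le\min(\mu(A),\mu(B))$, so it too only establishes the principal subcase; the missing hypothesis is $\mu(S_{A,B}(\tau))\ge|\mu(A)-\mu(B)|$, which does hold at the one point where the lemma is invoked (in the proof of Theorem~\ref{thm:RS}, after reducing to the regime in which $\mu(S_{A,B}(t))=\mu(C)>|\mu(A)-\mu(B)|$). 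The correct resolution is to add that hypothesis and drop the complementary case, rather than attempt to patch it as you suggest.
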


That is, $(A,B,C)= (A,B,S_{A,B}(\tau))$ satisfies \eqref{ineq:RSdoublep} (and thus \eqref{ineq:RS}, under some additional hypotheses).

\begin{proof}
Define $\sigma = \tfrac12(\mu(A)+\mu(B)-\mu(S_{A,B}(\tau)))$.
Equivalently,  $\mu(S_{A,B}(\tau)) = \mu(A)+\mu(B)-2\sigma$.
Calculate
\begin{align*}
(\mu(A)-\tau)(\mu(B)-\tau)  - \big(\mu(A)\mu(B)-\sigma^2\big)
&= -\tau(\mu(A)+\mu(B))+\tau^2+\sigma^2
\\&
= -\tau(\mu(A)+\mu(B)-2\sigma) + (\sigma-\tau)^2
\\&
= -\tau \mu(S(\tau)) + (\sigma-\tau)^2.
\end{align*}
Thus
\begin{equation} \label{muSbound}
\tau\mu(S_{A,B}(\tau))
=
-(\mu(A)-\tau)(\mu(B)-\tau)  + \big(\mu(A)\mu(B)-\sigma^2\big) + (\sigma-\tau)^2.
\end{equation}
Note that $(A,B,\tau)$ satisfies the hypotheses of Theorem~\ref{thm:sharpKPRGT}. Applying Theorem \ref{thm:sharpKPRGT} to the second term on the left-hand side of
\eqref{RSsupersconclusion} and then invoking \eqref{muSbound}  gives
the desired upper bound
\begin{align*}
\tau \mu(S_{A,B}(\tau)) + (\mu(A)-\tau)(\mu(B)-\tau) - (\sigma-\tau)^2
= \mu(A)\mu(B) -\sigma^2.
\end{align*}
\end{proof}

\begin{proof}[Proof of Theorem~\ref{thm:RS}]
Let $A,B,C\subset G$. 
Consider first the case
in which $\mu(A)+\mu(B)+\mu(C)\geq 2$.
Define $t$ by $\mu(A)+\mu(B)=1+t$; note that $t\geq 0$. 
Then $\one_A*\one_B(x)\ge t$ for every $x\in G$.
Indeed,
\[\one_A*\one_B(x) = \mu(A\cap(x-B)) \ge \mu(A)+\mu(x-B)-\mu(G)
= \mu(A)+\mu(B)-1 = t.\]
Therefore
\[ \int_C \one_A*\one_B\,d\mu
\le
\int_G \one_A*\one_B\,d\mu
-t\mu(G\setminus C)
= \mu(A)\mu(B)-t(1-\mu(C)).
\]
On the other hand, $\one_\astar*\one_\bstar\equiv t$
on $\torus\setminus\cstar$, and so the same calculation gives
\[\int_\cstar \one_\astar*\one_\bstar\,dm
= m(\astar)m(\bstar)-t(1-m(\cstar)) 
= \mu(A)\mu(B)-t(1-\mu(C)).\]
Thus the stated conclusion holds in this case.

If $\mu(C)\le |\mu(A)-\mu(B)|$
then, while $\one_A*\one_B\leq \min(\mu(A),\mu(B))$ on $C$, it also holds that $\one_\astar*\one_\bstar\equiv \min(m(\astar),m(\bstar))$
on $\cstar$. Therefore \eqref{ineq:RS} holds.
If $\mu(C)\ge \mu(A)+\mu(B)$
then either $\mu(A)\le |\mu(B)-\mu(C)|$ or $\mu(B)\le |\mu(A)-\mu(C)|$. 
\eqref{ineq:RS} thus follows by permutation invariance 
from the case in which $\mu(C)\le |\mu(A)-\mu(B)|$.

Assume henceforth that $\mu(A)+\mu(B)+\mu(C) < 2$,
and that $|\mu(A)-\mu(B)| < \mu(C) < \mu(A)+\mu(B)$.

If there exists $t\in[0,1]$ for which 
the superlevel set $S=S_{A,B}(t)$
satisfies $\mu(S)=\mu(C)$, then the desired inequality \eqref{ineq:RS} holds for $(A,B,C)$. More precisely,
$\int_C \one_A*\one_B \le \int_S \one_A*\one_B$.
The parameter $t$ satisfies $t\le\min(\mu(A),\mu(B))$, since $\norm{\one_A*\one_B}_{C^0}\le
\min(\mu(A),\mu(B))$ and $\mu(C)>0$.
It also satisfies $\mu(A)+\mu(B)\le 1+t$.
Indeed, if $\mu(A)+\mu(B)>1+t$ then $\one_A*\one_B(x)>t$ for every $x\in G$
as noted above,
so $S =S_{A,B}(t)=G$, so $\mu(C)=\mu(S)=1$, forcing
$\mu(A)+\mu(B)+\mu(C)=\mu(A)+\mu(B)+1 > 2+t\ge 2$ 
and thereby contradicting the assumption that $\mu(A)+\mu(B)+\mu(C)< 2$.

Thus the hypotheses of Lemma~\ref{lemma:RSforsupers}
are satisfied by $A,B,t$ and $S_{A,B}(t)$.
Applying that lemma to $S_{A,B}(t)$ gives the desired 
upper bound for $\int_{S_{A,B}(t)} \one_A*\one_B$, hence for $\int_C \one_A*\one_B$.

\medskip
It remains to reduce the general case to that in which there exists $t\in[0,1]$
satisfying $\mu(S_{A,B}(t))=\mu(C)$, under the hypotheses $\mu(A)+\mu(B)+\mu(C)< 2$ and $|\mu(A)-\mu(B)|<\mu(C)<\mu(A)+\mu(B)$.
We may also assume the auxiliary condition
\begin{equation} \label{auxcondition} \mu(\{x: \one_A*\one_B(x)>0\})\ge \mu(C).\end{equation}
Indeed, if this fails, set $\tilde C = C\cap \{x: \one_A*\one_B(x)>0\}$.
The value of the integral $\int_C \one_A*\one_B\,d\mu$
is unchanged when $C$ is replaced by $\tilde C$.
If $\mu(\tilde C)<|\mu(A)-\mu(B)|$ then
we have already observed that
$$\int_{\tilde C} \one_A*\one_B\,d\mu
\le
\int_{\tilde C^\star} \one_\astar*\one_\bstar\,dm
$$
(that is, $(A,B,\tilde{C})$ satisfies \eqref{ineq:RS}). Since $\mu(\tilde{C})\leq \mu(C)$, the right-hand side is in turn majorized by
$\int_{C^\star} \one_\astar*\one_\bstar\,dm$, so \eqref{ineq:RS} holds for $(A,B,C)$.
If $\mu(\tilde C) \ge |\mu(A)-\mu(B)|$
then it suffices to prove that $(A,B,\tilde C)$ satisfies \eqref{ineq:RS}.
Thus matters are reduced to the case in which
$(A,B,C)$ satisfies \eqref{auxcondition}.

Given \eqref{auxcondition}, a sufficient condition for the existence of $t$
satisfying $\mu(C)=\mu(S_{A,B}(t))$
is that all level sets of $\one_A*\one_B$ should be null sets, that is,
for every $r>0$, 
$\mu(\{x: \one_A*\one_B(x)=r\})=0$.
Moreover, because $(A,B,C)\mapsto \int_C \one_A*\one_B\,d\mu$
is continuous in the sense that
\[\int_{C_n}\one_{A_n}*\one_{B_n}
\,d\mu\to\int_C \one_A*\one_B\,d\mu
\text{ if } \mu(A_n\symdif A) + \mu(B_n\symdif B) + \mu(C_n\symdif C)\to 0,\]
it would suffice to construct $(A_n,B_n,C_n)$, converging
to $(A,B,C)$ in this sense, such that
all level sets of $\one_{A_n}*\one_{B_n}$ are $\mu$--null.

Such a construction does not necessarily exist in $G$, but it does in the auxiliary group $\tilde G = G\times\torus$ with normalized
Haar measure $\tilde\mu$. 
Consider a sequence of triples $(\alpha_n,\beta_n,\gamma_n)$ of Lebesgue measurable subsets of $\torus$
satisfying $\mu(\alpha_n)\to 1$ as $n\to\infty$ and likewise for $\mu(\beta_n),\mu(\gamma_n)$,
such that 
all level sets of $\one_{\alpha_n}*\one_{\beta_n}$ on $\torus$ are Lebesgue null sets.
The existence of such sequences can be proved in various ways.

Consider $(\tilde A,\tilde B,\tilde C) = (A\times\alpha_n, B\times\beta_n,C\times\gamma_n)$.
Then
$\one_{\tilde A_n}*\one_{\tilde B_n}$
is the product function 
$G\times\torus\owns (x,y)\mapsto (\one_A*\one_B(x))\cdot(\one_{\alpha_n}*\one_{\beta_n}(y))$,
so
\[ \int_{\tilde C_n} \one_{\tilde A_n}*\one_{\tilde B_n}\,d\tilde\mu
= \Big(\int_{C_n} \one_A*\one_B\,d\mu\big)
\cdot
\Big(\int_{\gamma_n} \one_{\alpha_n}*\one_{\beta_n}\,dm\big)\]
converges to $\int_C \one_A*\one_B\,d\mu$ as $n\to\infty$.
Moreover, 
all level sets of $\one_{\tilde A_n}*\one_{\tilde B_n}$ are null sets;
this is a simple consequence of Fubini's theorem and the
corresponding property of $\one_{\alpha_n}*\one_{\beta_n}$.
Therefore the conclusion of Theorem~\ref{thm:RS}, or equivalently
that of Theorem~\ref{thm:RSprime} (whose hypotheses are satisfied by $(\tilde{A}_n,\tilde{B}_n,\tilde{C}_n)$ for large $n$), holds for 
$(\tilde A_n,\tilde B_n,\tilde C_n)$
for all sufficiently large $n$.
Since $\tilde\mu(A_n)=\mu(A)m(\alpha_n)\to\mu(A)$
and likewise for $\tilde B_n,\tilde C_n$,
it follows from passage to the limit that the conclusion also holds 
for $(A,B,C)$.
\end{proof}

The proofs of the subsequent statements are not provided, as they are direct adaptations of proofs in \cite{christRS}.

The next lemma states that if $(A,B,C)$
nearly maximizes the Riesz-Sobolev functional $\int_C \one_A*\one_B\,d\mu$,
then $C$ nearly coincides with a superlevel set $S_{A,B}(\tau)$ (as long as $(A,B,C)$ is appropriately admissible).

\begin{lemma} \cite{christRS} \label{lemma:Staulowerbound}
\label{C almost level set} 
Let $A,B,C\subset G$ be measurable sets with $\mu(A),\mu(B),\mu(C)>0$. Suppose that
\begin{gather} 
 \big|\,\mu(A)-\mu(B)\,\big|  + 2\scriptd(A,B,C)^{1/2}< \mu(C) < \mu(A)+\mu(B) -2\scriptd(A,B,C)^{1/2}
\\
\mu(A)+\mu(B)+\mu(C) < 2-2\scriptd(A,B,C)^{1/2}.
\end{gather}
Define $\tau$ by $\mu(C) = \mu(A)+\mu(B)-2\tau$.  Then 
the superlevel set $S_{A,B}(\tau)$ satisfies
\begin{gather} \label{ineq:symmetricdifferencebound}
\mu(S_{A,B}(\tau)\bigtriangleup C) \le 4\scriptd(A,B,C)^{1/2}
\\
\label{RSsharpeneddisguised}
\big|\mu(S_{A,B}(\tau))-\mu(C)\big| \leq 2\mathcal{D}(A,B,C)^{1/2}
\\
\scriptd(A,B,S_{A,B}(\tau)) \le \scriptd(A,B,C).
\end{gather}
\end{lemma}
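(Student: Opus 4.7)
The plan is to compare $C$ to the superlevel set $S:=S_{A,B}(\tau)$ in two different ways: a direct pointwise comparison yielding \eqref{RSsharpeneddisguised} and the third conclusion, and a ``submodular'' comparison using the pair $(S\cup C,S\cap C)$ yielding \eqref{ineq:symmetricdifferencebound}. Write $U=C\setminus S$, $V=S\setminus C$, with Haar measures $u,v$, and set $s=\mu(S)$, $c=\mu(C)$; then $s-c=v-u$ and $\mu(S\triangle C)=u+v$. Because $\one_A*\one_B>\tau$ on $V$ and $\one_A*\one_B\le\tau$ on $U$, one has the pointwise identity
\begin{equation*}
\int_S \one_A*\one_B\,d\mu-\int_C \one_A*\one_B\,d\mu=\tau(s-c)+\int_{S\triangle C}|\one_A*\one_B-\tau|\,d\mu.
\end{equation*}

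For the first comparison, I apply Lemma~\ref{lemma:RSforsupers} to $(A,B,\tau)$ to obtain $\int_S\one_A*\one_B\le\mu(A)\mu(B)-\sigma^2$ with $\sigma=\tfrac12(\mu(A)+\mu(B)-s)$. Combining with the equality $\int_C\one_A*\one_B=\mu(A)\mu(B)-\tau^2-\scriptd$ (which is Theorem~\ref{thm:RSprime} applied to $(A,B,C)$, admissible by the hypotheses of the present lemma) and the algebraic fact $\tau-\sigma=\tfrac12(s-c)$, so that $\tau^2-\sigma^2-\tau(s-c)=-\tfrac14(s-c)^2$, a short substitution into the pointwise identity yields
\begin{equation*}
\tfrac14(s-c)^2+\int_{S\triangle C}|\one_A*\one_B-\tau|\,d\mu\le\scriptd,
\end{equation*}
from which \eqref{RSsharpeneddisguised} is immediate. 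The same computation rearranges to $\scriptd(A,B,S)=\scriptd-\tfrac14(s-c)^2-\int_{S\triangle C}|\one_A*\one_B-\tau|\,d\mu\le\scriptd$, which is the third conclusion.

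For \eqref{ineq:symmetricdifferencebound}, I apply Theorem~\ref{thm:RSprime} separately to the triples $(A,B,S\cup C)$ and $(A,B,S\cap C)$ to get $\int_{S\cup C}\le\mu(A)\mu(B)-\tau_\cup^2$ and $\int_{S\cap C}\le\mu(A)\mu(B)-\tau_\cap^2$, where $\tau_\cup=\tau-v/2$ and $\tau_\cap=\tau+u/2$. Summing these bounds, using the set-theoretic identity $\int_{S\cup C}+\int_{S\cap C}=\int_S+\int_C$ together with the pointwise identity above and the formula $\int_C=\mu(A)\mu(B)-\tau^2-\scriptd$, the combination $\tau_\cup^2+\tau_\cap^2-2\tau^2+\tau(s-c)$ collapses after expansion to the very clean expression $(u^2+v^2)/4$, yielding
\begin{equation*}
\tfrac14(u^2+v^2)+\int_{S\triangle C}|\one_A*\one_B-\tau|\,d\mu\le 2\scriptd.
\end{equation*}
Hence $u^2+v^2\le 8\scriptd$, so $\mu(S\triangle C)^2=(u+v)^2\le 2(u^2+v^2)\le 16\scriptd$, giving $\mu(S\triangle C)\le 4\scriptd^{1/2}$. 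The principal technical obstacle is verifying that the auxiliary triples $(A,B,S)$, $(A,B,S\cup C)$, and $(A,B,S\cap C)$ all satisfy the admissibility hypotheses needed for Lemma~\ref{lemma:RSforsupers} and Theorem~\ref{thm:RSprime} in their clean quadratic form; the $2\scriptd^{1/2}$ slack built into the hypotheses on $\mu(C)$ and $\mu(A)+\mu(B)+\mu(C)$ should absorb this, possibly via a short bootstrap once \eqref{RSsharpeneddisguised} is in hand and before invoking submodularity.
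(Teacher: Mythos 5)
Your algebraic skeleton is correct and the organizing ideas are attractive: the pointwise identity
\begin{equation*}
\int_S \one_A*\one_B\,d\mu-\int_C \one_A*\one_B\,d\mu
=\tau(s-c)+\int_{S\triangle C}\big|\one_A*\one_B-\tau\big|\,d\mu
\end{equation*}
is verified easily, and the computation producing $\tfrac14(s-c)^2+\int_{S\triangle C}|\cdot|\le\scriptd$ from Lemma~\ref{lemma:RSforsupers}, as well as the submodular computation producing $\tfrac14(u^2+v^2)+\int_{S\triangle C}|\cdot|\le 2\scriptd$ from Theorem~\ref{thm:RSprime} applied to $(A,B,S\cup C)$ and $(A,B,S\cap C)$, both check out line by line. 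The paper does not give a proof of this lemma (it refers the reader to \cite{christRS}), so I cannot compare with the source directly.

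The genuine gap is exactly the point you flag at the end, and it is not a ``short bootstrap'': every application of Lemma~\ref{lemma:RSforsupers} or Theorem~\ref{thm:RSprime} to a triple involving the a priori uncontrolled set $S=S_{A,B}(\tau)$ requires that triple to satisfy admissibility and in particular the total-measure bound, i.e.\ $\mu(A)+\mu(B)+\mu(S)\le 2$ for the first comparison, and $\mu(A)+\mu(B)+\mu(S\cup C)\le 2$, $\mu(S\cap C)\ge|\mu(A)-\mu(B)|$ for the second. But the only control you are proposing to establish on $\mu(S)$, $u=\mu(C\setminus S)$, and $v=\mu(S\setminus C)$ is the very conclusion of the lemma, so the verification is circular. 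In a noncompact setting like $\reals$ no such total-measure constraint is present, which is why the $\reals$ version of this argument closes cleanly; the whole delicacy of adapting it to compact $G$ lies in ruling out the scenario $\mu(A)+\mu(B)+\mu(S)>2$, and you have not done that. Note that the hypotheses of the present lemma give you $\tau-\rho>\scriptd^{1/2}$ (with $\rho=\mu(A)+\mu(B)-1$ when positive), and one can push the argument in the Corollary after Theorem~\ref{thm:sharpKPRGT}, together with Lemma~\ref{lemma:726}, to get $\mu(S)\le 1-\tau+\scriptd'(A,B,\tau)/(\tau-\rho)\le 1-\tau+2\scriptd/(\tau-\rho)$; but since $\scriptd'(A,B,\tau)$ can be as large as $2\scriptd$ this only yields $\mu(A)+\mu(B)+\mu(S)<2+\scriptd^{1/2}$, which just misses. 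So the hypotheses of Lemma~\ref{lemma:RSforsupers} and Theorem~\ref{thm:RSprime} remain unverified, and the proof as written does not go through.

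What survives without the problematic hypothesis is the plain Tao bound: applying \eqref{ineq:taokneser5} at level $\tau$ (which only needs $\mu(A)+\mu(B)-1\le\tau\le\min(\mu(A),\mu(B))$, both available) to the identity $\int_S\one_A*\one_B=\tau s+\int_\tau^\infty\mu(S_{A,B}(t))\,dt$ and combining with your pointwise identity yields $\int_{S\triangle C}|\one_A*\one_B-\tau|\,d\mu\le\scriptd$ regardless of the size of $\mu(S)$. That bound alone, however, does not control $u+v$ because $\one_A*\one_B$ could a priori hover near level $\tau$ on a large set, so it does not close the lemma either. To repair the argument you would need a genuinely independent route to a preliminary bound on $\mu(S)$, and you should supply it explicitly rather than asserting that the slack in the hypotheses ``should absorb'' the issue.

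Two minor remarks. First, the right-hand side of \eqref{RSsupersconclusion} as printed in the paper reads $\mu(A)\mu(B)-\tfrac14(\mu(A)-\mu(B)-\mu(S))^2$, which is inconsistent with the proof of Lemma~\ref{lemma:RSforsupers}; you are tacitly reading it as $\mu(A)\mu(B)-\sigma^2$ with $\sigma=\tfrac12(\mu(A)+\mu(B)-\mu(S))$, which is what the proof actually establishes, so that is fine. Second, the proof of Lemma~\ref{lemma:RSforsupers} only treats the case $\sigma\le\min(\mu(A),\mu(B))$, i.e.\ $\mu(S)\ge|\mu(A)-\mu(B)|$; that also needs to be in your list of conditions to verify.
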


The next result sharpens Theorem~\ref{thm:RS}
in the same way that Theorem~\ref{thm:sharpKPRGT} sharpens \eqref{ineq:taokneser3}.
It is simply a restatement of \eqref{ineq:symmetricdifferencebound} in alternative terms.

\begin{theorem}
Under the hypotheses of Lemma~\ref{lemma:Staulowerbound},
\begin{equation}
\int_C \one_A*\one_B\,d\mu
\le \int_\cstar \one_\astar*\one_\bstar\,dm
- \tfrac1{16} \mu\big(C\symdif S_{A,B}(\tau_C)\big)^2
\end{equation}
where $\tau_C = \tfrac12 (\mu(A)+\mu(B)-\mu(C))$.
\end{theorem}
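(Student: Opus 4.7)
The plan is to observe that this theorem is a direct algebraic reformulation of the bound \eqref{ineq:symmetricdifferencebound} already established in Lemma~\ref{lemma:Staulowerbound}. Since the hypotheses of that lemma are assumed verbatim in the present theorem, I can invoke it with its parameter $\tau$ taken to be the quantity $\tau_C = \tfrac12(\mu(A)+\mu(B)-\mu(C))$ defined in the statement of the present theorem. This choice is consistent with Lemma~\ref{lemma:Staulowerbound}, since in the statement of that lemma $\tau$ is defined by the identical relation $\mu(C)=\mu(A)+\mu(B)-2\tau$.

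The lemma then supplies the bound $\mu\big(S_{A,B}(\tau_C)\symdif C\big) \le 4\,\scriptd(A,B,C)^{1/2}$. Squaring — legitimate since both sides are nonnegative — and dividing by $16$ yields
$$\tfrac{1}{16}\,\mu\big(S_{A,B}(\tau_C)\symdif C\big)^2 \;\le\; \scriptd(A,B,C).$$
Unpacking the definition $\scriptd(A,B,C) = \int_\cstar \one_\astar*\one_\bstar\,dm - \int_C \one_A*\one_B\,d\mu$ and transposing the last term transforms this into precisely the inequality asserted by the theorem.

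There is no genuine obstacle: the substantive work has already been carried out in the proof of Lemma~\ref{lemma:Staulowerbound} (cited from \cite{christRS}), where the defect $\scriptd(A,B,C)$ was shown to dominate, up to a constant, the square of the symmetric difference between $C$ and the superlevel set $S_{A,B}(\tau_C)$. The statement here simply records the quadratic refinement of the Riesz-Sobolev inequality $\scriptd(A,B,C)\ge 0$ that this control affords, exhibiting an explicit lower bound on the defect in terms of how far $C$ deviates from being a superlevel set of $\one_A*\one_B$.
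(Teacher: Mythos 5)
Your proof is correct and matches the paper's, which explicitly notes that this theorem "is simply a restatement of \eqref{ineq:symmetricdifferencebound} in alternative terms." Squaring the bound $\mu(S_{A,B}(\tau_C)\symdif C)\le 4\scriptd(A,B,C)^{1/2}$ and unwinding the definition of $\scriptd$ is exactly the intended argument.
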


The next two lemmas relate the two defects $\scriptd,\scriptd'$ to one another.

\begin{lemma} \emph{\textbf{\cite{christRS}}} \label{returning to level set} 
Let $A,B$ be measurable subsets of $G$ of positive Haar measures, and suppose that
$\tau \in [0,\min (\mu(A),\mu(B))]$ and $\mu(A)+\mu(B)<1+\tau$. 
Then
$$\mathcal{D}(A,B,S_{A,B}(\tau))\leq \mathcal{D}'(A,B,\tau).  $$
\end{lemma}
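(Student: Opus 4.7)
The plan is to unpack both defects and reduce the claimed inequality to a purely one-dimensional statement on $\torus$. Write $S := S_{A,B}(\tau)$ and $S_0 := S_{\astar,\bstar}(\tau)$. Since $\max(\one_A*\one_B-\tau,0)$ is supported on $S$ and equals $\one_A*\one_B-\tau$ there, I would first rewrite
\[
\int_G \max(\one_A*\one_B-\tau,0)\,d\mu = \int_S \one_A*\one_B\,d\mu - \tau\mu(S),
\]
and analogously
\[
\int_\torus \max(\one_\astar*\one_\bstar-\tau,0)\,dm = \int_{S_0} \one_\astar*\one_\bstar\,dm - \tau\, m(S_0).
\]
Substituting these into the definitions of $\scriptd(A,B,S)$ and $\scriptd'(A,B,\tau)$, the desired inequality $\scriptd(A,B,S)\le\scriptd'(A,B,\tau)$ collapses, after cancellation of $\int_S \one_A*\one_B\,d\mu$, to the purely torus-side statement
\[
\int_{S^\star}\one_\astar*\one_\bstar\,dm - \int_{S_0}\one_\astar*\one_\bstar\,dm \;\le\; \tau\bigl(\mu(S) - m(S_0)\bigr).
\]

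The structural input I would then invoke is that both $S^\star$ and $S_0$ are centered intervals in $\torus$: $S^\star$ by the definition of symmetrization, and $S_0$ because $\one_\astar*\one_\bstar$ is an even continuous function that is non-increasing on $[0,\tfrac12]$. Any two centered intervals are nested, so either $S^\star\supset S_0$ (when $\mu(S)\ge m(S_0)$) or $S^\star\subset S_0$. Writing the left-hand side of the reduced inequality as
\[
\int_{S^\star\setminus S_0}\one_\astar*\one_\bstar\,dm \;-\; \int_{S_0\setminus S^\star}\one_\astar*\one_\bstar\,dm,
\]
exactly one of these two integrals vanishes depending on the nesting.

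In the first case I would use that $\one_\astar*\one_\bstar\le\tau$ on $\torus\setminus S_0$ (by the very definition of the superlevel set), giving $\int_{S^\star\setminus S_0}\one_\astar*\one_\bstar\,dm\le\tau\, m(S^\star\setminus S_0)=\tau(\mu(S)-m(S_0))$; in the second case $\one_\astar*\one_\bstar\ge\tau$ a.e.\ on $S_0$, so $-\int_{S_0\setminus S^\star}\one_\astar*\one_\bstar\,dm\le -\tau\,m(S_0\setminus S^\star)=\tau(\mu(S)-m(S_0))$. Either way the reduced inequality holds, completing the proof. The hypothesis $\mu(A)+\mu(B)<1+\tau$ enters only to ensure that $S_0$ is a proper subinterval of $\torus$ so that the level $\tau$ is genuinely attained in the interior and both branches of the case analysis are meaningful; the degenerate case $\mu(A)+\mu(B)\ge 1+\tau$ would give both defects equal to $0$ trivially.

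I do not anticipate a substantive obstacle: the entire argument is a direct rearrangement of the identities expressing $\int \max(\cdot-\tau,0)$ in terms of the single superlevel set, combined with the two defining inequalities for $S_0$. The only point requiring care is keeping track of the sign of $\mu(S)-m(S_0)$ relative to which of $S^\star,S_0$ is the larger interval, so that the bound $\tau(\mu(S)-m(S_0))$ comes out with the correct orientation in both nesting cases.
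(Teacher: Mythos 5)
Your proof is correct. Since the paper defers this lemma to \cite{christRS} without reproducing an argument, a strict comparison is not possible, but your approach is the natural one: rewrite $\int\max(\cdot-\tau,0)$ on each side in terms of the superlevel set (namely $\int_S \one_A*\one_B - \tau\mu(S)$ on $G$ and the analogous identity for $S_0 = S_{\astar,\bstar}(\tau)$ on $\torus$), cancel the common term $\int_S\one_A*\one_B\,d\mu$, and reduce the claim to a one-dimensional comparison of the centered intervals $S^\star$ and $S_0$, which are nested. The two-branch argument then uses exactly the defining property of the superlevel set $S_0$: $\one_\astar*\one_\bstar\le\tau$ off $S_0$ and $\ge\tau$ on $S_0$. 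The only point deserving slightly more care than you gave it is the assertion that $\one_\astar*\one_\bstar$ is symmetric-decreasing on $\torus$ — this is true (e.g.\ it is a special case of the Riesz–Sobolev inequality for $\torus$, or can be checked directly from the explicit formula for the convolution of two centered arcs), and it is what guarantees $S_0$ is a centered interval. The remark on the degenerate case $\mu(A)+\mu(B)\ge 1+\tau$ is also correct (both defects vanish then), though as you say it lies outside the lemma's hypotheses.
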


\begin{lemma} \emph{\textbf{\cite{christRS}}} \label{lemma:726}
Let $A,B,C\subset G$ be measurable sets with positive Haar measures. Let
$\tau_C = \tfrac12 (\mu(A)+\mu(B)-\mu(C))$. 
If 
\begin{equation} \label{726lemmahypothesis}
\big|\,\mu(A)-\mu(B)\,\big|  + 2\scriptd(A,B,C)^{1/2}< \mu(C) < \mu(A)+\mu(B)-2\scriptd(A,B,C)^{1/2}
\end{equation}
and $\mu(A)+\mu(B)+\mu(C) \le 2-2\scriptd(A,B,C)^{1/2}$
then
\begin{equation} \scriptd'(A,B,\tau_C) \le 2\scriptd(A,B,C). \end{equation}
\end{lemma}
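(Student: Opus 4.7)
The plan is to reduce the inequality to two elementary observations: an exact identity on $\torus$, arising from the fact that $C^\star$ coincides (up to a null set) with the superlevel set $S_{A^\star,B^\star}(\tau_C)$, together with a trivial pointwise splitting of $\one_A*\one_B$ on $G$. The first step is to verify that $\tau_C$ lies in the ``tent regime'' of $\one_{A^\star}*\one_{B^\star}$ on $\torus$, meaning
\begin{equation*}
\max(\mu(A)+\mu(B)-1,\,0)\;<\;\tau_C\;<\;\min(\mu(A),\mu(B)).
\end{equation*}
The left inequality follows from $\mu(A)+\mu(B)+\mu(C)\le 2-2\scriptd^{1/2}$ (rewritten as $2\tau_C > \mu(A)+\mu(B)-(2-2\scriptd^{1/2})+2\scriptd^{1/2}-\mu(C)$, or more simply from $\mu(A)+\mu(B)+\mu(C)<2$), while the right inequality follows from $\mu(C)>|\mu(A)-\mu(B)|$, both from \eqref{726lemmahypothesis}. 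In this regime the graph of $\one_{A^\star}*\one_{B^\star}$ is a symmetric trapezoid (or triangle) whose descending flanks have slope $\pm 1$, so $S_{A^\star,B^\star}(\tau_C)$ is an interval centered at $0$ of length $\mu(A)+\mu(B)-2\tau_C=\mu(C)$; hence it equals $C^\star$ up to a null set.

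Using this identification, the integrand $\max(\one_{A^\star}*\one_{B^\star}-\tau_C,0)$ is supported on $C^\star$ and equals $\one_{A^\star}*\one_{B^\star}-\tau_C$ there, giving the exact identity
\begin{equation*}
\int_\torus \max(\one_{A^\star}*\one_{B^\star}-\tau_C,\,0)\,dm
\;=\;\int_{C^\star}\one_{A^\star}*\one_{B^\star}\,dm\;-\;\tau_C\,\mu(C).
\end{equation*}
On the $G$ side, the pointwise identity $f=\min(f,\tau_C)+\max(f-\tau_C,0)$ applied to $f=\one_A*\one_B$ and integrated over $C$, combined with $\min(\one_A*\one_B,\tau_C)\le\tau_C$ and the trivial enlargement of the domain from $C$ to $G$ in the positive-part term, yields
\begin{equation*}
\int_C \one_A*\one_B\,d\mu \;\le\; \tau_C\,\mu(C) + \int_G \max(\one_A*\one_B-\tau_C,\,0)\,d\mu.
\end{equation*}

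Subtracting these two estimates, the $\tau_C\,\mu(C)$ terms cancel and one obtains the slightly sharper bound $\scriptd'(A,B,\tau_C)\le\scriptd(A,B,C)$, which trivially implies the stated conclusion. There is no serious obstacle; the only point that requires care is ensuring that $\tau_C$ lies strictly inside the tent regime so that the identification $C^\star=S_{A^\star,B^\star}(\tau_C)$ is valid, and that is precisely the role of the slack $2\scriptd(A,B,C)^{1/2}$ present in \eqref{726lemmahypothesis}.
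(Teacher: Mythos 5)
Your proof is correct, and in fact proves the sharper bound $\scriptd'(A,B,\tau_C) \le \scriptd(A,B,C)$, with constant $1$ in place of $2$. The two ingredients are both sound: the identity $\int_\torus\max(\one_{A^\star}*\one_{B^\star}-\tau_C,0)\,dm = \int_{C^\star}\one_{A^\star}*\one_{B^\star}\,dm - \tau_C\mu(C)$ holds because under the admissibility conditions $|\mu(A)-\mu(B)|\le\mu(C)\le\mu(A)+\mu(B)$ and $\mu(A)+\mu(B)+\mu(C)\le 2$, the set $C^\star$ agrees with $S_{A^\star,B^\star}(\tau_C)$ up to a null set; and the $G$-side bound $\int_C\one_A*\one_B\,d\mu \le \tau_C\mu(C)+\int_G\max(\one_A*\one_B-\tau_C,0)\,d\mu$ is the pointwise decomposition $f=\min(f,\tau_C)+\max(f-\tau_C,0)$ combined with two obvious enlargements; equivalently it is the instance $E=C$ of the variational identity $\int_G\max(f-\tau,0)\,d\mu=\sup_{E}\bigl(\int_E f\,d\mu-\tau\,\mu(E)\bigr)$. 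Subtracting gives $\scriptd'(A,B,\tau_C)\le\scriptd(A,B,C)$.

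The paper itself omits the proof and cites \cite{christRS}, where the argument (to judge from the shape of the hypotheses) presumably proceeds via the $G$-side superlevel set $S_{A,B}(\tau_C)$: one first uses the conclusion of Lemma~\ref{lemma:Staulowerbound} to control $|\mu(S_{A,B}(\tau_C))-\mu(C)|$ by $2\scriptd(A,B,C)^{1/2}$, and this is where the slack terms $2\scriptd(A,B,C)^{1/2}$ appearing in \eqref{726lemmahypothesis} and the factor $2$ in the conclusion arise. Your route avoids $S_{A,B}(\tau_C)$ entirely, makes no use of the $2\scriptd^{1/2}$-slack in the hypotheses (the admissibility inequalities without slack suffice), and is more elementary. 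The one spot where you wave slightly is the strict inequality $\tau_C>\mu(A)+\mu(B)-1$; if $\mu(A)+\mu(B)+\mu(C)=2$ (possible only when $\scriptd=0$) then equality holds there, but the identity for $\int_\torus\max(\one_{A^\star}*\one_{B^\star}-\tau_C,0)\,dm$ remains true since the ``flat bottom'' of $\one_{A^\star}*\one_{B^\star}$ then lies exactly at height $\tau_C$ and contributes nothing to the positive part. This does not affect the validity of the argument.
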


\begin{corollary} \emph{\textbf{\cite{christRS}}} 
\label{cor:KTsharpened} 
Let $G$ be a compact connected Abelian topological group, equipped with normalized Haar measure $\mu$.
Let $A,B\subset G$ be measurable sets with positive Haar measures. 
Let $\tau\in [0,\min(\mu(A),\mu(B))]$, and suppose that
$\mu(A)+\mu(B)\le 1+\tau$ and
\begin{gather*}
\big|\,\mu(A)-\mu(B)\,\big| \le\mu(S_{A,B}(\tau)),
\\
\mu(A)+\mu(B)+\mu(S_{A,B}(\tau))\le 2.
\end{gather*}
Then
\begin{equation}  \label{corKTsharpconclusion}
\big|\mu(S_{A,B}(\tau))-(\mu(A)+\mu(B)-2\tau)\big|
\le 2\scriptd'(A,B,\tau)^{1/2}.
\end{equation}
\end{corollary}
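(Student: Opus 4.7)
The plan is to deduce the corollary directly from Theorem~\ref{thm:sharpKPRGT} combined with an explicit evaluation of the torus integral appearing in $\scriptd'(A,B,\tau)$. Setting $\sigma = \tfrac12(\mu(A)+\mu(B)-\mu(S_{A,B}(\tau)))$, matching the notation of Theorem~\ref{thm:sharpKPRGT}, I would first verify that the hypothesis $|\mu(A)-\mu(B)|\le\mu(S_{A,B}(\tau))$ is equivalent to the condition $\sigma\le\min(\mu(A),\mu(B))$: assuming without loss of generality $\mu(A)\le\mu(B)$, the inequality $\sigma\le\mu(A)$ rearranges to $\mu(B)-\mu(A)\le\mu(S_{A,B}(\tau))$. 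This places us squarely in the favorable branch of Theorem~\ref{thm:sharpKPRGT}, whose other two hypotheses ($\tau\le\min(\mu(A),\mu(B))$ and $\mu(A)+\mu(B)+\mu(S_{A,B}(\tau))\le 2$) coincide with the standing assumptions of the corollary.

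Applying Theorem~\ref{thm:sharpKPRGT} in this case yields
\[
\int_\tau^\infty \mu(S_{A,B}(\alpha))\,d\alpha \;\le\; (\mu(A)-\tau)(\mu(B)-\tau)-(\sigma-\tau)^2.
\]
Next I would compute the corresponding quantity on $\torus$. Since $\tau\le\min(\mu(A),\mu(B))$ and $\mu(A)+\mu(B)\le 1+\tau$, the parenthetical remark accompanying \eqref{ineq:taokneser3} gives equality there for the extremal intervals $\astar,\bstar$, so
\[
\int_\torus \max(\one_\astar*\one_\bstar-\tau,0)\,dm = (\mu(A)-\tau)(\mu(B)-\tau).
\]
Recalling that $\int_G \max(\one_A*\one_B-\tau,0)\,d\mu = \int_\tau^\infty \mu(S_{A,B}(\alpha))\,d\alpha$, subtraction produces the key intermediate inequality $\scriptd'(A,B,\tau)\ge(\sigma-\tau)^2$.

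Finally I would translate back to the displayed statement. The relation $2(\sigma-\tau) = (\mu(A)+\mu(B)-2\tau) - \mu(S_{A,B}(\tau))$ is immediate from the definition of $\sigma$, so
\[
\bigl|\mu(S_{A,B}(\tau))-(\mu(A)+\mu(B)-2\tau)\bigr| \;=\; 2|\sigma-\tau| \;\le\; 2\,\scriptd'(A,B,\tau)^{1/2},
\]
which is \eqref{corKTsharpconclusion}. There is no real obstacle here: all content is encoded in Theorem~\ref{thm:sharpKPRGT}, with the remaining steps amounting to recognizing the hypothesis $|\mu(A)-\mu(B)|\le\mu(S_{A,B}(\tau))$ as the exact condition that keeps us in the case where the quadratic improvement $(\sigma-\tau)^2$ is available, and identifying the torus-side integral explicitly using $\mu(A)+\mu(B)\le 1+\tau$.
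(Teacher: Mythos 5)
Your proof is correct and follows essentially the same route as the paper's own (very terse) proof: verify the hypotheses of Theorem~\ref{thm:sharpKPRGT}, recognize that $|\mu(A)-\mu(B)|\le\mu(S_{A,B}(\tau))$ is precisely $\sigma\le\min(\mu(A),\mu(B))$ so that $h=(\sigma-\tau)^2$, identify the torus-side integral as $(\mu(A)-\tau)(\mu(B)-\tau)$ using $\mu(A)+\mu(B)\le 1+\tau$, and then unwind the definition of $\sigma$ to obtain \eqref{corKTsharpconclusion}. You simply spell out the arithmetic the paper compresses into ``restatement of the conclusion of Theorem~\ref{thm:sharpKPRGT}.''
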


\begin{proof}
The hypotheses of Theorem~\ref{thm:sharpKPRGT} are satisfied.
The hypothesis 
$\big|\,\mu(A)-\mu(B)\,\big| \le\mu(S_{A,B}(\tau))$
of the corollary is equivalent to $\sigma\le\min(\mu(A),\mu(B))$, where $\sigma$ is defined by $\mu(S_{A,B}(\tau))=\mu(A)+\mu(B)-2\sigma$. Thus, \eqref{corKTsharpconclusion} holds by being a restatement of the conclusion of Theorem \ref{thm:sharpKPRGT}
for $\sigma$ in this range.
\end{proof}

\section{Two key principles} \label{section:twokey}

In analyzing near-maximizers $(A,B,C)$ of the Riesz-Sobolev functional,
we have found it to be useful to transform $(A,B,C)$ in several different ways.
Two of these are based on the principles of submodularity and complementation,
which are developed in this section
as Proposition~\ref{submodularity} and Lemma~\ref{lemma:complementD}, respectively.
A third is the transformation of $(A,B,C)$ to a triple $(A,B,\tau)$,
based on the relationship between $\scriptd(A,B,S_{A,B}(\tau))$ and $\scriptd'(A,B,\tau)$
explored in \S\ref{section:RSproved}.
A fourth is the flow $(A,B,C)\mapsto(A(t),B(t),C(t))$
introduced in \S\ref{section:flow}.
A fifth arises when $C\subset G$ is a rank one Bohr set or is well approximated
by such a set, and relates $\int_{C} \one_A*\one_B\,d\mu$
to a relaxed version of this functional for associated data on $\torus$.
This connection is developed in \S\ref{section:When}.
\medskip

At certain stages of the analysis we will pass from a triple $(A,B,C)$ 
to a related triple $(A',B',C')$ with certain more advantageous properties,
or from $(A,B,\tau)$ to $(A',B',\tau')$.
We want to do this without sacrificing smallness of $\scriptd(A,B,C)$ or
of $\scriptd'(A,B,\tau)$, respectively.
Two principles that make this possible are submodularity and complementation.

Let $G$ be a compact connected Abelian group $G$, with normalized Haar measure $\mu$.

\begin{proposition}[Submodularity] \emph{{(Tao \cite{taokneser})}}\label{submodularity} 
Let $A, B_1, B_2$ be measurable subsets of $G$, and let $\tau\in[ 0,\min(\mu(A),\mu(B_1\cap B_2))]$ with $\mu(A)+\mu(B_1\cup B_2)-\tau\leq 1$. Then
$$\mathcal{D}'(A,B_1\cap B_2,\tau)+\mathcal{D}'(A,B_1\cup B_2,\tau)\leq \mathcal{D}'(A,B_1,\tau)+\mathcal{D}'(A,B_2,\tau)
$$
and the above four quantities $\scriptd'$ are all nonnegative.
\end{proposition}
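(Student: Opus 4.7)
The plan is to split each defect $\scriptd'(A,B,\tau)$ into its torus part and its $G$ part, and to verify the asserted inequality separately for each. Two ingredients drive the argument: the pointwise set-theoretic identity $\one_{B_1\cap B_2}+\one_{B_1\cup B_2}=\one_{B_1}+\one_{B_2}$, and the convexity of the function $t\mapsto\max(t-\tau,0)$. First I would verify that for each of $B\in\{B_1,B_2,B_1\cap B_2,B_1\cup B_2\}$ one has $\tau\le\min(\mu(A),\mu(B))$ and $\mu(A)+\mu(B)\le 1+\tau$; both follow at once from the given hypotheses, since $\mu(B_1\cap B_2)\le\mu(B_i)\le\mu(B_1\cup B_2)$. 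With both bounds in force, the equality case of \eqref{ineq:taokneser3} for intervals centered at $0$ on $\torus$ yields the explicit value
\[
\int_\torus\max(\one_{A^\star}*\one_{B^\star}-\tau,0)\,dm=(\mu(A)-\tau)(\mu(B)-\tau)
\]
for each of the four choices of $B$. Since $\mu(B_1\cap B_2)+\mu(B_1\cup B_2)=\mu(B_1)+\mu(B_2)$, summing these values over $\{B_1\cap B_2,B_1\cup B_2\}$ and over $\{B_1,B_2\}$ produces the same number, so the torus contributions to the two sides of the submodularity inequality coincide and cancel.

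After this cancellation, the submodularity bound reduces to the pointwise inequality, for each $x\in G$,
\[
\max(v_\cap-\tau,0)+\max(v_\cup-\tau,0)\ge\max(u_1-\tau,0)+\max(u_2-\tau,0),
\]
where $u_i=\one_A*\one_{B_i}(x)$ and $v_\cap,v_\cup$ denote the analogous convolutions with $B_1\cap B_2$ and $B_1\cup B_2$. From $\one_{B_1\cap B_2}\le\one_{B_i}\le\one_{B_1\cup B_2}$ and nonnegativity of $\one_A$, I obtain $v_\cap\le\min(u_1,u_2)\le\max(u_1,u_2)\le v_\cup$, while the key identity gives $v_\cap+v_\cup=u_1+u_2$; hence the pair $(v_\cap,v_\cup)$ majorizes $(u_1,u_2)$. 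The pointwise inequality above then follows from the convexity of $t\mapsto\max(t-\tau,0)$, and integrating over $G$ yields the desired bound for the $G$--contributions.

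Finally, the nonnegativity of each of the four quantities $\scriptd'(A,B,\tau)$ follows directly from \eqref{ineq:taokneser4}, whose hypothesis $\tau\le\min(\mu(A),\mu(B))$ was verified above. I do not foresee any substantive obstacle here: the whole argument is built from the set-theoretic identity, pointwise convexity, and the explicit torus formula supplied by \S\ref{section:Inter}. The only care required is the bookkeeping to confirm that the hypotheses of \eqref{ineq:taokneser3}--\eqref{ineq:taokneser4} hold simultaneously for all four instances, and this was recorded at the outset.
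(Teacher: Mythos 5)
Your proof is correct. The paper does not supply its own argument for Proposition~\ref{submodularity}---it attributes the result to Tao \cite{taokneser}---but the route you take is the standard one and matches Tao's in substance: the torus contributions to all four $\scriptd'$ terms reduce, via the equality case of \eqref{ineq:taokneser3}, to the explicit quantity $(\mu(A)-\tau)(\mu(B)-\tau)$, whose sum is invariant under $\{B_1,B_2\}\mapsto\{B_1\cap B_2,\,B_1\cup B_2\}$ by inclusion--exclusion; and the $G$--contributions obey a pointwise inequality by the majorization relation $v_\cap\le\min(u_1,u_2)\le\max(u_1,u_2)\le v_\cup$, $v_\cap+v_\cup=u_1+u_2$ together with convexity of $t\mapsto\max(t-\tau,0)$. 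Your verification at the outset that the hypotheses of \eqref{ineq:taokneser3}--\eqref{ineq:taokneser4} hold simultaneously for all four choices of $B$ (so that both the torus formula and nonnegativity are legitimate) is exactly the bookkeeping that must be done, and you do it correctly using $\mu(B_1\cap B_2)\le\mu(B_i)\le\mu(B_1\cup B_2)$.
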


\begin{lemma} \label{lemma:complementadmiss}
Suppose that each of $A,B,C$ has Haar measure strictly $>0$
and strictly $<1$.
$(A,B,C)$ is admissible
and satisfies $\mu(A)+\mu(B)+\mu(C)\le 2$
if and only if 
$(G\setminus A,G\setminus B,C)$ is admissible
and satisfies $\mu(G\setminus A)+\mu(G\setminus B)+\mu(C)\le 2$.
\end{lemma}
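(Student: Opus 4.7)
The plan is a direct algebraic computation. Write $a=\mu(A)$, $b=\mu(B)$, $c=\mu(C)$, so that $\mu(G\setminus A)=1-a$ and $\mu(G\setminus B)=1-b$. The hypothesis $0<\mu(\cdot)<1$ on $A,B,C$ is preserved under the substitution $a\mapsto 1-a$, $b\mapsto 1-b$, so the measure-range requirement of admissibility holds for $(A,B,C)$ if and only if it holds for $(G\setminus A,G\setminus B,C)$.

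The real content of the lemma lies in how the remaining constraints transform under the complementation $(A,B)\mapsto (A^c,B^c)$. I would list the four nontrivial inequalities governing the two hypotheses jointly:
\begin{equation*}
\mathrm{(a)}\ a\le b+c,\quad \mathrm{(b)}\ b\le a+c,\quad \mathrm{(c)}\ c\le a+b,\quad \mathrm{(d)}\ a+b+c\le 2.
\end{equation*}
Under the substitution $a\mapsto 1-a$, $b\mapsto 1-b$, each of $(\mathrm{a})$ and $(\mathrm{b})$ is sent to the same statement with $a$ and $b$ interchanged, so the pair $\{(\mathrm{a}),(\mathrm{b})\}$ is preserved as a set. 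The decisive observation is that $(\mathrm{c})$ and $(\mathrm{d})$ swap: after the substitution, $(\mathrm{c})$ reads $c\le (1-a)+(1-b)$, which is precisely $(\mathrm{d})$; while $(\mathrm{d})$ reads $(1-a)+(1-b)+c\le 2$, which is precisely $(\mathrm{c})$. Hence the conjunction $(\mathrm{a})\wedge(\mathrm{b})\wedge(\mathrm{c})\wedge(\mathrm{d})$ is invariant under complementation, which is the content of the lemma.

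The only subtlety is the strict versus non-strict inequality appearing in the definition of admissibility, where $\mu(E_1)+\mu(E_2)+\mu(E_3)<2$ is strict. Under the swap this strict form of $(\mathrm{d})$ corresponds to strict $(\mathrm{c})$ on the complemented side, i.e.\ $c<(1-a)+(1-b)$, which is delivered by the strict sum requirement $(1-a)+(1-b)+c<2$ coming from admissibility of $(G\setminus A,G\setminus B,C)$, when combined with the non-strict sum hypothesis. I do not expect any real obstacle here; the proof is a tidy bookkeeping exercise once the swap of $(\mathrm{c})$ and $(\mathrm{d})$ under complementation is isolated as its central mechanism.
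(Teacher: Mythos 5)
Your main argument is correct and is essentially the same bookkeeping the paper performs: identifying that complementation swaps the pair of triangle inequalities in $a,b$ and swaps the third triangle inequality with the sum-bound, hence preserves the conjunction. That part is fine.

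Your final paragraph, however, is not correct as written, and it glosses over a real imprecision. You claim that the strict requirement $a+b+c<2$ on the original side is "delivered by the strict sum requirement $(1-a)+(1-b)+c<2$ coming from admissibility of $(G\setminus A,G\setminus B,C)$." But $(1-a)+(1-b)+c<2$ simplifies to $c<a+b$, which is the strict form of your $(\mathrm{c})$, not of $(\mathrm{d})$; combining it with the non-strict hypothesis $(1-a)+(1-b)+c\le 2$ gives nothing new. In fact, if one takes the paper's definition of admissibility literally (with the strict inequality $\mu(E_1)+\mu(E_2)+\mu(E_3)<2$), the biconditional fails: for $a=b=c=\tfrac23$ the triple $(G\setminus A,G\setminus B,C)$ is admissible with $\mu(G\setminus A)+\mu(G\setminus B)+\mu(C)=\tfrac43\le 2$, yet $(A,B,C)$ is not admissible since $a+b+c=2$. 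The paper's own proof silently treats all four conditions as non-strict, for which your swap argument does give the equivalence; so the underlying mathematics is the same, but the lemma should be read with the non-strict version of that condition (or with the extra hypothesis $\mu(A)+\mu(B)+\mu(C)\le 2$ understood as replacing the strict one), and your attempted reconciliation of the strict case does not succeed.
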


\begin{proof}
The relation $\mu(C) \le \mu(G\setminus A) + \mu(G\setminus B)$
is equivalent to $\mu(A)+\mu(B)+\mu(C)\le 2$,
and by symmetry 
$\mu(C) \le \mu(A) + \mu(B)$
is equivalent to $\mu(G\setminus A)+\mu(G\setminus B)+\mu(C)\le 2$.

The relation $\mu(G\setminus A)\le \mu(G\setminus B)+\mu(C)$
is equivalent to $\mu(B)\le \mu(A)+\mu(C)$,
and interchanging $A,B$ in this equivalence yields the equivalence
of the remaining two relations.
\end{proof}

\begin{lemma}
For each $\eta>0$
there exists $\eta'>0$ with the following property.
Suppose that each of $A,B,C$ has Haar measure strictly $>0$
and strictly $<1$, and that
$(A,B,C)$ is $\eta$--strictly admissible 
and $\eta$-bounded.
Then 
$(G\setminus A,G\setminus B,C)$ is $\eta'$--strictly admissible
and $\eta'$-bounded.
\end{lemma}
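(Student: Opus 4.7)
The plan is to translate both properties into linear inequalities on $(a,b,c) := (\mu(A),\mu(B),\mu(C))$ and verify them by appealing to the given inequalities for $(A,B,C)$. Write $M = \max(a,b,c)$ and $M' = \max(1-a,1-b,c)$. The essential observation driving the proof is that while $M \ge \eta$ (by $\eta$-boundedness), the complemented maximum $M'$ can be as large as $1$; consequently any strict-admissibility slack of the form $\eta M$ on the original side translates into a slack of only $\eta M \ge \eta^2$ when measured against $M'$ on the complemented side. This forces the quadratic scaling $\eta' \sim \eta^2$.

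For $\eta'$-boundedness of $(G\setminus A,\,G\setminus B,\,C)$, I need $\min(1-a,1-b,c)\ge \eta'$ and $(1-a)+(1-b)+c\le 2-\eta'$. The observation already stated in the excerpt (preceding Theorem~\ref{thm:RSuniqueness}) gives $M\le 1-\eta/2$, hence $1-a,1-b\ge \eta/2$; combined with $c\ge \eta$, this yields the first bound whenever $\eta'\le \eta/2$. The second is equivalent to $a+b-c\ge \eta'$, which follows from the $\eta$-strict admissibility inequality $c\le a+b-\eta M$ together with $M\ge c\ge \eta$, giving $a+b-c\ge \eta^2$.

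For $\eta'$-strict admissibility of $(G\setminus A,\,G\setminus B,\,C)$, the three permuted inequalities reduce to $a+b+c\le 2-\eta' M'$, $b\le a+c-\eta' M'$, and $a\le b+c-\eta' M'$. The first holds provided $\eta'\le \eta$, since $a+b+c\le 2-\eta$ and $M'\le 1$. The latter two follow from the $\eta$-strict admissibility relations $b\le a+c-\eta M$ and $a\le b+c-\eta M$ for $(A,B,C)$, provided $\eta' M'\le \eta M$; since $M\ge \eta$ and $M'\le 1$, it suffices to take $\eta'\le \eta^2$. Hence any $\eta'$ with $\eta'\le \min(\eta/2,\eta^2)$ works, and since $\eta\le 1$ we may simply set $\eta'=\eta^2/2$. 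There is no real obstacle beyond this bookkeeping; the only point requiring attention is the quadratic dependence just explained, which is intrinsic because $M$ and $M'$ need not be comparable.
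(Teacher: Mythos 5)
Your proof is correct and is essentially the paper's intended argument: the paper states the lemma "is proved in the same way as Lemma~\ref{lemma:complementadmiss}," which is a direct algebraic rewriting of the admissibility inequalities under $a\mapsto 1-a$, $b\mapsto 1-b$, and you have simply carried this out while carefully tracking the quantitative dependence of $\eta'$ on $\eta$. The one substantive observation you add — that because $\max(1-a,1-b,c)$ need not be comparable to $\max(a,b,c)$, only the crude bounds $M\ge\eta$ and $M'\le 1$ are available, forcing the quadratic loss $\eta'\sim\eta^2$ — is exactly the point the paper leaves implicit, and your verification of it is sound.
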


This is proved in the same way as Lemma~\ref{lemma:complementadmiss}.
\qed

\begin{lemma} \label{lemma:complementsumset}
Suppose that each of $A,B$ has Haar measure strictly $>0$,
that $\mu(A)+\mu(B)<1$, and that $A+B$ is measurable.
Then
\begin{equation}
 \mu_*(A+\tilde B)-\mu(A)-\mu(\tilde B)
\le
\mu(A+B)-\mu(A)-\mu(B)
\end{equation}
where
$\tilde B = -\big(G\setminus(A+B)\big)$.
\end{lemma}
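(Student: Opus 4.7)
My plan is to rewrite the desired inequality as a clean upper bound on $\mu_*(A+\tilde B)$ and then establish it via a one-line set-theoretic observation: $-B$ is disjoint from $A+\tilde B$.

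First, I would simplify. Because $\tilde B$ is the set-theoretic image of a measurable set under $x\mapsto -x$ and $\mu$ is translation-invariant, $\mu(\tilde B)=\mu(G\setminus(A+B))=1-\mu(A+B)$. Substituting this into both sides of the proposed inequality and cancelling $\mu(A)$ and $\mu(A+B)$, the claim collapses to
\[
\mu_*(A+\tilde B)\le 1-\mu(B).
\]
Equivalently, using the identity $\mu_*(E)+\mu^*(G\setminus E)=\mu(G)=1$, it suffices to show that $\mu^*\bigl(G\setminus(A+\tilde B)\bigr)\ge\mu(B)$.

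The heart of the argument is the containment $-B\subseteq G\setminus(A+\tilde B)$. To verify it, I would argue by contradiction: if $-b\in A+\tilde B$ for some $b\in B$, write $-b=a+t$ with $a\in A$ and $t\in\tilde B$. By definition of $\tilde B$ the element $-t$ lies in $G\setminus(A+B)$, but $-t=a+b\in A+B$, a contradiction. Since $-B$ is measurable, the containment yields $\mu^*\bigl(G\setminus(A+\tilde B)\bigr)\ge\mu(-B)=\mu(B)$, which is precisely the reduced inequality.

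There is no real obstacle here; the only care needed is with inner/outer measures, because $A+\tilde B$ is not assumed to be measurable. The containment $-B\subseteq G\setminus(A+\tilde B)$ sidesteps this issue, since $-B$ itself is measurable and so automatically contributes to both the inner and outer measures of the complement. The hypothesis $\mu(A)+\mu(B)<1$ is not needed in the argument itself (it merely guarantees, via Kneser, that the right-hand side of the inequality is nonnegative and that $\tilde B$ is typically nonempty).
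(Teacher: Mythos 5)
Your proof is correct and follows essentially the same route as the paper's: the paper establishes the disjointness in the equivalent form $(G\setminus(A+B))-A\subset G\setminus B$ (your $-B\subseteq G\setminus(A+\tilde B)$ after negation), deduces $\mu_*(A+\tilde B)\le 1-\mu(B)$, and then does the same algebraic bookkeeping with $\mu(\tilde B)=1-\mu(A+B)$. Your reduction-first presentation and use of the inner/outer-measure identity are only cosmetic differences.
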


\begin{proof} It holds that $(G\setminus(A+B)) -A\subset  G\setminus B$.
Indeed, let $x\in A$ and $z\notin A+B$. If $y=z-x$ belongs to $B$ then
$x+y=z$, whence $z\in  A+B$, a contradiction.

Therefore $\mu_*\big(A-G\setminus(A+B) \big)\le 1-\mu(B)$
and consequently
\begin{multline*}
\mu_*\big(A-G\setminus(A+B) \big) -\mu(A)-\mu(G\setminus(A+B))
\\ \le 1-\mu(B)-\mu(A)-[1-\mu(A+B)]
\\
= \mu(A+B)-\mu(A)-\mu(B).\end{multline*}
\end{proof}

\begin{lemma}[Complementation] \label{lemma:complementD}
If $(A,B,C)$ is admissible and $\mu(A)+\mu(B)+\mu(C)\le 2$ then
\begin{equation} \label{eq:complementD}
\scriptd(A,B,C) = \scriptd(G\setminus A,\,G\setminus B,\,C).
\end{equation}
\end{lemma}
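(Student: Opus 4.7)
The plan is to establish the identity pointwise for the convolutions on both $G$ and $\torus$, and then observe that the discrepancy is an additive constant whose contribution to the defect cancels because $m(\cstar)=\mu(C)$ by construction of the symmetrization. Admissibility and the measure constraint enter only to guarantee (via Lemma~\ref{lemma:complementadmiss}) that the triple $(G\setminus A,G\setminus B,C)$ is of the same type, so that both defects in \eqref{eq:complementD} have their intended interpretation; the identity itself is purely algebraic.

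First I would expand $\one_{G\setminus A}=1-\one_A$ and $\one_{G\setminus B}=1-\one_B$ and use the linearity of convolution together with $\int_G\one_A\,d\mu=\mu(A)$ to obtain the pointwise identity
\begin{equation*}
\one_{G\setminus A}*\one_{G\setminus B}(x)=\one_A*\one_B(x)+1-\mu(A)-\mu(B)
\qquad\text{for every }x\in G.
\end{equation*}
The analogous computation on $\torus$, applied to $\torus\setminus\astar$ and $\torus\setminus\bstar$, gives
\begin{equation*}
\one_{\torus\setminus\astar}*\one_{\torus\setminus\bstar}(x)=\one_{\astar}*\one_{\bstar}(x)+1-\mu(A)-\mu(B)
\qquad\text{for every }x\in\torus.
\end{equation*}

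Next I need to replace $\torus\setminus\astar$ with $(G\setminus A)^\star$. These two sets have the same Lebesgue measure $1-\mu(A)$, but $\torus\setminus\astar$ is the interval centered at $\tfrac12$, whereas $(G\setminus A)^\star$ is the interval of that length centered at $0$; hence they differ by a translation of $\tfrac12$. Because convolution on $\torus$ intertwines such translations and two translations by $\tfrac12$ compose to the identity on $\torus$, one concludes that $\one_{(G\setminus A)^\star}*\one_{(G\setminus B)^\star}$ coincides with $\one_{\torus\setminus\astar}*\one_{\torus\setminus\bstar}$ pointwise, and therefore
\begin{equation*}
\one_{(G\setminus A)^\star}*\one_{(G\setminus B)^\star}
=\one_{\astar}*\one_{\bstar}+1-\mu(A)-\mu(B).
\end{equation*}

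Finally I would substitute both pointwise identities into the definition of $\scriptd(G\setminus A,G\setminus B,C)$ and compare term-by-term with $\scriptd(A,B,C)$. The two additive constants, integrated against $\one_{\cstar}$ on $\torus$ and $\one_{C}$ on $G$, contribute $(1-\mu(A)-\mu(B))\bigl(m(\cstar)-\mu(C)\bigr)$, which vanishes because $\cstar$ is defined to satisfy $m(\cstar)=\mu(C)$. This yields \eqref{eq:complementD}. There is no genuine obstacle; the only point requiring a moment of care is the translation bookkeeping between $\torus\setminus\astar$ and $(G\setminus A)^\star$, which the translation invariance of convolution on $\torus$ absorbs cleanly.
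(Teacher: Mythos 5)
Your proof is correct and follows essentially the same route as the paper: expand $\one_{G\setminus A}=1-\one_A$ (and similarly for $B$), observe that the same identity holds on $\torus$ once $\torus\setminus\astar$ is replaced by $(G\setminus A)^\star$ via a translation by $\tfrac12$ (the paper phrases this as the reflection $(G\setminus A)^\star=\{\tfrac12-x:x\in\torus\setminus\astar\}$ together with the symmetry of $\astar,\bstar,\cstar$), and note that the additive constants cancel in the defect because $m(\cstar)=\mu(C)$. The only cosmetic difference is that you state the pointwise convolution identities first and then integrate, whereas the paper works directly with the integrals.
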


\begin{proof}
Writing $\one_{G\setminus A} = 1-\one_A$ and likewise for $B$,
then expanding the integrand, gives
\begin{align*}
\int_C \one_{G\setminus A}*\one_{G\setminus B} \,d\mu
&= \int_C (1-\mu(A)-\mu(B)+\one_A*\one_B)\,d\mu
\\&
= \int_C \one_A*\one_B\,d\mu +  \mu(C)(1-\mu(A)-\mu(B)).
\end{align*}
Since 
\[
(G\setminus A)^\star =\{\tfrac12-x: x\in \torus\setminus \astar\}
\]
and likewise for $(G\setminus B)^\star$,
and since $\astar,\bstar,\cstar$ are symmetric under $x\mapsto -x$,
the same calculation gives
\[
\int_\cstar \one_{(G\setminus A)^\star}*\one_{(G\setminus B)^\star}\,dm
= \int_\cstar \one_\astar*\one_\bstar\,dm
+  \mu(C)(1-\mu(A)-\mu(B))
\]
since $m(\astar)=\mu(A)$ and likewise for $B$.
Subtracting these two relations gives 
$\scriptd(A,B,C) = \scriptd(G\setminus A,\,G\setminus B,\,C)$.
\end{proof}

\section{A link between Riesz-Sobolev and sumset inequalities} \label{section:alink}

The next lemma lies at the heart of this part of the analysis.
It states that if $(A,B,C)$ is nearly a maximizer for 
the functional $\int_C \one_A*\one_B\,d\mu$,
then a certain associated superlevel set $S=S_{A,B}(\beta)$
has small sumset in the sense that  $\mu(S-S)$ is nearly equal to $2\mu(S)$. 
A theorem of Tao \cite{taokneser} then implies that S is nearly a rank one Bohr set. 
The same holds for $C$, as, by Lemma~\ref{lemma:Staulowerbound}, 
$C\symdif S$ has small Haar measure. 

However, the proof of Lemma~\ref{lemma:equalitycase} 
requires its very restrictive hypothesis that $\mu(A)=\mu(B)$. 
In an analysis of the Riesz-Sobolev equality for $\reals^1$ in
\cite{christRS}, this hypothesis was removed in a subsequent step, 
by a method that does not apply to compact groups $G$.
In the present paper
we will accomplish this removal for compact connected Abelian groups by an unrelated and somewhat lengthy 
alternative method based in part on ideas of Tao \cite{taokneser}. 
This necessitates the reductions carried out in \S\ref{section:towards}.

\begin{lemma} \emph{\textbf{\cite{christRS}}} \label{lemma:equalitycase}
Let $(A,B,C)$ be an $\eta$--strictly admissible 
ordered triple of measurable subsets of $G$ with positive Haar measures. 
Suppose that  
\begin{align} &\mu(A)=\mu(B) \le \tfrac12 \label{unnaturalequality},
\\ & \mu(C) \le \mu(A)-  4\scriptd(A,B,C)^{1/2} \label{unnatural},
 \\ &\scriptd(A,B,C)^{1/2} < \tfrac1{28}\eta \mu(A). \label{28appears}  \end{align} 
Let $\beta = \tfrac{1}{2}\big(\mu(A)+\mu(B)-\mu(C)\big)$.
Then
\begin{equation} \label{nearlyadditive}
\mu\big(S_{A,B}(\beta)-S_{A,B}(\beta)\big)
\le 2\mu(S_{A,B}(\beta)) + 12\scriptd(A,B,C)^{1/2}.
\end{equation}
\end{lemma}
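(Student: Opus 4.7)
The strategy has three steps: (i) pass from $C$ to the superlevel set $S=S_{A,B}(\beta)$; (ii) leverage the hypothesis $\mu(A)=\mu(B)$ together with $\mu(C)<\mu(A)$ to establish the set-theoretic inclusion $S-S\subset S_{B,-B}(\gamma)$ for an auxiliary threshold $\gamma:=\mu(A)-\mu(C)>0$; (iii) bound the measure of $S_{B,-B}(\gamma)$ by the sharpened Tao--Kneser stability (Corollary~\ref{cor:KTsharpened}).

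The first two steps are short. Hypotheses~\eqref{unnatural} and~\eqref{28appears} place $(A,B,C)$ comfortably in the applicability regime of Lemma~\ref{lemma:Staulowerbound}, which yields $\mu(S\symdif C)\le 4\scriptd(A,B,C)^{1/2}$, $|\mu(S)-\mu(C)|\le 2\scriptd(A,B,C)^{1/2}$, and $\scriptd(A,B,S)\le\scriptd(A,B,C)$. Since $\mu(A)=\mu(B)$ and $\mu(C)\le \mu(A)-4\scriptd(A,B,C)^{1/2}$, one has $2\beta=2\mu(A)-\mu(C)>\mu(A)$, and so $\gamma:=2\beta-\mu(A)=\mu(A)-\mu(C)>0$. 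For any $s_1,s_2\in S$, the sets $A_{s_i}:=A\cap(s_i-B)$ each have measure strictly greater than $\beta$, so inclusion-exclusion inside $A$ forces $\mu(A_{s_1}\cap A_{s_2})>2\beta-\mu(A)=\gamma$. Any $a$ in this intersection realises $s_1-s_2$ as a difference of two elements of $B$, and integrating over $a$ gives $\one_B*\one_{-B}(s_1-s_2)\ge \mu(A_{s_1}\cap A_{s_2})>\gamma$; hence $S-S\subset S_{B,-B}(\gamma)$.

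The third step is the heart of the proof, and its main obstacle is the estimate $\scriptd'(B,-B,\gamma)=O(\scriptd(A,B,C))$. Granting this bound, Corollary~\ref{cor:KTsharpened} applied to $(B,-B,\gamma)$, whose hypotheses hold by $\mu(A)\le\tfrac12$ and $\mu(A)+\mu(C)\le 1$, yields $\bigl|\mu(S_{B,-B}(\gamma))-2\mu(C)\bigr|\le 2\scriptd'(B,-B,\gamma)^{1/2}$ because $2\mu(B)-2\gamma=2\mu(C)$; combining with $|\mu(S)-\mu(C)|\le 2\scriptd^{1/2}$ from step (i) and tracking constants delivers $\mu(S-S)\le 2\mu(S)+12\scriptd^{1/2}$. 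To secure the control on $\scriptd'(B,-B,\gamma)$ my plan is to refine step (ii) to the pointwise inequality $\one_B*\one_{-B}(s_2-s_1)-\gamma\ge (\one_A*\one_B(s_1)-\beta)+(\one_A*\one_B(s_2)-\beta)$, valid on $S\times S$ with both summands nonnegative. Integrating this against $\one_S(s_1)\one_S(s_2)$ and changing variables $v=s_2-s_1$ couples $\int_G(\one_B*\one_{-B}-\gamma)_+\, d\mu$ to $I-\mu(A)\mu(B)+\beta^2$, where $I:=\int_S \one_A*\one_B\, d\mu\ge \mu(A)\mu(B)-\beta^2-\scriptd(A,B,S)$. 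Since the corresponding $\torus$-integral $\int_\torus(\one_{B^\star}*\one_{-B^\star}-\gamma)_+\, dm$ equals $\mu(C)^2$ exactly --- this being the equality case of Tao's inequality on centered intervals --- the defect $\scriptd'(B,-B,\gamma)$ is controlled by a multiple of $\scriptd(A,B,C)$, completing the plan.
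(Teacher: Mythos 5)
Your steps (i) and (ii), including the pointwise inequality $\one_B*\one_{-B}(s_1-s_2)-\gamma\ge(\one_A*\one_B(s_1)-\beta)+(\one_A*\one_B(s_2)-\beta)$ valid for $s_1,s_2\in S$, are correct. The gap is in step (iii). Integrating the pointwise inequality against $\one_S(s_1)\one_S(s_2)$ and changing variables gives
\[
\int_G\bigl(\one_B*\one_{-B}(v)-\gamma\bigr)\,\one_S*\one_{-S}(v)\,d\mu(v)\ \ge\ 2\mu(S)\bigl(I-\beta\mu(S)\bigr),\qquad I:=\int_S\one_A*\one_B\,d\mu,
\]
and since the weight $\one_S*\one_{-S}$ is bounded above by $\mu(S)$ and supported where $\one_B*\one_{-B}>\gamma$, the best one can extract is $\int_G(\one_B*\one_{-B}-\gamma)_+\,d\mu\ge 2(I-\beta\mu(S))$. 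Using $\scriptd(A,B,S)\le\scriptd$ and $|\mu(S)-\mu(C)|\le 2\scriptd^{1/2}$, this is $\tfrac12\mu(C)^2-O(\scriptd)$, whereas the plan requires $\mu(C)^2-O(\scriptd)$. The factor-of-two loss is real, not an artifact of loose estimates: for $G=\torus$ with $A=B$ a centered interval (so that $S$ is one too), one computes $\int_G(\one_B*\one_{-B}-\gamma)_+\,dm=\mu(C)^2$ while $2(I-\beta\mu(S))=\tfrac12\mu(C)^2$. Consequently the bound you get on $\scriptd'(B,-B,\gamma)$ is of size $\tfrac12\mu(C)^2$ rather than $O(\scriptd)$, and Corollary~\ref{cor:KTsharpened} cannot close the argument.

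The repair is to apply Kneser's inequality at every threshold $t\ge\gamma$ rather than working only at $t=\gamma$. Setting $\beta_t:=\tfrac12(\mu(A)+t)$, exactly your step (ii) argument gives $S_{A,B}(\beta_t)-S_{A,B}(\beta_t)\subset S_{B,-B}(t)$ for all $t\in[\gamma,\mu(B))$; since $S_{A,B}(\beta_t)\subset S$ and $\mu(S)<\tfrac12$, Kneser's inequality yields $\mu(S_{B,-B}(t))\ge\mu\bigl(S_{A,B}(\beta_t)-S_{A,B}(\beta_t)\bigr)\ge 2\mu(S_{A,B}(\beta_t))$. Integrating in $t\ge\gamma$ and substituting $u=\beta_t$ (which contributes a Jacobian factor of $2$) gives $\int_\gamma^\infty\mu(S_{B,-B}(t))\,dt\ge 4\int_\beta^\infty\mu(S_{A,B}(u))\,du=4\bigl[(\mu(A)-\beta)^2-\scriptd'(A,B,\beta)\bigr]=\mu(C)^2-4\scriptd'(A,B,\beta)$. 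Lemma~\ref{lemma:726}, whose hypotheses are met here because $\mu(A)=\mu(B)$, supplies $\scriptd'(A,B,\beta)\le 2\scriptd(A,B,C)$. Hence $\scriptd'(B,-B,\gamma)\le 8\scriptd(A,B,C)$, and your step (iii) then completes as written, with final constant $4+4\sqrt2<12$.
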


The proof of this lemma is essentially identical to the proof of the corresponding result
in \cite{christRS}, so it is not included here. \qed
 
Under certain hypotheses, 
it can be concluded
that the set $S_{A,B}(\beta)$ above is nearly a rank one Bohr set.

\begin{corollary} \label{cor:equalitycase}
For each $\eps,\eta>0$ there exists $\rho>0$ with the following property.
Let $(A,B,C)$ be an $\eta$--strictly admissible $\eta$--bounded
ordered triple of measurable subsets of $G$
satisfying  the hypotheses
\eqref{unnaturalequality} and \eqref{unnatural} of Lemma~\ref{lemma:equalitycase}.
If $\scriptd(A,B,C)\le\rho$
then there exists a rank one Bohr set $\scriptb\subset G$ satisfying
\begin{equation} \label{CnearlyBohr} 
\mu(C\bigtriangleup \scriptb) \le \eps. \end{equation}

If $\mu(C)\le \mu(A)=\mu(B)\le \tfrac12-\eta$
and $\scriptd(A,B,C)=0$,
then there exists a rank one Bohr set $\scriptb\subset G$ satisfying
\begin{equation} \label{CreallyBohr} 
\mu(C\bigtriangleup \scriptb) =0. \end{equation}
\end{corollary}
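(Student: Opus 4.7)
The plan is to apply Lemma~\ref{lemma:equalitycase} to the superlevel set $S:=S_{A,B}(\beta)$, where $\beta=\tfrac12(\mu(A)+\mu(B)-\mu(C))$, concluding that $S$ has nearly minimal difference set; to approximate $S$ by a rank one Bohr set via a stability theorem for Kneser's inequality; and then to transfer the Bohr-set approximation from $S$ to $C$ through Lemma~\ref{lemma:Staulowerbound}.

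Choose $\rho$ small enough depending on $\eta$ and $\eps$. Since $\mu(A)\ge\eta$ by $\eta$--boundedness, the condition $\rho\le(\eta^2/28)^2$ guarantees \eqref{28appears}; the remaining hypotheses \eqref{unnaturalequality} and \eqref{unnatural} of Lemma~\ref{lemma:equalitycase} are assumed. That lemma delivers
\[ \mu(S-S)\ \le\ 2\mu(S)+12\,\scriptd(A,B,C)^{1/2}. \]
Simultaneously, for $\rho$ small the hypotheses of Lemma~\ref{lemma:Staulowerbound} are met (using $\eta$--boundedness and \eqref{unnatural}), yielding $\mu(S\symdif C)\le 4\,\scriptd(A,B,C)^{1/2}$ and $|\mu(S)-\mu(C)|\le 2\,\scriptd(A,B,C)^{1/2}$. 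In particular $\mu(S)\ge \eta/2$ and $\mu(S)\le\mu(A)-2\,\scriptd(A,B,C)^{1/2}$, so $\mu(S)+\mu(-S)=2\mu(S)\le 2\mu(A)-4\,\scriptd(A,B,C)^{1/2}$ is strictly less than $1$.

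Next, apply a stability version of Kneser's inequality (Griesmer's theorem from the introduction, or Theorem~\ref{thm:kneserstability} here) to the pair $(S,-S)$: the excess $\mu(S-S)-\mu(S)-\mu(-S)$ is at most $12\,\scriptd(A,B,C)^{1/2}$, each of $\mu(S),\mu(-S)$ is at least $\eta/2$, and their sum is strictly less than $1$. For $\rho$ small enough this produces a parallel pair of rank one Bohr sets $\scriptb\supset S$ and $\scriptb'\supset -S$ with $\mu(\scriptb\setminus S)+\mu(\scriptb'\setminus(-S))\le\eps/2$. The triangle inequality yields
\[ \mu(C\symdif\scriptb)\ \le\ \mu(C\symdif S)+\mu(\scriptb\setminus S)\ \le\ 4\,\scriptd(A,B,C)^{1/2}+\tfrac{\eps}{2}\ \le\ \eps \]
once $\rho\le(\eps/8)^2$, completing the first assertion.

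For the equality case $\scriptd(A,B,C)=0$ under $\mu(C)\le\mu(A)=\mu(B)\le\tfrac12-\eta$, the argument is parallel but simpler. The trivial cases $\mu(A)=0$ or $\mu(C)=0$ are handled by taking $\scriptb=\phi^{-1}(0)$ for any nontrivial continuous character $\phi:G\to\torus$ (which exists since $G$ is a nontrivial compact connected Abelian group), giving a rank one Bohr set of Haar measure zero with $\mu(C\symdif\scriptb)=0$. Otherwise $\mu(A),\mu(C)>0$, in which case $(A,B,C)$ is $\eta'$--strictly admissible with $\eta'=\mu(C)/\mu(A)>0$, and Lemma~\ref{lemma:equalitycase} yields $\mu(S-S)\le 2\mu(S)$; combined with Kneser's inequality ($2\mu(S)\le 2\mu(A)\le 1-2\eta<1$ ensures its applicability) we get $\mu(S-S)=2\mu(S)$ exactly. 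The sharp case of Kneser's theorem, as cited in the introduction, then furnishes a rank one Bohr set $\scriptb\supset S$ with $\mu(\scriptb\setminus S)=0$, and Lemma~\ref{lemma:Staulowerbound} gives $\mu(S\symdif C)=0$, whence $\mu(C\symdif\scriptb)=0$. The principal obstacle lies in the Kneser stability step when $\mu(A)$ is close to $\tfrac12$: then $\mu(S)+\mu(-S)$ approaches $1$ and the quantitative stability threshold may degrade; this must be absorbed by shrinking $\rho$ appropriately or, if necessary, by invoking the complementation principles of \S\ref{section:twokey} to reduce to a comfortable parameter regime.
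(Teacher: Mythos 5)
Your proof follows essentially the same route as the paper's: Lemma~\ref{lemma:Staulowerbound} gives $\mu(S\symdif C)\le 4\scriptd(A,B,C)^{1/2}$ for $S=S_{A,B}(\beta)$, Lemma~\ref{lemma:equalitycase} shows $S$ nearly saturates Kneser's sumset inequality, the Tao/Griesmer inverse and stability theorems applied to the pair $(S,-S)$ produce a nearby Bohr set, and the triangle inequality transfers this to $C$; in the exact case $\scriptd=0$ one uses Kneser's inverse theorem in place of the stability theorem. This is exactly the paper's argument, with hypotheses verified somewhat more explicitly.

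Two observations on your remarks. Your concern about the regime $\mu(A)\to\tfrac12$ is legitimate: from the hypotheses one only obtains $\mu(S)+\mu(-S)\le 1-4\scriptd(A,B,C)^{1/2}$, which does not supply a gap from $1$ bounded below in terms of $\eta$ alone, whereas the cited Kneser stability theorems require such a uniform gap as an auxiliary hypothesis. The paper's proof simply asserts that the Griesmer/Tao conclusion holds with $\eps\to 0$ as $\scriptd\to 0$ ``with $\eta$ fixed,'' tacitly passing over this point, so the concern is not specific to your proposal. Note, however, that the complementation remedy you float does not obviously help here: when $\mu(A)=\mu(B)\approx\tfrac12$, the complements also satisfy $\mu(G\setminus A)=\mu(G\setminus B)\approx\tfrac12$, so the complemented triple sits at the same edge. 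Separately, your treatment of the trivial cases $\mu(A)=0$ or $\mu(C)=0$ in the exact-equality part is superfluous: the $\eta$--boundedness hypothesis of the corollary already forces $\min(\mu(A),\mu(B),\mu(C))\ge\eta>0$.
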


\begin{proof}
Let $\eps>0$.
By \eqref{ineq:symmetricdifferencebound}, $\mu(S_{A,B}(\beta)\symdif C)\le 4\scriptd(A,B,C)^{1/2}$.
Moreover, if $\scriptd(A,B,C)$ is sufficiently small as a function of $\eta$, then
the conclusion  \eqref{nearlyadditive} of Lemma~\ref{lemma:equalitycase}
states that $S_{A,B}(\beta)$ satisfies a strong form of the hypothesis 
of the theorems of Tao \cite{taokneser} and Griesmer \cite{griesmer} discussed in \S1.
The conclusion of those theorems is the existence of a rank one Bohr set
satisfying $\mu(\scriptb\symdif S_{A,B}(\beta))\le\eps$,
where $\eps\to 0$ as $\scriptd(A,B,C)\to 0$ with $\eta$ fixed. 
Therefore
\[ \mu(\scriptb\symdif C) 
\le
\mu(\scriptb\symdif S_{A,B}(\beta))
+
\mu(S_{A,B}(\beta)\symdif C)
\le \eps + 4\scriptd(A,B,C)^{1/2}.\]

If $\eps=0$ and the measures of $A,B,C$ satisfy the indicated
hypotheses, then by Lemma~\ref{lemma:equalitycase},
$S=S_{A,B}(\beta)$ satisfies $\mu(\setminus C)=0$
and $\mu(S-S)\le 2\mu(S)$.
Therefore $\mu(S_{A,B}(\beta))=\mu(C)\le \tfrac12-\eta$,
and $S$ achieves equality in Kneser's inequality.
Therefore by Kneser's inverse theorem,
there exists a rank one Bohr set $\scriptb$ satisfying $\mu(\scriptb)=\mu(S)$
and $\mu(\scriptb\setminus S)=0$.
Thus $\mu(\scriptb\symdif C)=0$ also.
\end{proof}

\section{Two reductions} \label{section:towards}

This section is devoted to two auxiliary results, of limited if any intrinsic interest, 
whose purpose is to reduce the analysis of triples that nearly saturate 
the Riesz-Sobolev inequality
to triples that satisfy the hypotheses of Corollary~\ref{cor:equalitycase}.
In particular, we show that
if $(A,B,C)$ nearly maximizes the Riesz-Sobolev functional among triples of sets with specified
Haar measures, then
there exists a closely related near maximizing triple $(\tilde A,\tilde B,\tilde C)$ 
satisfying supplementary properties, including the hypotheses of Corollary \ref{cor:equalitycase}.
Those properties will subsequently be used to deduce that $(\tilde A,\tilde B,\tilde C)$
is nearly a compatibly centered parallel triple of rank one Bohr sets.
From that we will deduce the same property for $(A,B,C)$.
This will be achieved by ultimately applying this reasoning to a short chain
of triples $(A_n,B_n,C_n)$, 
with $(A_n,B_n,C_n)$ constructed recursively from $(A_{n-1},B_{n-1},C_{n-1})$
beginning with $(A_0,B_0,C_0)= (A,B,C)$,
and with conclusions propagated in 
reverse from $(A_{n},B_{n},C_{n})$ to $(A_{n-1},B_{n-1},C_{n-1})$, 

\begin{lemma} \label{reducing to equal} Let $(A,B,C)$ be an $\eta$-strictly admissible
and $\eta$--bounded triple of $\mu$-measurable subsets of $G$, satisfying
$$\mu(C)\leq \mu(A)\leq \mu(B),
$$
$$ \mu(A) \le \tfrac{1}{2} ,
$$
$$\mathcal{D}(A,B,C)^{1/2}\leq \tfrac{1}{400}\eta^2 \mu(B).
$$

Define $\tau$ by $\mu(C)=\mu(A)+\mu(B)-2\tau$. 
Then there exists a measurable set
$B'\subseteq G$ with $\mu(A)=\mu(B')$ such that 
\begin{gather*}
(A,B',S_{A,B'}(\tau))\text{ is $\eta/2$--strictly admissible
and $\eta^2/2$--bounded},
\\
\mathcal{D}(A,B',S_{A,B'}(\tau))\leq \frac{1}{\eta}\mathcal{D}(A,B,C).
\end{gather*}
Moreover, if 
$\mu(C) \le (1-\tfrac{\eta}{50})\mu(B)$ then
\begin{equation} \label{eq:scriptseta}
\mu(S_{A,B'}(\tau)) \le \mu(A)-4\scriptd(A,B',S_{A,B'}(\tau))^{1/2}.
\end{equation}
\end{lemma}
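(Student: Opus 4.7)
The plan is to construct $B'$ as the intersection $B \cap (B-x)$ of two translates of $B$ for a well-chosen $x \in G$, arranging $\mu(B') = \mu(A)$, and to control the defect of $(A, B', S_{A,B'}(\tau))$ via Tao's submodularity (Proposition~\ref{submodularity}) combined with the transfer between the $\scriptd$ and $\scriptd'$ defects from Lemmas~\ref{lemma:726} and \ref{returning to level set}. First, Lemma~\ref{lemma:726} applied to $(A,B,C)$ yields $\scriptd'(A, B, \tau) \leq 2\scriptd(A,B,C)$; its hypotheses follow from the $\eta$-strict admissibility, $\eta$-boundedness, and the smallness $\scriptd^{1/2} \leq \eta^2\mu(B)/400$.

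Next, locate $x \in G$ with $\mu(B \cap (B-x)) = \mu(A)$ by applying the intermediate value theorem to the continuous function $\phi(x) := \one_B * \one_{-B}(x) = \mu(B \cap (B-x))$. One has $\phi(0) = \mu(B) \geq \mu(A)$ and $\int_G \phi \, d\mu = \mu(B)^2$; the $\eta$-strict admissibility bound $(1+\eta)\mu(B) \leq \mu(A) + \mu(C) \leq 2\mu(A)$ combined with $\mu(A) \leq 1/2$ gives, in the favorable range $\mu(B) \leq 1/2$, the estimate $\mu(B)^2 \leq \mu(B)/2 \leq \mu(A)$. Since $G$ is connected, $\phi(G)$ is a real interval containing both $\mu(B)$ and a value $\leq \mu(B)^2 \leq \mu(A)$, hence it contains $\mu(A)$. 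Set $B' := B \cap (B-x)$. Proposition~\ref{submodularity} applied to $B_1 = B$, $B_2 = B - x$, whose main hypothesis $2\mu(B) - \tau \leq 1$ follows from $\tau \geq \mu(B)/2$ and $\mu(B) \leq 2/3$, together with translation invariance $\scriptd'(A, B-x, \tau) = \scriptd'(A, B, \tau)$, yields
\begin{equation*}
\scriptd'(A, B', \tau) \leq 2\scriptd'(A, B, \tau) \leq 4\scriptd(A,B,C) \leq \eta^{-1}\scriptd(A,B,C)
\end{equation*}
for $\eta$ sufficiently small. Lemma~\ref{returning to level set} then gives $\scriptd(A, B', S_{A,B'}(\tau)) \leq \scriptd'(A, B', \tau)$, establishing the desired defect bound.

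It remains to verify $\eta/2$-strict admissibility, $\eta^2/2$-boundedness of $(A, B', S_{A,B'}(\tau))$, and the ``moreover'' clause \eqref{eq:scriptseta}. By Corollary~\ref{cor:KTsharpened} applied to $(A, B')$ and $\tau$,
\begin{equation*}
|\mu(S_{A,B'}(\tau)) - (\mu(A) - \mu(B) + \mu(C))| \leq 2\scriptd'(A, B', \tau)^{1/2},
\end{equation*}
and the quantity $\mu(A) - \mu(B) + \mu(C) = 2\mu(A) - 2\tau$ is bounded below by $\eta\max(\mu(A), \mu(B), \mu(C))$ by the $\eta$-strict admissibility of $(A, B, C)$. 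The smallness of $\scriptd$ dominates the $2\scriptd'^{1/2}$ correction, yielding the claimed admissibility and boundedness (the $\eta^2/2$-lower bound arising from $\mu(A) \geq \eta$ and $\mu(S_{A,B'}(\tau)) \gtrsim \eta\max \geq \eta^2$). For \eqref{eq:scriptseta}, the hypothesis $\mu(C) \leq (1-\eta/50)\mu(B)$ gives $\mu(A) - \mu(B) + \mu(C) \leq \mu(A) - \eta\mu(B)/50$, so $\mu(S_{A,B'}(\tau)) \leq \mu(A) - \eta\mu(B)/50 + 2\scriptd'^{1/2}$; the smallness $\scriptd^{1/2} \leq \eta^2\mu(B)/400$ makes $2\scriptd'^{1/2}$ much smaller than $\eta\mu(B)/50$, producing the required bound. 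The principal technical obstacle is the regime where $\mu(B)$ exceeds $1/2$ (or even $(1+\mu(A))/2$), since then $\min_x \phi(x) \geq 2\mu(B) - 1$ need not fall below $\mu(A)$ and the submodularity hypothesis $2\mu(B) - \tau \leq 1$ can fail; handling this regime likely requires a preliminary reduction via the complementation principle (Lemma~\ref{lemma:complementD}) or an iterated/averaged submodularity argument that absorbs the loss into the factor $\eta^{-1}$.
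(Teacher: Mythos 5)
Your approach is the same in spirit as the paper's, and your opening reductions (Lemma~\ref{lemma:726} to pass to $\scriptd'(A,B,\tau)$, shrinking $B$ via an intersection with a translate, submodularity, then Lemma~\ref{returning to level set} and Corollary~\ref{cor:KTsharpened}) are exactly the right ingredients. But you attempt the reduction in a single step, and that single step does not work in general — which you yourself flag at the end.

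Concretely, two things go wrong for small $\eta$. First, the intermediate-value argument for locating $x$ with $\mu(B\cap(B-x))=\mu(A)$ uses that $\phi(x)=\one_B*\one_{-B}(x)$ attains every value in $[\mu(B)^2,\mu(B)]$ (this is Lemma~\ref{continuous overlap}); you need $\mu(A)\ge\mu(B)^2$, and your justification $\mu(B)^2\le\mu(B)/2\le\mu(A)$ uses $\mu(B)\le\tfrac12$ and $\mu(B)\le 2\mu(A)$, neither of which is a hypothesis of the lemma (only $\mu(A)\le\tfrac12$ is assumed, and $\eta$-strict admissibility gives $\mu(B)\le\tfrac{2}{1+\eta}\mu(A)$, which is $>2\mu(A)$ for small $\eta$... wait, no, that's $\le 2\mu(A)$; but $\mu(B)$ can exceed $\tfrac12$). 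Second, the submodularity hypothesis $\mu(A)+\mu(B\cup(B-x))-\tau\le 1$ reduces, as you note, to $2\mu(B)-\tau\le 1$, and from $\tau\ge\tfrac12\mu(B)$ this requires $\mu(B)\le\tfrac23$, again not granted. Both obstructions disappear only when $\eta$ is bounded away from zero. You correctly guess that the fix is an iterated submodularity argument, and this is precisely what the paper does: it constructs a nested chain $B=B_0\supseteq B_1\supseteq\cdots\supseteq B_J$ by repeated application of Lemma~\ref{continuous overlap}, choosing at each stage a decrement $b_j$ small enough that the submodularity hypothesis holds with $B_{j-1}$ in place of $B$, and large enough that the chain terminates in $J$ steps with $2^J\le 2/\eta^2$; each step doubles the $\scriptd'$ bound at worst, yielding the final defect control. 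Carrying this out is the substantive part of the proof of Claim~\ref{claim:algorithm} and is not a routine modification of what you wrote. So the proposal is incomplete: the heart of the argument — the recursion and the bookkeeping that bounds $J$ — is acknowledged but not supplied.

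One further discrepancy: even where your single step applies, it gives $\scriptd'(A,B',\tau)\le 4\scriptd(A,B,C)$, and $4\le 1/\eta$ only when $\eta\le\tfrac14$ — but that is exactly the regime where your single step breaks down. So the constant as well as the construction needs the iteration.
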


\begin{proof} The set $B'$ is constructed via an iterative process, in the course of
which $B$ is recursively replaced by successively smaller sets $B_j$, 
finally arriving at a set $B'$ with the same Haar measure as $A$. 
The quantity $\mathcal{D}'(A,B_j,\tau)$ is controlled by induction on $j$,
yielding control of $\mathcal{D}'(A,B',\tau)$. 

Before starting this process, 
recall that $C$ is essentially equal to $S_{A,B}(\tau)$ in the sense that
\begin{gather}
\label{eq:C almost level set}
\big|\mu\big(S_{A,B}(\tau)\big)-\mu(C)\big|\leq 2\mathcal{D}(A,B,C)^{1/2},
\\
\label{eq:control at step 1}
\scriptd(A,B,S_{A,B}(\tau))
\le \scriptd(A,B,C),\\
\label{eq:control at step 1 for D'}
\scriptd'(A,B,\tau) \le2\scriptd(A,B,C),
\end{gather}
with these inequalities justified 
by Lemmas~\ref{lemma:Staulowerbound} and \ref{lemma:726}.

The following lemma will be useful. 

\begin{lemma}\label{continuous overlap} 
Let $B$ be a 
measurable subset of $G$.
For any $t\in [\mu(B)^2,\mu(B)]$, there exists $x_t\in B$ satisfying $\mu\big(B\cap (x_t+B)\big)=t$. 
\end{lemma}

This is a direct consequence of the connectivity of $G$,
since $x\mapsto \mu(B\cap (B+x))$ is a continuous function from $G$ to $\reals$.
\qed

Iteratively invoking Lemma \ref{continuous overlap}, a nested sequence of subsets of $B$ will be constructed; the last set in the sequence will be the desired $B'$. The properties of this sequence are described in the following Claim, the proof of which is postponed until after the proof of Lemma \ref{reducing to equal}.

\begin{claim} \label{claim:algorithm} There exists a nested sequence $B=:B_0\supseteq B_1\supseteq B_2\supseteq\ldots\supseteq B_J$ of subsets of $G$, with 
$$\mu(B_J)=\mu(A),
$$
\begin{equation}\label{eq:next defect}\mathcal{D}'(A,B_j,\tau)\leq 2\mathcal{D}'(A,B_{j-1},\tau)\text{ for each }j\leq J,
\end{equation}
$$2^J\leq \frac{2}{\eta^2}.
$$
\end{claim}

It follows that
$$\mathcal{D}'\big(A,B',\tau\big)\leq 2^J\cdot 
2 \mathcal{D}(A,B,C)\leq \frac{4}{\eta^2}\mathcal{D}(A,B,C), $$
whence
\begin{equation}\label{eq:control on D' last step} 
\mathcal{D}'\big(A,B',\tau\big)^{1/2}\leq \tfrac{1}{200}\eta\mu(B)  
\end{equation}
by the hypothesis on $\scriptd(A,B,C)$.

We claim that $(A,B',\tau)$ satisfies the hypotheses of Corollary~\ref{cor:KTsharpened}.
Firstly,
$\tau = \tfrac{1}{2}(\mu(A)+\mu(B)-\mu(C))\le 
\min(\mu(A),\mu(B'))=\mu(A)$ 
is equivalent to $\mu(B)\le \mu(A)+\mu(C)$, which 
holds since $(A,B,C)$ is admissible.
Secondly,
the superlevel set $S_{A,B'}(\tau)$ satisfies
$|\mu(A)-\mu(B')|\le \mu(S_{A,B'}(\tau))$, since 
$\mu(A)-\mu(B')=0$.
Thirdly, $\mu(A)+\mu(B')\le 1+\tau = 1 + \tfrac12(\mu(A)+\mu(B)-\mu(C))$
is equivalent to 
$ \mu(A) + \mu(C) + (2\mu(B')-\mu(B))\le 2$, which holds since $\mu(A)+\mu(B)+\mu(C)\le 2$ and $\mu(B')\le\mu(B)$. Fourthly, $\mu(A)+\mu(B')+\mu(S_{A,B'}(\tau))\leq 2$, as $\mu(A)=\mu(B')\leq \tfrac{1}{2}$.

Invoking Corollary~\ref{cor:KTsharpened} for the triple $(A,B',S_{A,B'}(\tau))$ gives
\begin{equation}\label{eq:control of last level set}
|\mu(S_{A,B'}(\tau))-(\mu(A)+\mu(B')-2\tau)|\leq 2\mathcal{D}'(A,B',\tau)^{1/2}.
\end{equation}
Since 
$\mu(A)+\mu(B')-2\tau
= \mu(A)+\mu(C)-\mu(B)$
is $\ge \eta\mu(B)\ge\eta\mu(A)$ by the $\eta$--strict admissibility hypothesis, while also $2\tau\geq \mu(B)$, it follows from \eqref{eq:control on D' last step} 
that $(A,B',S_{A,B'}(\tau))$ is $\eta/2$-strictly admissible
and satisfies the estimates
$\mu(A)+\mu(B')+\mu(S_{A,B'}(\tau))\le 2-\tfrac12\eta$
and 
$\min(\mu(A),\mu(B'),\mu(S_{A,B'}(\tau)))\ge \eta^2/2$.

Moreover,
if $\mu(C)\le \mu(B)- \tfrac1{50}\eta\mu(B)$ then
\begin{eqnarray*}
\begin{aligned}
\mu(S_{A,B'}(\tau))&\leq \mu(A)+\mu(B')-2\tau+2\mathcal{D}'(A,B',\tau)^{1/2}\\
&= \mu(A)+\mu(C)-\mu(B) + 2\mathcal{D}'(A,B',\tau)^{1/2}\\
&\leq \mu(A)- \big(\mu(B)-\mu(C)\big)+\tfrac{1}{100}\eta\mu(B) \\
& \le \mu(A)-\tfrac1{50}\eta\mu(B) + \tfrac1{100} \eta \mu(B).
\end{aligned}
\end{eqnarray*}
Therefore
$\mu(S_{A,B'}(\tau)) \le \mu(A)-4\scriptd(A,B',\tau)^{1/2}$,
establishing together with Lemma \ref{returning to level set} the final assertion of Lemma~\ref{reducing to equal}.
\end{proof}

\begin{proof}[Proof of Claim \ref{claim:algorithm}] The sets $B_j$ will be constructed by an iterative use of Lemma \ref{continuous overlap}, in such a way that Proposition \ref{submodularity} can be invoked to control each $\mathcal{D}'(A,B_j,\tau)$. More precisely, for each $j=1,\ldots,J$, define 
$$B_j:=B_{j-1}\cap (x_j+B_j),
$$
with $x_j\in G$ chosen to ensure that
\begin{equation}\label{eq:exact size}\mu(B_j)=\mu(B_{j-1})-b_j
\end{equation}
for appropriate quantities $b_j\in [0,\mu(B_{j-1})-\mu(B_{j-1})^2]$ that will be specified later (such $x_j$ exists by Lemma \ref{continuous overlap}), where $J$ is defined as the smallest non-negative integer such that $\mu(B_J)=\mu(A)$. (The quantities $b_j$ will be such that such $J$ will exist.)

Now, the $b_j\in [0,\mu(B_{j-1})-\mu(B_{j-1})^2]$ are chosen so that $\mu(B_j)\geq \mu(A)$ for all $j$, i.e.
\begin{equation}\label{eq:simplest bound}b_j\leq \mu(B_{j-1})-\mu(A),
\end{equation}
and so that Proposition \ref{submodularity} can be applied for $(A,B_j,\tau)$ and $\big(A,B_{j-1}\cup (x_j+B_{j-1}),\tau\big)$, to deduce \eqref{eq:next defect}. To that end, for each $j$ the estimate
$$\mu(A)+\mu\big(B_{j-1}\cup (x_j+B_{j-1})\big)\leq 1+\tau
$$
should hold, i.e. $\mu(A)+\mu(B_{j-1})+b_j\leq 1+\tau$ for all $j$. By \eqref{eq:exact size}, this is equivalent to
\begin{equation*}
\left\{ \begin{aligned}
&\mu(A)+\mu(B)+b_1\leq 1+\tau\text{ (for }j=1\text{)},
\\
&\mu(A)+\mu(B)-(b_1+b_2+\ldots+b_{j-1})+b_j\leq 1+\tau\text{ for }j\geq 2,
\\
\end{aligned} \right.
\end{equation*}
that is
\begin{equation}\label{eq:recursive submodularity}
\left\{ \begin{aligned}
&b_1\leq d,
\\
&b_j-(b_1+b_2+\ldots+b_{j-1})\leq d\text{ for }j\geq 2,
\\
\end{aligned} \right.
\end{equation}
where $d:=\tfrac12\big(2-\mu(A)-\mu(B)-\mu(C))\big).$ 

Therefore, it suffices to find $b_j\in [0,\mu(B_{j-1})-\mu(B_{j-1})^2]$ that satisfy \eqref{eq:simplest bound}, that are small enough for \eqref{eq:recursive submodularity} to hold, but also large enough for $\mu(B_J)=\mu(A)$ to hold for some $J$ with $2^J\leq \tfrac{2}{\eta^2}$.

Observe that, if not for the condition $b_j\in [0,\mu(B_{j-1})-\mu(B_{j-1})^2]$, the quantities $b_j=2^jd$ for all $j=1,\ldots,J-1$ and $b_J=\mu(B_{J-1})-\mu(A)$, where $J$ is the smallest positive integer with $\mu(B)-d-2d-\ldots-2^Jd<\mu(A)$, would work as they satisfy \eqref{eq:simplest bound} and \eqref{eq:recursive submodularity}, while also $2^J\leq \tfrac{2}{\eta^2}$.

In order to achieve the additional condition $b_j\in [0,\mu(B_{j-1})-\mu(B_{j-1})^2]$, more care needs to be taken. For simplicity, once $B_{j-1}$ has been defined, denote
$$m_j:=\min\big(\mu(B_{j-1})-\mu(B_{j-1})^2,\mu(B_{j-1})-\mu(A)\big).
$$
Define
$$b_j:=2^jd\text{ for all }j=1,\ldots,J_1-1,
$$
where $J_1$ is the smallest non-negative integer $j$ such that $2^jd>m_j$. Observe that the so far defined $b_j$ satisfy the required conditions.

If $2^{J_1}d\leq\mu(B_{J_1-1})-\mu(B_{J_1-1})^2$, then $2^{J_1}d>\mu(B_{J_1-1})-\mu(A)$, so $\mu(B_{J_1-1})-\mu(A)\in [0,  \mu(B_{J_1-1})-\mu(B_{J_1-1})^2]$. In this case, define $b_{J_1}:=\mu(B_{J_1-1})-\mu(A)$ and terminate the process. The $b_j$ satisfy all the required conditions.

Otherwise, $2^{J_1}d>\mu(B_{J_1-1})-\mu(B_{J_1-1})^2$. Define
$$b_j:=m_j\text{ for all }J=J_1+1,\ldots, \bar{J}_2-1,
$$
where $\bar{J}_2$ is the smallest integer larger than $J_1$ with $2^{J_1}d\leq m_{\bar{J}_2}$, having terminated the process at the smallest $j$ along the way for which $m_j=0$, if such a $j$ exists. Observe that the so far defined $b_j$ satisfy the required conditions.

If the process has not been terminated, define
$$b_j:=2^{J_1+j-\bar{J}_2}d\text{ for all }j=\bar{J}_2,\ldots, J_2-1,
$$
where $J_2$ is the smallest integer $j$ larger than $\bar{J}_2$ with $2^{J_1+j-\bar{J}_2}d>m_j$. The so far defined $b_j$ satisfy the required conditions.

Now, working as above, if $2^{J_1+J_2-\bar{J}_2}d\leq\mu(B_{J_1-1})-\mu(B_{J_1-1})^2$ define $b_{J_2}:=\mu(B_{J_2-1})-\mu(A)$ and terminate the process. Otherwise, define
$$b_j:=m_j\text{ for all }J=J_2+1,\ldots, \bar{J}_3-1,
$$
where $\bar{J}_3$ is the smallest integer larger than $J_2$ with $2^{J_1+J_2-\bar{J}_2}d\leq m_2$, having terminated the process at the smallest $j$ along the way for which $m_j=0$, if such a $j$ exists. Continuing this way, one definitely finds $J\in \mathbb{N}$ with $\mu(B_J)=\mu(A)$; that is when the process terminates. The $b_j$ satisfy \eqref{eq:simplest bound} and \eqref{eq:recursive submodularity}. Therefore, it remains to show that $2^J\leq \tfrac{2}{\eta^2}$.

Indeed, $b_1+\ldots +b_J=\mu(B)-\mu(A)$. Now, let $\mathcal{M}$ be the set of $j$ for which $b_j=m_j$, and $\mathcal{M}':=\{1,\ldots,J\}\setminus \mathcal{M}$. On the one hand,
$$\sum_{j\in\mathcal{M}'}b_j=d+2d+2^2d+\ldots +2^{m'}d\geq 2^{m'}d,
$$
where $m'=\#\mathcal{M}'$. Therefore, $2^{m'}d\leq \mu(B)-\mu(A)$, so
$$2^{m'}\leq \tfrac{1}{\eta}.
$$
On the other hand, $m$ equals at most the number of consecutive intervals of the form $[c^2,c]$ needed to cover $[\mu(A),\mu(B)]$ (with the right-most interval being $[\mu(B)^2,\mu(B)]$). This in turn equals the smallest positive integer $k$ with $\mu(B)^{2^k}\leq \mu(A)$. Since $\mu(B)^{2^{k-1}}\geq \mu(A)$, it follows that
$$2^m\leq 2\tfrac{\ln\big(\tfrac{1}{\mu(A)}\big)}{\ln\big(\tfrac{1}{\mu(B)}\big)}\leq 2\tfrac{\ln \big(\tfrac{1}{\eta}\big)}{\ln 2}\leq\tfrac{2}{\eta}.
$$
So, $2^J=2^{m+m'}\leq \frac{2}{\eta^2}$.

\end{proof}

The next lemma will be used to deduce properties of more general triples
from properties of triples that satisfy the hypotheses of Lemma~\ref{reducing to equal}.

\begin{lemma} \label{for not strongly admissible} Let $(A,B,C)$ be $\eta$-strictly admissible 
and $\eta$--bounded and satisfy
\begin{gather*}
\mu(C)\leq \mu(A)\leq \mu(B),
\\
\mu(A)\leq\tfrac{1}{2},
\\
\mathcal{D}(A,B,C)^{1/2}\leq \tfrac{1}{800} \eta\mu(B).
\end{gather*}
Define $\tau$ by $\mu(B)=\mu(A)+\mu(C)-2\tau$.
If 
$\mu(C) > (1-\tfrac{\eta}{50})\mu(B)$
then there exist measurable sets $C'\subseteq C$ and $A'\subseteq A$ that satisfy
\begin{equation*}
\left\{ \begin{aligned}
&(S_{C',A}(\tau),C',A)\text{ is $\eta/4$--strictly admissible
and $\eta/4$--bounded}
\\
&\mathcal{D}(S_{C',A}(\tau),C',A)\leq 16\mathcal{D}(C,B,A)
\\
&\mu(C')=\mu(A')=\mu(C)-\tfrac{1}{10} \eta\mu(B),
\end{aligned} \right. 
\end{equation*}
while 
\begin{equation*}
\left\{ \begin{aligned}
&(S_{C',A'}(\tau),C',A')\text{ is $\eta/2$--strictly admissible and $\eta/2$--bounded}
\\
&\mathcal{D}(S_{C',A'}(\tau),C',A')\leq 16\mathcal{D}(C,B,A)
\\
&\mu(S_{A',C'}(\tau)) \le (1-\tfrac{\eta/2}{50}) \mu(C').
\end{aligned} \right.
\end{equation*}
\end{lemma}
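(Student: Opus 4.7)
My plan is to mirror the construction of Lemma \ref{reducing to equal}, but now shrinking both $C$ and $A$ simultaneously so that the resulting triples acquire the needed admissibility gaps. The hypotheses $\mu(C)\le\mu(A)\le\mu(B)$ and $\mu(C)>(1-\eta/50)\mu(B)$ force $\mu(A)\ge\mu(C)>(1-\eta/50)\mu(B)$, so all three measures lie within $\eta\mu(B)/50$ of $\mu(B)$, while $\eta$-strict admissibility yields $2\tau=\mu(A)+\mu(C)-\mu(B)\ge\eta\mu(B)$. Via Lemma \ref{continuous overlap} I first produce $x\in G$ with $C':=C\cap(x+C)$ of measure $\mu(C')=\mu(C)-\tfrac{\eta}{10}\mu(B)$, and then $y\in G$ with $A':=A\cap(y+A)$ of measure $\mu(A')=\mu(C')$; these shrinkage amounts lie in the permitted ranges since $\mu(C)(1-\mu(C))\ge\mu(C)/2\ge\eta/2$ and $\mu(A)-\mu(A')\le\tfrac{3\eta}{25}\mu(B)\le\mu(A)(1-\mu(A))$.

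The defect bounds will be established by chaining three principles: Lemma \ref{lemma:726} applied with the slots ordered so that $\tau$ matches $\tau_B$ (yielding $\mathcal{D}'(A,C,\tau)\le 2\mathcal{D}(C,B,A)$), submodularity (Proposition \ref{submodularity}), and Lemma \ref{returning to level set}. At each submodularity step the hypothesis $\mu(A)+\mu(B_1\cup B_2)-\tau\le 1$ reduces to a condition of the form $\tfrac{1}{2}(\mu(A)+\mu(B)+\mu(C))+O(\eta\mu(B))\le 1$, which holds by $\eta$-boundedness. The chain produces first $\mathcal{D}'(A,C',\tau)\le 2\mathcal{D}'(A,C,\tau)\le 4\mathcal{D}(C,B,A)$, hence $\mathcal{D}(S_{C',A}(\tau),C',A)\le 4\mathcal{D}(C,B,A)$ after Lemma \ref{returning to level set}; a second submodularity step shrinking $A$ to $A'$ yields $\mathcal{D}'(C',A',\tau)\le 8\mathcal{D}(C,B,A)$, and one more application of Lemma \ref{returning to level set} gives the claimed $\mathcal{D}(S_{C',A'}(\tau),C',A')\le 8\mathcal{D}(C,B,A)\le 16\mathcal{D}(C,B,A)$.

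The admissibility, boundedness, and size-separation conditions will be verified using Corollary \ref{cor:KTsharpened} applied to $(C',A)$ and $(C',A')$ at level $\tau$. This shows $\mu(S_{C',A}(\tau))$ equals its expected value $\mu(C')+\mu(A)-2\tau=(1-\tfrac{\eta}{10})\mu(B)$ up to error $2\mathcal{D}'(C',A,\tau)^{1/2}=O(\mathcal{D}(A,B,C)^{1/2})$, and likewise for the second pair. The hypothesis $\mathcal{D}(A,B,C)^{1/2}\le\tfrac{1}{800}\eta\mu(B)$ renders these errors $O(\eta\mu(B)/200)$, negligible against the $\Omega(\eta\mu(B))$ gaps produced by the construction. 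The $\eta/4$- and $\eta/2$-strict admissibility and boundedness then follow by direct substitution into the three admissibility inequalities and the boundedness sum, using $\mu(A)+\mu(B)+\mu(C)\le 2-\eta$. The final inequality $\mu(S_{A',C'}(\tau))\le(1-\eta/100)\mu(C')$ reduces to $2\tau-\mu(C')\ge\tfrac{\eta}{100}\mu(C')+O(\eta\mu(B)/100)$; the identity $2\tau-\mu(C')=\mu(A)-\mu(B)+\tfrac{\eta}{10}\mu(B)\ge-\tfrac{\eta}{50}\mu(B)+\tfrac{\eta}{10}\mu(B)=\tfrac{2\eta}{25}\mu(B)$ provides ample slack.

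The principal obstacle is the careful bookkeeping of numerical gaps throughout: the shrinkage amount $\tfrac{\eta}{10}\mu(B)$ must simultaneously preserve admissibility of each intermediate triple and produce the target gaps, while each submodularity step doubles the $\mathcal{D}'$-bound, accounting for the final factor $16$. A secondary subtlety lies in the permutation behavior of the asymmetric defect $\mathcal{D}(\cdot,\cdot,\cdot)$ — in particular $\int_C \one_A*\one_B=\int_A\one_C*\one_{-B}$ rather than $\int_A\one_C*\one_B$ — which is navigated by phrasing the intermediate bounds in terms of the symmetric $\mathcal{T}_G$-functional and the equivalent defect $\overline{\mathcal{D}}$ introduced in the notation section, and reading the final bound as an estimate on $\mathcal{D}(C,B,A)$ as it naturally arises from the ordering in Lemma \ref{lemma:726}.
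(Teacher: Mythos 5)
Your proposal follows the paper's proof in all essentials: construct $C'=C\cap(x_C+C)$ and $A'=A\cap(x_A+A)$ via Lemma~\ref{continuous overlap} with $\mu(A')=\mu(C')=\mu(C)-\tfrac{\eta}{10}\mu(B)$, control the defects through Lemma~\ref{lemma:726}, two applications of Proposition~\ref{submodularity}, and Lemma~\ref{returning to level set}, then invoke Corollary~\ref{cor:KTsharpened} to pin down superlevel-set sizes and verify admissibility. The one point to tighten is your plan to apply Corollary~\ref{cor:KTsharpened} directly to the pair $(C',A)$: its hypothesis $\mu(A)-\mu(C')\le\mu(S_{C',A}(\tau))$ is not available a priori, since $\mu(A)>\mu(C')$ and there is no immediate lower bound on $\mu(S_{C',A}(\tau))$. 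The paper sidesteps this by applying the corollary only to $(A',C')$ — where $\mu(A')=\mu(C')$ makes that hypothesis trivial — and then controlling $\mu(S_{C',A}(\tau))$ by the sandwich $\mu(S_{C',A'}(\tau))\le\mu(S_{C',A}(\tau))\le\mu(S_{C,A}(\tau))$, with the upper endpoint estimated via Lemma~\ref{lemma:Staulowerbound}. (Alternatively one can note that if the hypothesis failed, Theorem~\ref{thm:sharpKPRGT} in the $\sigma>\min$ regime would force $\mathcal{D}'(C',A,\tau)\ge(\mu(C')-\tau)^2\gtrsim\mu(B)^2$, contradicting the smallness assumption.) Your constant chain ($8\le16$) and the final size-separation estimate via $2\tau-\mu(C')=\mu(A)-\mu(B)+\tfrac{\eta}{10}\mu(B)\ge\tfrac{2\eta}{25}\mu(B)$ are otherwise verified correctly.
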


\begin{proof} 
Define $\tau = \tfrac12 (\mu(A)+\mu(C)-\mu(B))$.
Then $\tau\ge \tfrac12 \eta \mu(B)\ge \tfrac12 \eta^2$ by the $\eta$--strict admissibility hypothesis,
while $\tau \le \tfrac12\mu(C)\le \tfrac14$ since $\mu(B)\ge\mu(A)$.

Since $(A,B,C)$ is $\eta$-strictly admissible and $\mathcal{D}(A,B,C)$ is small relative to $\eta\mu(B)$, 
Lemma~\ref{lemma:Staulowerbound} gives
\begin{equation}\label{eq:B almost level set}\big|\mu\big(S_{C,A}(\tau)\big)-\mu(B)\big|\leq 2\mathcal{D}(A,B,C)^{1/2},
\end{equation}
whence $(C,A,S_{C,A}(\tau))$ is $\tfrac{1}{2}\eta$-strictly admissible. Lemma~\ref{lemma:Staulowerbound} also gives
\begin{equation}\label{eq:control of D}\mathcal{D}(C,A,S_{C,A}(\tau))\leq \mathcal{D}(A,B,C).
\end{equation}
By Lemma~\ref{lemma:726},
\begin{equation*}
\mathcal{D}'(C,A,\tau)
\leq 2\mathcal{D}(A,B,C).
\end{equation*}

Now, there exist $x_C,x_A\in G$ such that 
$C':=C\cap (x_C +C)$ and
$A':=A\cap (x_A +A)$ satisfy
\begin{equation*} \left\{
\begin{aligned}
\mu(C')&=\mu(C)-\tfrac{\eta}{10}\mu(B) \ \in [\mu(C)^2,\mu(C)]
\\
\mu(A')&=\mu(C') \  \in [\mu(A)^2,\mu(A)].
\end{aligned} \right. \end{equation*}
(Observe that $\mu(C)-\tfrac{\eta}{10}\mu(B)\geq \mu(A)^2$ ($\geq \mu(C)^2$) because $\mu(A)\leq \tfrac{1}{2}$, thus $\mu(A)^2\leq \tfrac{1}{2}\mu(A)\leq \tfrac{1}{2}\mu(B)$; combining this with the lower bound assumption on $\mu(C)$, one obtains $\mu(C)-\mu(A)^2\geq (1-\tfrac{\eta}{50}-\tfrac{1}{2})\mu(B)\geq \tfrac{\eta}{10}\mu(B)$.)

It holds that
$$0\leq\tau\leq\mu(C')=\min\big\{\mu(C'),\mu(A)\big\}=\min\big\{\mu(C'),\mu(A')\big\}
$$
and
$$\mu(C')+\mu(A\cup A')-\tau\leq \mu(A)+\mu(C\cup C')-\tau<1
$$
(as $2\frac{\eta}{10}\mu(B)<2-(\mu(A)+\mu(B)+\mu(C))$).
Therefore,
\begin{eqnarray*}
\begin{aligned}
0\leq\mathcal{D}'(C',A',\tau)\leq 2\mathcal{D}'(C',A,\tau)&\leq 4\mathcal{D}'(C,A,\tau)\leq 8\mathcal{D}(A,B,C)
\end{aligned}
\end{eqnarray*}
by the submodularity principle, Proposition~\ref{submodularity}.

We apply Corollary~\ref{cor:KTsharpened}
to the triple $(A',C',\tau)$.  
Its hypotheses are satisfied.
First, $0\leq \tau\leq\min(\mu(C'),\mu(A'))=\mu(C')$; also, $\mu(A')+\mu(C')<1+\tau$ holds,
since $\mu(C')=\mu(A')\le \mu(A)\le\tfrac12$ while $\tau>0$.
Second, $\mu(S_{A',C'}(\tau)) \ge 0 = |\mu(A')-\mu(C')|$.
Third,
$\mu(A')+\mu(C')+\mu(S_{A',C'}(\tau))\le 2$
because $\mu(A')=\mu(C')\le\mu(A)\le\tfrac12$
while $\mu(S_{A',C'}(\tau))\le 1$.
Therefore the Corollary may be applied to obtain
\begin{eqnarray}
\begin{aligned}\label{eq:final level set}
|\mu(S_{C',A'}(\tau))-(\mu(A')+\mu(C')-2\tau)|
\leq 2\mathcal{D}'(C',A',\tau)^{\frac{1}{2}}
\leq \tfrac{\eta}{100}\mu(B).
\end{aligned}
\end{eqnarray}

We next show that
$(S_{C',A'}(\tau),C',A')$ is $\frac{\eta}{2}$-strictly admissible. Inserting the definition of $\tau$ into \eqref{eq:final level set} gives
\begin{eqnarray*}
\begin{aligned}
\mu(S_{C',A'}(\tau))&\leq \mu(B)-\big(\mu(A)-\mu(A')\big)-\big(\mu(C)-\mu(C')\big)+\tfrac{\eta}{100}\mu(B)\\
&\leq\mu(C)+\tfrac{\eta}{50}\mu(B)-2\cdot\tfrac{\eta}{10}\mu(B)+\tfrac{\eta}{100}\mu(B)\\
&\leq \mu(C')-\tfrac{\eta}{50}\mu(B)
\\
&\leq (1-\tfrac{\eta}{50})\mu(C').
\end{aligned}
\end{eqnarray*}
Note that the last of the three conclusions stated for $(A',C',S_{A',B'}(\tau))$
has been verified.

On the other hand,
\begin{eqnarray}
\begin{aligned}\label{eq:smaller by definite amount}
\mu(S_{C',A'}(\tau))&\geq \mu(B)-\big(\mu(A)-\mu(A')\big)-\big(\mu(C)-\mu(C')\big)-\tfrac{\eta}{100}\mu(B)\\
&\geq\mu(B)-\left(\tfrac{\eta}{10}\mu(B)+\tfrac{\eta}{100}\mu(B)\right)-\tfrac{\eta}{10}\mu(B)-\tfrac{\eta}{100}\mu(B)\\
&\geq \mu(B)-\tfrac{\eta}{4}\mu(B)
\\
&\geq \mu(C')-\tfrac{\eta}{4}\mu(B)\\
&>\left(1-\tfrac{\eta}{50}-\tfrac{\eta}{4}\right)\mu(B)\\
&>\tfrac{\eta}{2}\mu(B).
\end{aligned}
\end{eqnarray}
Since $\mu(A')=\mu(C')$ and $\mu(B)\ge \max(\mu(A'),\mu(C'),\mu(S_{A',C'}(\tau)))$,
the triple $(A',C',S_{A',C'}(\tau))$ is $\eta/2$--strictly admissible.




\medskip
We claim next that the intermediate triple $(S_{C',A}(\tau),C',A)$ 
is $\tfrac{\eta}{4}$-strictly admissible. 
Indeed, since $A'\subseteq A$ and $C'\subseteq C$, 
$$\mu(S_{C',A'}(\tau))\leq \mu(S_{C',A}(\tau))\leq \mu(S_{C,A}(\tau)),$$
whence, by \eqref{eq:B almost level set} and one of the inequalities in \eqref{eq:smaller by definite amount},
\begin{eqnarray*}
\begin{aligned}
\mu(B)-\tfrac{\eta}{4}\mu(B)\leq \mu(S_{C',A}(\tau))
\leq \mu(B)+2\mathcal{D}(A,B,C)^{1/2}
\leq\mu(B)+\tfrac{\eta}{400}\mu(B).
\end{aligned}
\end{eqnarray*}
Therefore, $\tfrac{\eta}{4}$-strict admissibility follows from the $\eta$--strict admissibility
of $(A,B,C)$ and the inequalities
$|\mu(C')-\mu(C)| \le \tfrac{\eta}{10}\mu(B)$
and $|\mu(A)-\mu(B)| \le \frac{\eta}{50}\mu(B)$.

\medskip
Finally, the $\eta/2$--boundedness  of $(A',C',S_{A',C'}(\tau))$
and
$\eta/4$--boundedness  of $(A,C',S_{A,C'}(\tau))$
follow from estimates shown above.
\end{proof}



\section{Relaxation} \label{section:relax}

For function $g_j: G\to[0,1]$,
define $g_j^\tarstar:\torus\to[0,\infty)$ 
to be the indicator function of the interval centered at $0$
whose Lebesgue measure is equal to $\int_G g_j\,d\mu$.
Define $\bg^\tarstar = (g_1^\tarstar,g_2^\tarstar,g_3^\tarstar)$.
Assuming that $g_j$ takes values in $[0,1]$ for each index $j$,
we say that $\bg$ is $\eta$--strictly admissible
if the triple $(\int_G g_j\,d\mu: 1\le j\le 3)$ is $\eta$--strictly admissible.

With these notations, Theorem~\ref{thm:relaxed} can be equivalently stated
as the inequality
\begin{equation}
\scriptt_G(\bg) \le \scriptt_\torus(\bg^\tarstar)
\ \text{ for all functions $g_j:G\to[0,1]$.}
\end{equation}

\begin{notation}
For any ordered triple $\bE$ of measurable subsets of $G$, define
\begin{equation} 
\overline{\scriptd}(\bE)=\scriptt_G(\bE^\star)-\scriptt_\torus(\bE).  
\end{equation}
More generally, for $g:G\to[0,1]$, define
\begin{equation}
\label{scriptdbardefn}
 \overline{\scriptd}(\bg) = \scriptt_\torus(\bg^\tarstar)-\scriptt_G(\bg),  
\end{equation}
and for $\bg = (g_j: j\in\{1,2,3\})$, define $\bg^\tarstar = (g_j^\star: j\in\{1,2,3\})$.
\end{notation}

Then 
\[ \scriptd(A,B,C)=\overline{\scriptd}(A,B,-C)\] 
for any ordered triple $(A,B,C)$ of measurable subsets of $G$.
That is,
\[
\langle \one_{\astar}*\one_{\bstar},\one_{\cstar}\rangle_\torus
- \langle \one_A*\one_B,\one_C\rangle_G
= \langle \one_{\astar}*\one_{\bstar},\one_{(-C)^\star}\rangle_\torus
- \langle \one_A*\one_B,\one_{-C}\rangle_G.
\]
Theorem~\ref{thm:relaxed} can again  be restated as $\overline{\scriptd}(\bE)\ge 0$
for every triple $\bE$.

The function $h:\torus\to[0,\infty)$ is said to be symmetric if $h(-x)=h(x)$ for all $x\in\torus$.
If $h$ is symmetric, $h$ is said to be nonincreasing if its restriction to $[0,\tfrac12]\subset\torus$
is nonincreasing, under the usual identification of $\torus$ with $[-\tfrac12,\tfrac12]$. 

\begin{lemma} \label{hardy-littlewood type} 
Let 
$f_1,f_2,f_3:\mathbb{T}\rightarrow \mathbb{R}$ be symmetric, nonincreasing functions satisfying $0\leq f_1,f_2,f_3\leq 1$. Let
$I\subset\torus$ be the interval centered at $0$ of length $|I|=\int_\torus f_1\, dm$.
Then
$$\mathcal{T}_\torus(f_1,f_2,f_3)\leq \mathcal{T}_\torus(\one_I,f_2,f_3). $$
\end{lemma}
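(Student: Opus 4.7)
The plan is to reduce the trilinear inequality to a linear-in-$f_1$ statement and close by a bathtub-principle argument. Using the symmetry $f_3(-u)=f_3(u)$ and Fubini,
\[
\scriptt_\torus(f_1,f_2,f_3)=\int_\torus f_1(x)\,(f_2*f_3)(x)\,dm(x),
\]
and likewise $\scriptt_\torus(\one_I,f_2,f_3)=\int_\torus \one_I(x)(f_2*f_3)(x)\,dm(x)$. Set $F:=f_2*f_3$. It thus suffices to show that whenever $f_1:\torus\to[0,1]$ is measurable with $\int_\torus f_1\,dm=|I|$, one has $\int_\torus f_1 F\,dm\le\int_\torus \one_I F\,dm$; this will follow once $F$ is known to be symmetric and nonincreasing on $[0,\tfrac12]$.

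The first substantive step is to establish this property of $F$. By the layer-cake identity, $f_j=\int_0^1 \one_{\{f_j>t\}}\,dt$ for $j\in\{2,3\}$, and by the symmetric nonincreasing hypotheses on $f_2,f_3$, each superlevel set $\{f_j>t\}$ is a centered interval (possibly empty or all of $\torus$). Fubini then gives
\[
F=\int_0^1\!\int_0^1 \one_{I_s}*\one_{J_t}\,ds\,dt,
\]
with $I_s,J_t\subset\torus$ centered intervals. It therefore suffices to check that $\one_I*\one_J$ is symmetric and nonincreasing on $[0,\tfrac12]$ for any pair of centered intervals $I=[-a,a]$, $J=[-b,b]\subset\torus$. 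Computing $\one_I*\one_J(x)=m(I\cap (x-J))$ directly, and distinguishing whether the translate $x-J$ wraps around $\torus$, yields a clipped tent: the value equals $\min(2a,2b)$ for $|x|$ near $0$, decreases linearly in $|x|$ on an intermediate annulus, and equals $\max(0,2a+2b-1)$ for $|x|$ near $\tfrac12$. This function is manifestly symmetric and nonincreasing on $[0,\tfrac12]$, so $F$ is too.

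With $F$ symmetric and nonincreasing, the bathtub argument is immediate. Let $\alpha=\inf_{x\in I}F(x)$; by the previous step this also equals $\sup_{x\in\torus\setminus I}F(x)$. Then pointwise
\[
\bigl(\one_I(x)-f_1(x)\bigr)\bigl(F(x)-\alpha\bigr)\ge 0,
\]
since on $I$ both factors are nonnegative (using $f_1\le 1$ and $F\ge\alpha$), while off $I$ both factors are nonpositive (using $f_1\ge 0$ and $F\le\alpha$). Integrating and using $\int_\torus(\one_I-f_1)\,dm=|I|-|I|=0$ gives $\int_\torus(\one_I-f_1)F\,dm\ge 0$, which is the desired inequality. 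The main potential obstacle is the wrap-around case $a+b>\tfrac12$ in the torus convolution of two centered intervals, but this reduces to a finite case analysis (equivalently, one may apply Lemma~\ref{lemma:complementD}-style complementation to replace one of the intervals by its complement and reduce to the case $a+b\le\tfrac12$).
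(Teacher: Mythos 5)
Your proof is correct and essentially reproduces the paper's argument: both reduce to the layer-cake decomposition of $f_2,f_3$ into centered intervals, use that the torus convolution of two centered intervals is symmetric and nonincreasing on $[0,\tfrac12]$, and close by comparing against a constant (the value of the kernel at $|I|/2$) using $\int_\torus(\one_I - f_1)\,dm = 0$. The only cosmetic difference is that you aggregate the layer-cake integrals into $F=f_2*f_3$ before comparing with a single level $\alpha$, whereas the paper performs the comparison term-by-term against $\one_J*\one_K(m(I)/2)$; the underlying mechanism is identical.
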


\begin{proof} Defining $F$ by $f_1=\one_I+F$, one has
\begin{equation}\label{eq:properties of F}F\leq 0\text{ on }I\text{, }F\geq 0\text{ on }\mathbb{T}\setminus I\text{ and }\textstyle\int_\torus F\,dm=0.
\end{equation}
Since
$$\mathcal{T}_\torus(f_1,f_2,f_3)=\langle f_1, f_2*f_3\rangle_\torus =\langle \one_I,f_2*f_3\rangle_\torus +\langle F, f_2*f_3\rangle_\torus,
$$
it suffices to show that 
$\langle F, f_2*f_3\rangle_\torus\leq 0$.
Now, since $f_2, f_3$ are symmetric, non-increasing and non-negative, each can be approximated by a superposition of indicator functions of intervals centered at 0. Therefore, it suffices to show that
$\langle F, \one_J*\one_K\rangle_\torus\leq 0 $
for all intervals $J, K$ centered at 0. This is in fact trivially true, due to \eqref{eq:properties of F} and the fact that $\one_J*\one_K$ is symmetric, non-increasing and non-negative. Indeed,
\begin{eqnarray*}
\begin{aligned}\langle F, \one_J*\one_K\rangle_\torus&=\int_I \one_J*\one_K\cdot F\,dm + \int_{\mathbb{T}\setminus I} \one_J*\one_K\cdot  F\,dm\\
&\leq \int_I \left(\inf_I\one_J*\one_K\right) F\,dm+ \int_{\mathbb{T}\setminus I} \left(\sup_{\mathbb{T}\setminus I}\one_J*\one_K\right) F\,dm\\
&= c\int_I F\,dm+ c\int_{\mathbb{T}\setminus I} F\,dm=c\int F\,dm=0,
\end{aligned}
\end{eqnarray*}
where $c:=\one_J*\one_K\left(\tfrac{m(I)}{2}\right)$.
\end{proof}

\begin{proof}[Proof of Theorem~\ref{thm:relaxed}]
By expressing each of $f,g,h$ as a superposition of indicator functions
and invoking Theorem~\ref{thm:RS}, we deduce that
\begin{equation}
\langle f*g,h\rangle_G \le \langle f^\star*g^\star,h^\star\rangle_\torus.
\end{equation}
Express $h^\star$ as a superposition $\int_0^1 \one_{D(t)}\,dt$
where each $D(t)\subset\torus$ is an interval centered at $0$.
According to Lemma~\ref{hardy-littlewood type},
\begin{equation}
\langle f^\star,g^\star,\one_{D}\rangle_\torus
\le 
\langle \one_\astar*\one_\bstar,\one_D\rangle_\torus
\end{equation}
for any interval $D$ centered at $0$.
Integrating with respect to $t\in[0,1]$ yields 
\begin{equation}
\langle f^\star*g^\star,h^\star\rangle_\torus
\le
\langle \one_\astar*\one_\bstar,h^\star\rangle_\torus.
\end{equation}
A repetition of this reasoning gives
\begin{equation}
\langle \one_\astar*\one_\bstar,h^\star\rangle_\torus
\le
\langle \one_\astar*\one_\bstar,\one_\cstar\rangle_\torus.
\end{equation}
\end{proof}


\section{The perturbative Riesz-Sobolev regime} \label{section:RSperturbative}

In this section, we prove the following lemma,
in which the sets in question are assumed to be moderately well approximated
by appropriately related rank one Bohr sets,
and are proved to be better approximated if 
$\scriptd(\bE) = \scriptt_\torus(\bE^\star)-\scriptt_G(\bE)$ is sufficiently small.
The analysis is adapted from \cite{christRSult}.
 
\begin{lemma} \label{lemma:perturbative} For each $\eta,\eta'>0$ there exist $\delta_0>0$ 
and $\bC<\infty$ with the following property. 
Let $\bE = (E_1,E_2,E_3)$ be an $\eta$--strictly admissible triple 
of measurable subsets of $G$ satisfying
\begin{equation} \mu(E_1)+\mu(E_2)+\mu(E_3)\le 2-\eta'.  \end{equation}
Suppose that there exists a compatibly centered parallel ordered triple 
$\bB = (\scriptb_1,\scriptb_2,\sB_3)$
of  rank one Bohr sets $\scriptb_j\subset G$
satisfying $\mu(\scriptb_j)=\mu(E_j)$ and
\begin{equation}
\max_j \mu(E_j\symdif \scriptb_j) \le \delta_0 \max_k \mu(E_k).
\end{equation}
Then there exists $\by$ satisfying $y_1+y_2=y_3$ such that
\begin{equation}
\max_j \mu(E_j\symdif (\scriptb_j+y_j)) \le \bC\scriptd(\bE)^{1/2}.
\end{equation}
\end{lemma}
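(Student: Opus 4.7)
Let $\phi:G\to\torus$ be the common continuous homomorphism underlying the parallel Bohr triple, so that $\sB_j=\phi^{-1}(I_j)$ for compatibly centered intervals $I_j\subset\torus$ with $m(I_j)=\mu(E_j)$. Introduce fibrewise densities $f_j:\torus\to[0,1]$ defined by $f_j(s):=\nu_s(E_j)$, where $\nu_s$ is normalized Haar on the coset $\phi^{-1}(s)$; these satisfy $\int_\torus f_j\,dm=\mu(E_j)$. Because $\sB_j+y_j=\phi^{-1}(I_j+\phi(y_j))$ is a complete fibre union, one has the exact identity
\[\mu\bigl(E_j\symdif(\sB_j+y_j)\bigr)=\|f_j-\one_{I_j+\phi(y_j)}\|_{L^1(\torus)}.\]
Hence the lemma reduces to showing $\mathfrak{e}:=\inf_{a_3=a_1+a_2}\max_j\|f_j-\one_{I_j+a_j}\|_{L^1(\torus)}\le\bC\scriptd(\bE)^{1/2}$, since any minimizing $(a_1,a_2)\in\torus^2$ lifts to $(y_1,y_2)\in\phi^{-1}(a_1)\times\phi^{-1}(a_2)$ with $y_3:=y_1+y_2$.

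Next, decompose $\one_{E_j}=f_j\circ\phi+\tilde h_j$ with $\tilde h_j$ having vanishing fibrewise mean. Expanding $\scriptt_G$ trilinearly, using that $\phi$ is a homomorphism so that $\scriptt_G(g_1\circ\phi,g_2\circ\phi,g_3\circ\phi)=\scriptt_\torus(g_1,g_2,g_3)$, and noting that every cross term containing at least one factor $\tilde h_j$ and at least one factor $f_i\circ\phi$ vanishes by fibrewise mean-zero, one arrives at the identity
\[\scriptt_G(\bE)=\scriptt_\torus(f_1,f_2,f_3)+\scriptt_G(\tilde h_1,\tilde h_2,\tilde h_3),\]
so $\scriptd(\bE)=\overline{\scriptd}(f_1,f_2,f_3)-\scriptt_G(\tilde h_1,\tilde h_2,\tilde h_3)$, where $\overline{\scriptd}$ is the defect of the relaxed Riesz-Sobolev inequality on $\torus$. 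The proof then rests on two estimates. First, by adapting the perturbative second-variation analysis of \cite{christRSult} to the relaxed framework of Theorem~\ref{thm:relaxed}: writing $f_j=\one_{I_j+a_j^\star}+\psi_j$ at the optimal translate and expanding $\scriptt_\torus$ trilinearly in $\psi_j$, one exploits the strictly positive slope of the trapezoidal kernels $\one_{I_i+a_i^\star}*\one_{I_k+a_k^\star}$ at the boundary $\partial(I_j+a_j^\star)$ (guaranteed by the $\eta$-strict admissibility and $\eta'$-constraint on the measures) to obtain the quadratic lower bound $\overline{\scriptd}(f_1,f_2,f_3)\ge c(\eta,\eta')\,\mathfrak{e}^2$. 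Second, the fibrewise identities $\|\tilde h_j\|_{L^2(G)}^2=\int_\torus f_j(1-f_j)\,dm\le\mathfrak{e}_j$ and $\|\tilde h_j\|_{L^1(G)}=2\|\tilde h_j\|_{L^2}^2\le 2\mathfrak{e}_j$ together with Young's inequality yield
\[|\scriptt_G(\tilde h_1,\tilde h_2,\tilde h_3)|\le\|\tilde h_1\|_{L^2}\|\tilde h_2\|_{L^2}\|\tilde h_3\|_{L^1}\le 2\mathfrak{e}^2,\]
and using in addition the perturbative hypothesis $\int f_j(1-f_j)\,dm\le\mu(E_j\symdif\sB_j)\le\delta_0\max_k\mu(E_k)$ sharpens this to $|\scriptt_G(\tilde h_1,\tilde h_2,\tilde h_3)|\le C\delta_0\,\overline{\scriptd}(f_1,f_2,f_3)$. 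Taking $\delta_0$ small in terms of $c(\eta,\eta')$ absorbs this error, giving $\scriptd(\bE)\ge\tfrac12 c(\eta,\eta')\,\mathfrak{e}^2$, hence $\mathfrak{e}\le\bC\scriptd(\bE)^{1/2}$, and lifting completes the proof.

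The main technical obstacle is the first of the two estimates above: the perturbative stability of the relaxed Riesz-Sobolev inequality on $\torus$ must be proved directly here, since Theorem~\ref{thm:relaxedstability} is established only later in the paper. Carrying out the second-variation analysis of \cite{christRSult} for $[0,1]$-valued functions rather than indicator functions of sets, while tracking the coercivity constant $c(\eta,\eta')$ with enough precision to arrange $\delta_0\ll c(\eta,\eta')$ in the subsequent step, is the delicate point; the absorption of the fibrewise fluctuation term $\scriptt_G(\tilde h_1,\tilde h_2,\tilde h_3)$ into $\overline{\scriptd}(f_1,f_2,f_3)$ is then routine.
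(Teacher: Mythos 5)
The student takes a genuinely different route from the paper's. The paper works entirely within $G$: it decomposes $g_j=\one_{\scriptb_j}+f_j$, further splits $f_j$ by a support criterion involving a tunable constant $\lambda$, and verifies that every cross term in the trilinear expansion either vanishes identically (by support disjointness guaranteed by strict admissibility) or is dominated by the coercive term $\langle K_k,\tilde f_k\rangle$ after choosing $\lambda$ large. Your proposal instead pushes forward by $\phi$: it writes $\one_{E_j}=f_j\circ\phi+\tilde h_j$ with $f_j(s)=\nu_s(E_j)$ the fibrewise density and $\tilde h_j$ the fibrewise-mean-zero fluctuation, uses the exact orthogonality to obtain $\scriptt_G(\bE)=\scriptt_\torus(f_1,f_2,f_3)+\scriptt_G(\tilde h_1,\tilde h_2,\tilde h_3)$, and proposes to treat the two summands separately. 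The identity, the fibrewise $L^1$ and $L^2$ computations $\|\tilde h_j\|_{L^2}^2=\int_\torus f_j(1-f_j)\,dm$ and $\|\tilde h_j\|_{L^1}=2\|\tilde h_j\|_{L^2}^2$, and the lifting of optimal translates from $\torus$ back to $G$ are all correct. This architecture is close in spirit to the paper's Proposition~\ref{prop:oneofthree}, which handles the sub-case in which $C$ is exactly a rank one Bohr set, so that $\tilde h_3\equiv 0$ and the fluctuation term vanishes.

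The crucial absorption step, however, has a genuine gap. You assert that $|\scriptt_G(\tilde h_1,\tilde h_2,\tilde h_3)|\le C\delta_0\,\overline{\scriptd}(f_1,f_2,f_3)$, but the estimates you actually invoke — $\|\tilde h_j\|_{L^2}^2\le\min(\mathfrak{e}_j,\delta_0 M)$ and $|\scriptt_G|\le\|\tilde h_1\|_{L^2}\|\tilde h_2\|_{L^2}\|\tilde h_3\|_{L^1}$ — yield only $|\scriptt_G(\tilde h_1,\tilde h_2,\tilde h_3)|\le 2\mathfrak{e}^2$, hence $|\scriptt_G(\tilde h_1,\tilde h_2,\tilde h_3)|\le\bigl(2/c(\eta,\eta')\bigr)\,\overline{\scriptd}(f_1,f_2,f_3)$ with no factor of $\delta_0$ at all. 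The hypothesis $\max_j\mu(E_j\symdif\scriptb_j)\le\delta_0 M$ does not help here: it bounds $\mathfrak{e}$ and $\int f_j(1-f_j)$ by the same quantity $\delta_0 M$, so there is no mechanism forcing $\int f_j(1-f_j)\ll\mathfrak{e}_j$; indeed all these quantities can simultaneously be of order $\delta_0 M$. Since the coercivity constant $c(\eta,\eta')$ produced by the second-variation analysis is small when the admissibility margins are small, the ratio $2/c(\eta,\eta')$ can exceed $1$, and $\scriptd(\bE)=\overline{\scriptd}(f_1,f_2,f_3)-\scriptt_G(\tilde h_1,\tilde h_2,\tilde h_3)$ is not usefully bounded below. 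Shrinking $\delta_0$ does nothing to repair this: the fluctuation is quadratic in $\mathfrak{e}$, exactly like the coercivity gain, and neither term carries an extra $\delta_0$. The paper's proof never creates this competition — its error terms either vanish exactly by support considerations or are dominated because the coercivity prefactor is multiplied by the adjustable parameter $\lambda$. To close your proof you would need a genuine proof that $\scriptt_G(\tilde h_1,\tilde h_2,\tilde h_3)\le\tfrac12\overline{\scriptd}(f_1,f_2,f_3)$ (or a replacement for the Hölder--Young bound that exploits the specific structure of the $\tilde h_j$), and the argument as written does not supply one.
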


Since $0<\mu(\scriptb_j)<1=\mu(G)$, the homomorphism $\phi$ does not
vanish identically.


\begin{definition} An ordered triple $(\scriptb_1,\scriptb_2,\scriptb_3)$ of rank one Bohr sets is $\scriptt_G$-compatibly centered if $(\scriptb_1,\scriptb_2,-\scriptb_3)$ is compatibly centered.
\end{definition}

All of our discussion of the Riesz-Sobolev inequality
can be rephrased in terms of $\scriptt_G$ since
\begin{equation}
\scriptt_G(\bE) = \langle \one_{E_1}*\one_{E_2},\one_{-E_3}\rangle
\end{equation}
and $\mu(-E_3)=\mu(E_3)$.
Theorem~\ref{thm:RS} thus states that
\begin{equation} \scriptt_G(\bE) \le \scriptt_\torus(\bE^\star) \end{equation}
for all triples $\bE$ of measurable subsets of $G$.
Another equivalent formulation is
$\scriptt_G(\bE)\le \scriptt_G(\bB)$
for any $\scriptt_G$--compatibly centered ordered triple $\bB$
of parallel rank one Bohr sets satisfying $\mu(E_j)=\mu(\scriptb_j)$
for each $j\in\{1,2,3\}$;
the right-hand side equals $\scriptt_\torus(\bE^\star)$
for any such triple $\bB$.



Lemma~\ref{lemma:perturbative} can thus be equivalently formulated as follows.
\begin{lemma} \label{lemma:perturbative'} 
For each $\eta,\eta'>0$ there exist $\delta_0>0$ and $\bC<\infty$ 
with the following property. 
Let $\bE = (E_1,E_2,E_3)$ be an $\eta$--strictly admissible triple 
of measurable subsets of $G$ satisfying
\begin{equation} \mu(E_1)+\mu(E_2)+\mu(E_3)\le 2-\eta'.  \end{equation}
Suppose that there exists a $\scriptt_G$-compatibly centered parallel ordered triple $\bB = (\scriptb_1,\scriptb_2,\sB_3)$
of  rank one Bohr sets $\scriptb_j\subset G$
satisfying $\mu(\scriptb_j)=\mu(E_j)$ and
\begin{equation}
\max_j \mu(E_j\symdif \scriptb_j) \le \delta_0 \max_k \mu(E_k).
\end{equation}
Then there exists $\by$ satisfying $y_1+y_2+y_3=0$ such that
\begin{equation}
\max_j \mu(E_j\symdif (\scriptb_j+y_j)) \le \bC\ovscriptd(\bE)^{1/2}.
\end{equation}
\end{lemma}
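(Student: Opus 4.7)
The plan is to reduce the problem to a one-dimensional perturbative analysis on $\torus$ via the common homomorphism $\phi:G\to\torus$ underlying the parallel Bohr sets $\scriptb_j = \phi^{-1}(I_j)$, with intervals $I_j=I_j^\star+c_j\subset\torus$ satisfying the compatibility $c_1+c_2+c_3=0$ (the $\scriptt_G$-centering hypothesis). Let $K=\ker\phi$, disintegrate $\mu=\int_\torus\mu_s\,dm(s)$ into fibered Haar measures on the cosets $\phi^{-1}(s)$, and introduce the conditional expectations $f_j(s):=\mu_s(E_j)\in[0,1]$, so that $f_j:\torus\to[0,1]$ with $\int f_j\,dm=\mu(E_j)$ and the perturbative hypothesis yields $\|f_j-\one_{I_j}\|_{L^1(\torus)}\le\mu(E_j\symdif\scriptb_j)\le\delta_0\max_k\mu(E_k)$. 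A Plancherel calculation on $G$, exploiting that $\hat G$ splits orthogonally into the annihilator $K^\perp$ (characters pulled back from $\torus$ via $\phi$) and its complement, with $\widehat{f_j\circ\phi}$ supported on $K^\perp$ and the within-fiber fluctuation $h_j^{\mathrm{fib}}:=\one_{E_j}-f_j\circ\phi$ having Fourier support off $K^\perp$, yields the orthogonal identity
\[
\scriptt_G(\bE) \;=\; \scriptt_\torus(f_1,f_2,f_3) + \scriptt_G(h_1^{\mathrm{fib}},h_2^{\mathrm{fib}},h_3^{\mathrm{fib}}).
\]
For any $\tilde y_j\in\torus$ with $\sum\tilde y_j=0$, one can choose lifts $y_j\in G$ with $\phi(y_j)=\tilde y_j$ and $\sum y_j=0$ by using the freedom within $K$, and then $\scriptb_j+y_j=\phi^{-1}(I_j+\tilde y_j)$ gives the clean identity $\mu(E_j\symdif(\scriptb_j+y_j))=\|f_j-\one_{I_j+\tilde y_j}\|_{L^1(\torus)}$. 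Thus the conclusion reduces to producing $\tilde y_j$ summing to zero with $\max_j\|f_j-\one_{I_j+\tilde y_j}\|_1\le\bC\,\overline{\scriptd}(\bE)^{1/2}$.

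Define the torus defect $\scriptd_\torus:=\scriptt_\torus(\bE^\star)-\scriptt_\torus(f_1,f_2,f_3)\ge 0$ (nonnegativity from Theorem~\ref{thm:relaxed}). The identity above gives $\overline{\scriptd}(\bE)=\scriptd_\torus-\scriptt_G(h^{\mathrm{fib}})$. The heart of the argument will be a \emph{perturbative} stability statement for the relaxed Riesz--Sobolev on $\torus$: given $(f_j)$ in $L^1$-distance at most $\delta_0$ from a compatibly centered parallel interval triple $(\one_{I_j})$ of $\eta$-strictly admissible, $\eta$-bounded masses, small $\scriptd_\torus$ forces the existence of $\tilde y_j$ with $\sum\tilde y_j=0$ and $\max_j\|f_j-\one_{I_j+\tilde y_j}\|_1\le C\scriptd_\torus^{1/2}$. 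I would establish this by an explicit spectral coercivity analysis adapted from the Euclidean case of \cite{christRSult}: expanding $\scriptd_\torus$ about $(\one_{I_j})$ in the perturbations $\tilde g_j:=f_j-\one_{I_j}$ (which are forced by $f_j\in[0,1]$ to satisfy $\tilde g_j\le 0$ on $I_j$ and $\ge 0$ off $I_j$), and exploiting the sharp tent structure of the convolution kernels $V_j=\one_{I_k}*\one_{I_l}$ on $\torus$ near $\partial I_j$, which yields the requisite coercivity of the linear part of $\scriptd_\torus$ modulo the three-parameter translation symmetry.

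Finally, I close the loop by controlling the fluctuation term. Young's inequality, together with $\|h_j^{\mathrm{fib}}\|_\infty\le 1$, gives $|\scriptt_G(h^{\mathrm{fib}})|\le\|h_1^{\mathrm{fib}}\|_1\|h_2^{\mathrm{fib}}\|_1$; and a fiberwise computation gives $\|h_j^{\mathrm{fib}}\|_1=2\int_\torus f_j(1-f_j)\,dm\le 2\|f_j-\one_{I_j+\tilde y_j}\|_1$ for \emph{any} $\tilde y_j$. Setting $\delta:=\max_j\|f_j-\one_{I_j+\tilde y_j^*}\|_1$ at the optimal shifts produced in the previous step, these combine to $|\scriptt_G(h^{\mathrm{fib}})|\le 4\delta^2$, whence $\scriptd_\torus\le\overline{\scriptd}(\bE)+4\delta^2$. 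Chaining with the torus bound $\delta\le C\scriptd_\torus^{1/2}$ yields a self-consistent inequality $\delta^2\le C^2\overline{\scriptd}(\bE)+4C^2\delta^2$. Choosing $\delta_0$ sufficiently small so that the a priori bound $\delta\le C\delta_0\max_k\mu(E_k)$ forces the quadratic contribution $4C^2\delta^2$ to be a small fraction of $\delta^2$, one absorbs it into the left-hand side to extract $\delta\le\bC\,\overline{\scriptd}(\bE)^{1/2}$. The principal obstacle is the perturbative torus stability itself: one must establish, by hand, coercivity of the Hessian of $\scriptt_\torus$ at $(\one_{I_j})$ modulo translations, with constants depending only on $(\eta,\eta')$; the self-consistent absorption is then a quantitative but routine bookkeeping.
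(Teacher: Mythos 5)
The orthogonal disintegration idea is clean, and the exact identities you derive are correct: using the conditional expectation $f_j(s)=\mu_s(E_j)$ one indeed has $\scriptt_G(\bE)=\scriptt_\torus(f_1,f_2,f_3)+\scriptt_G(h^{\mathrm{fib}}_1,h^{\mathrm{fib}}_2,h^{\mathrm{fib}}_3)$ (all six cross terms vanish by Fourier support), and $\mu\bigl(E_j\symdif(\scriptb_j+y_j)\bigr)=\|f_j-\one_{I_j+\tilde y_j}\|_{L^1(\torus)}$, as well as $\|h^{\mathrm{fib}}_j\|_1=2\int_\torus f_j(1-f_j)\,dm\le 2\|f_j-\one_{I_j+\tilde y_j}\|_1$. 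The gap is in the closing step. From $\scriptd_\torus\le\overline{\scriptd}(\bE)+4\delta^2$ and the assumed torus stability $\delta\le C\scriptd_\torus^{1/2}$ you obtain $\delta^2\le C^2\overline{\scriptd}(\bE)+4C^2\delta^2$, and you argue that shrinking $\delta_0$ makes $4C^2\delta^2$ a small fraction of $\delta^2$; but $4C^2\delta^2$ is always exactly the fraction $4C^2$ of $\delta^2$, independently of $\delta_0$. Absorption requires $4C^2<1$, and $C$ is forced on you by the torus problem. In fact $C\ge 2/3$: taking $I_1=I_2=I_3=[-a,a]$ and $f_j=\one_{I_j+\tilde y_j}$ with $(\tilde y_1,\tilde y_2,\tilde y_3)=(0,0,\tilde z)$ gives $\scriptd_\torus=\tilde z^2$, while the best compatibly centered shifts are $\tilde y_j'=\tilde z/3$, yielding $\delta=2|\tilde z|/3$; hence $4C^2\ge 16/9>1$ and the bootstrap does not close.

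The deeper problem is that the Young bound $|\scriptt_G(h^{\mathrm{fib}})|\le 4\delta^2$ is of the same order as the coercivity $\scriptd_\torus\gtrsim\delta^2$ you are trying to exploit, so these cannot be separated with an unknown stability constant. The paper's proof of Lemma~\ref{lemma:perturbative_relaxed} sidesteps this by working directly in $G$ with $f_j:=g_j-\one_{\scriptb_j}$ and, crucially, by first truncating $f_j=f_j^\dagger+\tilde f_j$ to reduce to a \emph{supplementary case} in which each perturbation is supported in the shell $\bigl\{\,|\,|\phi(x)|-\tfrac12\mu(\scriptb_j)\,|\le C_0\delta\,\bigr\}$. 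There, $\eta$-strict admissibility separates $\pm\tfrac12\mu(\scriptb_1)\pm\tfrac12\mu(\scriptb_2)$ from $\pm\tfrac12\mu(\scriptb_3)$ by $\gtrsim\eta\max_k\mu(\scriptb_k)$, so the full cubic $\scriptt_G(f_1,f_2,f_3)$ (whose orthogonal part is precisely your $\scriptt_G(h^{\mathrm{fib}})$) and the quadratic terms $\langle\one_{\scriptb_l},f_i*f_j\rangle$ vanish \emph{identically} rather than being merely $O(\delta^2)$; the only surviving contribution is the coercive linear term $\sum_k\langle K_k,f_k\rangle\le-c\delta^2$. To repair your argument you would have to import this support-localization step before taking conditional expectations; without it the Young bound cannot be improved, and the reduction to $\torus$ leaves you needing the same estimate with a favorable constant that is not available.
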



\begin{remark}
One aspect of the conclusion may be unanticipated.
Suppose that $\bE,\bB$ satisfy the hypotheses,
and that $\overline{\scriptd}(\bE)$ vanishes.
Then the conclusion is not only  that $\bE$ is equivalent
to some ordered triple of Bohr sets,
but that it is equivalent to a translate of $\bB$. 
A consequence is that for any $\eta>0$,
there exists $\eps>0$ with this property: If $B,B'$ are rank one Bohr sets
satisfying $\eta \le \mu(B)=\mu(B')\le 1-\eta$,
and if $\mu(B\symdif B')<\eps$,
then $\mu(B\symdif B')=0$.
There is no surprise in this consequence,
but it is interesting that it is implicit in the lemma.
To deduce it, assume without loss of generality that $B,B'$ are centered at $0$,
that is, $B = \{x: \|\phi(x)\|_\mathbb{T}\le r\}$ for some homomorphism $\phi$ and $2r\in[\eta,1-\eta]$,
and likewise for $B'$ with respect to a homomorphism $\phi'$.
Set $\bB = (B,B,B)$ and $\bE = (B',B',B')$.
The hypotheses of Lemma~\ref{lemma:perturbative} are satisfied, if $\eps$ is sufficiently small.
Moreover,
$\overline{\scriptd}(\bE)=0$; any $\scriptt$--compatibly centered
parallel family of rank one Bohr sets saturates the Riesz-Sobolev inequality.
The conclusion of the lemma is that 
$B'$ differs from some translate of $B$ by a $\mu$--null set.
\qed
\end{remark}

We will prove Lemma~\ref{lemma:perturbative'} in the more general relaxed framework,
in which indicator functions of sets are replaced by functions
taking values in $[0,1]$.
In the remainder of \S\ref{section:RSperturbative},
we study triples $\bg = (g_j: j\in\{1,2,3\})$ with $g_j: G\to[0,1]$.

For functions $g:G\to[0,1]$,
define $g^\tarstar:\torus\to[0,\infty)$ 
to be the indicator function of the interval centered at $0\in\torus$
whose Lebesgue measure is equal to $\int_G g\,d\mu$.
For triples $\bg$, define $\bg^\tarstar = (g_1^\tarstar,g_2^\tarstar,g_3^\tarstar)$.
Recall the notation
$\overline{\scriptd}(\bg) = \scriptt_\torus(\bg^\tarstar)-\scriptt_G(\bg)$  
introduced in \eqref{scriptdbardefn}.
Assuming that $g_j$ takes values in $[0,1]$ for each index $j$,
we say that $\bg$ is $\eta$--strictly admissible
if the triple $(\int_G g_j\,d\mu: 1\le j\le 3)$ of positive scalars
is $\eta$--strictly admissible.

The next lemma generalizes Lemma~\ref{lemma:perturbative'} to the relaxed
framework. The remainder of this section will be devoted to its proof.

\begin{lemma} \label{lemma:perturbative_relaxed} 
For each $\eta,\eta'>0$ there exist $\delta_0>0$ and $\bC<\infty$ 
with the following property. 
Let $\bg$ be an $\eta$--strictly admissible triple 
of measurable functions $g_j:G\to[0,1]$ satisfying
\begin{equation} \sum_{j=1}^3 \int g_j\,d\mu \le 2-\eta'.  \end{equation}
Suppose that there exists a $\scriptt_G$-compatibly centered parallel ordered triple 
$\bB = (\scriptb_1,\scriptb_2,\sB_3)$ of  rank one Bohr sets $\scriptb_j\subset G$
satisfying $\mu(\scriptb_j)= \int g_j\,d\mu$ and
\begin{equation}
\max_j \norm{g_j -\one_{\scriptb_j}}_{L^1(G)}  \le \delta_0 \max_k \int g_k\,d\mu.
\end{equation}
Then there exists $\by\in G^3$ satisfying $y_1+y_2+y_3=0$ such that
\begin{equation}
\max_j \norm{g_j - \one_{\scriptb_j+y_j}}_{L^1(G)} \le \bC\ovscriptd(\bg)^{1/2}.
\end{equation}
\end{lemma}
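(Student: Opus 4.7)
The plan is to pull the problem back from $G$ to $\torus$ via the common homomorphism $\phi : G \to \torus$ defining the parallel family of rank one Bohr sets $\scriptb_j$, and then invoke the $G = \torus$ case of the present lemma (known from the one-dimensional theory of \cite{christRSult}). Since $G$ is compact connected and $\phi$ is nontrivial (because $0 < \mu(\scriptb_j) < 1$), $\phi$ is surjective; let $K = \ker\phi$, choose a measurable section, and identify $G$ with $\torus \times K$ as a measure space. Each $\scriptb_j$ then becomes a cylinder $I_j \times K$ with $I_j = [c_j - \rho_j, c_j + \rho_j]$ and $c_1 + c_2 + c_3 = 0 \pmod 1$. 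Decompose $g_j = \tilde g_j + h_j^\perp$, where $\tilde g_j(s) = \int_K g_j(s,u)\,d\mu_K(u)$ is the fiber average (also regarded as a function on $G$) and $h_j^\perp = g_j - \tilde g_j$ has zero mean on every fiber.

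Two preliminary calculations set up the reduction. First, expanding $g_j = \tilde g_j + h_j^\perp$ trilinearly in $\scriptt_G$ and noting that every term involving exactly one or two $h_i^\perp$ factors contains a free fiber integration of a fiber-mean-zero function, one obtains the key identity
\[
\scriptt_G(\bg) = \scriptt_\torus(\tilde\bg) + \scriptt_G(h_1^\perp, h_2^\perp, h_3^\perp).
\]
Since $\int_\torus \tilde g_j\,dm = \int_G g_j\,d\mu$, the starred functions coincide ($\tilde g_j^\tarstar = g_j^\tarstar$), so
\[
\ovscriptd(\bg) = \ovscriptd_\torus(\tilde\bg) - \scriptt_G(h_1^\perp,h_2^\perp,h_3^\perp),
\]
with $\ovscriptd_\torus(\tilde\bg) \ge 0$ by Theorem~\ref{thm:relaxed} applied on $\torus$. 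Second, because $\scriptb_j + y_j$ remains the cylinder $(I_j + \phi(y_j)) \times K$ (translations in the $K$-direction are trivial) and $g_j \in [0,1]$, a direct calculation splitting the integral by whether $s \in I_j + \phi(y_j)$ yields
\[
\|g_j - \one_{\scriptb_j + y_j}\|_{L^1(G)} = \|\tilde g_j - \one_{I_j + \phi(y_j)}\|_{L^1(\torus)},
\]
which reduces the desired conclusion on $G$ to a one-dimensional statement for $\tilde\bg$.

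Next I would apply the $G = \torus$ case of Lemma~\ref{lemma:perturbative_relaxed}, from \cite{christRSult}, to the triple $\tilde\bg$. The hypotheses descend: the integrals match, the $\eta$-strict admissibility and the $\eta'$-bound are preserved, and fiber-averaging is an $L^1$-contraction, so $\|\tilde g_j - \one_{I_j}\|_{L^1(\torus)} \le \|g_j - \one_{\scriptb_j}\|_{L^1(G)} \le \delta_0 \max_k \mu(\scriptb_k)$. This produces translates $y_j^\sharp \in \torus$ with $y_1^\sharp + y_2^\sharp + y_3^\sharp = 0$ and
\[
M := \max_j \|\tilde g_j - \one_{I_j + y_j^\sharp}\|_{L^1(\torus)} \le C_1\,\ovscriptd_\torus(\tilde\bg)^{1/2},
\]
where $C_1 = C_1(\eta,\eta')$. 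Lift $y_j^\sharp$ to $y_j \in G$ via a section of $\phi$, adjusting inside $K$ so that $y_1 + y_2 + y_3 = 0$ exactly; this does not change $\scriptb_j + y_j$, since $K$-translations preserve the cylinders.

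It remains to dominate $\scriptt_G(h^\perp)$ and close a quantitative loop. Using $g_j \in [0,1]$ and the fiber-mean-zero property, the variance bound $\int_K |h_j^\perp(s,\cdot)|^2\,d\mu_K \le \tilde g_j(s)(1-\tilde g_j(s))$ integrates to $\|h_j^\perp\|_{L^2(G)}^2 \le \int_\torus \tilde g_j(1-\tilde g_j)\,dm \le M$, whence also $\|h_j^\perp\|_{L^1(G)} \le 2M$. Young's convolution inequality then produces $|\scriptt_G(h_1^\perp,h_2^\perp,h_3^\perp)| \le C_2 M^2$, and substitution gives
\[
M^2 \le C_1^2\bigl(\ovscriptd(\bg) + \scriptt_G(h^\perp)\bigr) \le C_1^2\,\ovscriptd(\bg) + C_1^2 C_2 M^2,
\]
from which $M \le \bC\,\ovscriptd(\bg)^{1/2}$ once the cross term is absorbed. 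The main obstacle is exactly this final bootstrap: the Young estimate on $\scriptt_G(h^\perp)$ is only quadratic in $M$, so closing the loop requires refining the bound to extract an additional $o(1)$ factor from the smallness of $\delta_0$. This can be achieved by observing that the support of $h_j^\perp$ in the $s$-variable is effectively the transition region $\{s : 0 < \tilde g_j(s) < 1\}$, which has $m$-measure $O(M)$ once the $\tilde g_j$ concentrate near $\{0,1\}$; invoking a second iteration of the $\torus$-level stability, or a compactness argument on the fixed-point map $M \mapsto C_1(\ovscriptd(\bg) + C_2 M^2)^{1/2}$, shows that for $\delta_0$ sufficiently small as a function of $\eta,\eta'$ the only fixed point in a neighbourhood of $0$ satisfies $M \lesssim \ovscriptd(\bg)^{1/2}$. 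This is precisely the source of the dependence $\delta_0 = \delta_0(\eta,\eta')$ asserted in the statement.
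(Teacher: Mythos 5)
Your pullback framework is appealing, and the preliminary steps are correct. The identity $\scriptt_G(\bg) = \scriptt_\torus(\tilde\bg) + \scriptt_G(h_1^\perp, h_2^\perp, h_3^\perp)$ does hold: after disintegrating $\mu$ along $\phi$, every cross term in the trilinear expansion contains a free $K$-integration against a fiber-mean-zero factor, and the cocycle correcting the failure of a measurable section to be a homomorphism is harmless because only the zero-mean property is used. The reduction $\norm{g_j - \one_{\scriptb_j + y_j}}_{L^1(G)} = \norm{\tilde g_j - \one_{I_j + \phi(y_j)}}_{L^1(\torus)}$ is also correct, since $\one_{\scriptb_j+y_j}$ is constant on fibers and $0 \le g_j \le 1$. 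The variance and $L^1$ bounds for $h_j^\perp$ are fine.

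The gap is where you flag it, and I do not see how to repair it within your framework. You obtain $|\scriptt_G(h^\perp)| \le C_2 M^2$ from Young, and $M \le C_1 \ovscriptd(\tilde\bg)^{1/2}$ from the $\torus$ stability theorem; with $\ovscriptd(\tilde\bg) = \ovscriptd(\bg) + \scriptt_G(h^\perp)$ this yields $M^2 \le C_1^2\ovscriptd(\bg) + C_1^2 C_2 M^2$, which absorbs only if $C_1^2 C_2 < 1$. You have no control over $C_1$: it is whatever the $\torus$ theorem gives, it depends on $\eta,\eta'$, and it may well be large. The smallness of $\delta_0$ does not help at all, because the coefficient of $M^2$ on the right is $C_1^2 C_2$ regardless of how small $M$ is \emph{a priori}; for $C_1^2 C_2 \ge 1$ the inequality $M^2(1 - C_1^2 C_2) \le C_1^2\ovscriptd(\bg)$ is vacuous. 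Both of your proposed rescues fail for the same reason. The transition region $T_j = \{s : 0 < \tilde g_j(s) < 1\}$ need \emph{not} have measure $O(M)$: the bound $\int_\torus \min(\tilde g_j, 1-\tilde g_j)\,dm \le M$ controls the superlevel sets $\{\min(\tilde g_j, 1-\tilde g_j) \ge \alpha\}$ but not $|T_j|$, which can be comparable to $1$ even when $M$ is tiny (take $\tilde g_j = \one_{I_j} + \eps\psi$ with $\psi$ a small-amplitude bump supported throughout $\torus$). And the fixed-point map $M \mapsto C_1(\ovscriptd(\bg) + C_2 M^2)^{1/2}$ has no fixed point near $0$ when $C_1^2 C_2 \ge 1$, so there is nothing for a compactness argument to converge to. The underlying obstruction is that $\scriptt_G(h^\perp)$ really is $\Theta(M^2)$ in general, with either sign; taking $h_j^\perp(s,u) = a_j(s)b_j(u)$ with $a_j$ bumps in the base and $b_j$ zero-mean trigonometric polynomials in the fiber produces such examples, consistent with $g_j \in [0,1]$.

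The paper's argument avoids this by making a different decomposition: it writes $g_j = \one_{\scriptb_j} + f_j^\dagger + \tilde f_j$, where $f_j^\dagger$ is supported within $O(\lambda\delta)$ (in the $\phi$-coordinate) of the boundary $\{|\phi(x)| = \tfrac12\mu(\scriptb_j)\}$ and $\tilde f_j$ is supported farther away. The far part produces a \emph{linear} negative contribution $-c\lambda\delta\norm{\tilde f_j}_{L^1}$ through the explicit shape of $K_k = \one_{\scriptb_i}*\one_{\scriptb_j}$, which dominates the $O(\delta \cdot \tilde\delta)$ errors once $\lambda$ is large; for the near parts, $\eta$-strict admissibility forces all the trilinear and bilinear error terms to vanish by a support-separation argument, with translations chosen to make $\int f_{j,y_j}^\pm\,d\mu = 0$. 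That structural vanishing is the substitute for the uncontrolled Young bound on $\scriptt_G(h^\perp)$; your fiber-orthogonal part $h_j^\perp$ need not be supported near the boundary of $\scriptb_j$, so no analogous support separation is available, and the bootstrap cannot be closed.
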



Define the orbit $\orbit(\bA)$ of the triple $\bA$ of subsets of $G$ 
to be the set of all triples
$\bA+\by=(A_j+y_j: j\in\{1,2,3\})$ with 
$\by\in G^3$ satisfying $y_1+y_2+y_3=0$.
For $g_j:G\to[0,1]$ and $\bB = (\scriptb_j: 1\le j\le 3)$
satisfying $\mu(\scriptb_j) = \int g_j\,d\mu$,
define
\begin{equation}
\distance(\bg,\orbit(\bB)) 
= \inf_\by \max_{j\in\{1,2,3\}} \norm{ g_j - \one_{\scriptb_j+y_j}}_{L^1(G)}, 
\end{equation}
with the infimum taken over all
$\by\in G^3$ satisfying $y_1+y_2+y_3=0$.
With these definitions, Lemma~\ref{lemma:perturbative_relaxed} states that
if $\bB,\bg$ satisfy its hypotheses then
\begin{equation} \label{distanceformulation}
\distance(\bg,\orbit(\bB)) \le \bC\overline{\scriptd}(\bg)^{1/2}.
\end{equation}

We use $c$ to denote a strictly positive constant
that depends only on $\eta$, but whose value is permitted to change
from one occurrence to the next.
We write $\langle f,g\rangle = \int_G fg\,d\mu$
for functions $f,g:G\to\reals$.

\begin{proof}[Proof of Lemma~\ref{lemma:perturbative_relaxed}]
Set
\begin{equation} \delta = \distance(\bg,\orbit(\bB)). \end{equation}
Choose $\bz$ satisfying $z_1+z_2+z_3=0$ so that 
\begin{equation} \label{maximizerchoice}  
\max_j \norm{g_j - \one_{\scriptb_j+z_j}}_{L^1(G)} = \delta. \end{equation}
Such a minimizing $\bz$ must exist,
since $\norm{g_j - \one_{\scriptb_j+z_j}}_{L^1(G)}$ is a continuous
function of $\bz$ with compact domain. 
If $\delta=0$ then the conclusion of the lemma certainly holds,
so we may assume for the remainder of the proof that $\delta>0$.

The hypotheses and conclusion of the lemma are invariant under translation of each $g_j$ by $u_j\in G$,
with $\sum_j u_j=0$. 
By means of such a transformation, we may assume without loss of generality that
$\scriptb_j=\{x\in G: \norm{\phi(x)}_{\torus}\le r_j\}$, 
with $\phi:G\rightarrow \torus$ a continuous homomorphism independent of $j$,
and each $z_j=0$.
Here, $0<r_j = \tfrac12\mu(\scriptb_j) \le \tfrac12 (1-\tilde\eta)$
with $\tilde\eta=\tilde\eta(\eta,\eta')>0$.

Define functions $f_j$ by
\begin{equation}
g_j = \one_{\scriptb_j} + f_j.
\end{equation}
These functions take values in $[-1,1]$, and satisfy $\int_G f_j\,d\mu=0$.
Moreover,
$\max_{k\in\{1,2,3\}} \norm{f_k}_{L^1} =\delta$ by \eqref{maximizerchoice},
$f_k\le 0$ in $\scriptb_k$, and $f_k\ge 0$ in $G\setminus \scriptb_k$.

Regard $\phi$ as a (discontinuous) mapping from $G$ to $(-\tfrac12,\tfrac12]$
by identifying $\torus$ with $(-\tfrac12,\tfrac12]$ in the usual way.
For each $k\in\{1,2,3\}$,
write $\{1,2,3\} = \{i,j,k\}$ and define
\[ K_k(x) = \one_{\scriptb_i}*\one_{\scriptb_j}(x) \ \text{ for $x\in G$.}\]
$K_k$ is continuous and nonnegative.
There exists $\gamma_k>0$ such that 
$K_k(x)>\gamma_k$ if $|\phi(x)|<\tfrac12\mu(\scriptb_k)$,
$K_k(x)<\gamma_k$ if $|\phi(x)|>\tfrac12\mu(\scriptb_k)$,
and
$K_k(x)=\gamma_k$ when $|\phi(x)|=\tfrac12\mu(\scriptb_k)$.
The $\eta$--strict admissibility hypothesis 
implies that there exists a small positive constant $c>0$,
depending only on $\eta$, such that
\begin{equation} \label{eq:8.17} 
\begin{cases}
& |K_k(x)-\gamma_k|  
= \big|\,|\phi(x)|- \tfrac12 \mu(B_k)\,\big|
\ \ \text{whenever $\big|\,|\phi(x)|- \tfrac12 \mu(B_k)\,\big| \le c\mu(B_k)$,}
\\
& |K_k(x)-\gamma_k|  
\ge c\mu(B_k) \text{ otherwise.}
\end{cases} \end{equation}

Let $\lambda$ be a large positive constant, to be chosen below.
There exist a decomposition
\begin{equation} f_j =  f_j^\dagger + \tilde f_j \end{equation}
and consequently an expansion $g_j = \one_{\scriptb_j} + f_j^\dagger + \tilde f_j$,
with the following properties:
\begin{gather} 
\int f_j^\dagger\,d\mu = \int  \tilde f_j\,d\mu=0
\\
\text{$\tilde f_j,f_j^\dagger \ge 0$ on $G\setminus \scriptb_j$}
\\
\text{$\tilde f_j,f_j^\dagger \le 0$ on $\scriptb_j$}
\\
\text{If
$\big|\,|\phi(x)| -\tfrac12\mu(\scriptb_j)\,\big|\ge\lambda\delta$
then $f_j^\dagger(x)=0$.}
\\
\norm{\tilde f_j}_{L^1}
\le 2 \int_{ \big|\,|\phi(x)| -\tfrac12\mu(\scriptb_j)\,\big|\ge\lambda\delta }
|f_j(x)|\,d\mu(x). \label{l1}
\end{gather}

To achieve this, set $\tilde f_j(x)=f_j(x)$ whenever
$\big|\,|\phi(x)| -\tfrac12\mu(\scriptb_j)\,\big|\ge\lambda\delta$.
We do not simply set $\tilde f_j(x)\equiv 0$
otherwise (even though such an $\tilde f_j$ clearly satisfies the desired
condition \eqref{l1} above), 
because the vanishing condition
$\int \tilde f_j\,d\mu=0$ will be essential below.
Instead, for $x\in G$ satisfying
$\big|\,|\phi(x)| -\tfrac12\mu(\scriptb_j)\,\big|<\lambda\delta$,
we define $\tilde f_j(x) = f_j(x)\one_S(x)$ with the set $S$ chosen as follows.

If $\int_{\big|\,|\phi(x)| -\tfrac12\mu(\scriptb_j)\,\big|\geq\lambda\delta} f_j\,d\mu \ge 0$, then
$S\subset\scriptb_j$, and $S$ is chosen so that $\int \tilde f_j\,d\mu=0$.
Such a subset exists because
$\int f_j\,d\mu=0$,
$f_j \ge 0$ on $G\setminus\scriptb_j$ and $\le 0$ on $\scriptb_j$,
and $\mu$ is nonatomic.
For our purpose, any such set $S$ suffices.

If $\int_{\big|\,|\phi(x)| -\tfrac12\mu(\scriptb_j)\,\big|\geq\lambda\delta} f_j\,d\mu < 0$,
then instead choose $S\subset G\setminus \scriptb_j$ 
to ensure that $\int \tilde f_j\,d\mu=0$.
In both cases, define $f_j^\dagger = f_j-\tilde f_j$.
The resulting functions $\tilde f_j,f_j^\dagger$ enjoy
all of the required properties.

Set $g_j^\dagger = \one_{\scriptb_j}+f_j^\dagger$.
These functions satisfy 
$g_j= g_j^\dagger + \tilde f_j$,
$0\le g_j^\dagger\le 1$,
$-1\le \tilde f_j,f_j^\dagger\le 1$,
and (since $\int \tilde f_j=0$) $\int g_j^\dagger = \int g_j$.

Define
\begin{equation}
\tilde\delta = \max_{j} \norm{\tilde f_j}_{L^1(G)} \le\delta.
\end{equation}
$\scriptt=\scriptt_G$ satisfies 
\begin{equation}
\label{eq:trivialbound}
|\scriptt(h_1,h_2,h_3)|\le \norm{h_1}_{L^1}\norm{h_2}_{L^1} \norm{h_3}_{L^\infty}
\end{equation}
for arbitrary functions, and is invariant
under permutation of $(h_1,h_2,h_3)$.
Using the assumption that $\norm{g_j}_{L^\infty}\le 1$,
and for each $k$ writing $\{1,2,3\} = \{i,j,k\}$ in some arbitrary manner,
it follows that
\begin{align*} 
\scriptt(\bg) 
&= \scriptt(g_1^\dagger + \tilde f_1, g_2^\dagger + \tilde f_2, g_3^\dagger + \tilde f_3)
\\&
= \scriptt(\bg^\dagger) 
+ \sum_{k=1}^3 \scriptt(g_i^\dagger,g_j^\dagger,\tilde f_k)
+ O(\tilde\delta^2)
\\&
= \scriptt(\bg^\dagger) 
+ \sum_{k=1}^3 \scriptt(\one_{\scriptb_i},\one_{\scriptb_j},\tilde f_k)
+ O(\tilde\delta \cdot \delta)
\\&
= \scriptt(\bg^\dagger) 
+ \sum_{k=1}^3 \langle \scriptk_k,\tilde f_k\rangle
+ O(\tilde\delta \cdot \delta).
\end{align*}
The constant implicit in the $O(\tilde\delta\cdot\delta)$
term is independent of the parameter $\lambda$.


Since $\int \tilde f_k\,d\mu=0$,
$\langle \scriptk_k,\tilde f_k\rangle = \langle \scriptk_k-\gamma_k,\tilde f_k\rangle$.
On the complement of $\scriptb_k$, $\tilde f_k\ge 0$ and $K_k-\gamma_k\le 0$;
on $\scriptb_k$, both signs are reversed.
Therefore
\begin{equation*} 
\langle K_k, \tilde f_k\rangle 
= \int (K_k-\gamma_k) \tilde f_k\,d\mu 
= -\int |K_k-\gamma_k|\cdot|\tilde f_k|\,d\mu
\le -c\lambda \delta \norm{\tilde f_k}_{L^1}
\end{equation*}
according to the properties \eqref{eq:8.17} of $\scriptk_k$
and the relation $\lambda\delta\le c\mu(\scriptb_k)$,
which holds, for any particular choice of large constant $\lambda$, 
by the smallness hypothesis on $\delta/\mu(\scriptb_k)$.
Therefore in all,
\[\scriptt(\bg)
\le \scriptt(\bg^\dagger) - c\lambda \delta\cdot \tilde\delta
+ O(\tilde\delta\cdot\delta)\]
with both $c$ and the implicit constant in the remainder term $O(\delta^2)$
independent of the parameter $\lambda$, but with $\tilde\delta$ dependent on $\lambda$.
Choosing $\lambda$ sufficiently large gives
\begin{equation}\label{pathsdiverge}
\scriptt(\bg) 
\le \scriptt(\bg^\dagger) - c\lambda \delta\cdot \tilde\delta
\le \min\big(
\scriptt(\bg^\dagger),
\scriptt_\torus(\bg^\tarstar) - c\lambda \delta\cdot \tilde\delta\big),
\end{equation}
with $c>0$ independent of $\lambda$,
and $\lambda$ independent of $\bg$.
We have used the bound $\scriptt(\bg^\dagger)\le \scriptt_\torus((\bg^\dagger)^\tarstar)$
of Theorem~\ref{thm:relaxed}, and the identity
$(\bg^\dagger)^\tarstar = \bg^\tarstar$.



There are now two cases, depending on the magnitude of $\tilde\delta/\delta$.
If $\tilde\delta\ge \tfrac12\delta$ then
$\scriptt(\bg) \le \scriptt_{\mathbb{T}}(\bg^\tarstar) - \tfrac12 c  \delta^2$.
This is the desired conclusion of Lemma~\ref{lemma:perturbative_relaxed}.

In the second case, $\tilde\delta\le \tfrac12\delta$.
From the triangle inequality in the form
\[ \max_j \norm{f_j^\dagger}_{L^1}
= \max_j \big( \norm{f_j}_{L^1} - \norm{\tilde f_j}_{L^1} \big)
\ge \delta-\tilde\delta \ge \tfrac12\delta,\]
it follows that
\[ \max_j \norm{g_j^\dagger - \one_{\scriptb_j}}_{L^1}
= \max_j \norm{f_j^\dagger}_{L^1} \ge \tfrac12\delta.\]
In this case, we use the alternative bound $\scriptt(\bg) \le \scriptt(\bg^\dagger)$
from \eqref{pathsdiverge}. 
Thus it suffices to prove that
\[ \scriptt(\bg^\dagger) 
\le \scriptt_{\mathbb{T}}(\bg^\tarstar) - c\max_j \norm{g_j^\dagger-\one_{\scriptb_j}}_{L^1}^2,\]
that is, to establish
the conclusion of Lemma~\ref{lemma:perturbative_relaxed} for $\bg^\dagger$.

The modified triple $\bg^\dagger$ satisfies all hypotheses of the lemma,
and enjoys the supplementary property that 
$g_j^\dagger - \one_{\scriptb_j} \equiv 0$ whenever 
$\big|\,|\phi(x)|-\tfrac12 \mu(\scriptb_j) \,\big| \ge \lambda \delta$. 

Moreover,
\begin{equation}
\tfrac12 \distance(\bg,\orbit(\bB))
\le \distance(\bg^\dagger,\orbit(\bB))
\le \tfrac32 \distance(\bg,\orbit(\bB))
\end{equation}
by the triangle inequality for $L^1(G)$ norms,
since $\tilde\delta\le \tfrac12 \delta$.
Therefore we have reduced matters to proving Lemma~\ref{lemma:perturbative_relaxed}
under the supplementary hypothesis 
that for every $j\in\{1,2,3\}$,
\begin{equation} \label{supplementary}
g_j- \one_{\scriptb_j} \equiv 0 \text{ whenever } 
\big|\,|\phi(x)|-\tfrac12 \mu(\scriptb_j) \,\big| \ge C_0 \delta.
\end{equation}
Here $C_0$ is some universal constant that is not at our disposal, but is dictated by
our choice of $\lambda$.
For the remainder of the proof of Lemma~\ref{lemma:perturbative_relaxed}
we drop the superscripts $\dagger$,
denoting by $\bg$ an ordered triple of functions that satisfies
the hypotheses of the lemma, as well as \eqref{supplementary} for $\delta$ and $\bB$ such that $\max_j\|g_j-\one_{\scriptb_j}\|_1\sim\delta$. Redefine $f_j = g_j-\one_{\scriptb_j}$.

The perturbative term $f_j$ satisfies \eqref{supplementary}, that is,
is supported where $\big|\,|\phi(x)|-\tfrac12 \mu(\scriptb_j)\,\big| \le C_0\delta$.
We claim that 
if $\eps_0$ is a sufficiently small constant multiple of $\eta \max_k \mu(\scriptb_k)$,
and if $0 < \delta\le\eps_0$, then this restriction on the support of $f_j$ ensures that
\begin{equation} \label{vanishing} \scriptt(f_1,f_2,f_3)=0.  \end{equation}
Indeed, $f_1*f_2$ is supported 
where $\phi$ differs by at most $2C_0 \delta$
from some quantity $(\pm \tfrac12 \mu(\scriptb_1)\pm \tfrac12 \mu(\scriptb_2))$,
while $f_3$ is supported where $\phi$ differs by at most $C_0 \delta$
from $\pm \tfrac12 \mu(\scriptb_3)$. 
The upper bound on $\mu(\scriptb_1)+\mu(\scriptb_2)+\mu(\scriptb_3)$ 
and the $\eta$--strict admissibility of $\bB$ ensure that
\[ \eta\max_j\mu(\scriptb_j)\le\big| \pm \mu(\scriptb_1)
\pm \mu(\scriptb_2) \pm \mu(\scriptb_3)\big|\leq 2-\eta'\] 
for all eight choices of signs,
yielding \eqref{vanishing} by the triangle inequality
since $\delta\le\eps_0$ is assumed to be small relative to $\eta\max_k\mu(\scriptb_k)$.

For any $\by=(y_1,y_2,y_3)\in\torus^3$ satisfying $y_1+y_2+y_3=0$,
these constructions can be applied to 
the triple $\bg^{\by}$ defined by replacing $g_j(x)$ by the translated
function $g_j^{y_j}(x) = g_j(x-y_j)$.  
Then $\scriptt(\bg)=\scriptt(\bg^\by)$, and $\int g_j^{y_j}\,d\mu = \int g_j\,d\mu$.
Assume that $|\phi(y_j)| =O(\delta)$ for all three indices $j$. Then 
\[ \max_j \norm{g_j^{y_j}-\one_{\scriptb_j}}_{L^1}
\le 
\max_j \norm{g_j^{y_j}-\one_{\scriptb_j^{y_j}}}_{L^1}
+ \max_j \norm{\one_{\scriptb_j^{y_j}}- \one_{\scriptb_j}}_{L^1}
= O(\delta).\]
On the other hand, 
\[ \max_j \norm{g_j^{y_j}-\one_{\scriptb_j}}_{L^1} \ge 
\distance(\bg^{\by},\orbit(\bB))
= \distance(\bg,\orbit(\bB))
\ge c\delta\]
by $\by$--translation invariance of the orbit
and translation invariance of $\mu$.
Each translated function $g_j^{y_j}-\one_{\scriptb_j}$ remains supported in
$\{x: \big|\,|\phi(x)|-\mu(E_j)/2\,\big| )\le O(\delta) \}$. 

Each $f_j=g_j-\one_{\scriptb_j}$ has a unique additive decomposition $f_j = f_j^+ + f_j^-$, with 
$f_j^\pm$ supported where $\big|\,\phi(x)  \mp \tfrac12 \mu(B_j)\,\big|=O(\delta)$, 
respectively.
It will be advantageous to work instead with $\bg^\by$, with $\by$ chosen so that the summands corresponding to $g_j^{y_j}-\one_{\scriptb_j}$ satisfy certain vanishing properties which the summands $f_j^\pm$ potentially lack. In particular, define functions $f_{j,y_j}^\pm$ 
by first setting $f_{j,y_j} = g_j^{y_j}-\one_{\scriptb_j}$,
and then expressing $f_{j,y_j} = f_{j,y_j}^+ + f_{j,y_j}^-$,
with $f_{j,y_j}^\pm$ supported where $|\phi(x)\mp \tfrac12\mu(\scriptb_j)| = O(\delta)$.

\begin{lemma}\label{lemma:paininneck}
For each index $j$, there exists $y_j\in G$ satisfying 
$|\phi(y_j)| \le C_0\delta$ and
\begin{equation} \label{fjplusvanishing} 
\int f_{j,y_j}^+\,d\mu = \int f_{j,y_j}^-\,d\mu =0.  \end{equation}
\end{lemma}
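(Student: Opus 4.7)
The plan is to reduce to a one-dimensional problem on the base torus by working with averaged functions along fibres of $\phi:G\to\torus$, and then run an intermediate-value argument on $s=\phi(y_j)$. Since $\phi$ is a continuous homomorphism from a compact connected Abelian group whose image contains a Bohr ball of Haar measure strictly between $0$ and $1$, its image is a closed subgroup of $\torus$ that is neither finite nor trivial, hence all of $\torus$; thus $\phi$ is surjective and $\phi_*\mu=m$. Disintegrating $\mu$ accordingly as $d\mu=d\nu_t\,dm(t)$ and setting $\tilde g_j(t):=\int g_j\,d\nu_t$, the supplementary hypothesis \eqref{supplementary} forces $\tilde g_j\equiv 1$ on $(-r_j+C_0\delta,r_j-C_0\delta)$ and $\tilde g_j\equiv 0$ on $\torus\setminus[-r_j-C_0\delta,r_j+C_0\delta]$. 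The point of this reduction is that $\int f_{j,y_j}^+\,d\mu$ depends on $y_j$ only through $s=\phi(y_j)$, because translation by $y_j$ acts on averaged functions as a shift by $-s$, so $\widetilde{g_j^{y_j}}(t)=\tilde g_j(t-s)$.

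The key computation is to show that the function
\[ F_j^+(s):=\int_{\phi^{-1}((0,1/2])} f_{j,y_j}\,d\mu \;=\; \int_0^{1/2}\tilde g_j(t-s)\,dm(t) - r_j \]
is affine with slope $1$ on $[-C_0\delta,C_0\delta]$. Differentiating yields $F_j^{+\prime}(s)=\tilde g_j(-s)-\tilde g_j(\tfrac12-s)$; for $|s|\le C_0\delta$ the first term equals $1$ (since $-s$ lies well inside $(-r_j,r_j)$) and the second equals $0$ (since $\tfrac12-s$ lies well outside $[-r_j-C_0\delta, r_j+C_0\delta]$, using $r_j\le\tfrac12(1-\tilde\eta)$ with $\tilde\eta\gg\delta$). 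Since $|F_j^+(0)|=\bigl|\int f_j^+\,d\mu\bigr|\le\|f_j\|_{L^1}\le\delta$, the affine formula $F_j^+(s)=F_j^+(0)+s$ has a unique zero at $s=-F_j^+(0)$, and this $s$ satisfies $|s|\le\delta\le C_0\delta$ (taking $C_0\ge 1$). Choosing any $y_j\in\phi^{-1}(s)$, which exists by surjectivity of $\phi$, one obtains $|\phi(y_j)|\le C_0\delta$ and $\int f_{j,y_j}^+\,d\mu=0$.

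The companion identity $\int f_{j,y_j}^-\,d\mu=0$ is then automatic: since $\int g_j\,d\mu=\mu(\scriptb_j)$ by hypothesis, $\int f_{j,y_j}\,d\mu=0$, and the support decomposition $f_{j,y_j}=f_{j,y_j}^++f_{j,y_j}^-$ is into disjoint pieces, because for $|s|\le C_0\delta$ the support of $f_{j,y_j}$ lies in the union of two disjoint strips $\{|\phi(x)\mp r_j|\le 2C_0\delta\}$ (disjoint since $\mu(\scriptb_j)\ge\eta\gg\delta$). The principal technical care required lies in verifying that the support of $f_{j,y_j}$ remains confined to these two strips after the small shift $y_j$, and that the derivative formula indeed yields the clean identity $F_j^+(s)=F_j^+(0)+s$ throughout $[-C_0\delta,C_0\delta]$; both points are handled by the smallness of $\delta$ relative to $\eta$, which is guaranteed by the standing hypothesis $\delta\le\delta_0$.
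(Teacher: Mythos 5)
Your proof is correct and follows essentially the same route as the paper's: both hinge on the observation that $\int f^+_{j,y_j}\,d\mu$ depends on $y_j$ only through $s=\phi(y_j)$, and then locate a zero of the resulting scalar function of $s$ on $[-C_0\delta,C_0\delta]$. The only difference in execution is that you compute this function explicitly to be affine of slope $1$ on that interval (so the zero is found by direct solution, and in fact satisfies $|\phi(y_j)|\le\delta$), whereas the paper checks the signs at the two endpoints $\pm C_0\delta$ and invokes the Intermediate Value Theorem; your version is a slight quantitative refinement of the same argument.
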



\begin{proof}
$f_{j,y}^+$ is that portion of $g_j^{y}-\one_{\scriptb_j}$
that is supported where $|\phi(x)-\tfrac12\mu(\scriptb_j)|$ is small.
Since $|\phi(y)| \le C_0\delta$
and $g_j(x)=\one_{\scriptb_j}(x)$ wherever 
$|\phi(x)-\tfrac12\mu(\scriptb_j)|>C_0\delta$,
$f_{j,y}^+$ is supported where
$|\phi(x)-\tfrac12\mu(\scriptb_j)|\le 2C_0\delta$.

Consider the function that maps $z\in [-C_0\delta,C_0\delta]$ to 
\[ \int f_{j,y}^+(x)\,d\mu(x) 
= \int_{|\phi(x)-\tfrac12\mu(\scriptb_j)| \le 2C_0\delta} 
\big(g_{j}^y - \one_{\scriptb_j}\big)(x)\,d\mu(x),\]
with $y= y(z)$ satisfying $\phi(y)=z$.
While $y$ is not uniquely determined by $z$ via this equation,
the integral nonetheless depends only on $z$.
Indeed, 
the contribution of the term $\one_{\scriptb_j}$ to the
integral does not involve $y$.
Substituting $x=u+y$ allows us to rewrite
the contribution of $g_{j}^y(x)=g_j(x-y)$ as
\begin{align*}  \int_{|\phi(x)- \tfrac12\mu(\scriptb_j)|\le 2C_0\delta}  g_{j}(x-y)\,d\mu(x)
=  \int_{|\phi(u)+z  - \tfrac12\mu(\scriptb_j)| \le 2C_0\delta}  g_{j}(u)\,d\mu(u)
\end{align*}
which likewise depends on $z$ alone.

This function of $z$ is nonnegative when $z=C_0\delta$.
Indeed, if $\phi(x) \in [\tfrac12\mu(\scriptb_j)-2C_0\delta,\tfrac12\mu(\scriptb_j)]$
then $g_j(x-y)=1$,
since $\phi_j(x-y) = \phi_j(x)-C_0\delta\le \tfrac12\mu(\scriptb_j)-C_0\delta$
and (by virtue of the reduction to the case $g_j=g_j^\dagger$ made above)
$g_j(u)\equiv 1$ when $\tfrac12\mu(\scriptb_j)-O(\delta) \le \phi(u)
\le \tfrac12\mu(\scriptb_j)-C_0\delta$. Thus $g_{j}^y(x)-\one_{\scriptb_j}(x)=1-1=0$ for these values of $x$.
On the other hand,
if $\phi(x) \in [\tfrac12\mu(\scriptb_j),\tfrac12\mu(\scriptb_j)+2C_0\delta]$
then $\one_{\scriptb_j}(x)=0$, so $g_j^y(x)-\one_{\scriptb_j}(x)\ge 0$.

The same reasoning shows that this function of $z$ is nonpositive when $z=-C_0\delta$. 
Therefore we may apply the Intermediate Value Theorem on $[-C_0\delta,C_0\delta]$
to conclude that there exists $y_j$ with $\phi(y_j)=z\in [-C_0\delta,C_0\delta]$ 
satisfying $\int f_{j,y_j}^+\,d\mu=0$.

It follows at once that
$\int f_{j,y_j}^-\,d\mu = \int f_{j,y_j}\,d\mu -\int f_{j,y_j}^+\,d\mu=0$. 
\end{proof}


Choose $y_1,y_2$ to ensure \eqref{fjplusvanishing} for $j=1,2$, 
but then define $y_3$ by $y_1+y_2+y_3=0$. 
With such a choice of $\by$ fixed henceforth,
simplify notation by suppressing $y_j$ and writing again $g_j,f_j,f_j^\pm$,
continuing to use the notation $\bg$
for this modified triple. The quantities $\scriptt(\bg)$
and $\distance(\bg,\orbit(\bB))$ are unchanged.

The functions $f_3^\pm$ need not have vanishing integrals. 
Nonetheless, 
\begin{equation} \label{2of3works} \int f_i^\pm*f_j^\pm\,d\mu=0 
\ \text{ for any distinct indices $i\ne j\in\{1,2,3\}$,}
\end{equation}
for all four possible choices of $\pm$ signs,
since $\int (f_i^\pm*f_j^\pm)\,d\mu = \int f_i^\pm\,d\mu\,\cdot\,\int f_j^\pm\,d\mu$ and
at least one of the two indices $i,j$ must belong to $\{1,2\}$.

Expand $\scriptt(\bg) = \scriptt(\one_{\scriptb_j}+f_j: j\in\{1,2,3\})$
into eight terms, using the multilinearity of $\scriptt$.
The simplest term is $\scriptt(f_1,f_2,f_3)$. 
Provided that $\delta$ is sufficiently small relative to $\max_j \mu(\scriptb_j)$,
with constant of proportionality depending on $\eta,\eta'$,
this term vanishes 
for the modified triple $\bg$, just as it was shown 
in \eqref{vanishing} to vanish for the original triple.

The vanishing of $\scriptt(f_1,f_2,f_3)$ 
simplifies the expansion of $\scriptt(\bg)$ to
\begin{equation} \label{eq:finalsum}
\scriptt(\bg) 
=\scriptt(\bB)
+ \sum_{k=1}^3 \langle K_k,f_k\rangle
+ \sum_{i<j} \langle \one_{\scriptb_l},\,f_i*f_j\rangle.
\end{equation}
In the final sum, $i<j\in\{1,2,3\}$
and $l$ is defined by $\{1,2,3\} = \{i,j,l\}$.

We next discuss the terms 
\begin{equation}
\langle K_k,f_k\rangle 
= -\int |f_k(x)|\,\big|\,|\phi(x)|- \tfrac12 \mu(\scriptb_k)\,\big|\,d\mu(x) \le 0. 
\end{equation}

There exist an absolute constant $c_0>0$ and $n\in\{1,2,3\}$ 
such that $\norm{f_n}_{L^1}\ge c_0\delta$.
Let $c_1 = \tfrac18 c_0$. 
Because $\norm{f_n}_{L^\infty} \le 1$ and 
\[ \mu(\{x\in G: \big|\,|\phi(x)|-\tfrac12 \mu(\scriptb_n)\,\big|\le c_1\delta\})
= 4c_1\delta,\]
necessarily \[\int_{ |\,|\phi(x)|-\tfrac12 \mu(\scriptb_n)\,|\ge c_1\delta}
|f_n|\,d\mu \ge  \norm{f_n}_{L^1}-4c_1\delta \ge \tfrac12 c_0\delta.\]
Therefore
\begin{equation}
\langle K_n,f_n\rangle 
\le 
- \int_{ |\,|\phi(x)|-\tfrac12 \mu(\scriptb_n)\,|\ge c_1\delta}
|f_n(x)| \cdot \big|\,|\phi(x)|- \tfrac12 \mu(\scriptb_n)\,\big|\,d\mu(x) 
\le -c_1\delta \cdot \tfrac12 c_0\delta,
\end{equation}
which is comparable to $\max_j \norm{f_j}_{L^1}^2$
and therefore to $\distance(\bg,\orbit(\bB))^2$.
Thus
\begin{equation} \sum_k \langle K_k,f_k\rangle \le -c' \delta^2.  \end{equation}

To complete the proof, we next show that 
\begin{equation}\label{quadtermsallvanish} \langle \one_{\scriptb_l},f_i*f_j\rangle=0
\ \text{ for any three distinct indices $i,j,l$.} \end{equation}
For any of the four possible choices of $\pm$ signs, 
the support of the convolution $f_i^\pm*f_j^\pm$ is 
contained in the sum of the supports of the two factors,
hence consists of points $x$ at which $\phi(x)
= \tfrac12 (\pm\mu(B_i) \pm \mu(B_j)) + O(\delta)$.
On the other hand,
$\scriptb_l$ is the set of $x$ satisfying $|\phi(x)|\le \tfrac12\mu(\scriptb_l)$,
and the $\eta$--strict admissibility hypothesis says that 
\[|\pm \mu(\scriptb_l)\pm \mu(\scriptb_i)\pm \mu(\scriptb_j)| \ge c\eta \max_k \mu(\scriptb_k).\]
A hypothesis of Lemma~\ref{lemma:perturbative_relaxed}
is that $\delta$ is small relative to $\eta \max_k \mu(\scriptb_k)$.
Therefore for any choice of $\pm$ signs, the support of  $f_i^\pm*f_j^\pm$
is either entirely contained in $\scriptb_l$, or entirely contained in its complement.
Therefore 
in the integral
\[ \langle \one_{\scriptb_l},f_i^\pm*f_j^\pm\rangle
= \int \one_{\scriptb_l}\cdot (f_i^\pm*f_j^\pm)\,d\mu,\]
the factor $\one_{\scriptb_1}$ is constant. Since $\int f_i^\pm*f_j^\pm\,d\mu=0$
by \eqref{2of3works},
this integral vanishes.
Summing over all four possible choices of signs gives \eqref{quadtermsallvanish}.

Inserting these results into the expansion \eqref{eq:finalsum}, we conclude that
when the supplementary hypothesis \eqref{supplementary} is satisfied,
$\scriptt(\bg) \le \scriptt(\bg^\tarstar)-c\delta^2$,
that is,
\begin{equation} \scriptt(\bg) \le \scriptt(\bg^\tarstar)-c\distance(\bE,\orbit(\bB))^2, \end{equation}
as was to be shown.
\end{proof}

\section{The perturbative regime for sumsets} \label{section:sumsetperturbative}

In this section we prove Theorem~\ref{thm:kneserstability},
the quantitative stability result for the inequality
$\mu_*(A+B)\ge \min(\mu(A)+\mu(B),\mu(G))$.

We begin with a small lemma needed in the analysis. 
\begin{lemma} \label{lemma:>onehalf}
Let $K$ be a compact Abelian group with Haar measure $\nu$.
Let $A,B\subset K$ be compact.
Suppose that $B\ne\emptyset$ and that $\nu(A)>\tfrac12\nu(K)$.
Then
\begin{equation}
\nu(A+B) \ge \min(\nu(B) + \tfrac12\nu(A),\nu(K)).
\end{equation}
\end{lemma}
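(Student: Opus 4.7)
The plan is to invoke Kneser's sumset inequality in its general stabilizer form for compact Abelian groups. Let $H \subseteq K$ denote the closed subgroup $\{h \in K : h + (A+B) = A+B\}$ stabilizing $A+B$. Kneser's theorem then gives
\[
\nu(A+B) \;\ge\; \nu(A+H) + \nu(B+H) - \nu(H) \;\ge\; \nu(A) + \nu(B) - \nu(H),
\]
where the second bound uses the trivial estimates $\nu(A+H)\ge\nu(A)$ and $\nu(B+H)\ge\nu(B)$.

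If $\nu(H) \le \tfrac12 \nu(A)$, the inequality above immediately yields $\nu(A+B) \ge \tfrac12 \nu(A) + \nu(B)$, and the proof is complete. Otherwise $\nu(H) > \tfrac12 \nu(A) > \tfrac14 \nu(K)$. Since $H$ has positive Haar measure, it is open by the Steinhaus property, and the index $n := [K:H]$ satisfies $n < 4$, i.e.\ $n \in \{1,2,3\}$.

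The remaining work is a short case analysis on $n$. For $n=1$, $H=K$, so the non-empty $K$-translation invariant set $A+B$ must equal $K$. For $n=2$, the hypothesis $\nu(A) > \tfrac12 \nu(K) = \nu(H)$ forces $A$ to meet both cosets of $H$, so $A+H = K$; Kneser then gives $\nu(A+B) \ge \nu(K) + \nu(B) - \tfrac12 \nu(K) = \tfrac12 \nu(K) + \nu(B) \ge \tfrac12 \nu(A) + \nu(B)$. For $n=3$, $A$ meets at least two of the three $H$-cosets (since $\nu(A) > \tfrac12 \nu(K) > \nu(H) = \tfrac13\nu(K)$); if it meets all three, then $A+H=K$ and Kneser yields $\nu(A+B) \ge \tfrac23 \nu(K) + \nu(B) \ge \tfrac12 \nu(A)+\nu(B)$, whereas if it meets exactly two, then $\nu(A) \le \nu(A+H) = \tfrac23 \nu(K)$, so $\tfrac12\nu(A) \le \tfrac13\nu(K) = \nu(H)$, and Kneser gives $\nu(A+B) \ge \tfrac13 \nu(K) + \nu(B) \ge \tfrac12 \nu(A)+\nu(B)$.

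The main obstacle is really conceptual: the rest of the paper operates in the connected setting, where Kneser's inequality takes the simple form $\nu(A+B) \ge \min(\nu(A)+\nu(B),\nu(K))$; here $K$ is allowed to be an arbitrary compact Abelian group, so the stabilizer-refined version of Kneser is needed. Once that is in hand, the density hypothesis $\nu(A) > \tfrac12 \nu(K)$ controls the small-index cases cleanly via the elementary observation that $A$ cannot lie in a single coset of $H$.
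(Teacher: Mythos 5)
Your proof is correct, and it rests on the same foundation as the paper's, namely Kneser's theorem for compact abelian groups. The differences are in the packaging. You phrase Kneser's theorem in its stabilizer form, writing $\nu(A+B) \ge \nu(A+H)+\nu(B+H)-\nu(H)$ with $H$ the stabilizer of $A+B$, and then split into the case $\nu(H)\le\tfrac12\nu(A)$ (immediate) and the case $\nu(H)>\tfrac12\nu(A)>\tfrac14\nu(K)$, in which $[K:H]\in\{1,2,3\}$ and a short case check on the index finishes. The paper instead quotes the dichotomy form of Kneser and, in the periodic branch, uses a cleaner observation that makes the index analysis unnecessary: since $H$ is a proper open subgroup, $\nu(H)\le\tfrac12\nu(K)<\nu(A)$, so $A+H$ is a union of at least two $H$-cosets, hence $\nu(A+H)\ge 2\nu(H)$ and therefore
\[
\nu(A+H)-\nu(H)\;\ge\;\tfrac12\nu(A+H)\;\ge\;\tfrac12\nu(A),
\]
giving the bound directly from $\nu(A+B)=\nu(A+H)+\nu(B+H)-\nu(H)\ge\tfrac12\nu(A)+\nu(B)$. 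So your argument works but does more bookkeeping; the one-line bound $\nu(A+H)-\nu(H)\ge\tfrac12\nu(A+H)$ (valid the moment $A+H$ is at least two cosets) is what lets the paper bypass the case split on $[K:H]$. You might also note that your sentence ``Kneser's theorem then gives $\nu(A+B)\ge\nu(A+H)+\nu(B+H)-\nu(H)$'' implicitly relies on this inequality holding even when the stabilizer has Haar measure zero; that is true, but it deserves a word, since in that degenerate case the inequality is not the periodic branch of Kneser but rather follows from the $\nu(A)+\nu(B)$ branch applied to $A+H$ and $B+H$.
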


$K$ is not assumed to be connected.
The conclusion is false in general, without the hypothesis
that $\nu(A)>\tfrac12\mu(K)$. It fails, for instance, if there exists
a subgroup $H$ of $K$ satisfying $\nu(H)=\tfrac12\nu(K)$ and $A=B=H$.

\begin{proof}
According to a theorem of Kneser \cite{kneser},
either $\nu(A+B)\ge \nu(A)+\nu(B)$
or there exists a subgroup $H$ of $K$
of positive Haar measure
satisfying 
$A+B+H=A+B$ and
$\nu(A+B) = \nu(A+H)+\nu(B+H)-\nu(H)$.
In the first case, the conclusion of the lemma holds.
In the second case,
if $\nu(H)=\nu(K)$ then 
$\nu(A+H)+\nu(B+H)-\nu(H) = \nu(K)+\nu(K)-\nu(K)=\nu(K)$
and again the conclusion holds.

Now, suppose that $\nu(H)<\nu(K)$. $A+H$ is a union of cosets of $H$. It cannot be a single coset,
for $\nu(H)<\nu(K)$ implies $\nu(H)\le\tfrac12\nu(K)<\nu(A)$.
Therefore $A+H$ is a union of at least two cosets of $H$,
so $\nu(A+H) \ge 2\nu(H)$, so 
\[ \nu(A+H)-\nu(H)\ge \tfrac12\nu(A+H) \ge\tfrac12 \nu(A).\]
Thus
\[\nu(A+B) = \nu(A+H) -\nu(H) + \nu(B+H)
\ge \tfrac12\nu(A) + \nu(B),\]
as was to be shown.
\end{proof}

Let $G$ be a compact Abelian group with Haar measure
$\mu$, satisfying $\mu(G)=1$. 
Let $|\scripta|$ denote the Lebesgue measure
of any set $\scripta\subset\torus$.

\begin{proposition} \label{prop:refineTao}
There exists  $\delta_0>0$ with the following property.
Let $A,B\subset G$ be compact sets of positive measures satisfying
\[\mu(A)+\mu(B)\le 1-200\delta_0\min(\mu(A),\mu(B)).\]
Suppose that
\begin{align*}
\norm{\phi(x)}_\torus &\le \tfrac12\mu(A)+\delta_0\min(\mu(A),\mu(B))\ \text{ for all $x\in A$},
\\
\norm{\phi(x)}_\torus &\le \tfrac12\mu(B)+\delta_0\min(\mu(A),\mu(B))\ \text{ for all $x\in B$},
\\
\mu(A+B) &\le \mu(A)+\mu(B) +\delta \min(\mu(A),\mu(B))\text{ for some }0<\delta\leq \delta_0.
\end{align*}
Then $\phi(A)$ is contained in some interval in $\torus$
of length $\mu(A)+100\delta\min(\mu(A),\mu(B))$.
Likewise for $B$.
\end{proposition}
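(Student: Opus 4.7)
The plan is to analyze $A$ and $B$ fiber-by-fiber with respect to $\phi$, converting the problem into a stability question for Kneser's inequality on the quotient $\torus$. Let $K=\ker\phi$, equipped with normalized Haar measure $\mu_K$. Disintegrating $\mu$ along $\phi$, define slice functions $f,g,h:\torus\to[0,1]$ by $f(t)=\mu_K(A\cap \phi^{-1}(t))$, $g(t)=\mu_K(B\cap \phi^{-1}(t))$, and $h(t)=\mu_K((A+B)\cap \phi^{-1}(t))$. Write $\epsilon=\min(\mu(A),\mu(B))$. Then $\int f\,dm=\mu(A)$, $\int g\,dm=\mu(B)$, $\int h\,dm=\mu(A+B)\le\mu(A)+\mu(B)+\delta\epsilon$, while the supports of $f$ and $g$ lie in the intervals $I_A,I_B\subset\torus$ centered at $0$ of lengths $\mu(A)+2\delta_0\epsilon$ and $\mu(B)+2\delta_0\epsilon$ respectively.

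Set $A'=\{f>\tfrac12\}$ and $B'=\{g>\tfrac12\}$. A Markov-type argument applied to $\int_{I_A}(1-f)\,dm\le 2\delta_0\epsilon$ yields $m(I_A\setminus A')\le 4\delta_0\epsilon$, and similarly for $B'$. When $s\in A'$ and $s'\in B'$, each of $A\cap\phi^{-1}(s)$ and $B\cap\phi^{-1}(s')$ occupies more than half of its $K$-coset, so their Minkowski sum must equal the whole coset. Hence $h\equiv 1$ on $A'+B'$, which yields $\phi^{-1}(A'+B')\subset A+B$ and therefore the key estimate
\[
m(A'+B')\ \le\ \mu(A+B)\ \le\ \mu(A)+\mu(B)+\delta\epsilon.
\]
Since Kneser on $\torus$ gives $m(A'+B')\ge m(A')+m(B')$, the pair $(A',B')$ is near-extremal for Kneser on $\torus$ with defect $O((\delta+\delta_0)\epsilon)$. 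A Freiman-type stability statement on $\torus$ (of the type alluded to for Candela--de~Roton, or derivable directly on $\torus$ from the techniques already in play) then produces intervals $J_A\supset A'$ and $J_B\supset B'$ whose lengths are controlled by $m(A'),m(B')$ plus this defect.

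The main obstacle is to refine from $A'\subset J_A$ to $\phi(A)\subset J_A$ with the sharp length $\mu(A)+100\delta\epsilon$: the stray part $\phi(A)\setminus A'$ has a priori measure as large as $4\delta_0\epsilon$, which would overwhelm the target $100\delta\epsilon$ when $\delta\ll\delta_0$. The resolution is to reapply the half-measure principle (Lemma~\ref{lemma:>onehalf}) in the partial-slice regime. For $s\in\phi(A)\setminus A'$, so $0<f(s)\le\tfrac12$, and $s'\in B'$, so $g(s')>\tfrac12$, the lemma gives $h(s+s')\ge f(s)+\tfrac12 g(s')>\tfrac14$, contributing additional mass to $\int h\,dm$ on the set $(\phi(A)\setminus A')+B'$. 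Inserting this extra contribution into the already-tight bound $\int h\,dm\le\mu(A)+\mu(B)+\delta\epsilon$, and using Kneser on $\torus$ to majorize $m((\phi(A)\setminus A')+B')\ge m(\phi(A)\setminus A')+m(B')$, forces $m(\phi(A)\setminus A')\le C\delta\epsilon$; an analogous estimate handles $\phi(B)\setminus B'$. Combining with the stability step places $\phi(A)$ inside an interval of length $|J_A|+O(\delta\epsilon)\le\mu(A)+100\delta\epsilon$, and likewise for $\phi(B)$.
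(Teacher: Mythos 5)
The setup and the first half of your plan run parallel to the paper's: the slice functions $f,g,h$ are the paper's $\nu(A_t),\nu(B_t),\nu((A+B)_t)$, the sets $A'=\{f>\tfrac12\}$, $B'=\{g>\tfrac12\}$ coincide with the paper's $\scripta',\scriptb'$, and the observation that $h\equiv 1$ on $A'+B'$ (hence $m(A'+B')\le\mu(A+B)$) combined with a Fre\u{\i}man-type argument is exactly the paper's middle step. The divergence, and the gap, is in how you attempt to upgrade the resulting $O(\delta_0\epsilon)$-level control to the required $O(\delta\epsilon)$.

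The core problem is that the Kneser defect of $(A',B')$ that you can actually establish is only $O(\delta_0\epsilon)$: you have $m(A')\ge\mu(A)-2\delta_0\epsilon$ and $m(B')\ge\mu(B)-2\delta_0\epsilon$ from the Markov step, so $m(A'+B')\le\mu(A+B)\le\mu(A)+\mu(B)+\delta\epsilon$ gives defect $\le 4\delta_0\epsilon+\delta\epsilon$. The Fre\u{\i}man step then yields a hull interval of length $\mu(A)+O(\delta_0\epsilon)$, not $\mu(A)+O(\delta\epsilon)$. The paper closes this gap by first proving $\rho_A,\rho_B\le\delta$ and $|\scripta\setminus\scripta'|\le O(\delta\epsilon)$ via the three-part disjoint decomposition $A+B\supset(A_\tau+B)\sqcup(A_-+B_{b_-})\sqcup(A_++B_{b_+})$ (eq.~\eqref{from3disjointness}), which crucially exploits the support hypothesis to pick out a near-maximal slice $A_\tau$ and the two extremal slices $B_{b_\pm}$ of $B$ \emph{viewed on the real line}. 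Your argument never uses the support hypothesis in this way, and that is what costs you the sharp dependence.

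Your attempted refinement step does not recover it. Writing $E=\phi(A)\setminus A'$, the inequality $m(E+B')\ge m(E)+m(B')$ from Kneser does not deliver ``extra'' mass to $\int h\,dm$ because $E+B'$ can overlap, even be entirely contained in, $A'+B'$: if $E$ is scattered inside the hull interval of $A'$ (the generic configuration after the Fre\u{\i}man step), then $E+B'\subset A'+B'$ and the supposed additional contribution is zero. Making the bookkeeping precise, the disjoint excess is $m\big((E+B')\setminus(A'+B')\big)\ge m(E)-(\text{Kneser defect of }(A',B'))$, and since that defect is only known to be $O(\delta_0\epsilon)$, you recover nothing beyond $m(E)\le O(\delta_0\epsilon)$, which you already had.

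There is a second, independent gap: even a bound $m(\phi(A)\setminus A')\le C\delta\epsilon$ would not yield the stated conclusion, because the conclusion is a \emph{diameter} statement about the image $\phi(A)$. A compact set $A$ can contain points $a$ for which $\phi(a)$ is far from $A'$ but $\nu(A_{\phi(a)})=0$, so such $a$ are invisible to any measure bound on $\{t:f(t)>0\}$. The paper's Claim~\ref{claim:fullsets} handles exactly this with an extremal-point argument (placing a candidate outlier $z$ and the opposite extremal slices of $B$ so that $A_z+B_y$ lies strictly outside $\phi^{-1}(\scripta'+\scriptb')$), which has no analogue in your proposal. You would need both the near-max-slice disjointness trick for the $O(\delta\epsilon)$ measure control and a separate extremal argument for the diameter control before the proof closes.
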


Define $\scripta=\phi(A)$ and $\scriptb = \phi(B)$ in $\torus$.
For $t\in\torus$ define
\[ A_t = \{x\in A: \phi(x)=t\} \subset A\subset G.\]
$A_t$ will be regarded sometimes as a subset of a coset of $K=\kernel(\phi)$,
and sometimes as a subset of $K$ itself (by translating by any appropriate element of $G$).
Likewise define $B_t\subset B$.

Let $\nu$ be Haar measure on $H=\kernel(\phi)$, normalized to satisfy $\nu(H)=1$.

Each slice $\phi^{-1}(\{t\})\subset G$ is a coset of $H$.  
By translation, $\nu$ also defines a measure on each such coset,
which will also be denoted by $\nu$. Thus we may write
$\nu(A_t)$, even though there is no canonical identification
of $A_t$ with a subset of $H$. 

The hypotheses allow us to regard $\phi$ as a mapping from $A+B$ to $\reals$, 
rather than to $\torus$. Indeed, denoting $\eta:=\delta_0\min(\mu(A),\mu(B))$, each element of $\phi(a)\in \phi(A)$ is represented
by some element $\tilde\phi(a)\in [-\tfrac12\mu(A)-\eta,\tfrac12\mu(A)+\eta]$,
and correspondingly for $\phi(B)$. Therefore, for any $a\in A$ and $b\in B$,
$\phi(a+b)$ is represented by some element $\tilde\phi(a+b)\in (-\tfrac12,\tfrac12)$.
These satisfy $\tilde\phi(a+b)=\tilde\phi(a)+\tilde\phi(b)$,
where addition on the right-hand side is performed in $\reals$ rather than in $\torus$.
These three mappings, all denoted by the common symbol $\tilde\phi$,
are measure-preserving bijections.

\begin{proof}[Proof of Proposition~\ref{prop:refineTao}]
Define $\rho_A,\rho_B\in[0,1)$ by
\begin{equation} \label{rhoAB} 
1- \rho_A = \sup_t \nu(A_t) \text{ and } 1- \rho_B = \sup_s \nu(B_s).
\end{equation}
The hypothesis $\norm{\phi(x)}_\torus\le \tfrac12\mu(A)+\delta_0\min(\mu(A),\mu(B))$ for $x\in A$
implies that $|\scripta| \le(1+2\delta_0)\mu(A)$.
On the other hand, $\mu(A)\le (1-\rho_A)|\scripta|$.
Therefore $\rho_A \le 1-(1+2\delta_0)^{-1}$.
Thus if $\delta_0$ is sufficiently small then $\rho_A < \tfrac14$.
Likewise $\rho_B < \tfrac14$.
Therefore $\min(1-2\rho_A,\rho_B)=\rho_B$.
This relation will be used momentarily.

Let $\eps\in(0,\rho_A)$ be sufficiently small so that $1-\rho_A-\epsilon>\tfrac{3}{4}$, and choose $\tau\in\scripta$ satisfying 
\begin{equation} \nu(A_\tau)>1-\rho_A-\eps.\end{equation}
Set \[A_-=\{a\in A: \phi(a)<\tau\} \text{ and } A_+=\{a\in A: \phi(a)>\tau\}\]
(where $\mathcal{A}$ is seen as a subset of $\reals$ rather than of $\torus$).

Regarding $\scriptb$ as a subset of $\reals$,
let $b_-,b_+\in\reals$ be its minimum and maximum elements, respectively. 

Now \begin{equation*} 
A+B \supset (A_\tau+B) + (A_- +B_{b_-}) + (A_+ + B_{b_+})\end{equation*}
and these three sets are pairwise disjoint.
Therefore
\begin{equation*} 
\mu(A+B) \ge \mu(A_\tau+B) + \mu(A_- +B_{b_-}) + \mu(A_+ + B_{b_+}).\end{equation*}
$A_- + B_{b_-}$ contains a translate of $A_-$,
so $\mu(A_-+B_{b_-}) \ge \mu(A_-)$.
Likewise $\mu(A_+ + B_{b_+})\ge \mu(A_+)$.
Therefore
\begin{equation} \label{from3disjointness}
\mu(A+B) \ge \mu(A_\tau+B) + \mu(A). \end{equation}


One application of \eqref{from3disjointness} is the relation
\begin{equation} \max(\rho_A,\rho_B)\le\delta. \end{equation}
To prepare for its proof recall that according to Lemma~\ref{lemma:>onehalf},
\[ \nu(A_\tau+B_t) 
\ge \min\big(\tfrac12\nu(A_\tau) + \nu(B_t),1\big)
\ge \min\big(\nu(B_t) + \tfrac38,1\big)
\]
for any $t\in\phi(B)$, since $\nu(A_\tau) > \tfrac34$.
Therefore
\begin{multline*}
\mu(A_\tau+B)
= \int_\scriptb \nu(A_\tau+B_t)\,dt
\\
\ge \int_\scriptb \min(\nu(B_t)+\tfrac38,1)\,dt
= \mu(B) +  \int_\scriptb \big[ \min(\tfrac38,1-\nu(B_t))\big]\,dt
\\
\ge \mu(B) +  \int_\scriptb \big[ \min(\tfrac38,\rho_B)\big]\,dt
=  \mu(B) +   \rho_B|\scriptb|
\end{multline*}
since $\rho_B<\tfrac14$.
Since $|\scriptb|\ge \mu(B)$, 
inserting this bound into \eqref{from3disjointness} gives
\[ \mu(A+B) \ge \mu(A)+\mu(B) + \rho_B\mu(B).\]
Since $\mu(A+B)\le\mu(A)+\mu(B)+\delta\mu(B)$, 
we may conclude that $\rho_B\le\delta$.
The roles of $A,B$ can be interchanged, so $\rho_A\le\delta$ also.

Let $\scripta'=\{t\in\scripta: \nu(A_t)>\tfrac12\}$.
Likewise define $\scriptb'\subset\scriptb$.

We claim that
\begin{equation} \label{+14claim} 
\mu(A+B) \ge \mu(A)+\mu(B) + (\tfrac12-\rho_A) |\scriptb\setminus\scriptb'|.\end{equation}
The proof will use the fact that
for any subsets $S,T$ of a compact group $H$ satisfying $\mu(S)+\mu(T)>\mu(H)$, 
the associated sumset $S+T$ is all of $H$. 
Connectivity of $H$ is not required for this conclusion; it is valid
for the kernel $H$ of $\phi$.
Indeed, for any $z\in H$ it holds that $\{z-x: x\in S\}\cap T\ne\emptyset$,
since the intersection of these sets has measure equal to $\mu(S)+\mu(T)-\mu(H)>0$.
To prove the claim, majorize
\begin{equation} \label{again} \mu(A_\tau+B) 
\ge \int_\scriptb \nu(A_\tau+B_t) \,dt.
\end{equation}
One has $\nu(A_\tau+B_t)\ge \nu(B_t)$ for all $t$.
Moreover,
if $\nu(B_t) \le \tfrac12$ then \[\nu(A_\tau+B_t) \ge \nu(A_\tau)
\ge 1-\rho_A-\eps
\ge  \nu(B_t) + \tfrac12-\rho_A-\eps.  \]
Therefore
\begin{equation} \label{AtauB} 
\mu(A_\tau+B) 
\ge \int_\scriptb \nu(B_t)\,dt
 + \int_{\scriptb\setminus\scriptb'} (\tfrac12-\rho_A-\eps)\,dt
= \mu(B) +  (\tfrac12-\rho_A-\eps) |\scriptb\setminus \scriptb'|.  \end{equation} 
Letting $\eps\to 0$ and combining this with
\eqref{from3disjointness} gives \eqref{+14claim}. 
\qed

From \eqref{+14claim} together with the hypothesis 
$\mu(A+B)\le \mu(A)+\mu(B)+\delta\min(\mu(A),\mu(B))$
and the bound $\max(\rho_A,\rho_B)\le\delta$
we deduce that
\begin{equation}
\label{scriptbminusupper}
|\scriptb\setminus\scriptb'|\le (2+O(\delta))\delta\min(\mu(A),\mu(B)).\end{equation}
Since the roles of $A,B$ can be freely interchanged in this reasoning, 
$|\scripta\setminus\scripta'|$ satisfies the same inequality.

For every $s\in\scripta'$ and $t\in \scriptb'$,
$\nu(A_s+B_t) \ge \min(\nu(A_s)+\nu(B_t),1) =1$ since $\nu(A_s)>\tfrac12$
and likewise $\nu(B_t)> \tfrac12$.
Therefore 
$\nu((A+B)_x)=1$ for every $x\in\scripta'+\scriptb'$. Therefore
$|\scripta'+\scriptb'| \le \mu(A+B)$, and consequently
\begin{align*} |\scripta'+\scriptb'| 
&\le \mu(A)+\mu(B)+\delta\min(\mu(A),\mu(B))
\\
&\le |\scripta|+|\scriptb|+\delta\min(\mu(A),\mu(B))
\\
&< |\scripta'|+|\scriptb'|
+ 6\delta\min(\mu(A),\mu(B)).
\end{align*}
On the other hand,
\begin{align*}
|\scripta'|
&\ge|\scripta| -(2+O(\delta))\delta\min(\mu(A),\mu(B))
\\
&\ge\mu(A) -(2+O(\delta))\delta\min(\mu(A),\mu(B))
\\
&> \mu(A)-3\delta\min(\mu(A),\mu(B))\end{align*}
and likewise for $|\scriptb'|$.

A straightforward adaptation to $\reals$ (see \cite{christRS}) of a theorem of \freiman{}
states that if $S,S'\subset\reals$ are nonempty Lebesgue measurable
sets satisfying $|S+S'|_* < |S|+|S'|+ \min(|S|,|S'|)$,
then $S$ is contained in an interval of length $\le |S+S'|-|S'|$.
Regarding $\scripta',\scriptb'$ as subsets of $\reals$, as we may, 
this result allows us to conclude that if $\delta_0$ is less than some absolute constant, then $\scripta'$
is contained in an interval $I$ of length $\le |\scripta'|+(6+O(\delta))\delta\min(\mu(A),\mu(B))$.
Similarly, $\scriptb'$ is contained in an interval $J$
of length $\le |\scriptb'|+(6+O(\delta))\delta\min(\mu(A),\mu(B))$.

The following claim completes the proof of Proposition~\ref{prop:refineTao}.

\begin{claim} \label{claim:fullsets}
The full sets $\scripta=\phi(A)$ and $\scriptb=\phi(B)$
are contained in intervals of lengths
$\mu(A)+100\delta\min(\mu(A),\mu(B))$
and
$\mu(B)+100\delta\min(\mu(A),\mu(B))$, respectively.
\end{claim}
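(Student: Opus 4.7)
The plan is to bound the hull of $\scripta$ by $|I| + O(\delta\min(\mu(A),\mu(B)))$, where $I$ is the convex hull of $\scripta'$, and separately to bound $|I| \le \mu(A) + O(\delta\min(\mu(A),\mu(B)))$. Combined these give $\scripta$ inside an interval of length $\mu(A) + O(\delta\min(\mu(A),\mu(B)))$, hence $\le \mu(A) + 100\delta\min(\mu(A),\mu(B))$ for $\delta_0$ sufficiently small. The argument for $\scriptb$ is symmetric (swap $A\leftrightarrow B$). I take $I,J$ to be the convex hulls of $\scripta',\scriptb'$; the Freiman-type conclusion already derived then gives the gap estimates $|I\setminus\scripta'|,\,|J\setminus\scriptb'|\le (6+O(\delta))\delta\min(\mu(A),\mu(B))$.

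The bound $|I| \le \mu(A) + O(\delta\min(\mu(A),\mu(B)))$ comes from facts already established. Since $\nu((A+B)_u) = 1$ on $\scripta'+\scriptb'$ (both slices exceed $\tfrac12$, so they sum to the kernel), one has $|\scripta'+\scriptb'| \le \mu(A+B) \le \mu(A)+\mu(B)+\delta\min(\mu(A),\mu(B))$. Combined with Macbeath's inequality $|\scripta'+\scriptb'| \ge |\scripta'|+|\scriptb'|$ and the lower bound $|\scriptb'|\ge \mu(B)-O(\delta\min(\mu(A),\mu(B)))$ (which follows from \eqref{scriptbminusupper} and $|\scriptb|\ge \mu(B)$), this yields $|\scripta'|\le \mu(A) + O(\delta\min(\mu(A),\mu(B)))$, and hence $|I|\le \mu(A) + O(\delta\min(\mu(A),\mu(B)))$.

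For the main step, I take $s_0\in \scripta$ with $d:=s_0-\max I>0$ and will show $d\le O(\delta\min(\mu(A),\mu(B)))$. The key geometric observation is that the translate $s_0+\bigl(\scriptb'\cap(\max J-d,\max J]\bigr)$ lies inside $\scripta+\scriptb$ but strictly to the right of $\max(\scripta'+\scriptb')=\max I+\max J$, hence disjoint from $\scripta'+\scriptb'$. On this region each fibre satisfies $\nu((A+B)_u)\ge \nu(B_t)\ge 1-\rho_B$ for the decomposition $u=s_0+t$ with $t\in\scriptb'$. Since the gap of $\scriptb'$ inside any subinterval of $J$ is controlled by the total gap $|J\setminus\scriptb'|$, integrating gives
\[
\mu(A+B) \ge |\scripta'|+|\scriptb'|+(1-\rho_B)\bigl(\min(d,|J|)-(6+O(\delta))\delta\min(\mu(A),\mu(B))\bigr).
\]
Combined with the hypothesis $\mu(A+B)\le \mu(A)+\mu(B)+\delta\min(\mu(A),\mu(B))$, the bound $\rho_B\le \delta$, and the lower bounds on $|\scripta'|,|\scriptb'|$ from the previous paragraph, this forces $\min(d,|J|)\le O(\delta\min(\mu(A),\mu(B)))$. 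The case $d\ge |J|$ is excluded: it would require $|J|\le O(\delta\min(\mu(A),\mu(B)))$, but $|J|\ge |\scriptb'|\ge \mu(B)-O(\delta\min(\mu(A),\mu(B)))$ for $\delta_0$ small. Hence $d\le O(\delta\min(\mu(A),\mu(B)))$. The identical argument with $\min J$ in place of $\max J$ bounds $\min I-\min \scripta$, and swapping $A\leftrightarrow B$ handles $\scriptb$. The main obstacle is nothing conceptual but merely tracking the absolute constants so that the coefficient on $\delta\min(\mu(A),\mu(B))$ in the final bound is at most $100$; this is routine provided $\delta_0$ is taken as a sufficiently small absolute constant.
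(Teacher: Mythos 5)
Your proposal is essentially the same argument as the paper's. The paper proves the claim by contradiction: assuming a point $z\in\scripta$ lies distance $\ge C_1\delta\min(\mu(A),\mu(B))$ past the endpoint of $I$, it observes that for $y$ in $\scriptb'$ within distance $C_1\delta\min(\mu(A),\mu(B))$ of the corresponding endpoint of $J$, the sum $A_z+B_y$ lies outside $\phi^{-1}(\scripta'+\scriptb')$, then uses the fiber bound $\nu(B_y)>\tfrac12$ (the defining property of $\scriptb'$) to show this contributes extra mass to $\mu(A+B)$, contradicting the sumset hypothesis. Your direct version with $d=s_0-\max I$ and the window $(\max J-d,\max J]$ is the same idea; the $\min(d,|J|)$ case split and the dismissal of $d\ge|J|$ are correct and are essentially what the paper achieves by fixing the window width at $C_1\delta\min(\mu(A),\mu(B))$.

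One factual error in your write-up: you assert $\nu(B_t)\ge 1-\rho_B$ for $t\in\scriptb'$. That inequality goes the wrong way. By definition \eqref{rhoAB}, $1-\rho_B=\sup_s\nu(B_s)$, so for every $t$ one has $\nu(B_t)\le 1-\rho_B$, not $\ge$. The correct lower bound for $t\in\scriptb'$ is $\nu(B_t)>\tfrac12$ (the definition of $\scriptb'$), which is what the paper uses. Replacing your factor $(1-\rho_B)$ by $\tfrac12$ in the displayed inequality does not change the conclusion --- you then get $\min(d,|J|)\le (20+O(\delta))\delta\min(\mu(A),\mu(B))$ instead of a slightly smaller bound, and the constant $100$ in the claim is still comfortably met --- but as written the step is wrong.

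A minor positive: your first paragraph makes explicit the bound $|\scripta'|\le\mu(A)+O(\delta\min(\mu(A),\mu(B)))$ (via the one-dimensional Brunn--Minkowski inequality $|\scripta'+\scriptb'|\ge|\scripta'|+|\scriptb'|$, the estimate $|\scripta'+\scriptb'|\le\mu(A+B)$, and the lower bound on $|\scriptb'|$). The paper's final inequality chain $|I|+40\delta\min(\mu(A),\mu(B))\le|\scripta'|+46\delta\min(\mu(A),\mu(B))\le\mu(A)+100\delta\min(\mu(A),\mu(B))$ relies on exactly this bound without stating it, so spelling it out is an improvement in exposition rather than a departure in method.
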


The reasoning in the following proof of this claim will be used again below.

\begin{proof}[Proof of Claim~\ref{claim:fullsets}]
Suppose that some point $z\in\scripta$ 
were to lie to the left of the left endpoint of $I$
by a distance $\ge C_1\delta\min(\mu(A),\mu(B))$.
If $y\in\scriptb'$ lies within distance $C_1\delta\min(\mu(A),\mu(B))$
of the left endpoint of $J$, then $A_z + B_y$ lies outside $I+J$.
The set of all $y\in\scriptb'$ with this property 
has Lebesgue measure 
\[ \ge |\scriptb'| - \big(|J|-C_1\delta\min(\mu(A),\mu(B))\big) \ge (C_1-6)\delta\min(\mu(A),\mu(B)).\]
The sum of $A_z$ with the union of all such $B_y$ 
therefore has Haar  measure 
$\ge \tfrac12 (C_1-6)\delta\min(\mu(A),\mu(B))$.
This sumset is disjoint from $\phi^{-1}(\scripta'+\scriptb')
= \phi^{-1}(\scripta') + \phi^{-1}(\scriptb')$.
Therefore
\begin{align*} \mu(A+B) 
&\ge \mu(\phi^{-1}(\scripta'))
+ \mu(\phi^{-1}(\scriptb'))
+ \tfrac12 (C_1-6)\delta\min(\mu(A),\mu(B))
\\&
\ge(\mu(A)-|\scripta\setminus\scripta'|) 
+ (\mu(B)-|\scriptb\setminus\scriptb'|) 
+ \tfrac12 (C_1-6)\delta\min(\mu(A),\mu(B))
\\&
\ge \mu(A)+\mu(B) + \tfrac12 (C_1-18)\delta\min(\mu(A),\mu(B)).
\end{align*}
Choosing $C_1=21$ yields a contradiction for all sufficiently small $\delta$.


Thus $\scripta=\phi(A)$ is contained in an interval of length less than
\[|I|+ 40\delta\min(\mu(A),\mu(B))
\le |\scripta'|+46\delta\min(\mu(A),\mu(B))
\le \mu(A) + 100\delta\min(\mu(A),\mu(B)).\]
Likewise for $\scriptb$.
\end{proof}

The conclusions of Proposition~\ref{prop:refineTao} hold if $A,B$ satisfy the same
hypotheses but are merely assumed to be measurable, rather
than compact, except that the constant $200$ is replaced by a sufficiently large finite constant $\boldC$.
To prove this, choose compact subsets $A',B'$ of $A,B$ whose Haar measures are nearly
those of $A,B$ respectively, and invoke Proposition~\ref{prop:refineTao} to obtain
parallel rank one Bohr sets $\scriptb_{A'}\supset A'$ and $\scriptb_{B'}\supset B'$
satisfying $\mu(\scriptb_{A'})\le \mu(A)+\boldC\delta\min(\mu(A),\mu(B))$
with the corresponding bound for $\mu(\scriptb_{B'})$.
Then repeat the reasoning in the proof of the claim above
to deduce that there exist slightly larger parallel rank one Bohr sets,
associated to the same homomorphism $\phi$, which contain all of $A,B$ respectively,
and whose measures satisfy the required upper bounds with a larger constant factor $\boldC$. 
\end{proof}

\medskip
With a small modifications, the proof of Proposition~\ref{prop:refineTao}
establishes an extension: Setting $M = \min(\mu(A),\mu(B))$ to simplify notation, the hypotheses that 
$\norm{\phi(x)}_\torus \le \tfrac12\mu(A)+\delta_0M$ for all $x\in A$
and analogously for $B$
can be relaxed to
\begin{equation}
\left\{ \begin{aligned}
& \norm{\phi(x)}_\torus \le \tfrac12\mu(A)+\delta_0M 
\ \text{ $\forall\, x\in A$ outside a set of Haar measure $\le\delta_0M$}
\\
&\norm{\phi(x)}_\torus \le \tfrac12\mu(B)+\delta_0M
\ \text{ $\forall\,x\in B$ outside a set of Haar measure $\le\delta_0M$,}
\end{aligned} \right. \end{equation}
to conclude that, provided that $\delta_0$ is sufficiently small, $\phi(A)$ is contained in some interval in $\torus$
of length $\mu(A)+\bC\delta_0\min(\mu(A),\mu(B))$, and likewise for $\phi(B)$.

To prove this, define $A',B'$ to be the subsets of $A,B$, respectively, specified by these inequalities. 
We will use the proof of Claim~\ref{claim:fullsets} to control $A,B$ in terms of $A',B'$,
demonstrating that $A,B$ satisfy the hypotheses of Proposition~\ref{prop:refineTao}
with $\delta_0$ replaced by $\eps_0$, where $\eps_0$ depends only on $\delta_0$
and tends to zero as $\delta_0\to 0$. Thus the extension will be proved.

The only change to the reasoning in the proof of the claim  is that 
we may no longer conclude that, in the notation of that discussion, 
$A_z+B_y$ is disjoint from
\[I+J=\{x\in\torus: \norm{x}_\torus \le \tfrac12(\mu(A)+\mu(B))+O(\delta_0)M\}\]
whenever $\phi(z)\in[\tfrac12\mu(A)+2\bC\delta_0M,\tfrac12]$
and $\phi(y)\in[\tfrac12\mu(B)-\bC\delta_0M,\tfrac12\mu(B)]$.
Under the hypotheses of this extension, 
it is not permissible to regard $\phi(A),\phi(B)$ as subsets of $\reals$,
and the desired disjointness could fail due to periodicity.

Instead, we claim that if  $C_1$ is a sufficiently large constant 
and $\delta_0$ is sufficiently small then
for any $x\in A$ satisfying
\[ \tfrac12 \mu(A) + C_1\delta_0 M
\le |\phi(x) |\le\tfrac12,\]
the set of all $y\in B'$ satisfying $\phi(x+y)\notin I+J$
has Haar measure $\ge C_2\delta_0 M$,
where $C_2$ depends on $C_1$ but not on $\delta_0$, 
and $C_2\to\infty$ as $C_1\to\infty$.
We may assume without loss of generality that $\phi(x)\in[0,\tfrac12]$ 
by replacing $(A,B)$ by $(-A,-B)$ if necessary. 
The two desired conditions for $y$ are that $w=\phi(y)$ should satisfy
\[ w\ge \tfrac12 \mu(B)+\tfrac12\mu(A)-\phi(x) +O(\delta_0 M)\]
and 
\[ w \le 1-\tfrac12\mu(A)-\tfrac12\mu(B) -\phi(x) -O(\delta_0 M).\]
The  set of all $w\in [-\tfrac12\mu(B),\tfrac12\mu(B)]$
that satisfy both inequalities has Lebesgue measure
$\ge \bC_1\delta_0 M$
provided that $\delta_0$ is sufficiently small. 
The inverse image under $\phi$ of this set of elements $w$ has Haar measure
$\ge \bC_1\delta_0 M$.
The complement of the intersection with $B$ of 
this inverse image 
has Haar measure $O(\delta_0 M)$, with the constant in the $O(\cdot)$ notation
independent of the choice of $\bC_1$. The result therefore follows.

This completes the proof of the extension of Proposition~\ref{prop:refineTao}.
\qed

\medskip

\begin{proof}[Proof of Theorem~\ref{thm:kneserstability}]
Let $\eta>0$.
Let $A,B\subset G$ 
satisfy $\min(\mu(A),\mu(B))\ge\eta$ and $\mu(A)+\mu(B)\le 1-\eta$.
Suppose that $\mu_*(A+B)\le \mu(A)+\mu(B)+\delta\min(\mu(A),\mu(B))$.
By the same reasoning as the one used to extend the statement of Proposition \ref{prop:refineTao} to measurable sets,
it suffices to treat the case in which $A,B$ are compact.

If $\delta$ is sufficiently small, as a function of $\eta$ alone,
then the theorems of Tao \cite{taokneser} and/or Griesmer \cite{griesmer}
can be applied. The conclusion is that there exist parallel rank one Bohr sets
$\scriptb_A,\scriptb_B$ such that 
\[\mu(A\symdif \scriptb_A)\le \eps(\delta)\min(\mu(A),\mu(B))\]
and likewise for $\mu(B\symdif \scriptb_B)$. The quantity $\eps(\delta)$
tends to $0$ as $\delta\to 0$, provided that $\eta$ remains fixed.

The reasoning in the proof of the claim above now shows that the full sets $A,B$
are contained in parallel rank one Bohr sets $\scriptb^\sharp_A,\scriptb^\sharp_B$,
respectively,
satisfying 
\[\mu(\scriptb^\sharp_A)\le\mu(A)+\eps^\sharp\min(\mu(A),\mu(B))\]
where $\eps^\sharp\to 0$ as $\delta\to 0$.
Likewise for $B,\scriptb^\sharp_B$.

This is not the desired conclusion, since it includes no quantitative bound
for the dependence of $\eps^\sharp$ on $\delta$.
However, since $\eps^\sharp\to 0$ as $\delta\to 0$, 
it follows that if $\delta$ is sufficiently small then the pair $(A,B)$ 
satisfies the hypotheses of Proposition~\ref{prop:refineTao}.
Invoking that proposition completes the proof of the theorem.
\end{proof}

\section{A special case on $\torus$} \label{section:specialcase}

In this section, we discuss our functionals for $G=\torus$,
in the special situation in which one of the sets is an interval. 
In particular, our next result
ensures that, if $(A,B,C)$ satisfies near equality in the Riesz-Sobolev inequality on $\mathbb{T}$, and $C$ is an interval, then $A$ and $B$ are nearly intervals.

When discussing the special case $G=\torus$, we will often use
$|E|$, rather than $m(E)$, to denote the Lebesgue measure of $E$.

\begin{proposition} \label{prop:longname}
Let $\eta>0$.
There exists a constant $\bC<\infty$, depending only on $\eta$, 
with the following property.
Let $(A,B,C)$ be an 
$\eta$--strictly admissible and $\eta$--bounded
triple of measurable subsets of $\torus$.
Suppose that $C$ is an interval with center $x_C$.
Then
\begin{equation}
\label{eq:special case characterisation}
\inf_{x+y=x_C}\big(|A\symdif (A^\star+x)|+|B\symdif (B^\star+y)|\big)\le \bC \scriptd(A,B,C)^{1/2}.
\end{equation}
\end{proposition}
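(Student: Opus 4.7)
The plan is to verify that $(A,B,C)$ falls within the hypotheses of Lemma~\ref{lemma:perturbative}, specialized to $G=\torus$ (where every rank one Bohr set lying symmetric-difference close to a given interval is itself an interval), and then to invoke that lemma to deliver the $\scriptd(A,B,C)^{1/2}$ bound. As a first step I translate $B$ by $-x_C$, which preserves $\scriptd$ and all the relevant measures, to arrange that $C$ coincides with the centered interval $C^\star$. By the permutation invariance of $\scriptt_\torus$ and the complementation principle Lemma~\ref{lemma:complementD}, I may further suppose $|C|\le|A|\le|B|$ and $|A|\le\tfrac12$.

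To enter the equal-measure regime required by Lemma~\ref{lemma:equalitycase}, I invoke Lemma~\ref{reducing to equal} when $|C|\le(1-\eta/50)|B|$ and Lemma~\ref{for not strongly admissible} otherwise. This produces $B'\subseteq B$ with $|A|=|B'|$ and a companion triple $(A,B',S_{A,B'}(\tau))$ that is $\eta/2$-strictly admissible, $\eta^2/2$-bounded, has defect bounded by an absolute multiple of $\eta^{-1}\scriptd(A,B,C)$, and satisfies the supplementary estimate $|S_{A,B'}(\tau)|\le|A|-4\scriptd(A,B',S_{A,B'}(\tau))^{1/2}$. Corollary~\ref{cor:equalitycase} then produces a rank one Bohr set $\scriptb$ with $|S_{A,B'}(\tau)\symdif\scriptb|\to 0$ as $\scriptd(A,B,C)\to 0$; via Lemma~\ref{lemma:Staulowerbound} the superlevel set $S_{A,B'}(\tau)$ lies close to $C$, so $\scriptb$ is close to the interval $C$, forcing $\scriptb$ itself to be an interval in $\torus$ whose length and approximate center match those of $C$.

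The principal obstacle is to promote this information about a superlevel set into information about $A$ and $B'$ themselves. Lemma~\ref{lemma:equalitycase} furnishes the small-doubling bound $|S-S|\le 2|S|+12\scriptd(A,B',S_{A,B'}(\tau))^{1/2}$ for $S=S_{A,B'}(\beta)$, which I convert into a near-minimal sumset estimate for $A+_0 B'$ by splitting
\[
|A|^2 \;=\; \int_S \one_A*\one_{B'}\,dm \;+\; \int_{\torus\setminus S}\one_A*\one_{B'}\,dm,
\]
bounding the first integral above via the sharp Theorem~\ref{thm:RSprime} together with the closeness of $S$ to its concentric interval, and exploiting $\one_A*\one_{B'}\le\beta$ on $\torus\setminus S$ to deduce $|(A+_0 B')\setminus S|\le\bC\scriptd(A,B,C)^{1/2}/\beta$. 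Combined with $|S|\le|C|+O(\scriptd(A,B,C)^{1/2})$, this yields $|A+_0 B'|\le|A|+|B'|+\bC\eta^{-1}\scriptd(A,B,C)^{1/2}$. Theorem~\ref{thm:kneserstability} then supplies parallel intervals $\scriptb_A\supseteq A$ and $\scriptb_{B'}\supseteq B'$ with excess measures bounded by $\bC\scriptd(A,B,C)^{1/2}$. Since $|B|-|B'|\le\bC\scriptd(A,B,C)^{1/2}$ by the construction in Lemma~\ref{reducing to equal} together with Lemma~\ref{lemma:Staulowerbound} applied to $(A,B,C)$, a mild enlargement of $\scriptb_{B'}$ produces a parallel interval $\scriptb_B$ of length $|B|$ with $|B\symdif\scriptb_B|\le\bC\scriptd(A,B,C)^{1/2}$.

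At this stage $(A,B,C)$ lies within symmetric-difference distance $\bC\scriptd(A,B,C)^{1/2}$ of a parallel triple of intervals $(\scriptb_A,\scriptb_B,C)$ of the correct lengths, which after a small adjustment of their centers can be assumed compatibly centered. Lemma~\ref{lemma:perturbative} then yields translates $(\scriptb_A+x,\scriptb_B+y,C+z)$ with $x+y=z$ and $|A\symdif(\scriptb_A+x)|+|B\symdif(\scriptb_B+y)|+|C\symdif(C+z)|\le\bC\scriptd(A,B,C)^{1/2}$. Since $C=C^\star$ is already an interval centered at $0$, the third term forces $z=0$ up to a $\mu$-null set, whence $x+y=0$. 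Reversing the initial translation of $B$ by $x_C$ recovers \eqref{eq:special case characterisation}, in which the two translates of $A^\star$ and $B^\star$ have centers summing to $x_C$.
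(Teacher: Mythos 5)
Your proposal takes a fundamentally different route from the paper.  The paper's proof of Proposition~\ref{prop:longname} is self-contained and rests on the Baernstein--Taylor two-point symmetrization inequality (a device used nowhere else in the paper): Step~1 characterizes exact maximizers when $C$ is an interval via iterated reflections $R_y$; Step~2 promotes this to qualitative stability by a weak-compactness argument; Step~3 upgrades to the quantitative $\scriptd^{1/2}$ bound by Lemma~\ref{lemma:perturbative}.  Your route instead tries to rerun the global strategy of Theorem~\ref{thm:RSstability} (Lemmas~\ref{reducing to equal}, \ref{for not strongly admissible}, \ref{lemma:equalitycase}, Kneser stability), but \emph{without} Proposition~\ref{prop:oneofthree}, since that proposition's proof calls on the very statement you are trying to prove.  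That is the right instinct to avoid circularity, but the place where the paper invokes Proposition~\ref{prop:oneofthree} is precisely the place where your argument breaks.

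The key gap is the claimed near-minimal sumset estimate.  You split $\int_\torus\one_A*\one_{B'}$ over $S$ and $\torus\setminus S$, use Theorem~\ref{thm:RSprime} on $S$, and assert $|(A+_0 B')\setminus S|\le\bC\scriptd^{1/2}/\beta$.  The inequality runs the wrong way.  Theorem~\ref{thm:RSprime} gives $\int_{\torus\setminus S}\one_A*\one_{B'}\ge\sigma^2$ with $\sigma=\tfrac12(|A|+|B'|-|S|)\approx\beta$, and combining this with $\one_A*\one_{B'}\le\beta$ off $S$ only yields the \emph{lower} bound $|(A+_0 B')\setminus S|\ge\sigma^2/\beta\approx\beta$; no upper bound follows, because $\one_A*\one_{B'}$ can be positive but tiny on a set of large measure, contributing negligibly to $\scriptd$ while inflating $|A+_0B'|$.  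Concretely, take $A=B'=[-\tfrac18,\tfrac18]\cup[\tfrac12-\eps^2,\tfrac12+\eps^2]\subset\torus$ and $C=[-\tfrac1{16},\tfrac1{16}]$: then $\scriptd(A,B',C)=O(\eps^2)$, yet $|A+_0B'|=\tfrac34+2\eps^2$ while $|A|+|B'|=\tfrac12+4\eps^2$, so $|A+_0B'|-|A|-|B'|\approx\tfrac14$ and Theorem~\ref{thm:kneserstability}'s hypothesis fails, even though the conclusion of the proposition is plainly true here.  A second gap: you assert $|B|-|B'|\le\bC\scriptd^{1/2}$, but Lemma~\ref{reducing to equal} produces $B'\subseteq B$ with $|B'|=|A|$, so $|B|-|B'|=|B|-|A|$, which is controlled only by $\eta$-strict admissibility and can be a definite fraction of $|B|$.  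In the paper's proof of Theorem~\ref{thm:RSstability} the passage from $(A,B',C')$ back to $(A,B,C)$ is effected by a \emph{second} application of Proposition~\ref{prop:oneofthree}, exploiting that the common set $A$ is now known to be nearly a Bohr set; with Proposition~\ref{prop:oneofthree} unavailable to you, no analogous device is in place.
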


We outline here a proof based on a method relying on reflection
symmetry and a two-point inequality of Baernstein and Taylor \cite{baernsteintaylor}.
This technique does not otherwise appear in this paper.
It is also used by O'Neill \cite{oneill_sphere}
to analyze the corresponding issue for the sphere $S^d$, $d\ge 2$.

\begin{proof} 
The proof will consist of three steps.

\medskip
\noindent{Step 1.}\   
\textit{If $\scriptd(A,B,C)=0$ and $C$ is an interval, 
then $A,B$ differ from  intervals by Lebesgue null sets,
and these three intervals are compatibly centered.}

Assume without loss of generality that $C$ is centered at 0. Thus  $C=C^*$. By the complementation principle described in \S\ref{section:twokey}, it may also be assumed that $m(A)\leq \tfrac{1}{2}$, $m(B)\leq \tfrac{1}{2}$.

Identify $\torus$ with the unit circle in $\complex\leftrightarrow \reals^2$ 
via the mapping $x\mapsto e^{2\pi i (x+ \tfrac\pi{2})}$.
For each $x=(x_1,x_2)\in\torus$  let $R(x)=(x_1,-x_2)$ be the reflection
of $x$ about the horizontal axis.
To any $E\subset\torus$ associate $E^\sharp\subset\torus$,
defined as follows. For each pair of points $\{x,R(x)\}$
with $x=(x_1,x_2)$ with $x_2\ne 0$, let $x_+=(x_1,|x_2|)$ 
and $x_-=(x_1,-|x_2|)$.
If both $x_+,x_-\in E$ then both $x_+,x_-\in E^\sharp$;
if neither belongs to $E$ then neither belongs to $E^\sharp$;
and if exactly one belongs to $E$ then $x_+\in E^\sharp$
and $x_-\notin E^\sharp$.
If $x_2=0$ then $x\in E^\sharp$ if and only if $x\in E$.

Define $\torus_+=\{x=(x_1,x_2)\in\torus: x_2>0\}$.
For $y\in\torus$ define $R_y E = (E+y)^\sharp$,
where addition is in the additive group $\torus=\reals/\integers$.

Assume without loss of generality that the interval $C$ is centered at $0$.
The following hold: for any measurable sets $A,B\subset\torus$,
\begin{gather}
m(A^\sharp)=m(A),\; m(B^\sharp)=m(B),\tag{a}\\
m(A^\sharp \symdif B^\sharp) \le m(A \symdif B), \tag{b} 
\\
\langle \one_{A^\sharp} * \one_C,\one_{B^\sharp}\rangle
\ge \langle \one_{A} * \one_C,\one_{B}\rangle.
\tag{c} \end{gather}
Consequently the above conclusions hold with $A^\sharp,B^\sharp$
replaced by $R_yA, R_yB$, respectively,
for any $y\in\torus$.
(a) and (b) are direct consequences of the definition of the $\sharp$ operation,
while (c) is an almost equally direct consequence \cite{baernsteintaylor}.

Observe that
\begin{equation}\langle \one_{A^\sharp} * \one_C,\one_{B^\sharp}\rangle
\ > \  \langle \one_{A} * \one_C,\one_{B}\rangle
\tag{d} \end{equation}
if the set of all points $(x_+,y_+)\in \torus_+^2$ satisfying
$x_+\in A$, $x_-\notin A$, $y_+\notin B$, $y_-\in B$
and
$\norm{x_+-y_+}_\torus < \tfrac12m(C)$
and
$\norm{x_+-y_-}_\torus > \tfrac12m(C)$
has positive Lebesgue measure in $\torus^2$. The same holds if the set of all points $(x_+,y_+)\in \torus_+^2$ satisfying
$x_+\notin A$, $x_-\in A$, $y_+\in B$, $y_-\notin B$
and the above two inequalities has positive Lebesgue measure in $\torus^2$.

Moreover,
if $A\subset\torus$ is a finite union of closed intervals then there exists
a finite sequence $y_1,\dots,y_N$ of elements of $\torus$ such that
\begin{equation}
R_{y_N}R_{y_{N-1}}\cdots R_1A = A^\star.
\tag{e} \end{equation}
This is elementary, and its proof is left to the reader.

If $A\subset\torus$ is Lebesgue measurable then there exists
an infinite sequence $y_n\in\torus$ such that
\begin{equation}
\lim_{N\to\infty} m\big(R_{y_N}R_{y_{N_1}}\cdots R_1 A\symdif A^\star \big) = 0;
\tag{f} \end{equation}
(f) follows by combining (e) with the contraction property (b).

Consider any pair of measurable sets $A,B\subset\torus$ that satisfy
$\langle \one_A*\one_C,\one_B\rangle = \langle \one_\astar*\one_C,\one_\bstar\rangle$. 
Choose a sequence $(y_n)$ such that the sets defined recursively by
$A_0=A$ and $A_{n} = R_{y_n}A_{n-1}$ for $n\ge 1$
satisfy \[m(A_n\symdif \astar)\to 0.\]
Define $B_n$ recursively by 
$B_0=B$ and $B_{n} = R_{y_n}B_{n-1}$ for $n\ge 1$.
Then 
$\langle \one_{A_n}*\one_C,\one_{B_n}\rangle = \langle \one_\astar*\one_C,\one_\bstar\rangle$ 
for every $n$.
Choose $n_\nu$ so that the sequence $\one_{B_{n_\nu}}$ converges
weakly in $\lt(\torus)$ to some $h\in\lt(\torus)$, with $0\le h\le 1$, $\int h\,dm=m(B)$.
Denoting $(A_{\nu_n},B_{\nu_n})$ by $(A_n,B_n)$ for simplicity, the above implies
\[\langle \one_{A_n}*\one_C,\one_{B_n}\rangle \to \langle \one_{\astar}*\one_C,h\rangle.\]
From this and the admissibility hypothesis it follows that $h=\one_{\bstar}$.
Thus $\one_{B_n}\to\one_\bstar$ weakly.
Since $m(\torus)$ is finite, this forces
$$m(B_n\symdif\bstar)\to 0. 
$$
By (a), $A_n^\star=A^\star$ and $B_n^\star=B^*$ for all $n\in\mathbb{N}$; therefore, for all $\epsilon>0$ there exists $N=N(\epsilon)\in\mathbb{N}$ for which
$$m(A_n\symdif A_n^\star)<\epsilon\text{ and }m(B_n\symdif B_n^\star)<\epsilon,
$$
while also
$$\langle \one_{A_N}*\one_C,\one_{B_N}\rangle =\langle \one_{A_N^*}*\one_{C^*},\one_{B_N^*} \rangle
$$
by (c). Therefore, fixing $\eps$ to be sufficiently small
as a function of $\eta$ alone,
then the perturbative theory of Lemma \ref{lemma:perturbative} can be applied, implying that
\begin{equation}\label{eq:good N} \text{there exists $y_N$ such that $A_N=\astar+y_N$ and $B_N=\bstar+y_N$.}
\end{equation}

Denote by $\scriptr:\torus\to\torus$ the reflection
$\scriptr(x_1,x_2)=(x_1,-x_2)$. Consider any measurable $A,B\subset \torus$ such that the triple $(A,B,C)$ (for our fixed $C$) satisifes the hypotheses of the proposition. 

\begin{claim}  If $A^\sharp=\astar$ and $B^\sharp=\bstar$,
and if $\langle \one_A*\one_C,\one_B\rangle
=\langle \one_\astar*\one_C,\one_\bstar\rangle$,
then either $(A,B) = (\astar,\bstar)$
or $(A,B) = (\scriptr\astar,\scriptr\bstar)$.
\end{claim}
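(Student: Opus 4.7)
The plan is to extract from the equality $\langle \one_A * \one_C, \one_B\rangle = \langle \one_{\astar} * \one_C, \one_{\bstar}\rangle$ a rigid structural constraint on $(A,B)$ via the strict inequality criterion~(d) for the $\sharp$--symmetrization, and then to use the connectedness of a certain ``overlap set'' in $\astar \times \bstar$ to force $A$ and $B$ into one of the two claimed forms.

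First, I would observe that $A^\sharp = \astar$ means that for almost every reflection pair $\{x_+, x_-\}$, exactly one of $x_+, x_-$ lies in $A$, and this happens precisely when $x_+ \in \astar$; otherwise neither lies in $A$. Under the standing assumption $m(A) \le \tfrac12$ (and similarly $m(B) \le \tfrac12$) in force throughout Step~1, the sets $\astar$ and $\scriptr(\astar)$ are essentially disjoint, so $A$ is determined up to null sets by a measurable sign $\epsilon_A : \astar \to \{+1,-1\}$, with $\epsilon_A(x)=+1$ iff $x \in A$ and $\epsilon_A(x)=-1$ iff $\scriptr(x) \in A$. Analogously define $\epsilon_B : \bstar \to \{+1,-1\}$. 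The conclusion of the claim is exactly the assertion that $\epsilon_A$ and $\epsilon_B$ are each essentially constant and take the same value.

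Next, by~(c) together with the hypothesis, the inequality $\langle \one_{A^\sharp}*\one_C,\one_{B^\sharp}\rangle \ge \langle \one_A*\one_C,\one_B\rangle$ is actually an equality, so both exceptional configurations in~(d) must occur on sets of Lebesgue measure zero. Translated into the language of $\epsilon_A, \epsilon_B$, this becomes the statement that $\epsilon_A(x) = \epsilon_B(y)$ for almost every $(x,y)$ in
\[
P \;=\; \{(x,y)\in \astar \times \bstar : \|x-y\|_\torus < \tfrac12 m(C) < \|x-\scriptr(y)\|_\torus\}.
\]
Since $C$ is the interval $[-m(C)/2, m(C)/2]$ and $m(A)+m(B)<1$, one has $\|x-\scriptr(y)\|_\torus = \tfrac12 - |x+y|$ on $\astar\times\bstar$; hence $P$ reduces to the concrete open region $\{(x,y)\in\astar\times\bstar : |x-y|<\tfrac12 m(C),\ |x+y|<\tfrac12-\tfrac12 m(C)\}$.

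The conclusion follows from a connectedness argument. Using $\eta$--strict admissibility and $\eta$--boundedness, I would check the elementary fact that for every $x$ in the interior of $\astar$ the slice $P_x=\{y:(x,y)\in P\}$ has positive Lebesgue measure and depends continuously on $x$ in the symmetric difference metric. Hence for $x,x'$ sufficiently close in $\astar$ the intersection $P_x\cap P_{x'}$ has positive measure, and any $y$ in this intersection forces $\epsilon_A(x)=\epsilon_B(y)=\epsilon_A(x')$. Since the interval $\astar$ is connected, $\epsilon_A$ is essentially constant on $\astar$; the symmetric argument makes $\epsilon_B$ essentially constant on $\bstar$. Positivity of $|P|$ then identifies the two constants: if both equal $+1$ then $(A,B)=(\astar,\bstar)$, while if both equal $-1$ then $(A,B)=(\scriptr\astar,\scriptr\bstar)$, and no other combination is admissible. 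The only delicate point, and the only place where the quantitative admissibility hypotheses are used, is verifying the nondegeneracy and continuity of the slices $P_x$ across the whole interior of $\astar$; this reduces to an elementary planar geometry computation on the two linear constraints defining $P$.
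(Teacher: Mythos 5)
Your plan is correct and rests on the same core machinery as the paper's proof: the strict form~(d) of the Baernstein--Taylor two-point inequality, $\eta$-strict admissibility to make the overlap region nondegenerate, and connectedness of the interval $\astar$. The packaging via the sign functions $\epsilon_A,\epsilon_B$ and the region $P\subset\astar\times\bstar$ is cleaner and more symmetric than the paper's, which for each $x\in\astar$ constructs an explicit reference point $b(x)\in\bstar$ and works with $\eps$-neighborhoods of $x$ and $b(x)$. The endgame is genuinely different: the paper's symmetrization argument only yields $A\in\{\astar,\scriptr\astar\}$, after which $B$ is determined by a separate bathtub-principle argument applied to $\scriptd(A,B,C)=0$ with $A$ and $C$ both intervals, whereas you obtain constancy of both $\epsilon_A$ and $\epsilon_B$ from one and the same connectedness argument and then match the two constants directly because $|P|>0$, avoiding the bathtub step entirely. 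Two points to tighten. First, the phrase ``any $y$ in this intersection forces $\epsilon_A(x)=\epsilon_B(y)=\epsilon_A(x')$'' should be an almost-everywhere Fubini assertion, since $\epsilon_A(x)=\epsilon_B(y)$ holds only for a.e.\ $(x,y)\in P$, not for all; this is standard but should be said. Second, the deferred verification that $P_x=\{y\in\bstar:\ |x-y|<\tfrac12 m(C),\ |x+y|<\tfrac12-\tfrac12 m(C)\}$ has positive measure and moves continuously in symmetric difference as $x$ ranges over the interior of $\astar$ is exactly where the quantitative hypotheses enter, and it genuinely needs to be carried out --- the paper performs the equivalent computation when constructing $b(x)$. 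Note in particular that $|P_x|$ is not bounded below uniformly on the interior ($|P_x|\to 0$ as $x\to\pm\tfrac12 m(A)$ when $m(A)=\tfrac12$), so the chaining step must proceed with a locally chosen step size rather than a fixed one; connectedness of the interval still suffices.
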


The strict admissibility hypothesis guarantees that
there exists $\eps>0$ such that for any $x\in \astar$
there exists $y=b(x)\in\bstar$ such that whenever
$x',y'\in\torus$ satisfy
$\norm{x-x'}_\torus\le\eps$ and $\norm{y-y'}_\torus\le\eps$,
one has
\begin{equation}
\norm{x_+'-y_+'}_\torus<\tfrac12m(C) \ \text{ but } \ \norm{x_+'-y_-'}_\torus>\tfrac12m(C).
\end{equation}
Indeed, identifying $\torus$ with $[-\frac{1}{2},\frac{1}{2})$, it suffices to prove the above for $x\in A^\star$ with $x\geq 0$. The $\eta$-strict admissibility and $\eta$-boundedness of $(A,B,C)$ ensure that $\bar{p}:=\tfrac{1}{2}m(A)-\tfrac{1}{2}m(C)$ has the property
$$-\tfrac{1}{2}m(B)+\tfrac{\eta^2}{2}\leq \bar{p} \leq \tfrac{1}{2}m(B)-\tfrac{\eta^2}{2}
$$
(in particular $\bar{p} \in B^*$), while the right endpoint $\tfrac{1}{2}m(A)$ of $A^*$ satisfies
$$\big|\tfrac{1}{2}m(A)-\bar{p}_-\big|_{\torus}=\big|\tfrac{1}{2}m(A)-\big(\tfrac{1}{2}-\tfrac{1}{2}m(A)+\tfrac{1}{2}m(C)\big)\big|=\tfrac{1}{2}-m(A)\geq\tfrac{1}{2}m(C),
$$
as $m(A)\leq \tfrac{1}{2}$.

Therefore, if $\bar{p}\leq 0$, define $b(x):=\bar{p}-\tfrac{\eta^2}{4}$ for all $x\in A^\star$. If $\bar{x}>0$, define $b\left(t\tfrac{m(A)}{2}\right):=\bar{p}-\tfrac{\eta^2}{4}$ for every element $t\tfrac{m(A)}{2}$ of $A^\star$, for all $0<t\leq1$.

Denoting by $N_x$ the $\epsilon$-neighbourhood on $\torus$ of any $x\in \torus$, it follows by the above and (d) that, for any $x\in A^\star$,
\begin{eqnarray}
\begin{aligned}
\label{eq:one side}
\text{either }&m\left(\{x'\in  A^\star\cap N_x: x_+'\notin A, x_-'\in A\}\right)=0\\
\text{or }&m\left(\{y'\in  B^\star\cap N_{b(x)}: y_+'\in B, y_-'\notin B\}\right)=0
\end{aligned}
\end{eqnarray}
and
\begin{eqnarray}
\begin{aligned}
\label{eq:other side}
\text{either }&m\left(\{x'\in  A^\star\cap N_x: x_+'\in A, x_-'\notin A\}\right)=0\\
\text{or }&m\left(\{y'\in  B^\star\cap N_{b(x)}: y_+'\notin B, y_-'\in B\}\right)=0.
\end{aligned}
\end{eqnarray}
The second conclusion of \eqref{eq:one side} and the second conclusion of \eqref{eq:other side} cannot simultaneously hold, therefore the first conclusion of either \eqref{eq:one side} or \eqref{eq:other side} holds. That is,
$$\text{for every }x\in A^\star\text{, either }m(\scriptr A\cap N_x)=0\text{ or }m(A\cap N_x)=0.
$$
Now assume that, for some $x\in A^\star$ with $N_x\subset A^\star$, it holds that $m(\scriptr A\cap N_x)=0$. It will be shown that 
$$m(\scriptr A\cap N_y)=0\text{ for all }y\in A^\star\text{ with }\|x-y\|_\torus<\epsilon\text{ and }N_y\subset A^\star
$$
(and therefore, by the connectivity of $A^\star$, $m(\scriptr A\cap A^\star)=0$, i.e. $A=A^\star$ up to a Lebesgue null set). 

Indeed, let $y\in A^\star$ as above, and suppose that $m(\scriptr A\cap N_y)>0$. Due to the fact that $A^\sharp=A^\star$, it holds that $m(\scriptr A\cap N_z)+m(A\cap N_z)=m(A^\star\cap N_z)$ for every $z\in A^\star$. Therefore, $m(\scriptr A\cap N_y)=m(A\cap N_x)=\epsilon$. Since the sets $\scriptr A$ and $A$ share at most two points (as $m(A)\leq \tfrac{1}{2}$), it follows that
$$m\big((\scriptr A\cap N_y)\cup (A\cap N_x)\big)=2\epsilon.
$$
This is a contradiction, as the set $(\scriptr A\cap N_y)\cup (A\cap N_x)$ is contained in the arc $N:=N_y\cup N_x$ of $\torus$, of length $<\tfrac{\epsilon}{2}+\epsilon+\tfrac{\epsilon}{2}<2\epsilon$. 

Therefore, if $x$ as above exists, then $A=A^\star$ up to a Lebesgue null set. In a similar manner it can be shown that if there exists $x\in A^\star$ with $m(A\cap N_x)=0$ and $N_x\subset A^\star$, then $\scriptr(A)=A^\star$ up to a Lebesgue null set.

Thus, either $A=A^*$ or $A=\scriptr A^\star$ up to a Lebesgue null set. Without loss of generality, it is assumed that the former holds (the functional $\langle \one_A*\one_C,\one_B\rangle$ is invariant under simultaneous translations of $A$ and $B$). Then, the fact that $\scriptd(A,B,C)=0$ means that
$$\langle\one_{A^\star}*\one_{C},\one_{-B}\rangle =\langle\one_{A^\star}*\one_{C},\one_{-B^\star}\rangle;
$$
since $A^\star,C$ are intervals, the above implies that $|B\symdif B^\star|=0$.

It has thus been shown that either $(A,B)=(A^\star,B^\star)$ or $(A,B)=(\scriptr A^\star,\scriptr B^\star)$ (up to Lebesgue null sets). This completes the proof of the claim. \qed

We are now in a position to complete the proof for Step 1.
Return to the sequence of pairs $(A_n,B_n)$, for $n=1,\ldots,N$.
By \eqref{eq:good N}, $(A_N,B_N)=(\astar+y_N,\bstar+y_N)$
up to Lebesgue null sets.
Now, $(A_N,B_N) =((A_{N-1}-y)^\sharp,(B_{N-1}-y)^\sharp)$ for some $y\in \mathbb{T}$.
Therefore,
either $(A_{N-1},B_{N-1})$ equals either $(\astar+y_N+y,\bstar+y_N+y)$
or $(\scriptr\astar+y_N+y,\scriptr\bstar+y_N+y)$, up to Lebesgue null sets.
Repeating this reasoning recursively for $n=N-2,N-3,\dots$,
we get a similar conclusion for $(A,B)$.

This completes the discussion of Step 1.


\medskip
\noindent{\bf Step 2.}\   
\textit{For any $\eps>0$, there exists $\delta=\delta(\eta)>0$
such that if $\scriptd(A,B,C)^{1/2}\le\delta$ then}
\begin{equation} \label{notnegation}
\inf_{x+y=x_C}\big(|A\symdif (A^\star+x)| +|B\symdif (B^\star+y)|\big) \le \eps.  
\end{equation}

We argue by contradiction. 
If the conclusion fails to hold then there exists
an $\eta$--strictly admissible $\eta$--bounded sequence $(A_k,B_k,C_k)$ 
such that $\lim_{k\to\infty} \scriptd(A_k,B_k,C_k) = 0$, but
\begin{equation} \label{negation}
\inf_{x+y=x_{C_k}}\big(|A_k\symdif (A_k^\star+x)| +|B_k\symdif (B_k^\star+y)|\big) \ge \eps.
\end{equation}
It may be assumed without loss of generality that each $x_{C_k}=0$.

By passing to subsequences 
we may assume that $\one_{A_k}, \one_{B_k}$ converge 
weakly in $\lt(\torus)$ to $f,g\in L^2(\torus)$, respectively.
Then $0\le f,g\le 1$, $\lim_{k\to\infty} |A_k|$ exists and is equal to $\int_\torus f$,
and likewise $|B_k|\to \int_\torus g$.
Moreover, after a further diagonal argument,
$\one_{C_k}\to \one_C$ weakly for some interval $C$ centered at 0.

Because the $C_k$ are intervals, a simple compactness argument shows that
$\one_{A_k}*\one_{C_k}$ converges strongly in $\lt(\torus)$. Therefore
$\lim_{k\to\infty} \scriptt(A_k,B_k,C_k) =  \scriptt(\astar,\bstar,C)$
where $\astar,\bstar$ denote here the intervals centered at $0$
of lengths $\int_\torus f,\int_\torus g$, respectively.
By continuity, the limiting triple $(\astar,\bstar,C)$ satisfies $\scriptd(\astar,\bstar,C)=0$. 

By Lemma~\ref{hardy-littlewood type},
\[\int_C f*g \le \int_C f*\one_\bstar
\le \langle f^\star,\one_\bstar*\one_C\rangle.\]
The triple $(\int_\torus f^\star,|\bstar|,|C|)$ is $\eta$--strictly admissible.
Because $0\le f^\star\le 1$ and $\int_\torus f^\star=|\astar|$,
$\eta$--strict admissibility ensures that 
$\one_{\bstar}*\one_C$, which is is symmetric  and nonincreasing, 
is also  strictly decreasing with derivative identically
equal to $-1$ in $\{x: |\,x-|\astar|/2\,|\le r\}$ for some $r>0$ which depends only on $\eta$.
Therefore
$\langle f^\star,\one_\bstar*\one_C\rangle =\langle \one_\astar,\one_\bstar*\one_C\rangle$
if and only if $f^\star=\one_\astar$ almost everywhere.
Thus $f^\star$ is the indicator function of a set.
Since $f$ has the same distribution function as $f^\star$,
we conclude that $f=\one_A$ for some $A\subset\torus$.
Likewise, $g=\one_B$ for some set $B$.

Thus $\one_{A_k}\to\one_A$ and $\one_{B_k}\to\one_B$ weakly in $\lt(\torus)$.
Therefore $|A_k\symdif A| + |B_k\symdif B|\to 0$,
and $\scriptd(A,B,C)=0$. Step 1 now applies, allowing us to conclude that
$A$ and $B$ differ from intervals
by Lebesgue null sets, and that the centers of $A,$ satisfy $x_A+x_B=0$. 
This contradicts \eqref{negation}, completing Step 2.
\qed

\medskip
\noindent{\bf Step 3.} 
Let $(A,B,C)$ be a triple satisfying the hypotheses of the proposition. Let $\delta_0$ be the constant appearing in the statement of the perturbative Lemma \ref{lemma:perturbative}. By Step 2, there exists $\delta=\delta(\eta)>0$,
such that if $\mathcal{D}(A,B,C)^{1/2}\leq\delta\max(|A|,|B|,|C|)$ then 
$$\inf_{x+y=x_C} \big(|A\symdif (A^\star+x)|+ |B\symdif (B^\star+y)| \leq \delta_0\eta\leq \delta_0 \max(|A|,|B|,|C|),
$$
where the last inequality is due to the $\eta$-boundedness of $(A,B,C)$. 
Therefore, by Lemma~\ref{lemma:perturbative}, there exist $x',y',z'\in\mathbb{T}$ with $x'+y'=z'$ such that
\begin{equation}\label{eq:almost characterisation of sp case}|A\symdif (A^\star+x')|, |B\symdif (B^\star+y')|, |C\symdif (C^\star +z')|\leq \bC \mathcal{D}(A,B,C)^{1/2},
\end{equation}
for some $\bC>0$ depending only on $\eta$. This would be the desired result if $z'=x_C$, something which does not necessarily follow from Lemma \ref{lemma:perturbative}. However, it can be proved that $z'$ is very close to $x_C$; so close that, perturbing $z'$ to become $x_C$ and perturbing $x'$ by the same amount, the truth of \eqref{eq:almost characterisation of sp case} is not violated, up to multiplication by constant factors.

More precisely, it holds that $\|z'-x_C\|_\mathbb{T}\leq \bC\mathcal{D}(A,B,C)^{1/2}$. Indeed, first observe that $C\cap (C^\star+z')\neq \emptyset$, as otherwise \eqref{eq:almost characterisation of sp case} would imply
$$\mathcal{D}(A,B,C)^{1/2}\geq\tfrac{1}{\bC}|C\symdif (C^\star+z')|=\tfrac{2}{\bC}|C|\geq \tfrac{2\eta}{\bC}\max(|A|,|B|,|C|)
$$
by the $\eta$-strict admissibility of $(A,B,C)$, a contradiction for $\delta$ sufficiently small. Thus, since $C,C^\star+z'$ are intervals centered at $x_C,z'$, respectively, it holds that $\|z'-x_C\|_\mathbb{T}=\tfrac{1}{2}|C\symdif (C^\star+z')|\leq \bC\mathcal{D}(A,B,C)^{1/2}$.

Therefore, $\bar{x}':=x'+(x_C-z')$ satisfies $\bar{x}'+y'=x_C$ and
\begin{eqnarray*}
\begin{aligned}\big|A\symdif (A^\star+\bar{x}')\big|&\leq \big|A\symdif (A^\star+x')\big|+\big| (A^\star+x')\symdif (A^\star+\bar{x}')\big|\\
& \leq\bC\mathcal{D}(A,B,C)^{1/2}+\|x'-\bar{x}'\|_{\mathbb{T}}\\
&\leq 2\bC\mathcal{D}(A,B,C)^{1/2};
\end{aligned}
\end{eqnarray*}
likewise for $B$. Therefore, the triple $(A,B,C)$ satisfies \eqref{eq:special case characterisation} with constant depending only on $\eta$.

As long as the quantity $\delta$ in the argument above
is chosen sufficiently small, the complementary situation in which
$\mathcal{D}(A,B,C)^{1/2}>\delta \max(|A|,|B|,|C|)$ also 
leads to \eqref{eq:special case characterisation} with constant $\bC=2\delta^{-1}$,
simply because, for all $x\in\mathbb{T}$,
$$|A\symdif (A^\star+x)|\leq 2|A|\le 2\max(|A|,|B|,|C|).  $$
Likewise for $B$.
\end{proof}

\section{When one set is nearly rank one Bohr} \label{section:When}

The aim of this section is to establish for general groups $G$ that 
if $(A,B,C)$ is a strictly admissible triple 
with $\scriptd(A,B,C)$ small, if $(A,B,C)$ satisfies appropriate auxiliary hypotheses, 
and if one of the three sets $A,B,C$ is nearly a rank one Bohr set, 
then the other two are also nearly rank one Bohr sets (parallel to the first,
with the triple compatibly centered). 

\begin{proposition}  \label{prop:oneofthree}
Let $G$ be a compact connected Abelian topological group with normalized Haar measure $\mu$.
For any $\eta,\eta'>0$, there exist $c=c(\eta,\eta')>0$ and $\bC=\bC(\eta,\eta',c)<\infty$ such that the following holds.
Let $(A,B,C)$ be an $\eta$-strictly admissible triple of $\mu$-measurable subsets of $G$, 
with $\min(\mu(A),\mu(B),\mu(C))\geq \eta$ and $\mu(A)+\mu(B)+\mu(C)\leq 2-\eta'$. If there exists a rank one Bohr set $\mathcal{B}$ with
$$\mu(C\symdif \mathcal{B})\leq c(\eta,\eta')\max\big(\mu(A),\mu(B),\mu(C)\big),
$$
then there exists a compatibly centered parallel ordered triple $(\mathcal{B}_A,\mathcal{B}_B,\mathcal{B}_C)$ of rank one Bohr sets satisfying
\begin{equation}
\mu (A\symdif \mathcal{B}_A)
\leq \bC\mathcal{D}(A,B,C)^{1/2},
\end{equation}
and likewise for 
$\mu (B\symdif \mathcal{B}_B)$
and $\mu (C\symdif \mathcal{B}_C)$.
\end{proposition}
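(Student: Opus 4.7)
The strategy is to transport the problem to $\torus$ via the homomorphism $\phi$ defining the hypothesized Bohr set $\mathcal{B}$, apply the relaxed stability theorem (Theorem~\ref{thm:relaxedstability}) there to produce rough rank one Bohr approximations of $A$, $B$, $C$ in $G$, and then upgrade these to the sharp $\mathcal{D}^{1/2}$ bound through a single application of the perturbative lemma (Lemma~\ref{lemma:perturbative}).

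Write $\mathcal{B} = \phi^{-1}(J)$ for an interval $J \subset \torus$, and introduce the pushforward densities $f,g:\torus \to [0,1]$ defined by the conditional expectations $f(t) = \expect[\one_A \mid \phi = t]$ and $g(t) = \expect[\one_B \mid \phi = t]$. These satisfy $\int_\torus f\,dm = \mu(A)$, $\int_\torus g\,dm = \mu(B)$, and (by Fubini, using that $\one_\mathcal{B}$ is $\phi$-measurable and that the fibers of $\phi$ carry probability measure)
\[
\int_\mathcal{B} \one_A * \one_B\,d\mu = \langle f*g,\,\one_J\rangle_\torus.
\]
Replacing $C$ by $\mathcal{B}$ changes each side of the defect $\mathcal{D}(A,B,C)$ by at most $\mu(C\symdif\mathcal{B})$ (via $\|\one_A*\one_B\|_\infty \le 1$ and $|\mu(C)-\mu(\mathcal{B})|\le \mu(C\symdif\mathcal{B})$), so that
\[
\langle \one_{\astar}*\one_{\bstar},\one_{J^\star}\rangle_\torus - \langle f*g,\one_J\rangle_\torus
\le \mathcal{D}(A,B,C) + 2\mu(C\symdif\mathcal{B}).
\]
For $c(\eta,\eta')$ small enough, the triple $(\mu(A),\mu(B),m(J))$ inherits $\eta/2$-strict admissibility and $\eta/2$-boundedness. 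Theorem~\ref{thm:relaxedstability} then supplies a compatibly centered parallel triple $(\mathcal{B}_f,\mathcal{B}_g,\mathcal{B}_h)$ of rank one Bohr subsets of $\torus$ with $m(\mathcal{B}_f)=\mu(A)$, $m(\mathcal{B}_g)=\mu(B)$, $m(\mathcal{B}_h)=m(J)$, obeying
\[
\max\bigl\{\|f-\one_{\mathcal{B}_f}\|_{L^1},\,\|g-\one_{\mathcal{B}_g}\|_{L^1},\,\|\one_J-\one_{\mathcal{B}_h}\|_{L^1}\bigr\}
\le \boldC\bigl(\mathcal{D}(A,B,C)+\mu(C\symdif\mathcal{B})\bigr)^{1/2}.
\]
A rank one Bohr subset of $\torus$ associated to the homomorphism $t\mapsto nt$ is a union of $|n|$ arcs; since $\mathcal{B}_h$ must be $L^1$-close to the single interval $J$ whose measure is bounded below by $\eta-O(c)$, we are forced to have $|n|=1$, so $\mathcal{B}_f,\mathcal{B}_g,\mathcal{B}_h$ are genuine intervals.

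Pull back via $\phi$ to set $\mathcal{B}_A^{(0)}=\phi^{-1}(\mathcal{B}_f)$, $\mathcal{B}_B^{(0)}=\phi^{-1}(\mathcal{B}_g)$, and $\mathcal{B}_C^{(0)}=\phi^{-1}(\widetilde{\mathcal{B}}_h)$, where $\widetilde{\mathcal{B}}_h$ is the result of adjusting the radius of $\mathcal{B}_h$ so that $m(\widetilde{\mathcal{B}}_h)=\mu(C)$ exactly (a change of radius of at most $\tfrac12\mu(C\symdif\mathcal{B})$). These are three rank one Bohr sets defined by the common homomorphism $\phi$, hence parallel; they are compatibly centered because their $\torus$-centers satisfy $c_h=c_f+c_g$; and their measures match those of $A,B,C$ respectively. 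The exact identity
\[
\|f-\one_{\mathcal{B}_f}\|_{L^1(\torus)}
= \int_{\mathcal{B}_f}(1-f)\,dm + \int_{\torus\setminus\mathcal{B}_f} f\,dm
= \mu(\mathcal{B}_A^{(0)}\setminus A)+\mu(A\setminus\mathcal{B}_A^{(0)})
= \mu(A\symdif\mathcal{B}_A^{(0)})
\]
holds in the pullback (and similarly for $B$), while $\mu(C\symdif\mathcal{B}_C^{(0)}) \le \mu(C\symdif\mathcal{B}) + \|\one_J-\one_{\mathcal{B}_h}\|_{L^1} + O(\mu(C\symdif\mathcal{B}))$ by the triangle inequality. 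Choosing $c(\eta,\eta')$ and the smallness threshold for $\mathcal{D}$ sufficiently small, each of these three distances is at most $\delta_0 \max_k\mu(E_k)$, with $\delta_0$ the threshold appearing in Lemma~\ref{lemma:perturbative}.

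Finally, Lemma~\ref{lemma:perturbative} applied to $\bE=(A,B,C)$ together with the triple $(\mathcal{B}_A^{(0)},\mathcal{B}_B^{(0)},\mathcal{B}_C^{(0)})$ yields $\by\in G^3$ with $y_1+y_2=y_3$ such that the translates $\mathcal{B}_j:=\mathcal{B}_j^{(0)}+y_j$ still form a compatibly centered parallel triple of rank one Bohr sets (the condition $y_1+y_2=y_3$ preserves compatible centering, since centers shift by $\phi(y_j)$) and satisfy the sharp bound
\[
\max_{j\in\{1,2,3\}}\mu(E_j\symdif\mathcal{B}_j)\le \boldC\,\mathcal{D}(A,B,C)^{1/2},
\]
which is the desired conclusion. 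The main obstacle is the loss of tightness at the intermediate stage: the torus reduction produces an approximation of quality $\sqrt{\mathcal{D}+\mu(C\symdif\mathcal{B})}$, which is only sharp up to the given constant-size perturbation $\mu(C\symdif\mathcal{B})\le c\max_k\mu(E_k)$; it is precisely the second, perturbative, application of Lemma~\ref{lemma:perturbative} that absorbs this sub-sharp term into the final $\sqrt{\mathcal{D}}$ bound.
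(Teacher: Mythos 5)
The overall geometry of your argument---pushing forward along $\phi$ to get functions $f,g$ on $\torus$ satisfying $\langle f*g,\one_J\rangle_\torus=\int_{\scriptb}\one_A*\one_B\,d\mu$, obtaining a rough interval approximation there, pulling back via the exact identity $\|f-\one_{\scriptb_f}\|_{L^1(\torus)}=\mu\big(A\symdif\phi^{-1}(\scriptb_f)\big)$, and then upgrading with the perturbative Lemma~\ref{lemma:perturbative}---is the same skeleton the paper uses, and each of these steps, taken in isolation, is sound.

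The fatal problem is the appeal to Theorem~\ref{thm:relaxedstability} as the engine on $\torus$. That theorem is proved in \S\ref{section:relax2} as a corollary of Theorem~\ref{thm:RSstability}, and the proof of Theorem~\ref{thm:RSstability} in \S\ref{section:finishRSstability} (all three of Cases 1--3) invokes Proposition~\ref{prop:oneofthree}. So invoking Theorem~\ref{thm:relaxedstability}---even in the special case $G=\torus$ with $h$ an indicator of an interval---is circular. The paper avoids this by substituting two lower-level, independently proved ingredients for the general relaxed stability theorem: (i) Claim~\ref{eq:control on norms}, which shows directly that $\|f^\star-\one_{\astar}\|_{L^1}+\|g^\star-\one_{\bstar}\|_{L^1}\le \bC\scriptd(A,B,C)^{1/2}$ by pairing $\one_{\astar}-f^\star$ against the shifted kernel $K=\one_{\bstar}*\one_{\cstar}-\gamma$, exploiting the sign pattern and the linear growth of $|K|$ away from its zero set; and (ii) Proposition~\ref{prop:longname} from \S\ref{section:specialcase}, which handles the indicator-valued case on $\torus$ with $C$ an interval via two-point symmetrization of Baernstein--Taylor type, independently of Theorem~\ref{thm:RSstability}. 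To repair your proof you would need to replace the black-box appeal to Theorem~\ref{thm:relaxedstability} with some such self-contained argument covering exactly the case at hand; as written, the argument presupposes the theorem it is meant to help establish.

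A minor point worth recording: your observation that any rank one Bohr subset of $\torus$ which is $L^1$-close to a single interval of length bounded below must itself be an interval is correct and is implicitly used in the paper as well. Likewise the perturbation estimate $|\scriptd(A,B,C)-\scriptd(A,B,\scriptb)|\le 2\mu(C\symdif\scriptb)$ and the final perturbative upgrade are handled correctly.
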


\begin{proof} Let $\eta,\eta'>0$ and $(A,B,C)$ be as in the statement of the proposition. We may assume that $(A,B,C)$ satisfy the supplementary hypothesis
\begin{equation}\label{eq:smallness of D}\mathcal{D}(A,B,C)<c(\eta,\eta')\max\big(\mu(A),\mu(B),\mu(C)\big)^2
\end{equation}
for a small constant $c(\eta,\eta')$. Indeed, otherwise
\begin{equation*}
\mu(A\symdif\mathcal{B}_A)
\leq \bC(\eta,\eta')\mathcal{D}(A,B,C)^{1/2}
\end{equation*}
holds trivially for any rank one Bohr set $\mathcal{B}_A$ with $\mu(\mathcal{B}_A)=\mu(A)$; likewise for $B$ and $C$.

First, consider the case in which $C$ is a rank one Bohr set. That is,  $C=\phi^{-1}(C^\star)+x$, for some continuous homomorphism $\phi:G\rightarrow \mathbb{T}$ and some $x\in G$. We assume
without loss of generality that $C=\phi^{-1}(C^\star)$.
Define $\phi_*: L^1(G)\to L^1(\torus)$
by 
\[ \int_E \phi_*(f)\,dm = \int_{\phi^{-1}(E)} f\,d\mu
\ \text{ for all measurable $E\subset\torus$.} \]
Then
$$\phi_*(\one_A*\one_B)=\phi_*(\one_A)*\phi_*(\one_B), $$
and consequently
$$\int_C\one_A*\one_B\,d\mu=\scriptt_G(\one_A,\one_B,\one_C) = \scriptt_{\mathbb{T}}(\phi_*(\one_A),\phi_*(\one_B), \one_{C^\star})
= \scriptt_\torus(f,g,\one_{\cstar}),
$$
where the functions
$$f:=\phi_*(\one_A)\text{ and }g:=\phi_*(\one_B)
$$
from $G$ to $[0,\infty)$ satisfy
$$0\leq f,g\leq 1, \; \textstyle\int_\torus f\,dm=\mu(A), \; \int_\torus g\,dm =\mu(B).  $$
Thus, by the Riesz-Sobolev inequality on $\mathbb{T}$,
$$\scriptt_G(\one_A,\one_B,\one_C)
 = \scriptt_\torus(f,g,\one_\cstar)
\leq\scriptt_{\mathbb{T}}(f^\star ,g^\star , \one_{C^\star}).
$$


Applying Lemma~\ref{hardy-littlewood type}
to the functions $f^\star, g^\star, \one_{\cstar}$ gives
\begin{eqnarray}
\begin{aligned}
\label{eq:functional gets larger}
\scriptt_G(\one_A,\one_B,\one_C) 
&\leq \scriptt_\mathbb{T}(f^\star,g^\star,\one_{\cstar})\\
&\leq \max\{\scriptt_\mathbb{T}(f^\star, \one_{B^\star},\one_{C^\star}), 
\scriptt_\mathbb{T}(\one_{A^\star}, g^\star,\one_{C^\star})\}\\
&\leq \scriptt_\mathbb{T}(\one_{A^\star},\one_{B^\star},\one_{C^\star}).
\end{aligned}
\end{eqnarray}
Moreover, since $f^\star$, $g^\star$ are non-increasing functions with $0\leq f^\star,g^\star\leq 1$, 
$\int_\torus f\,dm=m(A^\star)$ and $\int_\torus g\,dm=m(B^\star)$, the following holds.

\begin{claim} \label{eq:control on norms} 
There exists $\boldC<\infty$, depending only on $\eta$, such that
\begin{equation} \|f^\star-\one_{A^\star}\|_{L^1(\torus)}+\|g^\star-\one_{B^\star}\|_{L^1(\torus)}
\leq \boldC  \scriptd(A,B,C)^{1/2}. 
\end{equation}
\end{claim}


\begin{proof} By \eqref{eq:functional gets larger}
and because $\int_\torus (\one_\astar-f^\star)\,dm=0$,
\begin{align*}
\mathcal{D}(A,B,C)
\geq \int_\torus (\one_{A^\star}-f^\star)\cdot  (\one_{B^\star}*\one_{C^\star}) \,dm
= \int_\torus (\one_{A^\star}-f^\star)\cdot  (\one_{B^\star}*\one_{C^\star} -\gamma) \,dm
\end{align*}
for any constant $\gamma$, and in particular for $\gamma = \one_{\bstar}*\one_\cstar\left(\tfrac{\mu(A)}{2}\right)$.
The function
$K(x)= \one_{B^\star}*\one_{C^\star} -\gamma$ 
is nonnegative on $\astar$ and nonpositive on $\torus\setminus\astar$,
as is $\one_\astar-f^\star$, so
\begin{equation} \label{eq:Kbound} \scriptd(A,B,C) 
\ge \int_\torus |\one_\astar-f^\star|\cdot |K|\,dm.
\end{equation}


Let $a=\mu(A)/2$. 
Obtaining a lower bound for the right-hand side would be simpler
if $|K|$ enjoyed a strictly positive lower bound, but
$K(a)=0$. 
$K$ does satisfy $|K(x)|=|x-a|$
for $x\in[0,\tfrac12]$ with 
$|x-a|\le \tfrac12 \min(\mu(B)+\mu(C)-\mu(A),\, \mu(A)-|\mu(B)-\mu(C)|)$,
and the $\eta$--strict admissibility hypothesis ensures that
this holds whenever $|x-a| \le \tfrac12\eta\max(\mu(A),\mu(B),\mu(C))$.
Since $\one_\bstar*\one_\cstar$ is nonincreasing, we find that,
for $x\in[0,\tfrac12]$,
\[ |K(x)| \ge \left\{ 
\begin{aligned}
&|x-a|  \qquad \text{if $|x-a|\le \tfrac{\eta}2 \max(\mu(A),\mu(B),\mu(C))$} \\
&\tfrac{\eta}2 \max(\mu(A),\mu(B),\mu(C)) \qquad\text{ otherwise}.
\end{aligned} \right.  \]

It is elementary that if $0\le\psi\le 1$ then
$\int_\reals |x| \psi(x)\,dx \ge \tfrac{1}{4}\norm{\psi}_{L^1(\mathbb{R})}^2$.
Therefore from the lower bound for $K$
and the upper bound $\norm{\one_\astar-f^\star}_{C^0} \le 1$
it follows that
\[
\int_\torus |\one_\astar-f^\star|\cdot |K|\,dm
\ge 
c\min\Big(\norm{\one_\astar-f^\star}_{L^1(\torus)}, \ \eta\max(\mu(A),\mu(B),\mu(C)) \Big)
\cdot
\norm{\one_\astar-f^\star}_{L^1(\torus)}
\]
for a certain absolute constant $c>0$.
Now $\norm{\one_\astar-f^\star}_{L^1(\torus)}\le 2\mu(A)$, so, provided that $\eta\le 1$,
this implies that
\[ 
\int_\torus |\one_\astar-f^\star|\cdot |K|\,dm
\ge 
c\norm{\one_\astar-f^\star}_{L^1(\torus)}^2,\]
for a constant $c>0$ that only depends on $\eta$.
The indicated conclusion for $\one_\astar-f^\star$ follows directly
from this and  \eqref{eq:Kbound}. 
The same holds for $\one_\bstar-g^\star$ since the roles of $A,B$ can be interchanged.
\end{proof}


Since $f,f^\star$ have identical distribution functions and likewise for $g,g^\star$, there exist
$\tilde A,\tilde B\subset \torus$ satisfying
$\norm{f-\one_{\tilde A}}_{L^1(\torus)}
= \|f^\star-\one_{A^\star}\|_{L^1(\torus)}$
and $\norm{g-\one_{\tilde B}}_{L^1(\torus)}=
\|g^\star-\one_{B^\star}\|_{L^1(\torus)}$,
with $m(\tilde A) = \mu(A)=\int_\torus f\,dm$
and $m(\tilde B) = \mu(B)=\int_\torus g\,dm$.

Therefore, if $c(\eta,\eta')$ is sufficiently small, the triple $(\tilde{A},\tilde{B},C^*)$ 
is $\tfrac{\eta}{2}$-strictly admissible, $\min(\eta,\eta')$-bounded and $m(\tilde{A})+m(\tilde{B})+m(C^*)\leq 2-\tfrac{\eta'}{2}$.
Since $C^*$ is an interval, Proposition~\ref{prop:longname} states that there exists $\bar x\in\torus$ satisfying 
\begin{equation} \label{tildedsetsbound}
m(\tilde{A}\symdif (\astar +\bar x))
+  m(\tilde{B}\symdif (\bstar -\bar x))
\leq \bC\mathcal{D}(\tilde{A},\tilde{B},C^*)^{1/2},
\end{equation}
for a constant $\bC$ depending only on $\eta,\eta'$.
Now 
\begin{equation}
\label{eq:OK without perturbative}
\mathcal{D}(\tilde{A},\tilde{B},C^\star)
\le \boldC \scriptd(A,B,C)^{1/2}\max(m(A),m(B),m(C)).
\end{equation}
Indeed, since $m(\tilde A)=m(A)$ and $m(\tilde B)=m(B)$, it follows that $\tilde{A}^\star=A^\star$ and $\tilde{B}^\star=B^\star$, so
$$\scriptt_\mathbb{T}(\tilde{A}^\star,\tilde{B}^\star,C^\star)=\scriptt_\mathbb{T}(A^\star,B^\star,C^\star),
$$
while
\begin{eqnarray*}
\begin{aligned}
\scriptt_\mathbb{T}(\tilde{A},\tilde{B},C^\star)
&=\scriptt_\mathbb{T}\big(f+(\one_{\tilde{A}}-f),g+(\one_{\tilde{B}}-g),\one_{C^\star}\big) \\
&\geq \scriptt_\mathbb{T}(f,g,\one_{C^\star})\\
& -(\|\one_{\tilde{A}}-f\|_{L^1(\torus)}+\|\one_{\tilde{B}}-g\|_{L^1(\torus)})m(C^\star)
+\|\one_{\tilde{A}}-f\|_{L^1(\torus)}\|\one_{\tilde{B}}-g\|_{L^1(\torus)}\\
&\geq \scriptt_G(A,B,C)
-\boldC \scriptd(A,B,C)^{1/2}\max(m(A),m(B),m(C))
\end{aligned}
\end{eqnarray*}
by Claim \ref{eq:control on norms}. Thus, \eqref{eq:OK without perturbative} follows by \eqref{eq:smallness of D}.

The homomorphism $\phi$ preserves measure in the sense that
$\mu(\phi^{-1}(E))=m(E)$ for any measurable $E\subset\torus$.
Therefore, since $f=\phi_*(\one_A)$,
\begin{equation}
\mu\big(A\symdif \phi^{-1}(\tilde A)\big) = 
\norm{f-\one_{\tilde A}}_{L^1(\torus)}\le \boldC\scriptd(A,B,C)^{1/2}.
\end{equation}
Moreover, 
\eqref{tildedsetsbound} and \eqref{eq:OK without perturbative} together with this
property of $\phi$ yield
$$
 \mu\big(\phi^{-1}(\tilde A) \symdif \phi^{-1}(A^\star+ \bar x)\big)
\le \boldC \scriptd(A,B,C)^{1/4}\max(\mu(A),\mu(B),\mu(C))^{1/2}
$$
In all,
\begin{eqnarray*}
\begin{aligned}
\mu(A\symdif \scriptb_A) &\le \boldC \scriptd(A,B,C)^{1/4}\max(\mu(A),\mu(B),\mu(C))^{1/2}\\
&\leq \bC c(\eta,\eta')^{1/4}\max(\mu(A),\mu(B),\mu(C))
\end{aligned}
\end{eqnarray*}
with $\scriptb_A = \phi^{-1}(\astar)+x$ for some $x\in G$, and likewise for $B$, with $x$ replaced by $-x$. The last inequality above is due to \eqref{eq:smallness of D}, and it ensures that, as long as $c(\eta,\eta')$ is sufficiently small, the perturbative Lemma \ref{lemma:perturbative} can be applied, yielding the desired conclusion for $(A,B,C)$. The analysis of the case in which the set $C$ coincides with a rank one Bohr set is now complete.

Suppose next that
$$\mu(C\symdif \bar{C}\big)\leq c(\eta,\eta')\max\big(\mu(A),\mu(B),\mu(C)\big),
$$
where $\bar{C}=\phi^{-1}(C^*)$ for some continuous homomorphism $\phi:G\rightarrow \mathbb{T}$.

If $c(\eta,\eta')$ is sufficiently small, then the triple $(A,B,\bar{C})$ is $\tfrac{\eta}{2}$-strictly 
admissible and satisfies $\mu(A)+\mu(B)+\mu(\bar{C})\leq 2-\tfrac{\eta}{2}$, while, by \eqref{eq:smallness of D},
\begin{eqnarray*}
\begin{aligned}
\mathcal{D}(A,B,\bar{C})&\leq \boldC c(\eta,\eta')\max\big(\mu(A),\mu(B),\mu(C)\big)^2\\
&\leq \boldC c(\eta,\eta')
\big(1+c(\eta,\eta')\big)^2\max\big(\mu(A),\mu(B),\mu(\bar{C})\big)^2.
\end{aligned}
\end{eqnarray*}
Therefore, since $\bar{C}$ is a rank one Bohr subset of $G$, if $c(\eta,\eta')$ is sufficiently small
then the partial result proved above can be applied to $(A,B,\bar{C})$, ensuring that 
there  exists a compatibly centered parallel ordered triple $(\mathcal{B}_A,\mathcal{B}_B,\mathcal{B}_{\bar{C}})$ of rank one Bohr sets, such that 
\begin{align*}
\mu(A\symdif \mathcal{B}_A)
&\leq \boldC(\eta,\eta')\mathcal{D}(A,B,\bar{C})^{1/2}
\\&
\le \boldC(\eta,\eta') c(\eta,\eta') \max\big(\mu(A),\mu(B),\mu(C)\big),
\end{align*}
and likewise for $B$ and $\bar{C}$. Now, this further implies that
\begin{eqnarray*}
\begin{aligned}
\mu(C\symdif \mathcal{B}_{\bar{C}})&\leq \mu(C\symdif \bar{C})+\mu(\bar{C}\symdif\mathcal{B}_{\bar{C}})\\
&\leq \bC(\eta,\eta')c(\eta,\eta')\max\big(\mu(A),\mu(B),\mu(C)\big).
\end{aligned}
\end{eqnarray*}

Therefore, if $c(\eta,\eta')$ is sufficiently small then the triple $(A,B,C)$ satisfies the hypotheses of the perturbative Lemma~\ref{lemma:perturbative}, the conclusion of which implies the desired estimate for $(A,B,C)$.

\end{proof}


\section{Stability of the Riesz-Sobolev inequality} \label{section:finishRSstability}


In this section we complete
the proof of Theorem~\ref{thm:RSstability}. This proof consists of five main steps.
Firstly, given $\bE$ with small discrepancy $\scriptd(\bE)$
for the Riesz-Sobolev functional,
an associated triple $\bE'$ is constructed, also with small discrepancy
but with altered Haar measures $\mu(E'_j)$ satisfying a supplementary condition.
Secondly, under this supplementary condition,
small Riesz-Sobolev discrepancy for $\bE'$ implies that
$E'_3$ nearly saturates Kneser's sumset inequality.
Thirdly, the inverse theorems of Griesmer and/or Tao 
imply that any saturator $E'_3$ nearly coincides with a rank one Bohr set.
Fourthly, this conclusion for $E'_3$ implies that the given triple $\bE$
nearly coincides with a parallel compatibly centered triple of rank one Bohr sets,
with $o_{\scriptd(\bE)}(1)$ control.
In the fifth step, this crude bound is refined to $O(\scriptd(\bE)^{1/2})$.
All of the ingredients have been developed in preceding
sections. Here, we link them together.

\begin{proof}[Proof of Theorem~\ref{thm:RSstability}] 
Let $\eta>0$. 
Let $\delta_0>0$ be a sufficiently small positive constant,
which will depend only on $\eta$.
Let $(A,B,C)$ be an $\eta$--strictly admissible $\eta$--bounded ordered triple
of measurable subsets of $G$ 
satisfying
\begin{equation}\label{eq:delta small D}
\mathcal{D}(A,B,C)\leq \delta_0.
\end{equation}
In this discussion, $\Ceta$ will denote positive constants
that depend only on $\eta$, not on $(A,B,C)$.
$\Ceta$ is allowed to change in value from one occurrence to the next.

Assume without loss of generality that $\mu(C)\leq \mu(A)\leq \mu(B)$. 
The proof is organized into three cases, reflecting the analysis in \S\ref{section:towards}.



\medskip
\noindent{\textbf{Case 1:}} 
\textit{$\mu(A)\leq \tfrac{1}{2}$ and $\mu(C)\leq (1-\tfrac{\eta}{50})\mu(B)$.}

In this case, the lower bound assumption 
$\min(\mu(A),\mu(B),\mu(C))\ge\eta$ 
implies that, for $\delta_0$ sufficiently small, 
$(A,B,C)$ satisfies the hypotheses of Lemma~\ref{reducing to equal}. 
Define $\tau$ by $\mu(C)=\mu(A)+\mu(B)-2\tau$, 
and define $C' = S_{A,B'}(\tau)$. 
According to Lemma~\ref{reducing to equal},
there exists a measurable set $B'\subset G$ 
such that the triple $(A,B',C')$
also nearly saturates the Riesz-Sobolev inequality, in the sense that
\begin{equation}
\label{retainsmalldefect}
\mathcal{D}(A,B',C')\leq \eta^{-1} \mathcal{D}(A,B,C)\leq \delta_0 \eta^{-1},
\end{equation}
satisfies the key supplementary condition
\begin{equation}
\mu(A)=\mu(B'),
\end{equation}
and satisfies the technical conditions
\begin{gather*}
(A,B',C')\text{ is $\eta/2$--strictly admissible and $\eta^2/2$--bounded},
\\
\mu(C') \le \mu(A)-4\scriptd(A,B',C')^{1/2}. 
\end{gather*}

Therefore, if $\delta_0$ is sufficiently small then the triple $(A,B',C')$ satisfies the hypotheses of Lemma~\ref{lemma:Staulowerbound}, whose conclusion is that $C'$ nearly coincides with a superlevel set:
\begin{equation}\label{eq:almost equal level sets1}
\mu(C' \symdif S_{A,B'}(\beta))\leq 4\mathcal{D}(A,B',C')^{1/2}\leq 4(\delta_0/\eta)^{1/2}
\end{equation}
with $\beta =\tfrac{1}{2}\big(\mu(A)+\mu(B')-\mu(C')\big)$. 
Moreover, $(A,B',C')$ satisfies the hypotheses of the key
Lemma~\ref{lemma:equalitycase} (in particular, $\mu(A)=\mu(B')$), 
whose conclusion is that the superlevel set
$S_{A,B'}(\beta)$ has small difference set:
\begin{eqnarray*}
\begin{aligned}
\mu\big(S_{A,B'}(\beta)-S_{A,B'}(\beta)\big)&
\le 2\mu(S_{A,B'}(\beta)) + 12\scriptd(A,B',S_{A,B'}(\beta))^{1/2}\\
&\leq 2\mu(S_{A,B'}(\beta))+12 (\delta_0/\eta)^{1/2}.
\end{aligned}
\end{eqnarray*}
So long as $\delta_0$ is appropriately small, $S_{A,B'}(\beta)$ satisfies the hypotheses of 
Corollary~\ref{cor:equalitycase}, whose proof relied on the stability theorems of
Tao \cite{taokneser} and/or Griesmer \cite{griesmer} for Kneser's inequality.
Its conclusion is that
there exists a rank one Bohr set $\mathcal{B}_{\beta}$ satisfying
$\mu(\mathcal{B}_{\beta}\symdif S_{A,B'}(\beta)) \le \Ceta \delta_0$.
Combining this with \eqref{eq:almost equal level sets1} yields
$$\mu(\mathcal{B}_{\beta}\symdif C') \le \Ceta \delta_0.  $$

Therefore for sufficiently small $\delta_0$, 
the triple $(A,B',C')$ satisfies the hypotheses of 
Proposition~\ref{prop:oneofthree},
with parameters that depend only on $\eta$; $C'$ nearly coincides with
a rank one Bohr set, and $\scriptd(A,B',C')$ is small.
The proposition states that $A$ and $B'$ consequently
also nearly coincide with rank one Bohr sets; in particular, 
there exists a rank one Bohr set $\mathcal{B}'_A$ satisfying
$$
\mu(\mathcal{B}'_A\symdif A)\leq \Ceta \mathcal{D}(A,B',C')^{1/2}
\leq \Ceta (\delta_0/\eta)^{1/2}
$$
for some finite constant $\Ceta$. 
The last inequality is \eqref{retainsmalldefect}.

With this control of $A$ we return to the originally given triple $(A,B,C)$.
For sufficiently small $\delta_0$, the $\eta$-strictly admissible, $\eta$-bounded triple $(A,B,C)$ 
satisfies the hypotheses of Proposition~\ref{prop:oneofthree}, since $A$ is now known
to nearly coincide with a rank one Bohr set.
The proposition states that 
there exists a compatibly centered parallel ordered triple 
$(\mathcal{B}_A,\mathcal{B}_B,\mathcal{B}_C)$ of rank one Bohr sets satisfying
$$\mu(A\symdif \mathcal{B}_A)
+\mu(B\symdif \mathcal{B}_B)
+\mu(C\symdif \mathcal{B}_C)
\leq \Ceta\mathcal{D}(A,B,C)^{1/2}.
$$
This completes the proof in Case 1. \qed

\medskip
\noindent{\textbf{Case 2:}} \textit{$\mu(A)\leq \tfrac{1}{2}$ and $\mu(C)> (1-\tfrac{\eta}{50})\mu(B)$.}

In this case, $\eta$--strict admissibility and $\eta$--boundedness 
together with sufficient smallness of $\delta_0$ ensure that
$(A,B,C)$ satisfies the hypotheses of Lemma~\ref{for not strongly admissible}. 
Therefore, with $\tau$ defined by $\mu(C)=\mu(A)+\mu(B)-2\tau$, 
there exist measurable sets $C'\subset C$ and $A'\subset A$ that satisfy
\begin{equation*}
\left\{ \begin{aligned}
&(S_{C',A}(\tau),C',A)\text{ is $\eta/4$--strictly admissible
and $\eta/4$--bounded}
\\
&\mathcal{D}(S_{C',A}(\tau),C',A)\leq 16\mathcal{D}(C,B,A)
\\
&\mu(C')=\mu(A')=\mu(C)-\tfrac{1}{10} \eta\mu(B),
\end{aligned} \right. 
\end{equation*}
while 
\begin{equation*}
\left\{ \begin{aligned}
&(S_{C',A'}(\tau),C',A')\text{ is $\eta/2$--strictly admissible and $\eta/2$--bounded}
\\
&\mathcal{D}(S_{C',A'}(\tau),C',A')\leq 16\mathcal{D}(C,B,A)
\\
&\mu(S_{A',C'}(\tau)) \le (1-\tfrac{\eta/2}{50}) \mu(C').
\end{aligned} \right.
\end{equation*}

The triple $(S_{A',C'}(\tau),C',A')$ falls into Case~1 above, with parameters that 
depend only on $\eta$. Therefore, if $\delta_0$ is sufficiently small then
there exists a rank one Bohr set $\mathcal{B}_{C'}$ satisfying
$$\mu(C'\symdif \mathcal{B}_{C'})\leq \Ceta \mathcal{D}(S_{A',C'}(\tau),A',C')^{1/2}\leq 
\boldC_{\eta} \delta_0^{1/2}.
$$
Setting $F:=S_{C',A}(\tau)$, the $\eta/4$-strict admissibility and $\eta/4$-boundedness of the triple $(F,C',A)$ ensure that, for sufficiently small $\delta_0$, $(F,C',A)$ satisfies the hypotheses of 
Proposition~\ref{prop:oneofthree}.
Therefore there exists a rank one Bohr set $\mathcal{B}_A$ satisfying
$$\mu(\mathcal{B}_A\symdif A)\leq \Ceta \mathcal{D}(F,C',A)^{1/2}\leq \Ceta \delta_0^{1/2}.
$$
By $\eta$--admissibility and $\eta$--boundeness, 
$(A,B,C)$ satisfies the hypotheses of 
Proposition~\ref{prop:oneofthree} provided that $\delta_0$ is sufficiently small.
Therefore there exists a compatibly centered parallel ordered triple 
$(\mathcal{B}_A',\mathcal{B}_B,\mathcal{B}_C)$ of rank one Bohr sets satisfying
$$\mu(A\symdif \mathcal{B}_A')+\mu(B\symdif \mathcal{B}_B)+\mu(C\symdif \mathcal{B}_C)
\leq \Ceta\mathcal{D}(A,B,C)^{1/2}.
$$
\qed

\medskip
\noindent{\textbf{Case 3:}} $\mu(A)>\tfrac{1}{2}$.

As discussed in \S\ref{section:twokey},
the triple $(C,G\setminus A, G\setminus B)$ is $\tfrac{\eta}{4}$-strictly admissible 
and $\tfrac{\eta}{4}$-bounded. 
Moreover, since $\tfrac12 < \mu(A)\le\mu(B)$, 
$\mu(G\setminus A)<\tfrac12$ and $\mu(G\setminus B)<\tfrac12$.
Therefore, $(C,G\setminus A, G\setminus B)$ falls in the range of one of the two cases already analyzed above.
Thus there exists a compatibly centered parallel ordered triple $(\mathcal{B}_C,\mathcal{B}_{G\setminus A},\mathcal{B}_{G\setminus B})$ of rank one Bohr sets satisfying
\begin{eqnarray*}
\begin{aligned}
\mu\big((G\setminus A)\symdif \mathcal{B}_{G\setminus A}\big)
\leq \Ceta\mathcal{D}(C,G\setminus A, G\setminus B)^{1/2}
=\Ceta\mathcal{D}(A,B,C)^{1/2}
\leq \Ceta\delta^{1/2}
\end{aligned}
\end{eqnarray*}
and likewise for
$\mu\big((G\setminus B)\symdif \mathcal{B}_{G\setminus B}\big)$
and for
$\mu(C\symdif \mathcal{B}_C)$.
The equality of 
$\mathcal{D}(C,G\setminus A, G\setminus B)^{1/2}$ with $\mathcal{D}(A,B,C)^{1/2}$
was established in Lemma~\ref{lemma:complementD}.

For any measurable subsets $E_1,E_2$ of $G$, $\mu(E_1\symdif E_2)=\mu\big((G\setminus E_1)\symdif (G\setminus E_2)\big)$. 
Therefore the compatibly centered parallel ordered triple $(\mathcal{B}_A,\mathcal{B}_B,\mathcal{B}_C)$ of rank one Bohr sets with $\mathcal{B}_A:=G\setminus\mathcal{B}_{G\setminus A}$, $\mathcal{B}_B:=G\setminus\mathcal{B}_{G\setminus B}$ satisfies
$$\mu(A\symdif \mathcal{B}_A)+\mu(B\symdif \mathcal{B}_B)+\mu(C\symdif \mathcal{B}_C)
\le \Ceta \scriptd(A,B,C)^{1/2}.
$$
The proof of Theorem~\ref{thm:RSstability} is complete.
\end{proof}


\section{Cases of equality in the Riesz-Sobolev inequality}

Theorem~\ref{thm:RSuniqueness} states that if 
$\scriptt_G(\bE) = \scriptt_\torus(\bE^\star)$,
and if $\bE$ is admissible,
then there exists a $\scriptt_G$--compatibly centered ordered triple of parallel 
rank one Bohr sets satisfying $\mu(E_j\symdif \scriptb_j)=0$
for every $j\in\{1,2,3\}$.
There are two cases. If $\bE$ is strictly admissible,
then there exists $\eta>0$ such that $\bE$ is $\eta$--strictly
admissible and $\eta$--bounded. Therefore $\bE$ satisfies
the hypotheses of Theorem~\ref{thm:RSstability}, the quantitative
stability theorem, with $\delta=0$. That theorem, whose proof
has been completed above, gives the required conclusion.

If $\bE$ is admissible but not strictly admissible,
then after appropriate permutation of the three indices,
$\mu(E_1)+\mu(E_2) = \mu(E_3)<1$,
and \[\langle \one_{E_1}*\one_{E_2},\one_{-E_3}\rangle = \mu(E_1)\mu(E_2)
= \langle \one_{E_1}*\one_{E_2},\one_{G}\rangle.\]
Therefore $\one_{E_1}*\one_{E_2}=0$ $\mu$--almost everywhere
on the complement of $-E_3$, that is,
$E_1+_0 E_2$ is contained in the union of $-E_3$ with a nullset.
Thus $\mu(E_1+_0 E_2) \le \mu(E_3)$. The converse inequality holds
by Kneser's theorem, so $\mu(\symdif(E_1+_0 E_2,\,-E_3)=0$. 
It is a corollary of more quantitative results of 
Griesmer \cite{griesmer} and Tao \cite{taokneser}
that equality of $\mu(E_1+_0 E_2)$ with $\mu(E_1)+\mu(E_2)$
implies existence of a parallel pair of rank one Bohr sets satisfying
$\mu(E_j\symdif \scriptb_j)=0$ for $j=1,2$.
Set $\scriptb_3 = \scriptb_1 +\scriptb_2$.
Then $(\scriptb_1,\scriptb_2,\scriptb_3)$
is an ordered triple of rank one Bohr sets with all required properties.
\qed

\section{Stability in the relaxed framework} \label{section:relax2}

Theorem~\ref{thm:relaxedstability},
a stability theorem for the Riesz-Sobolev inequality
in the situation in which
indicator functions of sets are replaced by functions
taking values in $[0,1]$, follows from slight modification of the 
proof of Theorem~\ref{thm:RSstability}.

\begin{proof}[Proof of Theorem~\ref{thm:relaxedstability}] 
Let $f,g,h$ be as in the statement of the theorem. 
To simplify notation, set 
\[ M = \max\big(\textstyle\int f\,d\mu,\textstyle\int g\,d\mu,\textstyle\int h\,d\mu\big). \]
With the notation of \S\ref{section:When},
\[\langle f*g,h\rangle_G\le \langle f^\star*g^\star,h^\star\rangle_\torus
\le \langle \one_\astar*\one_\bstar,h^\star\rangle_\torus\leq \langle \one_{A^\star}*\one_{B^\star},\one_{C^\star}\rangle_\torus.\]
As shown in \S\ref{section:When}, this implies that
\[ \norm{h^\star-\one_\cstar}_{L^1(\torus)} \le \bC\scriptd^{1/2}.\]
Since $h$ has the same distribution function as $h^\star$,
there exists a set $C\subset G$ satisfying
\begin{equation} \label{smallL1} \norm{h-\one_C}_{L^1(G,\mu)} 
=\norm{h^\star-\one_\cstar}_{L^1(\torus)}
\le \bC\scriptd^{1/2}.\end{equation}
The same reasoning applies to $f$ and to $g$, yielding
corresponding sets $A,B\subset\torus$, respectively.
Now 
\[ \big|\,\langle f*g,h\rangle_G - \langle \one_A*\one_B,\one_C\rangle_G\,\big|
\le \bC M \scriptd^{1/2}, \]
so, for $\scriptd$ sufficiently small as a function of $\eta$ alone, 
\begin{align*} \langle \one_A*\one_B,\one_C\rangle_G
& \ge
\langle \one_\astar*\one_\bstar,\one_\cstar\rangle_\torus
- \bC M \scriptd^{1/2} 
\\
&= \langle \one_\astar*\one_\bstar,\one_\cstar\rangle_\torus
- \bC\max(\mu(A),\mu(B),\mu(C))\scriptd^{1/2} 
\\
& \ge \langle \one_\astar*\one_\bstar,\one_\cstar\rangle_\torus
- \bC M \scriptd^{1/2} 
\end{align*}
with the convention that the constant $\bC\in(0,\infty)$
may change from one occurrence to the next.
In the final line we have used
the fact that $\max(\mu(A),\mu(B),\mu(C))$ is comparable to $M$.

Therefore according to Theorem~\ref{thm:RSstability},
there exists a
compatibly centered parallel triple 
$(\tilde A, \tilde B, \tilde C)$ 
of rank one Bohr subsets of $G$
such that
\[ |A\symdif \tilde A| \le \bC M^{1/2} \scriptd^{1/4},\] 
with the same bound for $|B\symdif \tilde B|$ and $|C\symdif \tilde C|$.
In combination with \eqref{smallL1} and the corresponding
results for $f,g$, this gives
\[ \max\big(\, 
\norm{f-\one_{\tilde A}}_{L^1},\,
\norm{g-\one_{\tilde B}}_{L^1},\,
\norm{h-\one_{\tilde C}}_{L^1}\, 
\big)
\le \bC M^{1/2} \scriptd^{1/4}.
\] 

It is given in the hypotheses of Theorem~\ref{thm:relaxedstability} that
$\scriptd/M^2$ is less than some small absolute constant
that is at our disposal, but no lower bound is given. Therefore
this conclusion is weaker than the desired bound $\bC\scriptd^{1/2}$.
However, any bound of the form $o_{\scriptd/M^2}(1)\cdot M$ is sufficient
to place us in the perturbative context of
Lemma~\ref{lemma:perturbative_relaxed},
which gives the desired bound,
completing the proof of Theorem~\ref{thm:relaxedstability}.
\end{proof}

\section{A flow of subsets of $\torus$} \label{section:flow}

This section and the next 
develop an alternative approach which, 
as it now stands, applies directly only for $G=\torus$, but which
yields slightly superior results for that group;
the bounds remain appropriately uniform as the measures of $A,B,C$
tend to zero.
It is based on monotonicity of the functional $(A,B,C)\mapsto \scriptt_\torus(A,B,C)$
under a certain continuous one-parameter deformation.
Such a monotonicity phenomenon is well-known for $G=\reals$ \cite{christRSult}. 
The variant developed here, which applies to $\torus$, is less effective but nonetheless useful. In the present section we develop the deformation
and its basic properties for Kneser's inequality and the Riesz-Sobolev inequality.
In the following section we apply it to establish an improved stability theorem for $\torus$.

In the present section and in \S\ref{section:finishingtorus},
the Lebesgue measure of a subset $E\subset\torus$ is denoted by $|E|$.
All integrals over $\torus$ are formed with respect to Lebesgue measure.

\medskip
Let $\scriptl(\torus)$ be the class of all 
equivalence classes of Lebesgue measurable sets $E\subset \torus$ with $|E|>0$, and
$E$ equivalent to $E'$ if and only if $|E\symdif E'|=0$.
Assuming that $|E|>0$, define 
\begin{equation} T_E = -\ln(|E|)>0.\end{equation}
In the next theorem, $E$ and $E_j$ denote arbitrary
equivalence classes of Lebesgue measurable subsets of $\torus$.
For equivalence class $A,B$, the notation $A\subset B$
means of course that any two representatives of these classes
satisfy $|B\setminus A|=0$.

Recall that
\begin{equation} \label{zerosumset}
A+_0 B = \{x: \one_A*\one_B(x)>0\}.
\end{equation}
The inequality $|A+B|_* \ge \min(|A|+|B|,1)$ for all measurable $A,B\subset G$
implies that
\begin{equation} \label{ineq:zerosumset} |A+_0 B| \ge \min(|A|+|B|,1)
\ \text{ for all measurable $A,B\subset \mathbb{T}$.} \end{equation}
Indeed, given $A,B\subset \mathbb{T}$, 
denote by $A^\dagger\subset A$ and $B^\dagger\subset B$ 
the sets of Lebesgue points of $A,B$, respectively. From the fact that almost
every point is a Lebesgue point, it follows easily that
$$A^\dagger+_0B^\dagger=A^\dagger+B^\dagger. $$
Therefore, since $A^\dagger\subset A$ and $B^\dagger\subset B$, it follows that
$$|A+_0B|\geq |A^\dagger+_0B^\dagger|=|A^\dagger+B^\dagger|\geq \min(|A^\dagger|+|B^\dagger|,1)=\min(|A|+|B|,1),
$$
establishing \eqref{ineq:zerosumset} for $A,B$.

\begin{theorem}\label{thm:flow}
There exists a flow $(t,E)\mapsto E(t)$ of elements of $\scriptl(\torus)$,
defined for $t\in[0,T_E]$, having the following properties. 
\begin{enumerate}
\item
$E(0)=E$ and $E(T_E) = \torus$.
\item $E(s)\subset E(t)$ whenever $s\le t$.
\item 
$|E(t)| = e^t|E|$ for all $t\in[0,T_E]$.
\item
$|E(s)\symdif E(t)|\to 0$ as $s\to t$.
\item
If $E\subset\tilde E$ then $E(s)\subset \tilde E(s)$ for all $s\in[0,T_{\tilde E}]$.
\item
$e^{-t}|E_1(t)\symdif E_2(t)|\le e^{-s} |E_1(s)\symdif E_2(s)|$ for all $E_1,E_2$
and every $0\le s\le t\le \min(T_{E_1},T_{E_2})$.
\item
If $0\le s\le t\le T_E$ then
$E(t) = (E(s))(t-s)$

\item
If $E$ is the rank one Bohr set
$\{x: \norm{\phi(x)}\le r\}$ associated to a nonconstant homomorphism
$\phi:\torus\to\torus$
then $E(t) = \{x: \norm{\phi(x)}\le e^t r\}$.
\item
$(E+y)(t) = E(t)+y$ for every $E\in\scriptl(\torus)$, $y\in\torus$, and $t\le T_E$.
Likewise, $(-E)(t)=-E(t)$. 
\item
The function $t\mapsto e^{-t}|E_1(t)\,+_0\,E_2(t)|$
is nonincreasing on $[0,\min(T_{E_1},T_{E_2})]$. 
\item 
The function $t\mapsto e^{-2t}\scriptt(E_1(t),E_2(t),E_3(t))$ 
is nondecreasing on $[0,\tau]$
provided that $\tau\le \min_{j\in\{1,2,3\}}(T_{E_j})$
and $\sum_{j=1}^3 |E_j(\tau)|\le 2$.
\end{enumerate}
\end{theorem}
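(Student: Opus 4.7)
The plan is to construct the flow $E \mapsto E(t)$ with sufficient structure that the items (1)--(9) are built into the definition, and then to prove the monotonicity statements (10) and (11) by differentiation in $t$.

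For the construction, I would define the flow by adding at each instant $t$ a thin boundary collar along $\partial E(t)$, with local thickness chosen so that the infinitesimal total growth equals $|E(t)|\,dt$. This immediately enforces $d|E(t)|/dt = |E(t)|$, hence $|E(t)| = e^t |E|$ and property (3); in the symmetric case of rank-one Bohr sets it reduces to uniform dilation, yielding (8). Properties (1), (2), (4), (7), (9) follow from the ODE-like nature of the construction. Properties (5) and (6) come from a coupling argument: for $E \subset \tilde E$, run both flows in tandem so that the collar added to $\tilde E(t)$ always contains the collar added to $E(t)$ wherever both boundaries are in the same region, which ensures $E(t) \subset \tilde E(t)$, and the same coupling controls the growth rate of $E_1(t) \symdif E_2(t)$ to give (6).

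For property (10), direct differentiation of $|E_1(t) +_0 E_2(t)|$ using the boundary-collar construction, combined with Kneser's inequality \eqref{ineq:zerosumset}, yields
$$\frac{d}{dt}|E_1(t) +_0 E_2(t)| \le |E_1(t) +_0 E_2(t)|,$$
from which the claimed monotonicity of $e^{-t}|E_1(t) +_0 E_2(t)|$ follows by integration.

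Property (11) is the main content. Differentiating $\scriptt_\torus$ and using trilinearity,
$$\frac{d}{dt} \scriptt_\torus\bigl(\bE(t)\bigr) = \sum_{i=1}^3 \int \bigl( \one_{E_{j}(t)} * \one_{E_{k}(t)} \bigr) \, d\nu_i^t,$$
where $\{j,k\} = \{1,2,3\} \setminus \{i\}$ and $\nu_i^t$ is the boundary growth measure of $E_i(t)$, which has total mass $|E_i(t)|$. The desired inequality $\frac{d}{dt}\scriptt_\torus \ge 2\,\scriptt_\torus$ reduces to a pointwise comparison between the boundary averages of the three convolutions $\one_{E_j} * \one_{E_k}$ and the bulk value $\scriptt_\torus(\bE(t))$. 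The hypothesis $\sum_i |E_i(\tau)| \le 2$ keeps the triple in the admissible regime in which Theorem~\ref{thm:sharpKPRGT} and the refined estimates of Section~\ref{section:Inter} apply. The main obstacle is to choose the boundary measures $\nu_i^t$ consistently across the flow so that the three resulting boundary integrals sum to exactly $2\,\scriptt_\torus(\bE(t))$; this amounts to an integration-by-parts identity on $\torus$ adapted to the specific geometry of the boundary-collar construction.
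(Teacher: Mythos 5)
Your proposal leaves the central steps unproved, and the approach does not obviously close.

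The flow cannot be defined by adding a ``boundary collar'' directly on an arbitrary element of $\scriptl(\torus)$: a general equivalence class of measurable sets has no usable boundary, so the local-thickness prescription and the ``boundary growth measure'' $\nu_i^t$ are not meaningful objects at this level of generality. The paper sidesteps this by first defining the flow on finite unions of closed arcs (each arc is dilated about its fixed center by the factor $e^t$, and arcs are merged when they collide), proving the contraction property (6) there, and then \emph{using} (6) to extend the flow to all of $\scriptl(\torus)$ by uniform continuity in the metric $|E\symdif E'|$. Without some such density-plus-contraction scheme, your properties (1)--(9) are not actually established.

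More seriously, your argument for (11) is not a proof. You reduce the claim to a ``pointwise comparison between boundary averages and the bulk value,'' and then describe the identity that would make this work as ``the main obstacle,'' leaving it open. That identity is precisely the content of the theorem. The paper proves (11) elementarily: by multilinearity of $\scriptt$ it reduces to the case where each $E_j(t)$ is a \emph{single} interval with center fixed and length $e^t|E_j|$; then $\scriptt(\bI(t))$ is rewritten, using $|E_1(t)|+|E_2(t)|<1$ (this is where the hypothesis $\sum_j |E_j(\tau)|\le 2$ enters, not via Theorem~\ref{thm:sharpKPRGT}, whose invocation here is a red herring), as a sum of two integrals over $\reals$; and a change of variables exhibits $e^{-2t}\scriptt(\bI(t))$ manifestly as a nondecreasing function of $t$ because the kernel $K=\one_{\tilde I_1}*\one_{\tilde I_2}$ is even and nonincreasing on $[0,\infty)$. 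Finally, the paper deduces (10) as a corollary of (11) by taking $E_3=-(E_1+_0E_2)$, rather than by a separate differentiation argument invoking Kneser; your direct route to (10) is again built on the undefined boundary measure. In short, what you outline is a heuristic for why a monotone flow should exist, not a proof, and the gap sits exactly at the point where the real work has to be done.
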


Each conclusion is to be interpreted in terms of equivalence classes of measurable sets. 
Thus, for instance, $A\subset B$ means $|B\setminus A|=0$.

As mentioned earlier, a flow with variants of these properties is known for $\reals$.
See for instance a discussion in \cite{christRSult}.
Such a flow acting on a dense class of sets, 
namely finite unions of intervals, is discussed in \cite{liebloss}.
That it extends to arbitrary sets has been known to experts \cite{burchardcommunication},
though it seems not to have been extensively discussed in the literature.

The flow for $\mathbb{R}$ \cite{christRSult} 
preserves Lebesgue measures, whereas that of Theorem~\ref{thm:flow}
does not. There exists no flow for $\torus$ that mimics all properties of the flow for $\reals$.
Indeed, a rank one Bohr set $E\subset\torus$
is a union of small intervals centered at the elements of a finite cyclic subgroup
$H$ of $\torus$, or at elements of a coset of $H$. 
$E$ satisfies $|E+E|=2|E|$ if $|E|<\tfrac12$, so $E$ realizes
equality in the sumset inequality. There is 
no way to continuously deform one such set $E$ to another, through
sets satisfying $|E(t)+E(t)|=2|E(t)|$ with $|E(t)|$ independent of $t$, 
if the two sets in question are associated
to subgroups $H$ having different numbers of elements.

The flow of Theorem~\ref{thm:flow} lacks another key property
of its analogue for $\reals$, a lack which
may appear to severely limit its utility, although we will show
in the next section that it is nonetheless a valuable tool.
The functionals $e^{-2t}\scriptt(E_1(t),E_2(t),E_3(t))$
and $e^{-t}|E_1(t) +_0 E_2(t)|$
are only defined with desired monotonicity properties for $t\le T$,
for a certain terminal time $T$. 
The defect is that the terminal time sets $E_j(T)$ need not possess any particular
structure (such as $E_j(T) = E_j(T)^\star$ up to translation, or $E_j(T)=\torus$). 
In contrast, the corresponding flow for $\reals$ deforms all three sets to their
symmetrizations $E_j^\star$.

\begin{proof}
The proof is nearly identical in many respects to that of a corresponding
result for $\reals$ proved in \cite{christRSult}, with the exception 
of the conclusion concerning $|E_1(t)+_0 E_2(t)|$. We will provide only a sketch
which deals with those points at which differences arise.

One begins by defining
$t\mapsto E(t)$ in the special case
in which $E$ is a finite union of closed intervals.
One verifies the stated
properties in that case, then uses these properties to show that the flow
extends to $\scriptl(\torus)$ via uniform continuity with respect to the metric
$\rho(E,E') = |E\symdif E'|$.\footnote{The flow of Theorem~\ref{thm:flow} 
acts on equivalence classes of sets. Its restriction
to finite unions of closed intervals agrees with the preliminary flow
defined on finite unions of intervals, up to Lebesgue null sets.}

Let $E=\cup_j I_j$ (a finite union), where $I_j\subset\torus$ is a closed arc 
of length $|I_j|$ with center $c_j$, 
and these closed arcs are pairwise disjoint.
Define $E(t)=\cup_j I_j(t)$,
where $I_j(t)$  is the arc with center $c_j$ and length $e^t|I_j|$,
for all $0\le t\le T_1$, where $T_1$ is the smallest $t$ for which
some pair of arcs $I_i(t),I_j(t)$ intersect. Any two arcs that do intersect
share only an endpoint (or two endpoints, in the case in which the union
has length $1$). Thus $E(T_1)$ may be expressed in a unique way
as a disjoint union of finitely many closed arcs, with certain centers.
The number of such arcs is strictly smaller than the number of arcs
comprising the initial set $E$.
Repeat the first step for this  new collection of  arcs,
stopping at the first time $T_2>T_1$ at which intersection occurs.
Again reorganize $E(T_2)$ as a union of finitely many pairwise
disjoint closed arcs, and repeat until a single arc remains.
This occurs, because the number of arcs is reduced with each
iteration, and it is not possible for the number of arcs to exceed
$1$ if the measure of their union equals $1$.
Continue until $|E(t)|=1$. 

We claim that if $E_j$ is a finite union of $N_j$ pairwise disjoint
closed arcs for each index $j\in\{1,2,3\}$,
and if $\tau>0$ is sufficiently small that $E_j(t)$
is defined for $t\in[0,\tau]$ and is a union of exactly $N_j$
pairwise disjoint closed arcs for every $t\in[0,\tau)$
for each index $j$, then $e^{-2t}\scriptt(\bE(t))$
is a nondecreasing function of $t\in[0,\tau]$.
It suffices to prove this for $t\in[0,T_1]$.

Write $\one_{E_j(t)} = \sum_{n=1}^{N_j}\one_{I_{j,n}(t)}$
with the natural notations. Then $|I_{j,n}(t)| = e^t|I_{j,n}(0)|$
for all indices $j,n$. By linearity of $\scriptt$,
it suffices to show that $t\mapsto e^{-2t}\scriptt(\bI(t))$ is a nondecreasing function
for any triple $\bI(t)=(I_j(t): j\in\{1,2,3\})$
of intervals,
with centers $c_j$ of $I_j(t)$ independent of $t$
and with lengths $|I_j(t)|=e^t|I_j(0)|$.
By translation-invariance, we may assume that $c_1=c_2=0$.
By reflecting about $0$ if necessary, we may assume that the center $\bar{c}_3:=-c_3$ of $-I_3$ satisfies $e^t \bar{c}_3\in[0,\tfrac12]$.

Set $l_j=|I_j(0)|/2$.
Now 
\begin{align*}
\scriptt(\bI(t)) = \iint_{\torus^2}
\one_{\norm{x}\le e^t l_1} \one_{\norm{y}\le e^t l_2} \one_{\norm{x+y-\bar{c}_3}\le e^t l_3}
\,dx\,dy.
\end{align*}

Define $K(x) = \one_{\tilde I_1}*\one_{\tilde I_2}(x)$ for $x\in\reals$,
where $\tilde I_j = [-\tfrac12 l_j,\tfrac12 l_j]\subset\reals$.
Then, since $|I_1(t)|+|I_2(t)|<1$ (as $t<T_1$),
$\scriptt(\bI(t))$ can be expressed as 
\[ \scriptt(\bI(t)) = \int_{\reals}
e^t (K(e^{-t}u) + K(e^{-t}(u-1))) \one_{|u-\bar{c}_3|\le e^tl_3}(u)\,du.  \]
Splitting this as a sum of two integrals and 
substituting $u=e^t x$ in one and $u = e^ty+1$ in the other gives
\begin{align*} 
e^{-2t} \scriptt(\bI(t)) 
= \int_\reals K(x)\one_{-I_3-\bar{c}_3}(x-e^{-t}\bar{c}_3)\,dx
+\int_\reals K(y)\one_{-I_3-\bar{c}_3}(y+e^{-t}(1-\bar{c}_3))\,dy.
\end{align*}

Because $K$ is nonnegative, even, and is nonincreasing on $[0,\infty)$, 
each of the two integrals above represents a nondecreasing function of $t$
for any interval $I_3$. This completes the proof of monotonicity.

The conclusions of Theorem~\ref{thm:flow} now follow in the same way as in \cite{christRSult}, 
with the exception of monotonicity of $e^{-t}|E_1(t) +_0 E_2(t)|$,
which was not discussed there.
Set $E_3 = -(E_1 +_0 E_2)$.
Then $\scriptt(E_1,E_2,E_3) = |E_1||E_2|$.
We have shown above that $t\mapsto e^{-2t} \scriptt(E_1(t),E_2(t),E_3(t))$
is a nondecreasing function of $t$. In particular,
\[ e^{-2t} \scriptt(E_1(t),E_2(t),E_3(t)) \ge
\scriptt(E_1(0),E_2(0),E_3(0)) =
\scriptt(E_1,E_2,E_3) = |E_1|\cdot |E_2|. \]
But 
\[ \int_\torus \one_{E_1(t)}* \one_{E_2(t)}
\le |E_1(t)|\cdot|E_2(t)| = e^{2t}|E_1|\cdot|E_2|.\]
Therefore
\[ 
\int_{-E_3(t)} \one_{E_1(t)}* \one_{E_2(t)}
=\int_\torus \one_{E_1(t)}* \one_{E_2(t)},\]
forcing $\{x: \one_{E_1(t)}*\one_{E_2(t)}(x)>0\}\subset -E_3(t)$
up to a Lebesgue null set. Therefore 
\[ e^{-t}|E_1(t) +_0 E_2(t)| \le e^{-t}|E_3(t)|
= |E_1+_0 E_2|.\]

If $0\le s\le t$ then $E_j(t) = (E_j(s))(t-s)$,
so the general relation
\[ e^{-s}|E_1(s) +_0 E_2(s)| \le e^{-t}|E_1(t) +_0 E_2(t)|\]
follows from the case $s=0$.
\end{proof}

\begin{remark} \label{cor:monotonicity of D}
An equivalent formulation of the monotonicity of $e^{-2t}\scriptt(E_1(t),E_2(t),E_3(t))$
is that
$t\mapsto e^{-2t}\scriptd(E_1(t),E_2(t),E_3(t))$ is nonincreasing on $[0,\tau]$, 
provided that $\tau\leq\min_{j\in\{1,2,3\}}T_{E_j}$ and $\sum_{j=1}^3|E_j(\tau)|\leq 2$.
The monotonicity will be invoked in this form.

Indeed, for $t\in[0,\tau]$, 
\begin{eqnarray*}
\begin{aligned}
e^{-2t}\scriptd(E_1(t),&E_2(t),E_3(t))\\
&=e^{-2t}\scriptt\big(E_1(t)^\star,E_2(t)^\star,(-E_3(t))^\star\big)-e^{-2t}\scriptt\big(E_1(t),E_2(t),-E_3(t)\big)\\
&=e^{-2t}\scriptt\big(E_1^\star(t),E_2^\star(t),(-E_3)^\star(t)\big)-e^{-2t}\scriptt\big(E_1(t),E_2(t),(-E_3)(t)\big)\\
&=\scriptt\big(E_1^\star,E_2^\star,(-E_3)^\star\big)-e^{-2t}\scriptt\big(E_1(t),E_2(t),(-E_3)(t)\big).
\end{aligned}
\end{eqnarray*}
Now $e^{-2t}\scriptt\big(E_1(t),E_2(t),(-E_3)(t)\big)$ is nondecreasing 
by the final conclusion of Theorem~\ref{thm:flow};
its hypotheses are satisfied since
$|(-E_3)(\tau)|=|E_3(\tau)|$ and $T_{-E_3}=T_{E_3}$. 
\end{remark}

The following remark, which will not be used in this paper
but which may nonetheless be of interest, 
also follows in the same way as in \cite{christRSult}.

\begin{proposition} \label{prop:flowtointervals}
Let $E\subset\reals^1$ be a Lebesgue measurable set with finite measure.
For each $t\in(0,T_E]$, $E(t)$ equals a union of intervals, up to a Lebesgue null set.
\end{proposition}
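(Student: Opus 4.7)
My plan is to prove the proposition by approximating $E$ from outside by finite unions of intervals, exploiting the constructive definition of the $\reals^1$ flow on that class together with its measure-preserving, inclusion-preserving, and $L^1$-continuous character. The $\reals$ flow, as constructed in \cite{christRSult}, shares with the flow of Theorem~\ref{thm:flow} the following basic properties: it is defined first on finite unions of $N$ closed intervals by an explicit evolution under which $F(t)$ remains a union of at most $N$ closed intervals for all $t\in[0,T_F]$; $E\subset E'$ implies $E(t)\subset E'(t)$; it preserves Lebesgue measure (this is the point at which the Euclidean flow differs from its $\torus$ analogue); and it extends to arbitrary measurable sets by $L^1$-continuity.

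First I would fix $E\subset\reals$ of finite measure and $t\in(0,T_E]$. By outer regularity of Lebesgue measure I choose a decreasing sequence $F_n\supset E$ of finite unions of open intervals with $|F_n\setminus E|\to 0$. Monotonicity yields $E(t)\subset F_n(t)$, and measure preservation on $\reals$ yields $|F_n(t)\setminus E(t)|=|F_n|-|E|=|F_n\setminus E|\to 0$. Setting $I:=\bigcap_n F_n(t)\supset E(t)$, one then has $|I\setminus E(t)|=0$, so it suffices to show that $I$ itself is a union of intervals up to a Lebesgue null set.

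The main obstacle is this last claim. Each $F_n(t)$ is a finite union of closed intervals, but a decreasing intersection of such sets can in principle be much more complicated than a union of intervals (e.g.\ a Cantor-type compact set). What rescues us is a quantitative regularity property of the Euclidean flow at positive times: distinct constituent intervals of $F_n(t)$ are separated by gaps whose widths are bounded below by a positive quantity depending on $t$ alone, because the flow moves intervals together at a definite rate and merges them upon contact, so any surviving gap at time $t$ corresponds to a gap in $F_n$ of width at least comparable to $t$. This bounds the number of constituent intervals of $F_n(t)$ uniformly in $n$ by a quantity depending only on $t$ and $|E|$, and after extracting a subsequence along which all interval endpoints converge, the Hausdorff limit is a finite union $I'$ of closed intervals. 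Comparing the $L^1$ and Hausdorff convergences, one sees that $I'$ differs from $I$, and hence from $E(t)$, by a Lebesgue null set.

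The technical heart of the argument is the quantitative gap bound claimed in the third paragraph; to justify it one must inspect the construction of the flow on finite unions of intervals from \cite{christRSult} and verify that the merger rule for touching intervals interacts well with the outer approximation $F_n\downarrow E$, so that the "surviving" gaps of $F_n(t)$ stabilize as $n\to\infty$ and correspond bijectively to the gaps of $\reals\setminus E$ of sufficiently large width.
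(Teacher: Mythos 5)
Your outer-approximation step fails for general measurable $E$: there need not exist a decreasing sequence of finite unions of intervals $F_n\supset E$ with $|F_n\setminus E|\to 0$. For instance, take $E=[0,1]\setminus C$ with $C$ a fat Cantor set of positive measure. Then $E$ is open and dense in $[0,1]$, so every finite union of intervals containing $E$ also contains $\overline{E}=[0,1]$ and hence has measure at least $1$; thus $\inf_F |F\setminus E| = |C|>0$ over all such $F$. (An unbounded $E$ of finite measure such as $\bigcup_{k\ge 1}[k,k+2^{-k}]$ is excluded from the outset, since no finite union of bounded intervals contains it at all.) Without the nested sandwich $E(t)\subset F_n(t)$ with $|F_n(t)\setminus E(t)|\to 0$, the rest of the argument has nothing to act on. Switching to open supersets (countable unions of intervals) does not rescue it: one then learns only that $E(t)$ is an $L^1$-limit of unions of intervals, which is vacuous information, since every measurable set — a fat Cantor set included — is such a limit, and fat Cantor sets are not unions of intervals modulo null.

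The quantitative gap bound is also unjustified as stated, and I believe it is false. You assert that the gaps between the constituent intervals of $F_n(t)$ are bounded below by a quantity depending on $t$ alone, but the justification offered — that a surviving gap "corresponds to a gap in $F_n$ of width at least comparable to $t$" — is a claim about time-$0$ gaps, not time-$t$ gaps; a gap may be on the verge of merging at time $t$ and have arbitrarily small width. Even the time-$0$ version yields a bound on the number of constituent intervals of $F_n(t)$ that depends on the diameter of $F_n$, not merely on $t$ and $|E|$, and whether such gaps shrink at a definite rate at all depends on details of the Euclidean flow that your sketch does not pin down. To prove the proposition one needs a genuinely structural feature of the flow at positive times (for instance, a uniform positive-density or finite-essential-boundary property of $E(t)$), not a soft approximation scheme; the paper deliberately defers exactly this to \cite{christRSult}, and your proposal does not supply the missing input.
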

That is, there exists a countable family of pairwise disjoint intervals $I_n(t)$ 
such that $|E(t) \symdif \bigcup_n I_n(t)|=0$.


\medskip
The next lemma  makes it possible
to propagate control of a triple $\bE(t)$ backwards in time, 
with respect to the flow $t \mapsto \bE(t)$, in the analysis of inequality
\eqref{ineq:RS} for $\torus$.

\begin{lemma}[Time reversal] \label{lemma:timereversal}
For each $\eta,\eta'>0$ there exist $\delta_1>0$ and $\bC<\infty$ 
with the following property.
Let $\bE$ be an $\eta$--strictly admissible 
ordered triple of measurable subsets of $\torus$, satisfying $\sum_j |E_j|\le 2-\eta'$ .
Let $0<t\le \min_{1\le j\le 3}T_{E_j}$ with
$e^t-1\le \delta_1$.
Suppose that 
there exists $\by=(y_1,y_2,y_3)\in\torus^3$ satisfying $y_1+y_2=y_3$
such that
\begin{equation} |E_j(t)\symdif (E_j(t)^\star+y_j)| \le \delta_1 \max_j |E_j(t)|
\qquad \forall\,j\in\{1,2,3\}. \end{equation}

Then there exists $\bz = (z_1,z_2,z_3)\in\torus^3$
satisfying $z_1+z_2=z_3$ such that
\begin{equation} \label{timereversed} 
|E_j\symdif (E_j^\star+z_j)| \le \bC\scriptd(\bE)^{1/2} 
\qquad \forall\,j\in\{1,2,3\}.
\end{equation}
\end{lemma}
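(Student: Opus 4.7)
The plan is to reduce directly to the perturbative Riesz-Sobolev stability result, Lemma~\ref{lemma:perturbative}. The key observation is that the hypothesis at time $t$, combined with purely set-theoretic flow properties, already forces $\bE$ itself at time $0$ to be close (in symmetric difference, at the linear-in-$\delta_1$ scale) to the parallel triple of intervals $(E_j^\star+y_j)$. Because $\delta_1$ may be chosen smaller than the threshold $\delta_0(\eta,\eta')$ appearing in Lemma~\ref{lemma:perturbative}, that lemma then applies directly to $\bE$ at time $0$ and upgrades this linear-in-$\delta_1$ closeness to the desired quadratic bound $\bC\scriptd(\bE)^{1/2}$.

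The main computational step is a triangle-inequality bound that uses only flow properties. Set $I_j := E_j^\star + y_j$, an interval of length $|E_j|$ centered at $y_j$. By translation-invariance (property~(9)) and property~(8) of Theorem~\ref{thm:flow} applied to the identity homomorphism, $I_j(t) = E_j(t)^\star + y_j$, so the hypothesis reads $|E_j(t) \symdif I_j(t)| \le \delta_1 \max_k |E_k(t)|$. Flow monotonicity (property~(2)) gives $E_j \subset E_j(t)$ and $I_j \subset I_j(t)$, so by property~(3) one has $|E_j \symdif E_j(t)| = |I_j \symdif I_j(t)| = (e^t-1)|E_j|$. The triangle inequality combined with $e^t-1\le\delta_1$ then yields
\begin{equation}
|E_j \symdif I_j| \;\le\; 2(e^t-1)|E_j| + \delta_1 e^t \max_k |E_k| \;\le\; 4\delta_1 \max_k |E_k|.
\end{equation}

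Choose $\delta_1$ so that $4\delta_1 \le \delta_0(\eta,\eta'/2)$, where $\delta_0$ is the threshold from Lemma~\ref{lemma:perturbative}. All hypotheses of that lemma then hold for $\bE$ at time $0$ with approximating triple $\bI=(I_1,I_2,I_3)$: the triple $\bI$ is a compatibly centered parallel family of rank one Bohr sets, corresponding to the identity homomorphism, with $|I_j|=|E_j|$ and $y_1+y_2=y_3$; the triple $\bE$ is $\eta$-strictly admissible with $\sum_j|E_j| \le 2-\eta'$; and the measure-theoretic closeness above suffices. The conclusion of Lemma~\ref{lemma:perturbative} furnishes $\by'$ with $y'_1+y'_2=y'_3$ and $\max_j|E_j \symdif (I_j+y'_j)| \le \bC\scriptd(\bE)^{1/2}$. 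Setting $z_j := y_j+y'_j$ yields $z_1+z_2=z_3$ and $I_j+y'_j = E_j^\star+z_j$, which is \eqref{timereversed}.

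The main conceptual hurdle is to recognize that the $L^1$-contraction property~(6) of the flow runs in the wrong direction to push closeness from time $t$ back to time $0$. The correct tool is instead the inclusion $E_j \subset E_j(t)$ together with the exact measure-increase formula, which controls the incremental symmetric difference over $[0,t]$ directly; the same inclusion holds for the comparison intervals $I_j \subset I_j(t)$. Once the linear-in-$\delta_1$ perturbation hypothesis of Lemma~\ref{lemma:perturbative} is verified at time $0$, the quadratic upgrade to $\scriptd(\bE)^{1/2}$ closeness is furnished by that lemma, and neither the $\scriptd$-monotonicity along the flow nor the deeper global stability theorems are needed.
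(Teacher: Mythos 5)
Your proof is correct and takes essentially the same route as the paper's: both use the triangle inequality through the time-$t$ state, with the nesting $E_j\subset E_j(t)$ and $E_j^\star\subset E_j(t)^\star$ together with $|E_j(t)|=e^t|E_j|$ to bound each incremental term by $(e^t-1)|E_j|$, yielding an $O(\delta_1\max_k|E_k|)$ bound on $|E_j\symdif(E_j^\star+y_j)|$ that places $\bE$ within the perturbative threshold of Lemma~\ref{lemma:perturbative}. The only stylistic difference is that you name $I_j=E_j^\star+y_j$ and appeal explicitly to flow properties (8) and (9) to identify $I_j(t)=E_j(t)^\star+y_j$, whereas the paper computes $|(E_j(t)^\star+y_j)\symdif(E_j^\star+y_j)|$ directly; the substance is identical.
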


\begin{proof}
Requiring $\delta_1\le 1$, as we may, yields
\begin{align*}
|E_j\symdif (E_j^\star+y_j)|
&\le
|E_j\symdif E_j(t)|
+ |E_j(t)\symdif (E_j(t)^\star+y_j)|
+ |(E_j(t)^\star+y_j)\symdif (E_j^\star+y_j)|
\\&
\le (e^t-1)|E_j|
+ \delta_1e^t\max_k|E_k|
+ (e^t-1) |E_j|
\\&
\le (2(e^t-1)+\delta_1)\max_k|E_k| 
\\&
=O(\delta_1\max_k|E_k|).\end{align*}
Therefore, if $\delta_1$ is sufficiently small,
then $\bE$ satisfies the hypotheses of Lemma~\ref{lemma:perturbative}.
Its conclusion is the desired inequality \eqref{timereversed}.
\end{proof}


\section{Concluding steps for $\torus$} \label{section:finishingtorus}

In this section we prove the following slight improvement of Theorem~\ref{thm:RSstability}
in the case $G=\torus$.
The improvement lies in the absence of any lower bound for $\min(m(A),m(B),m(C))$.
That no lower bound is needed, is to be expected after the work of Bilu \cite{bilu}
on the sumset inequality.

\begin{theorem} \label{thm:RS_Tstability}
For each $\eta>0$ there exist $\delta_0>0$ and $\bC<\infty$ with the following property.
Let $(A,B,C)$ be an $\eta$--strictly admissible ordered triple
of Lebesgue measurable subsets of $\torus$ satisfying $m(A)+m(B)+m(C)\le 2-\eta$.
Let $\delta\le \delta_0$.
If 
\begin{equation}
\int_C \one_A*\one_B\,dm \ge
\int_\cstar \one_\astar*\one_\bstar\,dm-\delta\max(m(A),m(B),m(C))^2
\end{equation}
then there exists a compatibly centered parallel ordered triple
$(\scriptb_A,\scriptb_B,\scriptb_C)$ of rank one Bohr subsets of $\torus$
satisfying
\begin{equation}
m(A\symdif \scriptb_A) \le \boldC \delta^{1/2} \max(m(A),m(B),m(C))
\end{equation}
and likewise for $(B,\scriptb_B)$ and $(C,\scriptb_C)$.
\end{theorem}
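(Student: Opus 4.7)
The plan is to use the flow of Theorem~\ref{thm:flow} to dilate $(A,B,C)$ to a scale at which Theorem~\ref{thm:RSstability} applies, and then to propagate the resulting Bohr approximation back to time $0$. Set $M = \max(m(A),m(B),m(C))$ and fix a small constant $c_0 = c_0(\eta) > 0$. If $M \ge c_0$, then $\eta$-strict admissibility gives $\min_j m(E_j) \ge \eta M \ge \eta c_0$, so $\bE$ is $\eta'$-bounded for some $\eta' = \eta'(\eta)$, and Theorem~\ref{thm:RSstability} applies directly; the factor $M \ge c_0$ is absorbed into the constant $\bC$, yielding the conclusion.

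Assume henceforth $M < c_0$ and set $t_0 = \ln(c_0/M) > 0$. Consider the flowed triple $\bE(t_0) = (A(t_0),B(t_0),C(t_0))$. By item (3) of Theorem~\ref{thm:flow}, $m(E_j(t_0)) = e^{t_0} m(E_j)$, so $\bE(t_0)$ inherits the $\eta$-strict admissibility of $\bE$ and satisfies $\max_j m(E_j(t_0)) = c_0$, $\min_j m(E_j(t_0)) \ge \eta c_0$, and $\sum_j m(E_j(t_0)) \le 3 c_0 \le 2-\eta/2$, provided $c_0$ is chosen small enough as a function of $\eta$. By Remark~\ref{cor:monotonicity of D},
\[
\scriptd(\bE(t_0)) \le e^{2 t_0}\, \scriptd(\bE) \le (c_0/M)^2 \cdot \delta M^2 = c_0^2\, \delta.
\]
For $\delta\le \delta_0(\eta)$ sufficiently small, Theorem~\ref{thm:RSstability} applies to $\bE(t_0)$ and produces a compatibly centered parallel triple $(\scriptb_j^\sharp)_{j=1,2,3}$ of rank-one Bohr subsets of $\torus$ with $m(E_j(t_0) \symdif \scriptb_j^\sharp) \le \bC(\eta)\, \delta^{1/2}$. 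Write $\scriptb_j^\sharp = \{x : \norm{\phi(x) - c_j}_\torus \le r_j^\sharp\}$ and define the candidate approximators at time $0$ by $\scriptb_j = \{x : \norm{\phi(x)-c_j}_\torus \le e^{-t_0} r_j^\sharp\}$. By item (8) of Theorem~\ref{thm:flow}, $\scriptb_j(t_0) = \scriptb_j^\sharp$, the triple $(\scriptb_A,\scriptb_B,\scriptb_C)$ is compatibly centered and parallel, and $m(\scriptb_j)$ is comparable to $M$.

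The main obstacle, and the most delicate step, is to deduce $m(E_j \symdif \scriptb_j) \le \bC\, \delta^{1/2} M$ from the time-$t_0$ bound. Item (6) of Theorem~\ref{thm:flow} provides only the reverse inequality. The intended approach is to iterate Lemma~\ref{lemma:timereversal} along a partition $0 = s_0 < s_1 < \cdots < s_N = t_0$ of step size $\le \delta_1$. At each level, the approximation of $\bE(s_k)$ by the flowed Bohr triple $\bigl(\scriptb_j(s_k)\bigr)_j$---which by item (8) is just the rescaling of $\scriptb_j$ with radius $e^{s_k - t_0} r_j^\sharp$---is transported back to $\bE(s_{k-1})$, with increment controlled by $\scriptd(\bE(s_{k-1}))^{1/2} \le e^{s_{k-1}} \delta^{1/2} M$ by Remark~\ref{cor:monotonicity of D}; the $e^{-t_0}$ rescaling relating $\scriptb_j$ to $\scriptb_j^\sharp$ exactly balances the exponential growth of the defect under the flow to produce the desired factor $M$. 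The principal technical subtlety is that Lemma~\ref{lemma:timereversal} is phrased in terms of approximation by translates of the symmetrization $E_j^\star$ (an interval), while the $\scriptb_j$ need not be intervals when $|\phi| \ge 2$; handling this through a localization to individual arcs of the Bohr sets, together with the observation that for $M$ small no merging occurs along the flow of the Bohr sets themselves (item (8) keeps them as pairwise disjoint unions of arcs), is where the main technical work lies.
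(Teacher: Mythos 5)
Your proposal takes essentially the same approach as the paper: dilate $(A,B,C)$ under the flow of Theorem~\ref{thm:flow} to a scale at which the $\eta$--boundedness hypothesis of Theorem~\ref{thm:RSstability} is restored, apply that theorem there, and then propagate the Bohr approximation backwards through a chain of short time steps. (The paper flows until $\min_j m(E_j(T)) = \eta^2/3$ rather than until $\max_j m(E_j(t_0)) = c_0$, but the two normalizations are interchangeable.) Your bookkeeping --- that $\scriptd(\bE(t))$ grows like $e^{2t}$ while $\max_j m(E_j(t))$ grows like $e^t$, so the ratio $\scriptd(\bE(t))/\max_j m(E_j(t))^2$ is nonincreasing and the hypothesis of Theorem~\ref{thm:RSstability} is preserved --- is exactly the paper's computation.

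On the subtlety you flag in your final sentences: you are right that Lemma~\ref{lemma:timereversal} is stated with the approximating set equal to a translate of $E_j(t)^\star$, i.e.\ an interval, whereas the Bohr sets returned by Theorem~\ref{thm:RSstability} on $\torus$ may consist of several arcs. But the resolution is much simpler than localizing to individual arcs and tracking merging. The entire proof of Lemma~\ref{lemma:timereversal} consists of a triangle-inequality estimate followed by a single invocation of Lemma~\ref{lemma:perturbative}, and Lemma~\ref{lemma:perturbative} is already formulated for arbitrary rank-one Bohr sets. If $E_j(t)$ is $\delta_1\max_k|E_k(t)|$--close to a compatibly centered parallel rank-one Bohr set $\scriptb_j$ of measure $|E_j(t)|$, shrink $\scriptb_j$ to the concentric Bohr set $\scriptb_j'$ (same $\phi$, same center, smaller radius) of measure $|E_j|$; then $|E_j\symdif\scriptb_j'| \le |E_j\symdif E_j(t)| + |E_j(t)\symdif\scriptb_j| + |\scriptb_j\symdif\scriptb_j'| = O(\delta_1\max_k|E_k|)$, exactly as in the written proof, and Lemma~\ref{lemma:perturbative} applied to $\bE$ and $(\scriptb_j')_j$ gives the $O(\scriptd(\bE)^{1/2})$ bound. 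No analysis of arc merging is needed; this is precisely what the paper means by ``a straightforward series of reverse time steps.'' So your proposal is correct in outline and matches the paper; the one unresolved point you raise is dispatched more directly than you anticipate.
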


\begin{proof} By Theorem \ref{thm:RSstability}, the desired conclusion holds for all triples $(A,B,C)$ that additionally satisfy $\min(m(A),m(B),m(C))\geq \tfrac{1}{3}\eta^2$.

Now, let $(A,B,C)$ be a triple satisfying the hypotheses of the theorem, but with 
$$\min(m(A),m(B),m(C))<\tfrac{1}{3}\eta^2.
$$
Set $\bE = (E_1,E_2,E_3) = (A,B,C)$ and consider the flowed triples $\bE(t)$
for $0\le t\le T$, with $T$ chosen so that
$$\min_{j=1,2,3} m(E_j(t))=\tfrac{1}{3}\eta^2.
$$
That is, $\tfrac{1}{3}\eta^2=e^Tm$, for $m:=\min_{j=1,2,3}m(E_j)$. 

For all $t\in [0,T]$, the triple $\bE(t)$ is $\eta$-strictly admissible. Setting $M := \max_{j=1,2,3}m(E_j)$, the $\eta$-strict admissibility of $\bE$ ensures that
$$\max_{j=1,2,3}m(E_j(T))=e^TM\leq e^Tm\eta^{-1}=\tfrac{1}{3}\eta,
$$
whence
$$\sum_{j=1}^3m(E_j(t))\leq \eta \leq 2-\eta\text{ for all }t\in [0,T].
$$
Moreover, the assumption $\scriptd(\bE)\leq \delta M^2$ together by the monotonicity of the Riesz-Sobolev functional under the flow (discussed in \S\ref{section:flow}) imply that
$$\scriptd(\bE(t))\leq e^{2t}\scriptd(\bE)\leq e^{2t}\delta M^2=\delta \max_{j=1,2,3}m(E_j(t))^2\text{ for all }t\in [0,T].
$$

The triple $\bE(T)$ enjoys the additional property that it is $\eta^2$-bounded, and therefore satisfies the hypotheses of by Theorem \ref{thm:RSstability} with parameters depending only on $\eta$. It follows that, provided that $\delta_0$ is sufficiently small as a function of $\eta$ alone, there exists a compatibly centered parallel ordered triple $\bB:=(\scriptb_1,\scriptb_2,\scriptb_3)$ of rank one Bohr sets with
$$m\big(\scriptb_j\symdif E_j(T)\big)\leq \bC \delta^{1/2}\max_{j=1,2,3}m(E_j(T)).
$$
Assuming again that $\delta_0$ is sufficiently small as a function of $\eta$, the time reversal Lemma \ref{lemma:timereversal}
can be applied in a straightforward series of reverse time steps
to conclude that there exists a compatibly centered triple
$(\scriptb_1',\scriptb_2',\scriptb_3')$
of rank one Bohr sets such that
\[ m(\scriptb_j'\symdif E_j)\le \bC\scriptd(\bE)^{1/2} \]
for each $j\in\{1,2,3\}$.
\end{proof}


\end{document}